\numberwithin{equation}{section}
\newtheorem{thm}{Theorem}[section]
\newtheorem{cor}[thm]{Corollary}
\newtheorem{lem}[thm]{Lemma}
\newtheorem{pro}[thm]{Proposition}
\newtheorem*{thm*}{Theorem}
\theoremstyle{remark}
\newtheorem{rem}[thm]{Remark}
\theoremstyle{definition}
\newtheorem{exa}[thm]{Example}
 \DeclareMathOperator{\D}{d}
\DeclareMathOperator{\E}{e}
\newcommand*{\alu}{\EuScript A}
\newcommand*{\bb}{\mathcal B}
\newcommand*{\borel}[1]{{\mathfrak B}(#1)}
\newcommand*{\cbb}{\mathbb C}
\newcommand*{\dz}[1]{{\EuScript D}(#1)}
\newcommand*{\ee}{\mathcal E}
\newcommand*{\fscr}{\mathscr F}
\newcommand*{\gfrak}{\mathfrak{G}}
\newcommand*{\Ge}{\geqslant}
\newcommand*{\hh}{\mathcal H}
\newcommand*{\I}{{\mathrm i}}
\newcommand*{\is}[2]{\langle#1,#2\rangle}
\newcommand*{\jd}[1]{\EuScript N(#1)}
\newcommand*{\kscr}{\mathscr K}
\newcommand*{\Le}{\leqslant}
\newcommand*{\mm}{\mathcal M}
\newcommand*{\nbb}{\mathbb N}
\newcommand*{\ogr}[1]{\boldsymbol B(#1)}
\newcommand*{\ogrp}[1]{\boldsymbol B_{+}(#1)}
\newcommand*{\ob}[1]{{\mathcal R}(#1)}
\newcommand*{\pcal}{{\mathcal P}}
\newcommand*{\zscr}{\mathscr{Z}}
\newcommand*{\rbb}{\mathbb R}
\newcommand*{\smalloplus}{\raise0pt\hbox{$\scriptscriptstyle \oplus$}}
\newcommand*{\yscr}{\mathscr Y}
\newcommand*{\zbb}{\mathbb Z}
\begin{document}
   \title[Composition operators on
entire functions with analytic symbols]{Composition
operators on Hilbert spaces of entire \\ functions
with analytic symbols}
   \author[J.\ Stochel]{Jan Stochel}
\address{Instytut Matematyki, Uniwersytet Jagiello\'nski,
ul.\ \L ojasiewicza 6, PL-30348 Kra\-k\'ow, Poland}
\email{Jan.Stochel@im.uj.edu.pl}
   \author[J.\ B.\ Stochel]{Jerzy Bart{\l}omiej Stochel}
\address{Faculty of Applied Mathematics,
AGH University of Science and Technology, Al. Mickiewicza
30, PL-30059 Krak\'ow, Poland} \email{stochel@agh.edu.pl}
   \thanks{The research of the first author was supported
by the NCN (National Science Center), decision No.
DEC-2013/11/B/ST1/03613. The research of the second
author was supported by the local grant No.
11.11.420.004.}

\subjclass[2010]{Primary 47B32, 47B33; Secondary
47B20, 47A80}

\keywords{Reproducing kernel Hilbert space, entire
function, Segal-Bargmann space, composition operator,
Fock's type model, seminormal operator}

   \begin{abstract}
Composition operators with analytic symbols on some
reproducing kernel Hilbert spaces of entire functions
on a complex Hilbert space are studied. The questions
of their boundedness, seminormality and positivity are
investigated. It is proved that if such an operator is
bounded, then its symbol is a polynomial of degree at
most $1$, i.e., it is an affine mapping. Fock's type
model for composition operators with linear symbols is
established. As a consequence, explicit formulas for
their polar decomposition, Aluthge transform and
powers with positive real exponents are provided. The
theorem of Carswell, MacCluer and Schuster is
generalized to the case of Segal-Bargmann spaces of
infinite order. Some related questions are also
discussed.
   \end{abstract}
   \maketitle
   \newpage
\setcounter{tocdepth}{2}
\tableofcontents
   \section{Introduction}
   Composition operators in Banach spaces of analytic
functions with analytic symbols have been intensively
studied since at least the 1970's (see the classical
survey article \cite{Nor78}). The literature on this
subject is vast and still growing. Most effort has
been focussed on investigating bounded composition
operators in Banach spaces of analytic functions on
bounded regions in finite dimensional Euclidean
spaces. We mention only a few selected articles:
\cite{Jon01} (composition operators on weighted
Bergman spaces of the unit disk), \cite{Cow-Gun-Ko13}
(weighted composition operators on weighted Hardy
spaces of the unit disk), \cite{Cha-Par14} (weighted
composition operators on the Hardy-Hilbert space of
the unit disc), \cite{MC97} (Fredholm composition
operators over bounded domains in $\cbb^N$) and
\cite{Char13} (composition operators on weighted
Bergman-Orlicz Spaces of the unit ball). It is also
worth mentioning two monographs on analytic
composition operators: \cite{Sh93} (on the Hardy space
$H^2$) and \cite{Cow-MC95} (on the Hardy space $H^p$
of the unit ball).

Since the publication of the article \cite{c-m-s03},
there has been growing interest in investigating
analytic (weighted) composition operators in Banach
spaces of entire functions on finite dimensional
Euclidean spaces. Most work has been concerned with
Fock-type spaces over $\cbb^n$ (see, e.g.,\
\cite{Ue07,Ue07a,Ue10,LeT14,Zha-Pa15} for $n=1$ and
\cite{Ste09,C-I-K10,Du11,Men14,Zha14} for $n\Ge 1$).

The goal of the present paper is to study composition
operators with holomorphic symbols on a reproducing
kernel Hilbert spaces $\varPhi(\hh)$ induced by a
nonconstant entire function $\varPhi$ in $\cbb$ whose
Taylor coefficients at the origin are nonnegative.
This space consists of entire functions on a complex
Hilbert spaces $\hh$ (finite- or
infinite-dimensional). It is worth pointing out that
Frankfurt spaces (see \cite{fr1,fr2,fr3}) and their
multidimensional generalizations (see \cite{FHSz2})
that consist of entire functions which are square
integrable with respect to some symmetric measures fit
well into the scope of our approach (see Example
\ref{phih}). The Segal-Bargmann spaces (nowadays often
called Fock spaces) are the most spectacular examples
of them with $\varPhi=\exp$ (see \cite[Chapter
VI]{seg} and \cite{Barg}; see also
\cite{Barg2,Ja-Ru,jerz0,jerz,Ja-Ru-2,jerz4,jerz3,Ma-St}
for the infinite-dimensional case).

The study of composition operators with holomorphic
symbols in the Segal-Bargmann space $\bb_d$ of finite
order $d$ was initiated by Carswell, MacCluer and
Schuster in \cite{c-m-s03}. They proved, among other
things, that the symbol of a bounded composition
operator on $\bb_d$ is an affine mapping. As shown in
the present paper, this is also true for composition
operators on $\varPhi(\hh)$ (see Proposition
\ref{pro1}). In contrast to the bounded case, densely
defined composition operators on $\bb_d$ may have
non-affine holomorphic symbols (see Example
\ref{ndd}). Here, we concentrate mostly on composition
operators with affine symbols. What is more, we do not
impose any restrictions on the dimension of the
underlying Hilbert space $\hh$.

The paper is organized as follows. We begin by
discussing the selected properties of generalized
inverses of positive bounded Hilbert space operators
(see Section \ref{sek2}). From the point of view of
the present paper the most important property of
generalized inverses is the limit property which is
described in Lemma \ref{cfkz-2}. It is used in the
proof of Theorem \ref{glowne2}. The reproducing kernel
Hilbert space $\varPhi(\hh)$ is introduced and
initially investigated in Section \ref{sekt3}. The
basic properties of (not necessarily bounded)
composition operators $C_{\varphi}$ in $\varPhi(\hh)$
with holomorphic symbols $\varphi$ are established in
Section \ref{sekt4}. A special emphasis is placed on
describing the adjoint operator $C_{\varphi}^*$ (see
Theorem \ref{sprz}). Section \ref{sekt5} is devoted to
showing that members of the class of densely defined
composition operators in $\varPhi(\hh)$ with
holomorphic symbols are maximal with respect to
inclusion of graphs (see Theorem \ref{fipsi}). In
Section \ref{sekt6}, we prove that the symbol of a
bounded composition operator on $\varPhi(\hh)$ is a
polynomial of degree at most $1$, or in other words,
an affine mapping (see Proposition \ref{pro1}).
Proposition \ref{fipsib} shows that the question of
boundedness of a composition operator in
$\varPhi(\hh)$ with an affine symbol reduces to
investigating the case when the linear part of the
symbol is positive. Section \ref{sekt7} provides
Fock's type model for composition operators in
$\varPhi(\hh)$ with linear symbols (see Theorem
\ref{fmod}). The model itself depends on a choice of a
conjugation on the underlying Hilbert space $\hh$. The
reader is referred to Appendix \ref{apap} for more
information on conjugations on Hilbert spaces. Fock's
type model enables us to show that the adjoint of a
composition operator $C_A$ with a linear symbol $A$ is
the composition operator with the symbol $A^*$, which
is the adjoint operator of $A$ (see Theorem
\ref{wnoca}). In particular, this gives us a
description of all selfadjoint composition operators
in $\varPhi(\hh)$ with linear symbols (see Corollary
\ref{sa-sa}). In Section \ref{sekt8}, we completely
characterize the boundedness of composition operators
in $\varPhi(\hh)$ with linear symbols and give
explicit formulae for their norms and spectral radii
(see Theorem \ref{bca}). As a consequence, the
question of when a bounded composition operator on
$\varPhi(\hh)$ with a linear symbol is normaloid
(resp.,\ isometric, coisometric, unitary, an
orthogonal projection, a partial isometry) is
answered. Theorem \ref{gcd3}, which is the main result
of Section \ref{sekt9}, characterizes positivity of
composition operators in $\varPhi(\hh)$ with linear
symbols. This section also contains explicit
descriptions of powers (with positive real exponents),
the polar decomposition and the generalized Aluthge
transform of composition operators in $\varPhi(\hh)$
with linear symbols. Section \ref{sekt10} is devoted
mainly to characterizing seminormality of composition
operators in $\varPhi(\hh)$ with linear symbols (see
Theorem \ref{chca}; see also Theorems \ref{nier} and
\ref{wn2} for related inequalities). The problem of
characterizing subnormality of such operators is
partially solved in Proposition \ref{subnwkw}.
However, in its full generality this problem still
remains unsolved (even in the case of
$\varPhi(z)=z^2$, see \cite[Open Question]{js2}).
Section \ref{sekt11} provides some necessary
conditions for boundedness and seminormality of
composition operators in $\exp(\hh)$ with affine
symbols (see Lemma \ref{n&sc4b} and Proposition
\ref{cohyp}).

In Section \ref{sec9}, we give another and a
significantly shorter proof of the
Carswell-MacCluer-Schuster theorem (see Theorem
\ref{s-b-norm} and Proposition \ref{naszeq}). Our idea
is to reduce the proof to the one-dimensional case,
and then to combine some estimates for the norms of
iterations of the square of the modulus of a
composition operator together with Gelfand's formula
for the spectral radius (see Lemmata \ref{sb2} and
\ref{sb4}). As a consequence, the spectral radius of a
bounded composition operator on the Segal-Bargmann
space of finite order with a holomorphic symbol is
proved to be equal to $1$ (see Theorem
\ref{s-b-prsp}). In turn, such a composition operator
is normaloid if and only if its symbol vanishes at $0$
(see Theorem \ref{semi-lid}), and it is seminormal if
and only if it is normal (see Corollary
\ref{semi-nor}). In Section \ref{Sek13}, we discuss
the relationship between a composition operator
$C_{\varphi}$ acting in the Segal-Bargmann space
$\bb_d$ and the corresponding composition operator
$\tilde C_{\varphi}$ acting in $L^2(\mu_d)$, where
$\varphi$ is a holomorphic symbol and $\mu_d$ is a
Gaussian measure on $\cbb^d$. It is proved that if
$\tilde C_{\varphi}$ is well-defined, then
$C_{\varphi}$ is bounded if and only if $\tilde
C_{\varphi}$ is bounded (see Theorem \ref{bl2G}). It
is worth pointing out that, in contrast to the case of
analytic composition operators on $\bb_d$, there are
bounded cosubnormal (resp.,\ cohyponormal) composition
operators on $L^2(\mu_d)$ with holomorphic symbols
which are not normal (resp.,\ cosubnormal) (see Remark
\ref{mamde}). Theorem \ref{glowne2}, which is the main
result of Section \ref{Sek14}, provides two
characterizations of boundedness of a composition
operator on $\exp(\hh)$ with a holomorphic symbol (see
the equivalences (i)$\Leftrightarrow$(ii) and
(i)$\Leftrightarrow$(iii)) as well as two formulas for
its norm (see \eqref{Jur3}). This can be thought of as
a far-reaching generalization of the
Carswell-MacCluer-Schuster theorem. Let us mention
that the equivalence (i)$\Leftrightarrow$(iii) and the
equality $\|C_{\varphi}\|^2 =
\exp(\|(I-AA^*)^{-1/2}b\|^2)$ have been proved
independently by Trieu Le using a different approach
(cf.\ \cite{LeT}). Our method of proving Theorem
\ref{glowne2} is based on an approximation technique
which enables us to reduce the reasoning to the finite
dimensional case (which, as said above, can in turn be
reduced to the one-dimensional case). If the linear
part $A$ of a holomorphic symbol $\varphi$ of a
bounded composition operator $C_{\varphi}$ on
$\exp(\hh)$ is a strict contraction, then the spectral
radius of $C_{\varphi}$ is equal to $1$ (see
Proposition \ref{spr1}). Example \ref{optim} shows
that there are bounded cohyponormal (hence normaloid)
analytic composition operators on $\exp(\hh)$ which
are not normal and whose spectral radii can take any
value in the interval $(1,\infty)$ (which is never the
case when $\dim \hh < \infty$). In Example \ref{uww}
we illustrate Theorem \ref{glowne2} in the context of
diagonal operators.

The paper is concluded with two appendices. Appendix
\ref{apap} provides proofs of selected properties of
conjugations on Hilbert spaces while Appendix
\ref{apap2} deals with the question of paranormality
of tensor products. Both appendices are strongly
related to the Fock's type model for analytic
composition operators in $\varPhi(\hh)$ with linear
symbols (see Section \ref{sekt7}).

We refer the reader to the monograph \cite{chae} for
more information on holomorphy in normed spaces.
Concerning reproducing kernel Hilbert spaces, we refer
the reader to the classical paper by Aronszajn
\cite{Aronsz} as well as to modern treatises
\cite{FHSz3,FHSz4,FHSz5}.
   \section{\label{sek2}Preliminaries}
   Let $\cbb$, $\rbb$, $\rbb_+$ and $\zbb_+$ denote
the sets of complex numbers, real numbers, nonnegative
real numbers and nonnegative integers, respectively.
Set $\nbb=\zbb_+ \setminus \{0\}$. Let $\hh$ be a
complex Hilbert space. Given a family
$\{M_{\omega}\}_{\omega\in \varOmega}$ of nonempty
subsets of $\hh$, we denote by $\bigvee_{\omega\in
\varOmega} M_{\omega}$ the closed linear span of
$\bigcup_{\omega\in \varOmega} M_{\omega}$. We write
$M^\perp$ for the orthocomplement of a nonempty set $M
\subseteq \hh$. By an {\em operator} in $\hh$ we mean
a linear map $A\colon \hh \supseteq \dz{A} \to \hh$
defined on a vector subspace $\dz{A}$ of $\hh$ called
the {\em domain} of $A$. The kernel, the range, the
adjoint and the closure of $A$ are denoted by
$\jd{A}$, $\ob{A}$, $A^*$ and $\bar A$, respectively
(provided they exist). The {\em graph inner product}
$\is{\cdot}{\mbox{-}}_A$ of $A$ is given by
$\is{f}{g}_A = \is{f}{g} + \is{Af}{Ag}$ for $f,g \in
\dz{A}$. A vector subspace $\ee$ of $\dz{A}$ is called
a {\em core} for $A$ if $\ee$ is dense in $\dz{A}$ in
the {\em graph norm} of $A$ (given by the graph inner
product $\is{\cdot}{\mbox{-}}_A$). If $A$ is closed,
then $\ee$ is a core for $A$ if and only if
$A=\overline{A|_{\ee}}$. An operator in $\hh$ which
vanishes on its domain is called a {\em zero
operator}. A zero operator may not be closed, or even
densely defined.

We say that a densely defined operator $A$ in $\hh$ is
   \begin{enumerate}
   \item[$\bullet$] {\em positive}
if $\is{A\xi}{\xi} \Ge 0$ for all $\xi\in \dz{A}$; then we
write $A\Ge 0$,
   \item[$\bullet$] {\em selfadjoint} if $A=A^*$,
   \item[$\bullet$] {\em hyponormal} if
$\dz{A} \subseteq \dz{A^*}$ and $\|A^*\xi\| \Le \|A\xi\|$
for all $\xi \in \dz{A}$,
   \item[$\bullet$] {\em cohyponormal} if $\dz{A^*} \subseteq
\dz{A}$ and $\|A\xi\| \Le \|A^*\xi\|$ for all $\xi \in
\dz{A^*}$,
   \item[$\bullet$] {\em normal} if $A$ is hyponormal and
cohyponormal,
   \item[$\bullet$] {\em subnormal} if there exist
a complex Hilbert space $\mm$ and a normal operator
$N$ in $\mm$ such that $\hh \subseteq \mm$ (isometric
embedding), $\dz{A} \subseteq \dz{N}$ and $Af = Nf$
for all $f \in \dz{A}$,
   \item[$\bullet$] {\em seminormal} if $A$ is either
hyponormal or cohyponormal.
   \end{enumerate}
Recall that normal operators are subnormal and
subnormal operators are hyponormal, but not conversely
(even in the case of bounded operators, see
\cite{Hal,Fur}). It follows from the von Neumann
theorem that a closed operator $A$ in $\hh$ is
hyponormal (resp.,\ cohyponormal) if and only if $A^*$
is cohyponormal (resp.,\ hyponormal).

We denote by $\ogr{\hh}$ the $C^*$-algebra of all
bounded operators defined on the whole $\hh$. We write
$I=I_{\hh}$ for the identity operator on $\hh$. Set
   \begin{align*}
\ogrp{\hh} = \{A \in \ogr{\hh}\colon A \Ge 0\}.
   \end{align*}
The spectrum and spectral radius of $A \in \ogr{\hh}$
is denoted by $\sigma(A)$ and $r(A)$ respectively. An
operator $A\in \ogr{\hh}$ is said to be {\em
normaloid} if $r(A)=\|A\|$ (cf.\ \cite{Fur}). Clearly,
$A$ is normaloid if and only if $A^*$ is normaloid. An
operator $A\in \ogr{\hh}$ is said to be {\em
paranormal} if $\|Af\|^2 \Le \|f\| \|A^2f\|$ for every
$f \in \hh$. It is well-known that paranormal
operators are normaloid (see \cite{Fur} for more
information on this and related classes of operators).

We refer the reader to \cite{Gui,R-S,Part} for the
foundations of the theory of tensor products of
Hilbert space operators including symmetric tensor
powers of such operators.

Each closed densely defined operator $A$ in $\hh$ has
a unique polar decomposition $A = U|A|$, where $U\in
\ogr{\hh}$ is a partial isometry such that $\jd{U} =
\jd{A}$ and $|A| = (A^*A)^{1/2}$ (cf.\ \cite[Section
8.1]{b-s}). Given two positive selfadjoint operators
$A$ and $B$ in $\hh$, we write $A\preccurlyeq B$ if
$\dz{B^{1/2}} \subseteq \dz{A^{1/2}}$ and $\|A^{1/2}
\xi\| \Le \|B^{1/2} \xi\|$ for all $\xi \in
\dz{B^{1/2}}$. If $A, B \in \ogrp{\hh}$, then $A
\preccurlyeq B$ if and only if $A \Le B$, i.e., $B-A
\Ge 0$.

Suppose $A\in \ogr{\hh }$ is selfadjoint. Since
$A|_{\overline{\ob{A}}}\colon \overline{\ob{A}} \to
\ob{A}$ is a bijection, we may define a generalized
inverse $A^{-1}$ of $A$ by
$A^{-1}=\big(A|_{\overline{\ob{A}}}\big)^{-1}$.
Clearly, we~ have
   \begin{align}  \label{aa-1}
   \begin{gathered}
\text{$\dz{A^{-1}} = \ob{A}$, $\ob{A^{-1}} =
\overline{\ob{A}}$,}
   \\
\text{$AA^{-1}=I_{\ob{A}}$ and $A^{-1}A=P$,}
   \end{gathered}
   \end{align}
where $I_{\ob{A}}$ is the identity operator on
$\ob{A}$ and $P$ is the orthogonal projection of $\hh$
onto $\overline{\ob{A}}$. If $A \in \ogrp{\hh}$, then
we write
   \begin{align} \label{aa-2}
A^{-t} = (A^{t})^{-1}, \quad t \in (0,\infty).
   \end{align}
We show that if $A$ is a positive operator with closed
range, then $A^{-1/2}=(A^{-1})^{1/2}$.
   \begin{lem} \label{a-12}
Let $A\in \ogrp{\hh}$ be an operator with closed
range. Then $\ob{A}=\ob{A^{1/2}}$,
$A^{-1}\in\ogrp{\ob{A}}$, $A^{-1/2} \in\ogrp{\ob{A}}$,
$A^{-1} = A^{-1/2}A^{-1/2}$ and
   \begin{align*}
\is{A^{-1}\xi} {\xi}=\|A^{-1/2}\xi\|^2, \quad \xi \in
\ob{A}.
   \end{align*}
   \end{lem}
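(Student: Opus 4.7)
The plan is to reduce everything to the range identity $\ob{A}=\ob{A^{1/2}}$; once that is in hand, the remaining assertions follow routinely from \eqref{aa-1}, \eqref{aa-2} and the positivity of $A$.

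First, the inclusion $\ob{A}\subseteq\ob{A^{1/2}}$ is immediate from $A=A^{1/2}A^{1/2}$. For the reverse, the identity $\|A^{1/2}\xi\|^{2}=\is{A\xi}{\xi}$ yields $\jd{A^{1/2}}=\jd{A}$, and combining this with selfadjointness of $A$ and $A^{1/2}$ together with the hypothesis that $\ob{A}$ is closed gives
\[
\ob{A^{1/2}}\subseteq\overline{\ob{A^{1/2}}}=\jd{A^{1/2}}^{\perp}=\jd{A}^{\perp}=\overline{\ob{A}}=\ob{A}.
\]
In particular $A^{1/2}$ itself has closed range, so by \eqref{aa-2} the operator $A^{-1/2}=(A^{1/2})^{-1}$ is well-defined on $\ob{A^{1/2}}=\ob{A}$.

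Next I would show $A^{-1},A^{-1/2}\in\ogrp{\ob{A}}$. By \eqref{aa-1} together with $\overline{\ob{A}}=\ob{A}$, both $A|_{\ob{A}}$ and $A^{1/2}|_{\ob{A}}$ are continuous bijections of the Banach space $\ob{A}$ onto itself, so the bounded inverse theorem gives boundedness of $A^{-1}$ and $A^{-1/2}$. For positivity, given $\xi\in\ob{A}$, set $\eta=A^{-1}\xi\in\ob{A}$ so that $\xi=A\eta$; then $\is{A^{-1}\xi}{\xi}=\is{\eta}{A\eta}\Ge 0$, and the same trick with $A^{1/2}$ in place of $A$ handles $A^{-1/2}$.

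Finally, inverting the factorization $A|_{\ob{A}}=(A^{1/2}|_{\ob{A}})(A^{1/2}|_{\ob{A}})$ of bounded bijections of $\ob{A}$ yields $A^{-1}=A^{-1/2}A^{-1/2}$; since $A^{-1/2}$ is positive, hence selfadjoint, on $\ob{A}$, one obtains
\[
\is{A^{-1}\xi}{\xi}=\is{A^{-1/2}A^{-1/2}\xi}{\xi}=\|A^{-1/2}\xi\|^{2}.
\]
The only genuine obstacle is the range identity $\ob{A}=\ob{A^{1/2}}$, where the closed-range hypothesis is essential; everything afterwards is formal bookkeeping with \eqref{aa-1} and \eqref{aa-2}.
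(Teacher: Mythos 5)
Your proof is correct and follows essentially the same route as the paper: both derive $\ob{A}=\ob{A^{1/2}}$ from $\overline{\ob{A^{1/2}}}=\overline{\ob{A}}=\ob{A}\subseteq\ob{A^{1/2}}$, then invert the factorization $A|_{\ob{A}}=(A^{1/2}|_{\ob{A}})(A^{1/2}|_{\ob{A}})$ and invoke an open-mapping/closed-graph argument for boundedness. The only cosmetic difference is that you spell out the kernel identity $\jd{A^{1/2}}=\jd{A}$ and the positivity check explicitly, which the paper leaves implicit.
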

   \begin{proof}
Indeed, since
   \begin{align*}
\overline{\ob{A^{1/2}}} = \overline{\ob{A}} = \ob{A}
\subseteq \ob{A^{1/2}} \subseteq
\overline{\ob{A^{1/2}}},
   \end{align*}
we get $\ob{A} = \ob{A^{1/2}}$. Hence $\ob{A}$ reduces
$A$ and $A^{1/2}$, and
   \begin{align*}
A^{-1}=\Big((A^{1/2}|_{\ob{A}})(A^{1/2}|_{\ob{A}})\Big)^{-1}
=A^{-1/2}A^{-1/2}.
   \end{align*}
This and the closed graph theorem imply that
$A^{-1}\in\ogrp{\ob{A}}$, $A^{-1/2} \in\ogrp{\ob{A}}$
and
   \begin{align*}
& \is{A^{-1}\xi}{\xi} = \|A^{-1/2}\xi\|^2, \quad \xi
\in \ob{A}. \qedhere
   \end{align*}
   \end{proof}
   Now we extend the definition of the partial order
$\preccurlyeq$ so as to cover the case of generalized
inverses of positive operators $A \in \ogrp{\hh}$
(note that, in general, $A^{-1}$ may not be densely
defined). Given two operators $A,B \in \ogrp{\hh}$, we
write $B^{-1} \preccurlyeq A^{-1}$ if $\dz{A^{-1/2}}
\subseteq \dz{B^{-1/2}}$ and $\|B^{-1/2} f \| \Le
\|A^{-1/2} f \|$ for all $f \in \dz{A^{-1/2}}$.
Clearly, if $A$ and $B$ are invertible in $\ogr{\hh}$,
then $B^{-1} \preccurlyeq A^{-1}$ if and only if
$B^{-1} \Le A^{-1}$. As shown below, the inequality
$B^{-1} \preccurlyeq A^{-1}$ turns out to be
equivalent to the inequality $A\Le B$.
   \begin{lem} \label{cnier}
If $A,B \in \ogrp{\hh}$ and $\varepsilon \in
(0,\infty)$, then the following conditions are
equivalent{\em :}
   \begin{enumerate}
   \item[(i)] $A \Le B$,
   \item[(ii)] $B^{-1} \preccurlyeq A^{-1}$,
   \item[(iii)] $(\varepsilon + B)^{-1} \Le (\varepsilon +
A)^{-1}$.
   \end{enumerate}
   \end{lem}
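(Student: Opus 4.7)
The plan is to establish (i)$\Leftrightarrow$(iii) by a standard inverse-monotonicity argument applied to the boundedly invertible operators $\varepsilon+A$ and $\varepsilon+B$, and then to handle (i)$\Leftrightarrow$(ii) by recognising the inequality between the generalized inverses as a Douglas factorisation of $A^{1/2}$ through $B^{1/2}$. The main obstacle is the book-keeping of the (possibly non-closed) domains of $A^{-1/2}$ and $B^{-1/2}$; the identities $AA^{-1}=I_{\ob{A}}$ and $A^{-1}A=P$ from \eqref{aa-1} will be invoked repeatedly to control compositions of an operator with its generalized inverse.

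For (i)$\Leftrightarrow$(iii), since $\varepsilon I\Le\varepsilon+A$ and $\varepsilon I\Le\varepsilon+B$, both $\varepsilon+A$ and $\varepsilon+B$ are invertible in $\ogr{\hh}$. Sandwiching $\varepsilon+A\Le\varepsilon+B$ by $(\varepsilon+B)^{-1/2}$ gives $(\varepsilon+B)^{-1/2}(\varepsilon+A)(\varepsilon+B)^{-1/2}\Le I$; this operator is bounded and strictly positive, so it may be inverted to obtain $(\varepsilon+B)^{1/2}(\varepsilon+A)^{-1}(\varepsilon+B)^{1/2}\Ge I$, and a final sandwich by $(\varepsilon+B)^{-1/2}$ yields (iii). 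The reverse implication follows by applying exactly the same argument to $(\varepsilon+B)^{-1}\Le(\varepsilon+A)^{-1}$, whose inverses are again $\varepsilon+B$ and $\varepsilon+A$.

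For (i)$\Rightarrow$(ii), I would apply Douglas' range-inclusion theorem with $T=A^{1/2}$ and $S=B^{1/2}$: the hypothesis $A\Le B$ reads $TT^{*}\Le SS^{*}$, so there exists $C\in\ogr{\hh}$ with $\|C\|\Le 1$ and $A^{1/2}=B^{1/2}C$. This immediately yields $\ob{A^{1/2}}\subseteq\ob{B^{1/2}}$, i.e.\ $\dz{A^{-1/2}}\subseteq\dz{B^{-1/2}}$. A given $f\in\dz{A^{-1/2}}$ has the form $f=A^{1/2}g$ with $g=A^{-1/2}f\in\overline{\ob{A^{1/2}}}$, so $f=B^{1/2}(Cg)$, and \eqref{aa-1} applied to the self-adjoint operator $B^{1/2}$ gives $B^{-1/2}f=P(Cg)$, where $P$ is the orthogonal projection of $\hh$ onto $\overline{\ob{B^{1/2}}}$; hence $\|B^{-1/2}f\|\Le\|Cg\|\Le\|g\|=\|A^{-1/2}f\|$.

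For (ii)$\Rightarrow$(i), which is the delicate direction, I would construct the factorising contraction by hand. Set $Cg:=B^{-1/2}A^{1/2}g$ for $g\in\hh$; this is well defined since $A^{1/2}g\in\ob{A^{1/2}}=\dz{A^{-1/2}}\subseteq\dz{B^{-1/2}}$. The assumed norm bound, combined with $A^{-1/2}A^{1/2}=P_{\overline{\ob{A^{1/2}}}}$ from \eqref{aa-1}, gives
\[
\|Cg\|=\|B^{-1/2}A^{1/2}g\|\Le\|A^{-1/2}A^{1/2}g\|\Le\|g\|,
\]
so $C\in\ogr{\hh}$ with $\|C\|\Le 1$; and \eqref{aa-1} also ensures $B^{1/2}B^{-1/2}A^{1/2}g=A^{1/2}g$, i.e.\ $B^{1/2}C=A^{1/2}$ on $\hh$. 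Passing to adjoints and multiplying then produces
\[
A=A^{1/2}(A^{1/2})^{*}=B^{1/2}CC^{*}B^{1/2}\Le B^{1/2}\cdot I\cdot B^{1/2}=B,
\]
which is (i).
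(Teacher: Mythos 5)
Your proof is correct, and for the central equivalence (i)$\Leftrightarrow$(ii) it follows essentially the same route as the paper: both hinge on producing a contraction $C$ with $A^{1/2}=B^{1/2}C$ (you via Douglas's theorem in one direction and by the explicit formula $C=B^{-1/2}A^{1/2}$ in the other; the paper constructs the contraction by hand and cites Douglas only as an aside) and on the identities $B^{-1/2}B^{1/2}=P$ and $B^{1/2}B^{-1/2}=I_{\ob{B^{1/2}}}$ from \eqref{aa-1}. For (i)$\Leftrightarrow$(iii) you give a self-contained sandwich-and-invert argument, whereas the paper either cites Kato or reduces to (i)$\Leftrightarrow$(ii) applied to the invertible operators $\varepsilon+A$ and $\varepsilon+B$; both are standard and valid.
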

   \begin{proof}
(i)$\Rightarrow$(ii) If $E \in \ogrp{\hh}$, then
$P_E\colon \hh\to \overline{\ob{E^{1/2}}}$ stands for
the orthogonal projection of $\hh$ onto
$\overline{\ob{E^{1/2}}}$ and $J_E\colon
\overline{\ob{E^{1/2}}} \to \hh$ is given by
$J_E\xi=\xi$ for all $\xi \in
\overline{\ob{E^{1/2}}}$. Since $\|A^{1/2} \xi\| \Le
\|B^{1/2} \xi\|$ for all $\xi \in \hh$, there exists a
linear contraction $\tilde T\colon
\overline{\ob{B^{1/2}}} \to \overline{\ob{A^{1/2}}}$
such that $\tilde T B^{1/2} = A^{1/2}$. Set
$T=J_A\tilde T P_B$. Then $T \in \ogr{\hh}$, $\|T\|
\Le 1$, $T B^{1/2} = A^{1/2}$ and $T^* = J_B \tilde
T^* P_A$. This implies that $\ob{T^*} = \ob{\tilde
T^*} \subseteq \overline{\ob{B^{1/2}}}$ and $A^{1/2} =
B^{1/2} T^*$, and thus $\ob{A^{1/2}} \subseteq
\ob{B^{1/2}}$ (see also \cite[Theorem 1]{Doug}).
Applying \eqref{aa-1}, we get
   \begin{align} \label{baba}
B^{-1/2} A^{1/2} = B^{-1/2} B^{1/2} T^* = P_{B}
T^*=T^*.
   \end{align}
Hence, by \eqref{aa-1}, we have
   \begin{align*}
\|B^{-1/2} \eta\| = \|(B^{-1/2}A^{1/2})A^{-1/2}\eta\|
\overset{\eqref{baba}}= \|T^* A^{-1/2}\eta\| \Le
\|A^{-1/2}\eta\|, \quad \eta \in \ob{A^{1/2}}.
   \end{align*}
This means that $B^{-1} \preccurlyeq A^{-1}$.

(ii)$\Rightarrow$(i) Since $\ob{A^{1/2}} \subseteq
\ob{B^{1/2}}$ and $\|B^{-1/2} \eta\| \Le \|A^{-1/2}
\eta\|$ for all $\eta\in \ob{A^{1/2}}$, we get
   \begin{align*}
\|B^{-1/2} A^{1/2}A^{-1/2} \eta\| \Le
\|A^{-1/2}\eta\|, \quad \eta \in \ob{A^{1/2}}.
   \end{align*}
Set $T= B^{-1/2} A^{1/2}$. It follows from the above
that $\dz{T}=\hh$, $T \in \ogr{\hh}$ and $\|T\|\Le 1$.
Moreover, we have
   \begin{align*}
B^{1/2} T = (B^{1/2} B^{-1/2}) A^{1/2}
\overset{\eqref{aa-1}}= I_{\ob{B^{1/2}}} A^{1/2} =
A^{1/2}.
   \end{align*}
This implies that $A^{1/2} = T^* B^{1/2}$ and thus
   \begin{align*}
\is{A\xi}{\xi}=\|A^{1/2}\xi\|^2 = \|T^* B^{1/2}\xi\|^2
\Le \|B^{1/2}\xi\|^2=\is{B\xi}{\xi}, \quad \xi \in
\hh.
   \end{align*}

(i)$\Leftrightarrow$(iii) This follows from
\cite[Theorem VI.2.21]{Kat}. Alternatively, since for
any $\varepsilon > 0$, $A\Le B$ if and only if
$A+\varepsilon \Le B+\varepsilon$, we can apply the
equivalence (i)$\Leftrightarrow$(ii) to the operators
$A+\varepsilon$ and $B+\varepsilon$ which are
invertible in $\ogr{\hh}$.
   \end{proof}
The next lemma is a particular case of the Mac
Nerney-Shmul'yan characterization of the range of a
bounded operator.
   \begin{lem}[\mbox{\cite[Theorem 3]{MacN}},
\mbox{\cite[Lemma 3]{Shm}}] \label{cfkz-0} If $A\in
\ogrp{\hh}$ and $\xi \in \hh$, then the following
conditions are equivalent{\em :}
   \begin{enumerate}
   \item[(i)] $\xi \in \ob{A^{1/2}}$,
   \item[(ii)] there exists $c \in \rbb_+$  such that
$|\is{\xi}{h}| \Le c \|A^{1/2}h\|$ for all $h \in
\hh$.
   \end{enumerate}
Moreover, if $\xi \in \ob{A^{1/2}}$, then the smallest
$c\in \rbb_+$ in {\em (ii)} is equal to
$\|A^{-1/2}\xi\|$.
   \end{lem}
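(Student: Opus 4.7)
The plan is to prove the two implications separately and then read off the optimal constant from the construction itself.

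For (i)$\Rightarrow$(ii) I would proceed by direct computation. Assume $\xi \in \ob{A^{1/2}}$ and set $\eta = A^{-1/2}\xi \in \overline{\ob{A^{1/2}}}$, so that $A^{1/2}\eta = \xi$ by \eqref{aa-1}. Using selfadjointness of $A^{1/2}$ and Cauchy--Schwarz,
\[
|\is{\xi}{h}| = |\is{A^{1/2}\eta}{h}| = |\is{\eta}{A^{1/2}h}| \Le \|\eta\|\cdot \|A^{1/2}h\|, \quad h \in \hh,
\]
so (ii) holds with $c = \|A^{-1/2}\xi\|$.

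For (ii)$\Rightarrow$(i) the idea is to manufacture the preimage by Riesz. First, applying (ii) to any $h \in \jd{A^{1/2}} = \ob{A^{1/2}}^\perp$ gives $\is{\xi}{h}=0$, so $\xi \in \overline{\ob{A^{1/2}}}$. Next, define a linear functional $\psi\colon \ob{A^{1/2}}\to \cbb$ by $\psi(A^{1/2}h) = \is{h}{\xi}$; it is well defined because any two preimages of the same vector differ by an element of $\jd{A^{1/2}}$, which is orthogonal to $\xi$. By (ii), $|\psi(A^{1/2}h)| \Le c\|A^{1/2}h\|$, so $\psi$ extends continuously to a bounded linear functional on $\overline{\ob{A^{1/2}}}$ of norm at most $c$. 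The Riesz representation theorem yields $\eta \in \overline{\ob{A^{1/2}}}$ with $\|\eta\|\Le c$ and $\psi(g) = \is{g}{\eta}$ for all $g \in \overline{\ob{A^{1/2}}}$. Then for every $h \in \hh$,
\[
\is{h}{\xi} \;=\; \psi(A^{1/2}h) \;=\; \is{A^{1/2}h}{\eta} \;=\; \is{h}{A^{1/2}\eta},
\]
which forces $\xi = A^{1/2}\eta \in \ob{A^{1/2}}$.

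For the moreover clause, the $\eta$ produced above lies in $\overline{\ob{A^{1/2}}}$ and satisfies $A^{1/2}\eta=\xi$, so by the definition of the generalized inverse in \eqref{aa-1} we have $\eta = A^{-1/2}\xi$. The infimum of admissible constants in (ii) is exactly the norm of $\psi$, which equals $\|\eta\|=\|A^{-1/2}\xi\|$; combined with the first implication, which already shows this value of $c$ is admissible, the minimum is attained precisely at $\|A^{-1/2}\xi\|$.

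The main subtlety is the non-invertible case: one must keep $\psi$ defined on $\ob{A^{1/2}}$ rather than on all of $\hh$, verify that $\xi$ annihilates $\jd{A^{1/2}}$ before $\psi$ is even well defined, and select the Riesz representative inside $\overline{\ob{A^{1/2}}}$ so that it coincides with $A^{-1/2}\xi$ rather than with some arbitrary preimage.
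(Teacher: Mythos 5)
Your proof is correct. Note that the paper itself does not prove the equivalence (i)$\Leftrightarrow$(ii) at all --- it cites \cite{MacN} and \cite{Shm} and only justifies the ``moreover'' clause, by observing that the optimal $c$ is $\|\eta\|$ for the unique $\eta\in\overline{\ob{A^{1/2}}}$ with $\xi=A^{1/2}\eta$, which is by definition $A^{-1/2}\xi$. Your argument for the ``moreover'' part is essentially the same; in addition you supply a complete, self-contained proof of the two implications (the standard Riesz-functional argument behind the Mac Nerney--Shmul'yan/Douglas range criterion), correctly handling the non-invertible case by first checking $\xi\perp\jd{A^{1/2}}$, defining the functional only on $\ob{A^{1/2}}$, and taking the Riesz representative inside $\overline{\ob{A^{1/2}}}$ so that it coincides with $A^{-1/2}\xi$. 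All steps check out, including the identification of the least admissible constant with $\|\psi\|=\|\eta\|$.
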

   \begin{proof}
We justify only the ``moreover'' part. It is
well-known that the smallest $c\in \rbb_+$ in (ii) is
equal to $\|\eta\|$, where $\eta \in
\overline{\ob{A^{1/2}}}$ is a unique solution of the
equation $\xi = A^{1/2}\eta$. However, this means that
$\eta=A^{-1/2}\xi$.
   \end{proof}
The Mac Nerney-Shmul'yan theorem can be used to
describe the range of the square root of the limit of
a monotonically decreasing net of positive operators.
   \begin{lem} \label{cfkz-2}
If $\{A_{P}\}_{P \in \pcal} \subseteq \ogrp{\hh}$ is a
monotonically decreasing net which converges in the
weak operator topology to $A \in \ogrp{\hh}$ and $\xi
\in \hh$, then the following conditions are
equivalent{\em :}
   \begin{enumerate}
   \item[(i)] $\xi \in \ob{A^{1/2}}$,
   \item[(ii)] for every $P \in \pcal$,
$\xi \in \ob{A_{P}^{1/2}}$ and $c:=\sup_{P\in \pcal}
\|A_{P}^{-1/2} \xi\| < \infty$.
   \end{enumerate}
Moreover, if $\xi \in \ob{A^{1/2}}$, then
$c=\|A^{-1/2}\xi\|$.
   \end{lem}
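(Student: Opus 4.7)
The plan is to reduce both implications to Lemma \ref{cfkz-0}, exploiting the fact that the weak operator convergence $A_{P}\to A$ of a monotonically decreasing net translates into the pointwise identity $\|A_{P}^{1/2}h\|^{2} = \is{A_{P}h}{h} \searrow \is{Ah}{h} = \|A^{1/2}h\|^{2}$ for every $h \in \hh$. In particular, this forces $A \Le A_{P}$ for all $P \in \pcal$, and hence $\|A^{1/2}h\| \Le \|A_{P}^{1/2}h\|$ for every $h$.

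For (i)$\Rightarrow$(ii), I would set $c_{0}:=\|A^{-1/2}\xi\|$. The ``moreover'' part of Lemma \ref{cfkz-0} applied to $A$ gives $|\is{\xi}{h}| \Le c_{0}\|A^{1/2}h\| \Le c_{0}\|A_{P}^{1/2}h\|$ for every $h \in \hh$ and every $P$, so applying Lemma \ref{cfkz-0} in the other direction, now to $A_{P}$, yields $\xi \in \ob{A_{P}^{1/2}}$ and $\|A_{P}^{-1/2}\xi\| \Le c_{0}$. This delivers (ii) together with the inequality $c \Le \|A^{-1/2}\xi\|$.

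For (ii)$\Rightarrow$(i), I would again invoke the ``moreover'' clause of Lemma \ref{cfkz-0}, but at each $A_{P}$ separately, to obtain $|\is{\xi}{h}| \Le \|A_{P}^{-1/2}\xi\|\cdot \|A_{P}^{1/2}h\| \Le c\,\|A_{P}^{1/2}h\|$ for every $h \in \hh$. Passing to the limit in $P$ and using the pointwise convergence recorded above, I get $|\is{\xi}{h}| \Le c\,\|A^{1/2}h\|$; a final application of Lemma \ref{cfkz-0}, this time to $A$, furnishes $\xi \in \ob{A^{1/2}}$ and $\|A^{-1/2}\xi\| \Le c$. Combining the two inequalities proves the ``moreover'' assertion $\|A^{-1/2}\xi\| = c$.

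I do not anticipate any genuine obstacle: the entire argument is a symmetric use of Mac Nerney--Shmul'yan between $A$ and the individual $A_{P}$'s, glued together by the monotone convergence $\|A_{P}^{1/2}h\|^{2} \searrow \|A^{1/2}h\|^{2}$. The only point worth stating carefully is that this last convergence is a direct consequence of the weak operator convergence applied to the vector $h$, so no appeal to square-root continuity in any stronger operator topology is needed.
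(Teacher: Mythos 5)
Your proof is correct and follows essentially the same route as the paper: the implication (ii)$\Rightarrow$(i) is verbatim the paper's argument (Mac Nerney--Shmul'yan at each $A_P$, then pass to the limit using $\is{(A_P-A)h}{h}\to 0$). The only cosmetic difference is in (i)$\Rightarrow$(ii), where the paper cites Lemma \ref{cnier} to get $\ob{A^{1/2}}\subseteq\ob{A_P^{1/2}}$ and $\|A_P^{-1/2}\xi\|\Le\|A^{-1/2}\xi\|$, whereas you rederive the same facts by a double application of Lemma \ref{cfkz-0} together with $A\Le A_P$; both are valid.
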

   \begin{proof}
(i)$\Rightarrow$(ii) By Lemma \ref{cnier}, $A_{P}^{-1}
\preccurlyeq A^{-1}$ and $\ob{A^{1/2}} \subseteq
\ob{A_{P}^{1/2}}$ for all $P \in \pcal$. Hence, if
$\xi \in \ob{A^{1/2}}$, then $\xi \in
\ob{A_{P}^{1/2}}$ and $\|A_{P}^{-1/2}\xi\| \Le
\|A^{-1/2}\xi\|$ for all $P \in \pcal$, which implies
that
   \begin{align} \label{ca-12}
c \Le \|A^{-1/2}\xi\|.
   \end{align}

(ii)$\Rightarrow$(i) Noting that
   \begin{align*}
|\is{\xi}{\eta}| =
|\is{A_{P}^{-1/2}\xi}{A_{P}^{1/2}\eta}| \Le c
\|A_{P}^{1/2}\eta\|, \quad \eta \in \hh, \, P \in
\pcal,
   \end{align*}
and
   \begin{align*}
\lim_{P \in \pcal} (\|A_{P}^{1/2}\eta\|^2 -
\|A^{1/2}\eta\|^2) = \lim_{P \in \pcal}
\is{(A_{P}-A)\eta}{\eta} = 0, \quad \eta \in \hh,
   \end{align*}
we see that
   \begin{align*}
|\is{\xi}{\eta}| \Le c\|A^{1/2}\eta\|, \quad \eta \in
\hh,
   \end{align*}
which together with Lemma \ref{cfkz-0} yields $\xi \in
\ob{A^{1/2}}$ and $\|A^{-1/2}\xi\| \Le c$. This
combined with \eqref{ca-12} completes the proof of the
implication (ii)$\Rightarrow$(i) and the ``moreover''
part.
   \end{proof}
   \section{\label{sekt3}The reproducing kernel Hilbert space
$\varPhi(\hh)$}
   In what follows, we denote by $\fscr$ the set of
all entire functions $\varPhi$ of the form
   \begin{align} \label{wsp}
\varPhi(z) = \sum_{n=0}^\infty a_n z^n, \quad z \in \cbb,
   \end{align}
such that $a_k \Ge 0$ for all $k \in \zbb_+$ and
$a_{n} > 0$ for some $n\in \nbb$. Clearly, if $\varPhi
\in \fscr$, then $\varPhi|_{[0,\infty)}$ is
non-negative, strictly increasing and
$\lim_{[0,\infty) \ni x \to \infty}
\varPhi(x)=\infty$; hence, by Liouville's theorem,
$\limsup_{|z| \to \infty}|\varPhi(z)|=\infty$. Given
$\varPhi \in \fscr$ as in \eqref{wsp}, we set
$\zscr_{\varPhi} = \{k\in \zbb_+\colon a_k > 0\}$ and
$\gfrak_{\varPhi} = \bigcap_{k \in \zscr_{\varPhi}
\setminus \{0\}} G_k$, where $G_k:= \{z\in \cbb\colon
z^k = 1\}$ for $k\in \nbb$. The order of the
multiplicative group $\gfrak_{\varPhi}$ can be
calculated explicitly.
   \begin{lem} \label{gcd=1}
Suppose $Y$ is a nonempty subset of $\nbb$. Then
   \begin{enumerate}
   \item[(i)] $\bigcap_{k \in Y} G_k = G_{\mathrm{gcd}(Y)}$,
where $\mathrm{gcd}(Y)$ is the greatest common divisor
of $Y$,
   \item[(ii)] $\bigcap_{k \in Y} G_k =
\{1\}$ if and only if there exists a nonempty finite
subset of $Y$ which consists of relatively prime
integers.
   \end{enumerate}
   \end{lem}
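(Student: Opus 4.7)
The plan is to prove part (i) first; part (ii) then follows at once. The argument rests on two elementary number-theoretic ingredients: the Bezout identity and the fact that the greatest common divisor of a (possibly infinite) nonempty subset of $\nbb$ is already attained by a finite subfamily. To justify the latter, one may fix any $k_0 \in Y$ and successively replace it by $\mathrm{gcd}(k_0, k)$ as $k$ ranges over $Y$; this yields a nonincreasing sequence of positive integers, which must eventually stabilize. Consequently there exists a nonempty finite subset $F \subseteq Y$ such that $\mathrm{gcd}(F)$ divides every element of $Y$, and hence $\mathrm{gcd}(F) = \mathrm{gcd}(Y)$.

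For part (i), the inclusion $G_{\mathrm{gcd}(Y)} \subseteq \bigcap_{k \in Y} G_k$ is immediate, because $\mathrm{gcd}(Y)$ divides each $k \in Y$, so $z^{\mathrm{gcd}(Y)} = 1$ forces $z^k = 1$. For the reverse inclusion, let $z \in \bigcap_{k \in Y} G_k$; in particular $z^k = 1$ for every $k \in F$. By the Bezout identity there exist integers $\{n_k\}_{k \in F}$ with $\mathrm{gcd}(F) = \sum_{k \in F} n_k k$. Since $z \neq 0$, integer powers of $z$ are well defined, and
\begin{align*}
z^{\mathrm{gcd}(F)} = \prod_{k \in F} (z^k)^{n_k} = 1,
\end{align*}
which gives $z \in G_{\mathrm{gcd}(F)} = G_{\mathrm{gcd}(Y)}$.

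Part (ii) is then immediate from (i): the condition $\bigcap_{k \in Y} G_k = \{1\}$ amounts to $G_{\mathrm{gcd}(Y)} = G_1$, that is, $\mathrm{gcd}(Y) = 1$, which by the reduction described above is equivalent to the existence of a nonempty finite subset of $Y$ consisting of relatively prime integers. I do not anticipate any genuine obstacle here; the only point requiring a moment's care is the passage from infinite $Y$ to a finite subfamily $F$ so that the Bezout identity may legitimately be invoked.
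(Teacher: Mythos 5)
Your proof is correct. It follows the same overall architecture as the paper's: both arguments reduce the infinite case to a finite one by observing that the sequence of successive greatest common divisors is nonincreasing and hence stabilizes, so that $\mathrm{gcd}(Y)=\mathrm{gcd}(F)$ for some finite $F\subseteq Y$, and then both deduce (ii) immediately from (i). The only real difference is in how the finite case is settled: the paper proves $G_k\cap G_l=G_{\mathrm{gcd}(k,l)}$ via the subtraction identity $G_k\cap G_l=G_k\cap G_{l-k}$ (a Euclidean-algorithm step carried out at the level of the groups themselves), whereas you invoke the Bezout identity $\mathrm{gcd}(F)=\sum_{k\in F}n_k k$ and compute $z^{\mathrm{gcd}(F)}=\prod_{k\in F}(z^k)^{n_k}=1$ directly. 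These are two faces of the same elementary fact; yours is perhaps a touch more standard, the paper's a touch more self-contained. Your remark that $z\neq 0$ is needed to make negative integer powers legitimate is a correct point of care, and your reading of ``relatively prime'' as $\mathrm{gcd}(F)=1$ matches the paper's intent.
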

   \begin{proof}
Clearly, $G_k$ is a multiplicative group of order $k$.
Since $G_k \cap G_l = G_k \cap G_{l-k}$ and
$\mathrm{gcd}(k,l)=\mathrm{gcd}(k,l-k)$ whenever $1
\Le k < l$, we deduce that $G_k \cap G_l =
G_{\mathrm{gcd}(k,l)}$ for all $k,l \in \nbb$. This
implies that $\bigcap_{k \in Y} G_k =
G_{\mathrm{gcd}(Y)}$ (because if $Y = \{k_1, k_2,
\ldots\}$, then the sequence $\{\mathrm{gcd}(k_1,
\dots, k_n)\}_{n=1}^{\infty}$ is monotonically
decreasing and thus $\mathrm{gcd}(Y) = \lim_{n \to
\infty} \mathrm{gcd}(k_1, \dots, k_n)$). Obviously (i)
implies (ii).
   \end{proof}
If $X$ is a nonempty set, then a function $K \colon
X\times X \to \cbb$ is called a kernel on $X$.
Following \cite{Aronsz}, we say that a kernel $K$ on
$X$ is {\em positive definite} if for all finite
sequences $\{\lambda_i\}_{i=1}^n \subseteq \cbb$ and
$\{x_i\}_{i=1}^n \subseteq X$,
   \begin{align*}
\sum_{i,j=1}^n K(x_i,x_j) \lambda_i \bar \lambda_j \Ge 0.
   \end{align*}
From now on, $\hh$ stands for a complex Hilbert space with
inner product $\is{\cdot}{\mbox{-}}$. Let $\varPhi \in
\fscr$. Applying the Schur product theorem (see \cite[p.\
14]{sch} or \cite[Theorem 7.5.3]{Hor-Joh}), we deduce that
the kernel $\hh \times \hh \ni (\xi,\eta) \mapsto
\is{\xi}{\eta}^n \in \cbb$ is positive definite for every
$n\in \zbb_+$, and thus the kernel $K^{\varPhi}\colon \hh
\times \hh \to \cbb$ defined by
   \begin{align}  \label{ajjaj}
K^{\varPhi}(\xi,\eta) = K^{\varPhi,\hh}(\xi,\eta) =
\varPhi(\is{\xi}{\eta}), \quad \xi,\eta \in \hh,
   \end{align}
is positive definite. It is clear that
   \begin{align*}
K^{\varPhi}(\xi,\eta) = \overline{K^{\varPhi}(\eta,\xi)},
\quad \xi,\eta \in \hh.
   \end{align*}
Denote by $\varPhi(\hh)$ the reproducing kernel
Hilbert space with the reproducing kernel
$K^{\varPhi}$ (see \cite[Chap.\ V]{Moo} and
\cite{Aronsz,FHSz3,FHSz4}), i.e., $\varPhi(\hh)$ is a
complex Hilbert space of complex valued functions on
$\hh$ (with the pointwise defined linear operations)
such that $\{K^{\varPhi}_{\xi}\colon \xi \in \hh\}
\subseteq \varPhi(\hh)$ and
   \begin{align} \label{rep}
f(\xi) = \is{f}{K^{\varPhi}_{\xi}}, \quad \xi \in \hh, \, f
\in \varPhi(\hh),
   \end{align}
where
   \begin{align} \label{kfxi}
K^{\varPhi}_{\xi}(\eta) = K^{\varPhi,\hh}_{\xi}(\eta)
= K^{\varPhi}(\eta,\xi), \quad \xi,\eta \in \hh.
   \end{align}
In particular, we have
   \begin{align} \label{rep2}
\varPhi(\is{\xi}{\eta})=K^{\varPhi}(\xi,\eta) =
\is{K^{\varPhi}_{\eta}}{K^{\varPhi}_{\xi}}, \quad \xi, \eta
\in \hh.
   \end{align}
Applying the Cauchy-Schwarz inequality, we infer from
\eqref{rep} and \eqref{rep2} that
   \begin{align} \label{uncov}
\sup\{|f(\xi)|\colon \xi \in \hh,\, \|\xi\| \Le R\} \Le
\|f\| \sqrt{\varPhi(R^2)}, \quad R \in (0,\infty).
   \end{align}
If $\dim \hh \Ge 1$, then by \eqref{kfxi} the
functions $K^{\varPhi}_{\xi}$, $\xi \in \hh$, are
holomorphic. Since the linear span $\kscr^{\varPhi}$
of the set $\{K^{\varPhi}_{\xi}\colon \xi \in \hh\}$
is dense in $\varPhi(\hh)$ (see \cite{Aronsz0} or use
\eqref{rep}), we infer from \eqref{uncov} and
\cite[Theorem 14.16]{chae} that $\varPhi(\hh)$
consists of holomorphic functions.

Let us recall the RKHS test (cf.\ \cite{FHSz,FHSz4}).
   \begin{thm} \label{rkhst}
A function $f\colon \hh \to \cbb$ belongs to
$\varPhi(\hh)$ if and only if there exists $c \in
[0,\infty)$ such that for all $n \in \nbb$,
$\{\lambda_i\}_{i=1}^n \subseteq \cbb$ and
$\{\xi_i\}_{i=1}^n \subseteq \hh$,
   \begin{align} \label{rkhst+}
\Big|\sum_{i=1}^n \lambda_i f(\xi_i)\Big|^2 \Le c
\sum_{i,j=1}^n K^{\varPhi}(\xi_i,\xi_j) \lambda_i \bar
\lambda_j.
   \end{align}
The smallest such $c$ is equal to $\|f\|^2$.
   \end{thm}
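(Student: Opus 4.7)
The plan is to prove the two implications separately and then extract the norm equality from the sufficiency argument.

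For the necessity direction, I would start from the reproducing property \eqref{rep}, which gives $\sum_i \lambda_i f(\xi_i) = \bigl\langle f,\; \sum_i \bar\lambda_i K^{\varPhi}_{\xi_i}\bigr\rangle$. By the Cauchy--Schwarz inequality,
\[
\Big|\sum_i \lambda_i f(\xi_i)\Big|^2 \Le \|f\|^2 \,\Big\|\sum_i \bar\lambda_i K^{\varPhi}_{\xi_i}\Big\|^2,
\]
and expanding the last norm via \eqref{rep2} yields exactly $\sum_{i,j} K^{\varPhi}(\xi_i,\xi_j) \lambda_i \bar\lambda_j$. Hence \eqref{rkhst+} holds with $c=\|f\|^2$, which also shows that the smallest admissible $c$ is no larger than $\|f\|^2$.

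For the sufficiency direction, assume \eqref{rkhst+} holds for some $c \in [0,\infty)$. The idea is to manufacture $f$ as the Riesz representer of an explicit bounded linear functional on $\kscr^{\varPhi}$. A direct expansion of the norm (using \eqref{rep2}) shows
\[
\Big\|\sum_i \mu_i K^{\varPhi}_{\xi_i}\Big\|^2 = \sum_{i,j} K^{\varPhi}(\xi_i,\xi_j)\, \bar\mu_i \mu_j,
\]
so substituting $\lambda_i = \bar\mu_i$ in \eqref{rkhst+} rewrites the hypothesis as
\[
\Big|\sum_i \mu_i \overline{f(\xi_i)}\Big|^2 \Le c \,\Big\|\sum_i \mu_i K^{\varPhi}_{\xi_i}\Big\|^2.
\]
This estimate has two consequences. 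First, if $\sum_i \mu_i K^{\varPhi}_{\xi_i} = 0$ then $\sum_i \mu_i \overline{f(\xi_i)} = 0$, so the formula $\Lambda\bigl(\sum_i \mu_i K^{\varPhi}_{\xi_i}\bigr) := \sum_i \mu_i \overline{f(\xi_i)}$ defines an unambiguous linear functional on $\kscr^{\varPhi}$. Second, $\Lambda$ is bounded on $\kscr^{\varPhi}$ with $\|\Lambda\| \Le \sqrt{c}$.

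Since $\kscr^{\varPhi}$ is dense in $\varPhi(\hh)$ (as noted in the paragraph preceding the theorem), $\Lambda$ extends uniquely to a bounded linear functional on $\varPhi(\hh)$ of the same norm. The Riesz representation theorem provides $g \in \varPhi(\hh)$ with $\Lambda(h) = \is{h}{g}$ for all $h \in \varPhi(\hh)$ and $\|g\| = \|\Lambda\|$. Evaluating at $h = K^{\varPhi}_{\xi}$ and using the reproducing property yields $\overline{f(\xi)} = \Lambda(K^{\varPhi}_{\xi}) = \is{K^{\varPhi}_{\xi}}{g} = \overline{g(\xi)}$ for every $\xi \in \hh$, so $f = g \in \varPhi(\hh)$. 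Finally, $\|f\| = \|\Lambda\| \Le \sqrt{c}$ for every admissible $c$, so combined with the necessity direction the infimum of admissible $c$ is exactly $\|f\|^2$.

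The only delicate point is the bookkeeping of complex conjugates between the kernel inequality in \eqref{rkhst+} and the Hilbert-space norm of $\sum_i \mu_i K^{\varPhi}_{\xi_i}$; the correct matching is obtained by taking $\lambda_i = \bar\mu_i$ and relying on the Hermitian symmetry of $K^{\varPhi}$ recorded after \eqref{ajjaj}. Everything else is routine Riesz-theorem machinery.
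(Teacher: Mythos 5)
Your argument is correct: the necessity direction via the reproducing property and Cauchy--Schwarz, and the sufficiency direction via the bounded functional $\Lambda$ on $\kscr^{\varPhi}$ extended by density and represented by the Riesz theorem, together with the conjugate bookkeeping $\lambda_i=\bar\mu_i$, all check out, and the two directions combine to identify the smallest admissible $c$ as $\|f\|^2$. The paper itself does not prove this statement --- it recalls the RKHS test with references --- so there is no in-paper proof to compare against; yours is the standard argument one would expect to find in those references.
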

The next three propositions will be proved by using
the RKHS test.
   \begin{pro} \label{stale}
Let $\varPhi \in \fscr$. If $\varPhi(0)=0$, then there
is no nonzero constant function in $\varPhi(\hh)$. If
$\varPhi(0)\neq 0$, then constant functions on $\hh$
belong to $\varPhi(\hh)$ and
   \begin{align} \label{njed}
\|\mathbb{1}\| = \frac{1}{\sqrt{\varPhi(0)}},
   \end{align}
where $\mathbb{1}(\xi) = 1$ for every $\xi \in \hh$.
   \end{pro}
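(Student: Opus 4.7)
The plan is to apply the RKHS test (Theorem \ref{rkhst}) separately to each case, using the non-negativity of the Taylor coefficients of $\varPhi$ to split off the constant term of $K^{\varPhi}$.

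\emph{Case $\varPhi(0)=0$.} I would suppose a constant function $f\equiv\lambda$ lies in $\varPhi(\hh)$ and derive $\lambda=0$. By Theorem \ref{rkhst} there exists $c\in[0,\infty)$ so that \eqref{rkhst+} holds for all finite data. Specialising to $n=1$, $\lambda_1=1$, $\xi_1=0$, the right-hand side becomes $c\,K^{\varPhi}(0,0)=c\,\varPhi(0)=0$, while the left-hand side equals $|\lambda|^2$. This forces $\lambda=0$.

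\emph{Case $\varPhi(0)\neq 0$.} Here $a_0=\varPhi(0)>0$. I would verify \eqref{rkhst+} for $f=\mathbb{1}$ with $c=1/\varPhi(0)$. Write $K^{\varPhi}(\xi,\eta)=\sum_{n=0}^\infty a_n\is{\xi}{\eta}^n$, and recall from the discussion preceding \eqref{ajjaj} that every kernel $(\xi,\eta)\mapsto\is{\xi}{\eta}^n$ is positive definite (Schur product theorem). Since $a_n\Ge 0$, the contributions from $n\Ge 1$ to
\begin{align*}
\sum_{i,j=1}^n K^{\varPhi}(\xi_i,\xi_j)\lambda_i\bar\lambda_j
= \sum_{k=0}^{\infty} a_k\sum_{i,j=1}^n \is{\xi_i}{\xi_j}^k \lambda_i\bar\lambda_j
\end{align*}
are nonnegative, so the whole double sum is bounded below by $a_0 \bigl|\sum_i\lambda_i\bigr|^2 = \varPhi(0)\bigl|\sum_i \lambda_i f(\xi_i)\bigr|^2$. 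Dividing by $\varPhi(0)$ yields \eqref{rkhst+} with $c=1/\varPhi(0)$; by Theorem \ref{rkhst}, this gives $\mathbb{1}\in\varPhi(\hh)$ and $\|\mathbb{1}\|^2\Le 1/\varPhi(0)$.

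For the reverse bound I would again specialise \eqref{rkhst+} to $n=1$, $\lambda_1=1$, $\xi_1=0$: the left-hand side equals $1$ and the right-hand side equals $c\,\varPhi(0)$, hence the optimal $c$ satisfies $c\Ge 1/\varPhi(0)$. The ``smallest $c$'' clause of Theorem \ref{rkhst} then delivers $\|\mathbb{1}\|^2 = 1/\varPhi(0)$, i.e.\ \eqref{njed}.

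There is no real obstacle; the essential trick is to peel off the $a_0$-term of the series for $\varPhi$ and discard the tail as a nonnegative quantity, which both establishes membership in $\varPhi(\hh)$ and pins down the norm through the extremal case $\xi_1=0$.
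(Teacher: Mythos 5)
Your proof is correct and follows essentially the same route as the paper: peel off the $a_0$-term of the kernel using positive definiteness of $(\xi,\eta)\mapsto\is{\xi}{\eta}^k$ to get membership and the upper bound, then evaluate at $\xi_1=0$ for the lower bound. The only cosmetic difference is in the case $\varPhi(0)=0$, where the paper observes directly that $K^{\varPhi}_0=0$ and invokes the reproducing property to get $f(0)=0$, while you reach the same conclusion via the RKHS test at the point $0$.
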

   \begin{proof}
If $\varPhi(0)=0$, then by \eqref{rep2} we have
$\|K^{\varPhi}_0\|^2 = \varPhi(0) =0$, which together with
\eqref{rep} implies that $f(0)=0$ for every $f\in
\varPhi(\hh)$. Hence there is no nonzero constant function
in $\varPhi(\hh)$. In turn, if $\varPhi(0)\neq 0$, then
   \begin{align*}
\Big|\sum_{i=1}^n \lambda_i\Big|^2 \Le \frac{1}{\varPhi(0)}
\sum_{k=0}^\infty a_k \sum_{i,j=1}^n \is{\xi_i}{\xi_j}^k
\lambda_i \bar \lambda_j = \frac{1}{\varPhi(0)}
\sum_{i,j=1}^n K^{\varPhi}(\xi_i,\xi_j) \lambda_i \bar
\lambda_j
   \end{align*}
for all finite sequences $\{\lambda_i\}_{i=1}^n
\subseteq \cbb$ and $\{\xi_i\}_{i=1}^n \subseteq \hh$,
where $\{a_k\}_{k=0}^\infty$ is as in \eqref{wsp}. It
follows from Theorem \ref{rkhst} that $\mathbb{1} \in
\varPhi(\hh)$ (hence, any constant function on $\hh$
belongs to $\varPhi(\hh)$) and $\|\mathbb{1}\|\Le
1/\sqrt{\varPhi(0)}$. To prove the reverse inequality,
set $c=\|\mathbb{1}\|^2$ and apply \eqref{rkhst+} to
$f=\mathbb{1}$, $n=1$, $\lambda_1=1$ and $\xi_1=0$.
What we get is $\|\mathbb{1}\|\Ge
1/\sqrt{\varPhi(0)}$. This completes the proof.
   \end{proof}
   \begin{pro}
If $\varPhi \in \fscr$ is as in \eqref{wsp}, then for
every $n\in \nbb$ and for all $\eta_1, \ldots, \eta_n
\in \hh$ and $k_1, \ldots, k_n \in \nbb$ such that
$a_{k_1+ \ldots+k_n} > 0$, $\is{\cdot}{\eta_1}^{k_1}
\cdots \is{\cdot}{\eta_n}^{k_n} \in \varPhi(\hh)$~ and
   \begin{align*}
\|\is{\cdot}{\eta_1}^{k_1} \cdots
\is{\cdot}{\eta_n}^{k_n}\|^2 \Le
\frac{\|\eta_1\|^{2k_1} \cdots
\|\eta_n\|^{2k_n}}{a_{k_1+ \ldots+k_n}}.
   \end{align*}
   \end{pro}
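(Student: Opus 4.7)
The plan is to verify the hypothesis of the RKHS test (Theorem \ref{rkhst}) for the function $f := \is{\cdot}{\eta_1}^{k_1}\cdots\is{\cdot}{\eta_n}^{k_n}$ with the candidate constant $c := \|\eta_1\|^{2k_1}\cdots\|\eta_n\|^{2k_n}/a_k$, where $k := k_1+\cdots+k_n$ (and $a_k>0$ by hypothesis). Accordingly, I would fix finite sequences $\{\lambda_i\}_{i=1}^m\subseteq\cbb$ and $\{\xi_i\}_{i=1}^m\subseteq\hh$ and aim to bound $|\sum_i \lambda_i f(\xi_i)|^2$ by $c\sum_{i,j}K^{\varPhi}(\xi_i,\xi_j)\lambda_i\bar\lambda_j$.

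First, I would pass to the $k$-fold Hilbert space tensor power $\hh^{\otimes k}$. Setting
\[
v := \underbrace{\eta_1\otimes\cdots\otimes\eta_1}_{k_1}\otimes\cdots\otimes\underbrace{\eta_n\otimes\cdots\otimes\eta_n}_{k_n} \in \hh^{\otimes k},
\]
we have $\|v\|^2 = \|\eta_1\|^{2k_1}\cdots\|\eta_n\|^{2k_n}$, together with the basic identity $f(\xi) = \is{\xi^{\otimes k}}{v}_{\hh^{\otimes k}}$. Sesquilinearity and a single application of Cauchy--Schwarz in $\hh^{\otimes k}$ then give
\[
\Big|\sum_{i=1}^m \lambda_i f(\xi_i)\Big|^2 = \Big|\Big\langle \sum_{i=1}^m \lambda_i\, \xi_i^{\otimes k},\, v\Big\rangle\Big|^2 \Le \|v\|^2 \sum_{i,j=1}^m \lambda_i \bar\lambda_j \is{\xi_i}{\xi_j}^k.
\]

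Second, I would exploit positive definiteness to estimate the tail of $K^{\varPhi}$ from below. Since each kernel $(\xi,\eta)\mapsto\is{\xi}{\eta}^p$ is positive definite (the Schur argument recalled before \eqref{ajjaj}) and $a_p\Ge 0$ for all $p$, every term of the expansion
\[
\sum_{i,j=1}^m K^{\varPhi}(\xi_i,\xi_j)\lambda_i\bar\lambda_j = \sum_{p=0}^\infty a_p \sum_{i,j=1}^m \is{\xi_i}{\xi_j}^p \lambda_i \bar\lambda_j
\]
is nonnegative. Retaining only the $p=k$ summand yields
\[
a_k \sum_{i,j=1}^m \is{\xi_i}{\xi_j}^k \lambda_i \bar\lambda_j \Le \sum_{i,j=1}^m K^{\varPhi}(\xi_i,\xi_j)\lambda_i\bar\lambda_j,
\]
which is precisely the step that forces the denominator $a_k$ in the claim.

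Combining the two estimates produces the RKHS-test inequality \eqref{rkhst+} with constant $c$, so Theorem \ref{rkhst} simultaneously delivers $f \in \varPhi(\hh)$ and the bound $\|f\|^2 \Le c$. There is no real obstacle; the only point that requires a little care is the algebraic identification $f(\xi)=\is{\xi^{\otimes k}}{v}$, which depends on ordering the factors of $v$ correctly relative to the convention that the inner product is linear in the first slot. Everything else is a direct assembly of Cauchy--Schwarz with the positive definiteness already exploited in the construction of $K^{\varPhi}$.
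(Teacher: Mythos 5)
Your proposal is correct and follows essentially the same route as the paper: the identity $f(\xi)=\is{\xi^{\otimes k}}{\eta_1^{\otimes k_1}\otimes\cdots\otimes\eta_n^{\otimes k_n}}$, Cauchy--Schwarz in $\hh^{\otimes k}$, dropping all but the $p=k$ term of the positive-definite expansion of $K^{\varPhi}$, and the RKHS test. No gaps.
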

   \begin{proof}
Set $k=k_1+ \ldots+k_n$. Then
   \begin{align*}
\Big|\sum_{i=1}^m \lambda_i \is{\xi_i}{\eta_1}^{k_1}
\cdots \is{\xi_i}{\eta_n}^{k_n}\Big|^2 & =
\Big|\Big\langle \sum_{i=1}^m \lambda_i \xi_i^{\otimes
k}, \eta_1^{\otimes k_1} \otimes \cdots \otimes
\eta_n^{\otimes k_n}\Big\rangle\Big|^2
   \\
& \Le \|\eta_1\|^{2k_1} \cdots \|\eta_n\|^{2k_n}
\sum_{i,j=1}^m \is{\xi_i}{\xi_j}^k \lambda_i \bar
\lambda_j
   \\
& \Le \frac{\|\eta_1\|^{2k_1} \cdots
\|\eta_n\|^{2k_n}}{a_k} \sum_{i,j=1}^n
K^{\varPhi}(\xi_i,\xi_j) \lambda_i \bar \lambda_j
   \end{align*}
for all $m\in \nbb$, $\{\lambda_i\}_{i=1}^m \subseteq
\cbb$ and $\{\xi_i\}_{i=1}^m \subseteq \hh$, which by
Theorem \ref{rkhst} completes the proof.
   \end{proof}
   \begin{pro} \label{isintin+}
Suppose $\varPhi \in \fscr$, $W\in \ogr{\hh}$ is a
coisometry and $f\colon \hh \to \cbb$ is a function
such that $f\circ W \in \varPhi(\hh)$. Then $f \in
\varPhi(\hh)$ and $\|f\circ W\|=\|f\|$.
   \end{pro}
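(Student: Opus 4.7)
The plan is to establish both membership and the norm equality via the RKHS test (Theorem \ref{rkhst}) applied in two directions. First I would apply it to $f\circ W$ (which we are given lies in $\varPhi(\hh)$) to get $f \in \varPhi(\hh)$ with $\|f\| \Le \|f\circ W\|$, and then apply it again to $f$ to get the reverse inequality $\|f\circ W\| \Le \|f\|$.

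For the first direction, fix finite sequences $\{\lambda_i\}_{i=1}^n \subseteq \cbb$ and $\{\xi_i\}_{i=1}^n \subseteq \hh$. The natural move is to set $\eta_i := W^* \xi_i$, because $W$ is a coisometry ($WW^* = I$), so $W \eta_i = \xi_i$ and therefore $f(\xi_i) = (f\circ W)(\eta_i)$. Moreover, using $WW^*=I$ again,
\begin{align*}
K^{\varPhi}(\eta_i,\eta_j) = \varPhi(\is{W^*\xi_i}{W^*\xi_j}) = \varPhi(\is{WW^*\xi_i}{\xi_j}) = K^{\varPhi}(\xi_i,\xi_j).
\end{align*}
Applying Theorem \ref{rkhst} to $f\circ W$ with the nodes $\eta_i$ yields
\begin{align*}
\Big|\sum_{i=1}^n \lambda_i f(\xi_i)\Big|^2 \Le \|f\circ W\|^2 \sum_{i,j=1}^n K^{\varPhi}(\xi_i,\xi_j)\lambda_i\bar\lambda_j,
\end{align*}
so Theorem \ref{rkhst} gives $f \in \varPhi(\hh)$ and $\|f\|\Le\|f\circ W\|$.

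For the reverse inequality I would use the RKHS test on $f$ itself. Given $\{\mu_i\}_{i=1}^n \subseteq \hh$, Theorem \ref{rkhst} yields
\begin{align*}
\Big|\sum_{i=1}^n \lambda_i (f\circ W)(\mu_i)\Big|^2 \Le \|f\|^2 \sum_{i,j=1}^n \varPhi(\is{W\mu_i}{W\mu_j}) \lambda_i\bar\lambda_j,
\end{align*}
and I need to dominate the right-hand side by $\|f\|^2 \sum_{i,j} K^{\varPhi}(\mu_i,\mu_j)\lambda_i\bar\lambda_j$. This is the main obstacle: one has to show that $\varPhi(\is{\mu_i}{\mu_j}) - \varPhi(\is{W\mu_i}{W\mu_j})$ is a positive definite kernel. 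I would tackle this by expanding $\varPhi(z)=\sum_{k=0}^\infty a_k z^k$ with $a_k\Ge 0$ and proving the claim for each power $k$ separately. The key observation is that $W^*W$ is an orthogonal projection (because $W^*$ is isometric), so $(W^*W)^{\otimes k}$ is an orthogonal projection $\Pi$ on $\hh^{\otimes k}$. Setting $v := \sum_i \lambda_i \mu_i^{\otimes k}$, a short computation gives
\begin{align*}
\sum_{i,j=1}^n \is{\mu_i}{\mu_j}^k \lambda_i \bar\lambda_j - \sum_{i,j=1}^n \is{W\mu_i}{W\mu_j}^k \lambda_i \bar\lambda_j = \|v\|^2 - \is{\Pi v}{v} = \|(I-\Pi)v\|^2 \Ge 0.
\end{align*}

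Multiplying by $a_k\Ge 0$ and summing over $k$ yields the desired kernel domination, hence $\|f\circ W\|\Le\|f\|$ by Theorem \ref{rkhst}. Combining the two inequalities gives $\|f\circ W\|=\|f\|$.
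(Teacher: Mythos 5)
Your proof is correct, and the first half (membership of $f$ in $\varPhi(\hh)$ together with $\|f\|\Le\|f\circ W\|$, via the RKHS test with nodes $W^*\xi_i$ and the identity $WW^*=I$) coincides with the paper's argument. Where you genuinely diverge is in the reverse inequality. The paper disposes of it in one line by citing Corollary \ref{unit}, i.e.\ the fact that $C_W$ is an isometry on $\varPhi(\hh)$ whenever $W$ is a coisometry --- a forward reference whose proof rests on the Fock-type model machinery of Sections \ref{sekt7}--\ref{sekt8} (Theorems \ref{fmod}, \ref{wnoca}, \ref{fipsi}, \ref{bca}). You instead prove directly that the kernel $(\xi,\eta)\mapsto \varPhi(\is{\xi}{\eta})-\varPhi(\is{W\xi}{W\eta})$ is positive definite, by expanding $\varPhi$ in its Taylor series and observing that $W^*W$, hence $(W^*W)^{\otimes k}$, is an orthogonal projection, so that each term of the difference equals $\|(I-\Pi)v\|^2\Ge 0$ with $v=\sum_i\lambda_i\mu_i^{\otimes k}$; this yields $\|f\circ W\|\Le\|f\|$ by a second application of the RKHS test. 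The computation is sound (note $\is{W\mu_i}{W\mu_j}^k=\is{(W^*W)^{\otimes k}\mu_i^{\otimes k}}{\mu_j^{\otimes k}}$), and what it buys is self-containedness: your proof stays entirely within the toolkit already available in Section \ref{sekt3}, avoiding the forward reference, at the cost of a slightly longer kernel-domination argument.
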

   \begin{proof}
By Theorem \ref{rkhst}, we have
   \begin{align*}
\Big|\sum_{i=1}^n \lambda_i f(\xi_i)\Big|^2 & =
\Big|\sum_{i=1}^n \lambda_i (f\circ W)
(W^*\xi_i)\Big|^2
   \\
& \Le \|f\circ W\|^2 \sum_{i,j=1}^n
K^{\varPhi}(W^*\xi_i,W^*\xi_j) \lambda_i \bar
\lambda_j
   \\
& \hspace{-.9ex}\overset{\eqref{ajjaj}} = \|f\circ
W\|^2 \sum_{i,j=1}^n K^{\varPhi}(\xi_i,\xi_j)
\lambda_i \bar \lambda_j,
   \end{align*}
for all $n \in \nbb$, $\{\lambda_i\}_{i=1}^n \subseteq
\cbb$ and $\{\xi_i\}_{i=1}^n \subseteq \hh$. Applying
Theorem \ref{rkhst} again shows that $f \in
\varPhi(\hh)$. Hence, by Corollary \ref{unit}, we have
$\|f\circ W\|=\|f\|$.
   \end{proof}
Now we give some examples of reproducing kernel
Hilbert spaces of the form $\varPhi(\hh)$ for
$\hh=\cbb$ that can be regarded as closed subspaces of
$L^2$-spaces. We refer the reader to \cite{FHSz2} for
the study of the multidimensional case.
   \begin{exa} \label{phih}
Let $\nu$ be a positive Borel measure on $\rbb_+$ such
that
   \begin{align} \label{maj1}
\text{$\int_{\rbb_+} t^{n} \D \nu(t) < \infty$ and
$\nu((c,\infty)) > 0$ for all $n \in \zbb_+$ and $c\in
(0,\infty)$.}
   \end{align}
Define the positive Borel measure $\mu$ on $\cbb$ by
   \begin{align*}
\mu(\varDelta) = \frac{1}{2\pi} \int_0^{2\pi}
\int_{\rbb_+} \chi_{\varDelta} (r\E^{\I \theta}) \D
\nu(r) \D \theta, \quad \varDelta \text{ is a Borel
subset of } \cbb.
   \end{align*}
It is a matter of routine to show that $p\in L^2(\mu)$
for every complex polynomial $p$ in one complex
variable. Since, by \eqref{maj1}, $\int_{\rbb_+} t^{n}
\D \nu(t) \in (0,\infty)$ for every $n \in \zbb_+$, we
can define the function $\varPhi\colon \cbb \to \cbb$
by
   \begin{align} \label{pser}
\varPhi(z) = \sum_{n=0}^{\infty}
\frac{1}{\int_{\rbb_+} t^{2n} \D \nu(t)} \, z^n, \quad
z \in \cbb.
   \end{align}
It follows from \eqref{maj1} and \cite[Exercise 4(e),
Chapter 3]{Rud1} that the radius of convergence of the
power series in \eqref{pser} is equal to
   \begin{align*}
\frac{1}{\limsup_{n\to \infty}
\frac{1}{\sqrt[n]{\int_{\rbb_+} t^{2n} \D \nu(t)}}} =
\lim_{n\to \infty} \sqrt[n]{\int_{\rbb_+} t^{2n} \D
\nu(t)} = \infty.
   \end{align*}
As a consequence, $\varPhi \in \fscr$. It turns out
that the reproducing kernel Hilbert space
$\varPhi(\cbb)$ can be described as follows
   \begin{align} \label{maj3}
\varPhi(\cbb) = \big\{f \colon f \text{ is an entire
function on $\cbb$ and } f \in L^2(\mu)\big\}.
   \end{align}
This fact was proved in \cite{fr1,fr2} (see also
\cite{fr3}) under the additional assumption that
$\nu(\{0\})=0$. However, arguing as in \cite[Example
18]{J-St} (with emphases put on \cite[Eq.\
(40)]{J-St}), one can show that \eqref{maj3} remains
true without assuming that $\nu(\{0\})=0$. This means
that the right-hand side of \eqref{maj3} is a
reproducing kernel Hilbert space with the reproducing
kernel $\cbb \times \cbb \ni (\xi,\eta) \mapsto
\varPhi(\xi \bar \eta) \in \cbb$, where $\varPhi$ is
given by \eqref{pser}. In particular, if $\nu$ is the
positive Borel measure on $\rbb_+$ given by
   \begin{align*}
\nu(\varDelta) = 2 \int_{\varDelta} s \E^{-s^2} \D s,
\quad \varDelta \text{ is a Borel subset of } \rbb_+,
   \end{align*}
then $\int_{\rbb_+} t^{2n} \D \nu(t)=n!$ for every
$n\in \zbb_+$. This implies that $\varPhi=\exp$ and
consequently $\exp(\cbb)$ can be regarded as the
Segal-Bargmann space $\bb_1$ (cf.\ Section~
\ref{sec9}).

The results of the present paper can be applied to
$\varPhi(\cbb)$ with $\varPhi$ given by \eqref{pser}.
In particular, by Proposition \ref{pro1} and Corollary
\ref{wn3}, bounded hyponormal composition operators on
$\varPhi(\cbb)$ with holomorphic symbols are always
normal.
   \end{exa}
It follows from Proposition \ref{stale} that
   \begin{align*}
\dim \varPhi(\{0\}) =
   \begin{cases}
0 & \text{if } \varPhi(0) = 0,
   \\[1ex]
1 & \text{if } \varPhi(0) \neq 0,
   \end{cases}
\quad \varPhi \in \fscr.
   \end{align*}
From now on, to avoid trivial cases, we make the
following
   \begin{align} \label{SA}
   \begin{minipage}{70ex} {\sc Standing assumption:}
$\dim \hh \Ge 1$.
   \end{minipage}
   \end{align}
   \section{\label{sekt4}Introducing $C_{\varphi}$}
   Given a holomorphic mapping $\varphi\colon \hh \to
\hh$, we define the operator $C_{\varphi}$ in
$\varPhi(\hh)$, called a {\em composition operator}
with a {\em symbol} $\varphi$, by
   \begin{align*}
\dz{C_{\varphi}}&=\{f\in\varPhi(\hh)\colon f \circ \varphi
\in \varPhi(\hh)\},
   \\
C_{\varphi} f &= f \circ \varphi, \quad f \in
\dz{C_{\varphi}}.
   \end{align*}
We begin by stating two simple properties of composition
operators.
   \begin{pro} \label{stale2}
Let $\varphi\colon \hh \to \hh$ be a holomorphic mapping
and $\varPhi \in \fscr$. Then
   \begin{enumerate}
   \item[(i)] $C_{\varphi}$ is closed,
   \item[(ii)] if
$\varPhi(0)\neq 0$ and $C_{\varphi} \in
\ogr{\varPhi(\hh)}$, then $r(C_{\varphi}) \Ge 1$ and thus
$\|C_{\varphi}\| \Ge 1$.
   \end{enumerate}
   \end{pro}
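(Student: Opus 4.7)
For part (i), the plan is to exploit the reproducing kernel structure, specifically the estimate \eqref{uncov}, which says that norm convergence in $\varPhi(\hh)$ controls suprema of $|f|$ on bounded subsets of $\hh$. So if $\{f_n\}\subseteq\dz{C_{\varphi}}$ with $f_n\to f$ in $\varPhi(\hh)$ and $C_{\varphi}f_n = f_n\circ\varphi \to g$ in $\varPhi(\hh)$, then in particular both sequences converge pointwise on $\hh$. For each $\xi\in\hh$, applying pointwise convergence of $\{f_n\}$ at the point $\varphi(\xi)$ gives $f_n(\varphi(\xi))\to f(\varphi(\xi))$, while pointwise convergence of $\{f_n\circ\varphi\}$ at $\xi$ gives $f_n(\varphi(\xi))\to g(\xi)$. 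Hence $g = f\circ\varphi$ everywhere, so $f\circ\varphi\in\varPhi(\hh)$, i.e., $f\in\dz{C_{\varphi}}$ and $C_{\varphi}f = g$. This closes the graph.

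For part (ii), the plan is to produce an eigenvalue equal to $1$. Since $\varPhi(0)\neq 0$, Proposition \ref{stale} gives $\mathbb{1}\in\varPhi(\hh)$ with $\|\mathbb{1}\| = 1/\sqrt{\varPhi(0)}>0$, so in particular $\mathbb{1}$ is a nonzero element of $\varPhi(\hh)$. Trivially $\mathbb{1}\circ\varphi = \mathbb{1}\in\varPhi(\hh)$, so $\mathbb{1}\in\dz{C_{\varphi}}$ and $C_{\varphi}\mathbb{1} = \mathbb{1}$. Under the hypothesis $C_{\varphi}\in\ogr{\varPhi(\hh)}$ this says $1$ is an eigenvalue of the bounded operator $C_{\varphi}$; in particular $1\in\sigma(C_{\varphi})$, so $r(C_{\varphi})\geq 1$, and then $\|C_{\varphi}\|\geq r(C_{\varphi})\geq 1$.

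Neither step looks like a real obstacle: (i) is a direct application of the uniform-on-balls estimate \eqref{uncov} that comes automatically from the reproducing kernel, and (ii) is essentially the observation that $\mathbb{1}$ is always a fixed point of $C_{\varphi}$, which becomes informative precisely when $\varPhi(0)\neq 0$ guarantees that $\mathbb{1}$ lies in the space. The only point worth double-checking is that \eqref{uncov} indeed yields pointwise convergence at every point of $\hh$ (not just on some fixed ball): given $\xi\in\hh$, choose $R>\|\xi\|$ and apply \eqref{uncov} to $f_n-f$; this gives $|f_n(\xi)-f(\xi)|\leq \|f_n-f\|\sqrt{\varPhi(R^2)}\to 0$, and analogously for $f_n\circ\varphi - g$ at any fixed $\xi$ by picking $R>\|\xi\|$. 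With that pointwise control in hand, both parts follow cleanly.
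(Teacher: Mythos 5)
Your proof is correct and follows essentially the same route as the paper: part (i) is the standard closedness argument via the fact that norm convergence in a reproducing kernel Hilbert space implies pointwise convergence (which is exactly what the paper means by ``deduced from the reproducing property''), and part (ii) is the paper's observation that $\mathbb{1}$ is a nonzero eigenvector of $C_{\varphi}$ with eigenvalue $1$.
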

   \begin{proof}
(i) This can be deduced from the reproducing property
\eqref{rep}.

(ii) By Proposition \ref{stale}, the function
$\mathbb{1}$ is an eigenvector of $C_{\varphi}$
corresponding to the eigenvalue $1$.
   \end{proof}
It is worth mentioning that the assertion (ii) of
Proposition \ref{stale2} is no longer true if
$\varPhi(0) = 0$ (e.g., one can apply Theorem
\ref{bca} to $\varPhi(z)=z\exp(z)$).

For self-containedness, we include the proof of the
following description of the adjoint of $C_{\varphi}$
which is true for composition operators acting in
arbitrary reproducing kernel Hilbert spaces with
arbitrary symbols.
   \begin{thm}\label{sprz}
Let $\varphi\colon \hh \to \hh$ be a holomorphic
mapping and $\varPhi \in \fscr$. Then
   \begin{enumerate}
   \item[(i)] if $C_{\varphi}$ is densely defined, then
$\kscr^{\varPhi}$ is a core for $C_{\varphi}^*$ and
   \begin{align} \label{op*}
C_{\varphi}^*(K^{\varPhi}_{\xi}) =
K^{\varPhi}_{\varphi(\xi)}, \quad \xi \in \hh,
   \end{align}
   \item[(ii)] if there exists an operator
$J_{\varphi}$ in $\varPhi(\hh)$ such that
   \begin{align} \label{Jurek}
\text{$\dz{J_{\varphi}} = \kscr^{\varPhi}$ and $J_{\varphi}
(K^{\varPhi}_{\xi}) = K^{\varPhi}_{\varphi(\xi)}$ for all
$\xi \in \hh$,}
   \end{align}
then $C_{\varphi} = J_{\varphi}^*$; such $J_{\varphi}$
is unique, and it is closable if and only if
$C_{\varphi}$ is densely defined.
   \end{enumerate}
Moreover, if $C_{\varphi}$ is densely defined, then
$J_{\varphi} := C_{\varphi}^*|_{\kscr^{\varPhi}}$ is
closable and satisfies \eqref{Jurek}.
   \end{thm}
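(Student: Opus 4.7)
The entire theorem rests on the single identity supplied by the reproducing property \eqref{rep}: for any $f\in\dz{C_{\varphi}}$ and any $\xi\in\hh$,
\begin{align*}
\is{C_{\varphi}f}{K^{\varPhi}_{\xi}}=(C_{\varphi}f)(\xi)=f(\varphi(\xi))=\is{f}{K^{\varPhi}_{\varphi(\xi)}}.
\end{align*}
My plan is to let this identity do all the work, first in the adjoint direction (part (i)) and then in the ``pre-adjoint'' direction (part (ii)).

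For (i), assuming $C_{\varphi}$ is densely defined, the displayed identity shows that the functional $f\mapsto\is{C_{\varphi}f}{K^{\varPhi}_{\xi}}$ is bounded on $\dz{C_{\varphi}}$ with representative $K^{\varPhi}_{\varphi(\xi)}$, so $K^{\varPhi}_{\xi}\in\dz{C_{\varphi}^{*}}$ and \eqref{op*} holds; linearity then gives $\kscr^{\varPhi}\subseteq\dz{C_{\varphi}^{*}}$. To upgrade this inclusion to the core property, I set $J:=C_{\varphi}^{*}|_{\kscr^{\varPhi}}$ and compute $J^{*}$ using the reproducing property exactly as in part (ii) below, which identifies $J^{*}=C_{\varphi}$. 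Since $C_{\varphi}$ is closed (Proposition \ref{stale2}(i)) and densely defined, taking one more adjoint gives $J^{**}=C_{\varphi}^{*}$; because $J^{*}$ is densely defined, $J$ is closable with $\bar J=J^{**}=C_{\varphi}^{*}$, which is precisely the assertion that $\kscr^{\varPhi}$ is a core for $C_{\varphi}^{*}$.

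For (ii), assume an operator $J_{\varphi}$ satisfying \eqref{Jurek} exists; uniqueness is immediate because its values are specified on the spanning family $\{K^{\varPhi}_{\xi}\colon\xi\in\hh\}$ of its domain. The same reproducing-kernel calculation now reads, for $f\in\dz{C_{\varphi}}$ and $g=\sum_{i}\lambda_{i}K^{\varPhi}_{\xi_{i}}\in\kscr^{\varPhi}$,
\begin{align*}
\is{C_{\varphi}f}{g}=\sum_{i}\bar\lambda_{i}f(\varphi(\xi_{i}))=\is{f}{J_{\varphi}g},
\end{align*}
which shows $C_{\varphi}\subseteq J_{\varphi}^{*}$. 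Conversely, if $h\in\dz{J_{\varphi}^{*}}$, then for every $\xi\in\hh$,
\begin{align*}
\overline{h(\varphi(\xi))}=\is{K^{\varPhi}_{\varphi(\xi)}}{h}=\is{J_{\varphi}K^{\varPhi}_{\xi}}{h}=\is{K^{\varPhi}_{\xi}}{J_{\varphi}^{*}h}=\overline{(J_{\varphi}^{*}h)(\xi)},
\end{align*}
forcing $h\circ\varphi=J_{\varphi}^{*}h\in\varPhi(\hh)$, hence $h\in\dz{C_{\varphi}}$ with $C_{\varphi}h=J_{\varphi}^{*}h$. Thus $C_{\varphi}=J_{\varphi}^{*}$. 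The closability clause is then standard: $J_{\varphi}$ is closable iff its adjoint $J_{\varphi}^{*}=C_{\varphi}$ is densely defined. The ``moreover'' statement is now immediate from (i): with $C_{\varphi}$ densely defined, $J_{\varphi}:=C_{\varphi}^{*}|_{\kscr^{\varPhi}}$ fulfills \eqref{Jurek} by \eqref{op*}, and $J_{\varphi}\subseteq C_{\varphi}^{*}$ yields $J_{\varphi}^{*}\supseteq C_{\varphi}^{**}=C_{\varphi}$, so $J_{\varphi}^{*}$ is densely defined and $J_{\varphi}$ is closable.

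The only step with any real friction is the core assertion in (i); the rest is bookkeeping with adjoints and the reproducing identity. The trick there is to avoid trying to approximate arbitrary elements of $\dz{C_{\varphi}^{*}}$ by linear combinations of $K^{\varPhi}_{\xi}$ in graph norm, and instead to deduce coreness indirectly by computing the double adjoint of $J:=C_{\varphi}^{*}|_{\kscr^{\varPhi}}$ and exploiting the closedness of $C_{\varphi}$ from Proposition \ref{stale2}(i).
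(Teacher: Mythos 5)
Your proof is correct. For part (i) and the ``moreover'' clause you follow essentially the paper's route: derive \eqref{op*} from the reproducing property, set $J=C_{\varphi}^*|_{\kscr^{\varPhi}}$, and obtain the core property from the double adjoint of $J$ (the paper phrases this as the sandwich $\overline{J}\subseteq C_{\varphi}^*\subseteq J^{**}=\overline{J}$, having only established $J^*\subseteq C_{\varphi}$; you get the cleaner $\overline{J}=J^{**}=(J^*)^*=C_{\varphi}^*$ because you first prove the full equality $J^*=C_{\varphi}$ — same computation, slightly different bookkeeping, and no circularity since your part (ii) does not use part (i)). The one place you genuinely diverge is part (ii): the paper identifies $\dz{J_{\varphi}^*}$ with $\dz{C_{\varphi}}$ in a single stroke by translating the boundedness condition $|\is{J_{\varphi}h}{f}|\Le c\|h\|$ into the inequality \eqref{rkhst+} and invoking the RKHS test (Theorem \ref{rkhst}), whereas you prove the two inclusions $C_{\varphi}\subseteq J_{\varphi}^*$ and $J_{\varphi}^*\subseteq C_{\varphi}$ separately, each by a direct reproducing-kernel computation. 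Your version is marginally more elementary — it needs only \eqref{rep} and the definition of the adjoint, not Theorem \ref{rkhst} — while the paper's version exhibits the equivalence ``$f\in\dz{J_{\varphi}^*}$ iff $f\circ\varphi\in\varPhi(\hh)$'' as an instance of the general RKHS membership criterion. Both arguments are complete.
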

   \begin{proof}
(i) Suppose $C_{\varphi}$ is densely defined. Since for
every $f \in \dz{C_{\varphi}}$,
    \begin{align*}
\is{C_{\varphi}f}{K^{\varPhi}_{\xi}}\overset{\eqref{rep}}=
(C_{\varphi}f)(\xi)=f(\varphi(\xi))\overset{\eqref{rep}}
=\is{f}{K^{\varPhi}_{\varphi(\xi)}}, \quad \xi \in \hh,
    \end{align*}
we deduce that $\kscr^{\varPhi} \subseteq
\dz{C_{\varphi}^*}$ and \eqref{op*} holds. Set $J_{\varphi}
= C_{\varphi}^*|_{\kscr^{\varPhi}}$. Then clearly
$J_{\varphi}$ is closable and densely defined, and thus, by
the von Neumann theorem, $J_{\varphi}^*$ is densely defined
and $J_{\varphi}^{**}=\overline{J_{\varphi}}$. If $f \in
\dz{J_{\varphi}^*}$, then
   \begin{align*}
(J_{\varphi}^*f)(\xi) \overset{\eqref{rep}}=
\is{J_{\varphi}^*f}{K^{\varPhi}_{\xi}} =
\is{f}{J_{\varphi}K^{\varPhi}_{\xi}} \overset{\eqref{op*}}=
\is{f}{K^{\varPhi}_{\varphi(\xi)}} \overset{\eqref{rep}}=
f(\varphi(\xi)), \quad \xi\in \hh,
   \end{align*}
which implies that $f \in\dz{C_{\varphi}}$ and
$J_{\varphi}^* f = C_{\varphi}f$. Hence $J_{\varphi}^*
\subseteq C_{\varphi}$. This yields
   \begin{align*}
\overline{J_{\varphi}} \subseteq C_{\varphi}^*
\subseteq J_{\varphi}^{**} = \overline{J_{\varphi}},
   \end{align*}
which means that $\kscr^{\varPhi}$ is a core for
$C_{\varphi}^*$.

(ii) Take $f \in \varPhi(\hh)$. If $h = \sum_{i=1}^n
\lambda_i K^{\varPhi}_{\xi_i}$ for some finite sequences
$\{\lambda_i\}_{i=1}^n \subseteq \cbb$ and
$\{\xi_i\}_{i=1}^n \subseteq \hh$, then, by \eqref{rep} and
\eqref{Jurek}, we have
   \begin{align} \label{tak2}
\is{J_{\varphi} h}{f} = \sum_{i=1}^n \lambda_i
\overline{f(\varphi(\xi_i))}.
   \end{align}
Note that $f$ is in $\dz{J_{\varphi}^*}$ if and only
if there exists $c \in (0,\infty)$ such that
$|\is{J_{\varphi} h}{f}|^2 \Le c \|h\|^2$ for all $h
\in \dz{J_{\varphi}}$, or equivalently, by
\eqref{rep2} and \eqref{tak2}, if and only if
   \begin{align*}
\Big|\sum_{i=1}^n \bar \lambda_i f(\varphi(\xi_i))\Big|^2
\Le c \sum_{i,j=1}^n K^{\varPhi}(\xi_j,\xi_i) \lambda_i
\bar \lambda_j, \quad \{\lambda_i\}_{i=1}^n \subseteq \cbb,
\, \{\xi_i\}_{i=1}^n \subseteq \hh, \, n \in \nbb.
   \end{align*}
In view of Theorem \ref{rkhst}, the latter is
equivalent to $f \circ \varphi \in \varPhi(\hh)$. This
shows that $\dz{C_{\varphi}} = \dz{J_{\varphi}^*}$.
Moreover, by \eqref{rep} and \eqref{tak2}, we have
   \begin{align*}
\is{J_{\varphi} h}{f} = \is{h}{f\circ \varphi}, \quad h \in
\dz{J_{\varphi}}, \, f \in \dz{C_{\varphi}},
   \end{align*}
which implies that $C_{\varphi} = J_{\varphi}^*$. The
uniqueness of an operator $J_{\varphi}$ satisfying
\eqref{Jurek} is obvious. The fact that such $J_{\varphi}$
is closable if and only if $C_{\varphi}$ is densely defined
is a direct consequence of the von Neumann theorem. This
completes the proof.
   \end{proof}
   \begin{cor}\label{hypo}
Suppose $\varPhi \in \fscr$ and $\varphi\colon \hh \to \hh$
is a holomorphic mapping such that $C_\varphi$ is
hyponormal. Then $\varphi(0)=0$.
   \end{cor}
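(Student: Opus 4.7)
The plan is to apply Theorem \ref{sprz}(i) to the reproducing kernel at the origin, $K^{\varPhi}_0$, and then derive the conclusion from the single norm inequality
\[
\|K^{\varPhi}_{\varphi(0)}\| \Le \|K^{\varPhi}_0\|
\]
together with strict monotonicity of $\varPhi|_{[0,\infty)}$. The first observation is that hyponormality of $C_\varphi$ forces $C_\varphi$ to be densely defined, so Theorem \ref{sprz}(i) applies and yields $\kscr^{\varPhi}\subseteq \dz{C_\varphi^*}$ together with $C_\varphi^* K^{\varPhi}_0 = K^{\varPhi}_{\varphi(0)}$.

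Next I would split into two cases according to the value of $\varPhi(0)$. If $\varPhi(0)=0$, then by \eqref{kfxi} the function $K^{\varPhi}_0$ is identically zero, so the adjoint formula immediately gives $K^{\varPhi}_{\varphi(0)}=0$, whence $\varPhi(\|\varphi(0)\|^2)=\|K^{\varPhi}_{\varphi(0)}\|^2=0$ by \eqref{rep2}, and strict monotonicity of $\varPhi|_{[0,\infty)}$ (together with $\varPhi(0)=0$) forces $\varphi(0)=0$. Note that here hyponormality is used only to secure dense definedness of $C_\varphi$.

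If on the other hand $\varPhi(0)\neq 0$, then by Proposition \ref{stale} the constant function $\mathbb{1}$ lies in $\varPhi(\hh)$, and from \eqref{kfxi} one has $K^{\varPhi}_0 = \varPhi(0)\mathbb{1}$. Since $\mathbb{1}\circ \varphi = \mathbb{1}\in \varPhi(\hh)$, we get $\mathbb{1}\in \dz{C_\varphi}$ with $C_\varphi \mathbb{1} = \mathbb{1}$, and hence $K^{\varPhi}_0 \in \dz{C_\varphi}$ with $C_\varphi K^{\varPhi}_0 = K^{\varPhi}_0$. Hyponormality then yields
\[
\|K^{\varPhi}_{\varphi(0)}\| = \|C_\varphi^* K^{\varPhi}_0\| \Le \|C_\varphi K^{\varPhi}_0\| = \|K^{\varPhi}_0\|,
\]
which by \eqref{rep2} rewrites as $\varPhi(\|\varphi(0)\|^2) \Le \varPhi(0)$; strict monotonicity of $\varPhi|_{[0,\infty)}$ again gives $\varphi(0)=0$.

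There is no genuine obstacle: all ingredients are already in place (the adjoint formula \eqref{op*} from Theorem \ref{sprz}, the constant-function computation from Proposition \ref{stale}, and the elementary fact that $\varPhi|_{[0,\infty)}$ is strictly increasing, noted at the beginning of Section \ref{sekt3}). The only minor subtlety is bookkeeping the two cases $\varPhi(0)=0$ and $\varPhi(0)\neq 0$ separately, because in the former case $K^{\varPhi}_0$ is the zero vector and the hyponormal inequality becomes trivial.
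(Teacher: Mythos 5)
Your argument is correct and follows essentially the same route as the paper: identify $K^{\varPhi}_0$ as a fixed vector of $C_\varphi$, apply the adjoint formula $C_\varphi^* K^{\varPhi}_0 = K^{\varPhi}_{\varphi(0)}$ from Theorem \ref{sprz}(i), and conclude via the hyponormality inequality and strict monotonicity of $\varPhi|_{[0,\infty)}$. The case split on $\varPhi(0)$ is unnecessary, since the single computation $K^{\varPhi}_0(\varphi(\xi))=\varPhi(0)=K^{\varPhi}_0(\xi)$ gives $C_\varphi K^{\varPhi}_0=K^{\varPhi}_0$ uniformly (the paper handles both cases at once this way), but it does no harm.
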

   \begin{proof}
Since
   \begin{align*}
K^{\varPhi}_0(\varphi(\xi)) = \varPhi(\is{\varphi(\xi)}{0})
= \varPhi(0) = \varPhi(\is{\xi}{0}) = K^{\varPhi}_0(\xi),
\quad \xi \in \hh,
   \end{align*}
we deduce that $K^{\varPhi}_{0} \in \dz{C_{\varphi}}$ and
$C_{\varphi} K^{\varPhi}_0 = K^{\varPhi}_0$. By Theorem
\ref{sprz}(i), we have
   \begin{align*}
\varPhi(\|\varphi(0)\|^2) = \|K^{\varPhi}_{\varphi(0)}\|^2
= \|C_{\varphi}^* K^{\varPhi}_{0}\|^2 \Le \|C_{\varphi}
K^{\varPhi}_{0}\|^2 = \|K^{\varPhi}_{0}\|^2 = \varPhi (0).
   \end{align*}
Since the function $\varPhi|_{[0,\infty)}$ is strictly
increasing, we get $\|\varphi(0)\|=0$, which completes
the proof.
   \end{proof}
Regarding Theorem \ref{sprz}(ii), it is worth pointing
out that if the composition operator $C_{\varphi}$ is
not densely defined, then the operator $J_{\varphi}$
satisfying \eqref{Jurek} may not exist. This is
illustrated by Proposition \ref{nonzero} below which
deals with the question of when $C_{\varphi}$ is a
zero operator. We also give an exact description of
the norm of the composition operator with a constant
symbol.
   \begin{pro} \label{nonzero}
Let $\varphi\colon \hh \to \hh$ be a holomorphic
mapping and $\varPhi \in \fscr$. Then
   \begin{enumerate}
   \item[(i)] if $\varPhi(0) \neq 0$ and $C_{\varphi}$
is densely defined, then $C_{\varphi}$ is not a zero
operator,
   \item[(ii)] if $\varPhi(0) = 0$, then $C_{\varphi}$
is a densely defined zero operator if and only if
$\varphi(\xi) = 0$ for every $\xi \in \hh$; if this is
the case, then $\dz{C_{\varphi}}=\varPhi(\hh)$,
   \item[(iii)] if $\varPhi(0) = 0$ and $\varphi(\xi) = a$
for every $\xi \in \hh$ and for some $a \in \hh
\setminus \{0\}$, then $C_{\varphi}$ is a zero
operator, $\dz{C_{\varphi}} =
\{K^{\varPhi}_{a}\}^\perp$, $K^{\varPhi}_{a} \neq 0$
and there is no operator $J_{\varphi}$ in
$\varPhi(\hh)$ satisfying \eqref{Jurek},
   \item[(iv)] if $\varPhi(0) \neq 0$ and $\varphi(\xi) = a$
for every $\xi \in \hh$ and for some $a \in \hh$, then
$C_{\varphi} \in \ogr{\varPhi(\hh)}$, $\mathbb{1} \in
\varPhi(\hh)$, $C_{\varphi} f = f(a) \cdot \mathbb{1}$
for every $f \in \varPhi(\hh)$, and
   \begin{align*}
\|C_{\varphi}\| =
\sqrt{\frac{\varPhi(\|a\|^2)}{\varPhi(0)}}.
   \end{align*}
   \end{enumerate}
   \end{pro}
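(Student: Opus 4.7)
My plan is to treat the four assertions in turn, grouping (i) and (iv) (which exploit $\varPhi(0) \neq 0$) and (ii) and (iii) (which exploit $\varPhi(0) = 0$).

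For (i), Proposition \ref{stale} gives $\mathbb{1} \in \varPhi(\hh)$. Since $\mathbb{1} \circ \varphi = \mathbb{1}$, we have $\mathbb{1} \in \dz{C_{\varphi}}$ and $C_{\varphi} \mathbb{1} = \mathbb{1} \neq 0$, so $C_{\varphi}$ is not a zero operator (the density hypothesis is in fact not needed here). For (iv) with $\varphi \equiv a$, the identity $f \circ \varphi = f(a) \cdot \mathbb{1}$ together with $\mathbb{1} \in \varPhi(\hh)$ immediately yields $\dz{C_{\varphi}} = \varPhi(\hh)$ and $C_{\varphi} f = f(a) \mathbb{1}$. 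The norm computation combines \eqref{njed}, the reproducing identity $f(a) = \is{f}{K^{\varPhi}_a}$ from \eqref{rep}, and $\|K^{\varPhi}_a\|^2 = \varPhi(\|a\|^2)$ from \eqref{rep2}:
\[
\|C_{\varphi} f\| = \frac{|f(a)|}{\sqrt{\varPhi(0)}} \Le \frac{\|f\|\sqrt{\varPhi(\|a\|^2)}}{\sqrt{\varPhi(0)}}, \quad f \in \varPhi(\hh),
\]
with equality attained at $f = K^{\varPhi}_a$, giving the stated formula for $\|C_{\varphi}\|$.

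For (ii), the ``if'' direction uses Proposition \ref{stale} once more: when $\varPhi(0) = 0$, every $f \in \varPhi(\hh)$ vanishes at $0$, so for $\varphi \equiv 0$ we have $f \circ \varphi = 0 \in \varPhi(\hh)$ for every $f$, hence $\dz{C_{\varphi}} = \varPhi(\hh)$ and $C_{\varphi} = 0$. For the converse, suppose $C_{\varphi}$ is a densely defined zero operator. Then for every $\xi \in \hh$ and every $f \in \dz{C_{\varphi}}$,
\[
0 = (C_{\varphi} f)(\xi) = f(\varphi(\xi)) = \is{f}{K^{\varPhi}_{\varphi(\xi)}}
\]
by \eqref{rep}. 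Density of $\dz{C_{\varphi}}$ forces $K^{\varPhi}_{\varphi(\xi)} = 0$, and \eqref{rep2} then yields $\varPhi(\|\varphi(\xi)\|^2) = 0$; strict monotonicity of $\varPhi|_{[0,\infty)}$ (noted just after \eqref{wsp}) together with $\varPhi(0) = 0$ finally forces $\varphi(\xi) = 0$.

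Part (iii) is the most delicate. With $\varphi \equiv a$ ($a \neq 0$) and $\varPhi(0) = 0$, the identity $f \circ \varphi = f(a) \cdot \mathbb{1}$ forces $f \circ \varphi \in \varPhi(\hh)$ iff $f(a) = 0$, because $\mathbb{1} \notin \varPhi(\hh)$ by Proposition \ref{stale}. Since $f(a) = \is{f}{K^{\varPhi}_a}$, we get $\dz{C_{\varphi}} = \{K^{\varPhi}_a\}^{\perp}$; strict monotonicity and $\|a\| > 0$ give $\|K^{\varPhi}_a\|^2 = \varPhi(\|a\|^2) > 0$, so $K^{\varPhi}_a \neq 0$ and this is a proper subspace on which $C_{\varphi}$ acts as zero. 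The main obstacle is the non-existence of $J_{\varphi}$, which I would settle by contradiction: the equality $\|K^{\varPhi}_0\|^2 = \varPhi(0) = 0$ gives $K^{\varPhi}_0 = 0$, so linearity forces $J_{\varphi} K^{\varPhi}_0 = 0$, whereas \eqref{Jurek} demands $J_{\varphi} K^{\varPhi}_0 = K^{\varPhi}_{\varphi(0)} = K^{\varPhi}_a \neq 0$.
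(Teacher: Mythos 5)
Your proposal is correct, and parts (ii)--(iv) follow essentially the same route as the paper: the same use of Proposition \ref{stale} and the reproducing identities \eqref{rep}, \eqref{rep2} to identify $\dz{C_{\varphi}}$ with $\{K^{\varPhi}_a\}^{\perp}$ in (iii), the same $K^{\varPhi}_0=0$ versus $K^{\varPhi}_a\neq 0$ obstruction to the existence of $J_{\varphi}$, and the same rank-one computation $C_{\varphi}f=\is{f}{K^{\varPhi}_a}\cdot\mathbb{1}$ for the norm in (iv). The only genuine divergence is in (i): the paper proves (i) and the ``only if'' half of (ii) simultaneously by passing to the adjoint and computing $\|C_{\varphi}^*K^{\varPhi}_{\xi}\|^2=\varPhi(\|\varphi(\xi)\|^2)=0$, whereas you exhibit $\mathbb{1}$ as a fixed vector of $C_{\varphi}$ directly; your argument is more elementary, avoids $C_{\varphi}^*$ altogether, and shows the density hypothesis in (i) is superfluous, at the cost of handling (i) and (ii) by separate arguments rather than one.
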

   \begin{proof}
(i)\&(ii) Suppose $C_{\varphi}$ is densely defined and
$C_{\varphi} f = 0$ for every $f \in
\dz{C_{\varphi}}$. Then $C_{\varphi}^*$ vanishes on
$\varPhi(\hh)$ and consequently
   \begin{align} \label{Jurobudz}
\sum_{n=0}^\infty a_n \|\varphi(\xi)\|^{2n}
\overset{\eqref{rep2}}= \|K^{\varPhi}_{\varphi (\xi)}\|^2
\overset{\eqref{op*}}= \|C_{\varphi}^*(K^{\varPhi}_{\xi})
\|^2=0, \quad \xi \in \hh,
   \end{align}
where $\{a_n\}_{n=0}^\infty$ is as in \eqref{wsp}. If
$\varPhi(0)\neq 0$, we arrive at a contradiction
(because $\varPhi(0)=a_0$). If $\varPhi(0) = 0$, then
\eqref{Jurobudz} implies that $\varphi(\xi) = 0$ for
every $\xi \in \hh$.

Suppose now that $\varPhi(0) = 0$ and $\varphi(\xi) =
0$ for every $\xi \in \hh$. Then, by \eqref{rep2},
$K^{\varPhi}_{0}=0$ and thus, by \eqref{rep}, $f(0)=0$
for every $f \in \varPhi(\hh)$, which means that
$\dz{C_{\varphi}} = \varPhi(\hh)$ and $C_{\varphi} f =
0$ for every $f \in \varPhi(\hh)$.

   (iii) If $f \in \dz{C_{\varphi}}$, then $f(a) \cdot
\mathbb{1} = C_{\varphi} f \in \varPhi(\hh)$ and thus,
by Proposition \ref{stale}, $f(a)=0$, or equivalently,
by \eqref{rep}, $f \in \{K^{\varPhi}_{a}\}^\perp$. The
converse implication holds as well. Since, by
\eqref{rep2}, $\|K^{\varPhi}_{0}\|^2 = \varPhi(0)=0$
and $\|K^{\varPhi}_{a}\|^2 = \varPhi(\|a\|^2) > 0$, we
see that $K^{\varPhi}_{0}=0$ and
$K^{\varPhi}_{\varphi(0)} \neq 0$. Hence, there is no
operator $J_{\varphi}$ satisfying \eqref{Jurek}.

   (iv) Using Proposition \ref{stale}, we see that
$\mathbb{1} \in \varPhi(\hh)$ and
   \begin{align*}
C_{\varphi} f = f(a) \cdot \mathbb{1}
\overset{\eqref{rep}} = \is{f}{K^{\varPhi}_a} \cdot
\mathbb{1}, \quad f \in \varPhi(\hh),
   \end{align*}
which in the standard operator theory notation means
that $C_{\varphi} = \mathbb{1} \otimes K^{\varPhi}_a$
(cf.\ \cite[p.\ 72]{Con}). This together with
\eqref{rep2}, \eqref{njed} and \cite[Proposition
16.3(d)]{Con} yields
   \begin{align*}
& \|C_{\varphi}\| = \|\mathbb{1}\| \|K^{\varPhi}_a\| =
\sqrt{\frac{\varPhi(\|a\|^2)}{\varPhi(0)}}. \qedhere
   \end{align*}
   \end{proof}
Under the assumptions of Proposition
\ref{nonzero}(iii), the orthocomplement of the domain
of $C_{\varphi}$ is one-dimensional and there is no
operator $J_{\varphi}$ in $\varPhi(\hh)$ satisfying
\eqref{Jurek}. However, if $\varPhi(\hh)$ is infinite
dimensional, then there are plenty of operators $T$ in
$\varPhi(\hh)$ such that $\dz{T} = \kscr^{\varPhi}$
and $C_{\varphi} = T^*$ (compare with Theorem
\ref{sprz}(ii)). This is a consequence of the
following more general result.
   \begin{pro}
Let $A$ be a zero operator in a complex Hilbert space
$\hh$ such that $\dz{A} = \{e\}^{\perp}$ for some
nonzero $e \in \hh$. Then
   \begin{enumerate}
   \item[(i)] if $T$ is a densely defined
operator in $\hh$, then $A=T^*$ if and only if there
exists a discontinuous linear functional $\tau\colon
\dz{T} \to \cbb$ such that $T\xi = \tau (\xi) e$ for
every $\xi \in \dz{T}$,
   \item[(ii)]  if $\hh$ is infinite dimensional and
$\EuScript E$ is a dense vector subspace of $\hh$,
then there exists an operator $T$ in $\hh$ such that
$\dz{T} = \EuScript E$ and $A=T^*$.
   \end{enumerate}
   \end{pro}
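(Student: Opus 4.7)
The plan is to analyze the adjoint relation $A = T^*$ by combining the standard description of $\dz{T^*}$ with the very restrictive form of $A$: it is a zero operator whose domain is the codimension-one subspace $\{e\}^\perp$. Throughout, I will use the fact that in a Hilbert space, $(\{e\}^\perp)^\perp = \cbb e$, since $\cbb e$ is already closed.

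For (i), the direction ``$\Rightarrow$'' goes as follows. Assume $A = T^*$, so $\dz{T^*} = \{e\}^\perp$ and $T^*|_{\{e\}^\perp} = 0$. For every $\xi \in \dz{T}$ and $f \in \{e\}^\perp$, $\is{T\xi}{f} = \is{\xi}{T^*f} = 0$, so $T\xi \perp \{e\}^\perp$, which forces $T\xi \in \cbb e$. Thus there is a uniquely determined linear map $\tau \colon \dz{T} \to \cbb$ with $T\xi = \tau(\xi)\,e$. To show $\tau$ is discontinuous, suppose for contradiction that it were continuous; then, since $\dz{T}$ is dense, $\tau$ would extend to a bounded functional $\is{\cdot}{g}$ on $\hh$. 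A direct adjoint computation would give $\is{T\xi}{f} = \is{\xi}{g}\is{e}{f} = \is{\xi}{\is{f}{e}g}$ for all $\xi \in \dz{T}$ and all $f \in \hh$, hence $\dz{T^*} = \hh$, contradicting $\dz{T^*} = \{e\}^\perp \neq \hh$. The direction ``$\Leftarrow$'' is a straightforward computation: for $T\xi = \tau(\xi)e$ with $\tau$ linear and discontinuous, the functional $\xi \mapsto \is{T\xi}{f} = \tau(\xi)\is{e}{f}$ is continuous on $\dz{T}$ if and only if $\is{e}{f} = 0$, so $\dz{T^*} = \{e\}^\perp = \dz{A}$ and $T^*f = 0 = Af$ there.

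For (ii), by (i) it suffices to produce a discontinuous linear functional $\tau \colon \EuScript E \to \cbb$ and then set $T\xi = \tau(\xi)\,e$. Since $\EuScript E$ is dense in the infinite-dimensional space $\hh$, it is itself infinite-dimensional. I would choose a linearly independent sequence $\{v_n\}_{n\Ge 1} \subseteq \EuScript E$ with $\|v_n\|=1$, extend $\{v_n\}$ to a Hamel basis of $\EuScript E$, and define $\tau(v_n) = n$ while setting $\tau$ equal to $0$ on all remaining basis vectors. Extending by linearity gives a functional with $|\tau(v_n)|/\|v_n\| = n \to \infty$, so $\tau$ is unbounded on the unit ball of $\EuScript E$ and hence discontinuous; part (i) then yields $A = T^*$.

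The main technical subtlety lies in the discontinuity step in (i): one must resist the temptation to invoke the Riesz representation prematurely, since $\tau$ is a priori defined only on the dense subspace $\dz{T}$, not on $\hh$. The argument above circumvents this by extending the hypothetical continuous $\tau$ to $\hh$, computing $T^*$ on the resulting bounded rank-one operator, and contradicting the equality $\dz{T^*} = \{e\}^\perp$.
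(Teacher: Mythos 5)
Your proposal is correct and follows essentially the same route as the paper: in the forward direction of (i) you deduce $\ob{T}\subseteq(\{e\}^{\perp})^{\perp}=\cbb\cdot e$ from $\dz{T^*}=\{e\}^{\perp}$, obtain $\tau$, and rule out continuity by noting it would force $\dz{T^*}=\hh$; the backward direction and the reduction of (ii) to the existence of a discontinuous functional on an infinite-dimensional normed space are likewise the paper's argument. The only difference is that you spell out the extension-to-$\is{\cdot}{g}$ computation and the Hamel-basis construction that the paper leaves as standard facts.
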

   \begin{proof} (i) Suppose $A=T^*$. If $\eta \in
\{e\}^{\perp} = \dz{T^*}$, then $\is{T\xi}{\eta} = 0$
for all $\xi \in \dz{T}$. Hence $\{e\}^{\perp}
\subseteq \ob{T}^{\perp}$, or equivalently, $\ob{T}
\subseteq \cbb \cdot e$. Thus for every $\xi\in
\dz{T}$ there exists a unique $\tau(\xi) \in \cbb$
such that $T\xi = \tau(\xi) e$. Clearly, $\tau$ is a
linear functional which is not continuous (otherwise,
$\dz{T^*} = \hh$, which contradicts our assumption).
To prove the ``if'' part note that $\eta \in \dz{T^*}$
if and only if the functional $\dz{T} \ni \xi \mapsto
\is{T\xi}{\eta} = \tau (\xi) \is{e}{\eta} \in \cbb$ is
continuous.

   (ii) This is a direct consequence of (i) and the
fact that each infinite dimensional normed space
admits a discontinuous linear functional.
   \end{proof}
   \section{\label{sekt5}Maximality of $C_\varphi$}
   In this section we show that members of the class
of densely defined composition operators in
$\varPhi(\hh)$ with holomorphic symbols are maximal
with respect to inclusion of graphs (see Theorem
\ref{fipsi} below). First, we need the following two
lemmata.
   \begin{lem} \label{symtensor}
If $a,b \in \hh$ and $n\in \nbb$, then the following
conditions are equivalent{\em :}
   \begin{enumerate}
   \item[(i)] $\is{\eta}{a}^n =  \is{\eta}{b}^n$
for all $\eta \in \hh$,
   \item[(ii)] there exists $\delta \in \cbb$
such that $\delta^n = 1$ and $a = \delta \cdot b$.
   \end{enumerate}
   \end{lem}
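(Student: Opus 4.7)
The direction (ii)$\Rightarrow$(i) is immediate: if $a = \delta b$ with $\delta^n = 1$, then $\is{\eta}{a}^n = \overline{\delta}^{\,n}\is{\eta}{b}^n = \overline{\delta^n}\,\is{\eta}{b}^n = \is{\eta}{b}^n$ for every $\eta \in \hh$. The substance lies in (i)$\Rightarrow$(ii), and my plan is to reduce it to the equality case of the Cauchy--Schwarz inequality.

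First I would dispose of the degenerate case. If $a = 0$, then (i) forces $\is{\eta}{b}^n = 0$ for every $\eta$; taking $\eta = b$ gives $\|b\|^{2n} = 0$, hence $b = 0$, so any $n$-th root of unity $\delta$ serves. The case $b = 0$ is symmetric.

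Assume henceforth $a \neq 0$ and $b \neq 0$. The next step is to test (i) against $\eta = a$ and $\eta = b$:
\begin{align*}
\|a\|^{2n} = \is{a}{a}^n = \is{a}{b}^n, \qquad \is{b}{a}^n = \is{b}{b}^n = \|b\|^{2n}.
\end{align*}
Taking moduli yields $|\is{a}{b}|^n = \|a\|^{2n}$ and $|\is{a}{b}|^n = \|b\|^{2n}$, from which $\|a\| = \|b\|$ and $|\is{a}{b}| = \|a\|\,\|b\|$. The equality case of Cauchy--Schwarz then gives a scalar $\lambda \in \cbb$ with $a = \lambda b$.

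Finally, I would substitute $a = \lambda b$ back into (i) to obtain $\overline{\lambda}^{\,n}\is{\eta}{b}^n = \is{\eta}{b}^n$ for every $\eta \in \hh$. Choosing $\eta = b$ (permissible because $b \neq 0$ implies $\is{b}{b} = \|b\|^2 \neq 0$) forces $\overline{\lambda}^{\,n} = 1$, hence $\lambda^n = 1$, so $\delta := \lambda$ meets the requirements of (ii). The only conceptual obstacle is recognizing that the Cauchy--Schwarz equality case controls linear dependence; after that, the argument is a short calculation, and I do not anticipate real difficulty.
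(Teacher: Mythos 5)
Your proof is correct. It differs from the paper's argument in how linear dependence of $a$ and $b$ is extracted from (i). The paper observes that (i) immediately gives $\{a\}^{\perp}=\{b\}^{\perp}$ (since $\is{\eta}{a}=0$ iff $\is{\eta}{a}^n=0$ iff $\is{\eta}{b}^n=0$), whence $\cbb\cdot a=\{a\}^{\perp\perp}=\{b\}^{\perp\perp}=\cbb\cdot b$; this handles the degenerate case and the proportionality $a=\delta b$ in one stroke, with no norm computations. You instead test (i) at $\eta=a$ and $\eta=b$, deduce $\|a\|=\|b\|$ and $|\is{a}{b}|=\|a\|\,\|b\|$, and invoke the equality case of Cauchy--Schwarz. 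Both routes are sound and of comparable length; the orthocomplement argument is slightly more economical (it avoids separating the zero case and the modulus computations), while yours is more elementary in that it uses only the Cauchy--Schwarz inequality rather than the biduality $M^{\perp\perp}=\overline{\lin M}$. The concluding step --- substituting $a=\lambda b$ back into (i) and evaluating at $\eta=b$ to force $\bar\lambda^n=1$ --- is identical in both proofs.
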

   \begin{proof}
Suppose (i) holds. It is clear that
$\{a\}^{\perp}=\{b\}^{\perp}$. Hence
   \begin{align*}
\cbb \cdot a = \{a\}^{\perp\perp} = \{b\}^{\perp\perp} =
\cbb \cdot b.
   \end{align*}
If $a=0$, then $b=0$ and thus $\delta =1$ does the
job. Otherwise, $a \neq 0$, $b\neq 0$ and $a=\delta
\cdot b$ for some $\delta \in \cbb\setminus \{0\}$.
Substituting $\eta = b$ into (i), we get $\bar
\delta^n \|b\|^{2n} = \|b\|^{2n}$, which implies that
(ii) holds. The reverse implication
(ii)$\Rightarrow$(i) is obvious.
   \end{proof}
   \begin{lem} \label{spojne}
If $\varphi\colon \hh \to \cbb$ is holomorphic, then the
set $\varphi^{-1}(\cbb\setminus \{0\})$ is connected.
   \end{lem}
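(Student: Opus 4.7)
The plan is to prove path-connectedness of $\varphi^{-1}(\cbb\setminus\{0\})$ by restricting $\varphi$ to complex affine lines and using that the zeros of a nonzero entire function on $\cbb$ form a discrete set.

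First, fix two points $\xi_1, \xi_2 \in \varphi^{-1}(\cbb\setminus\{0\})$; it suffices to join them by a connected subset of $\varphi^{-1}(\cbb\setminus\{0\})$. If $\xi_1=\xi_2$ there is nothing to do, so assume $\xi_1\neq \xi_2$ and consider the complex-affine embedding $\ell\colon \cbb \to \hh$ given by $\ell(z)=\xi_1+z(\xi_2-\xi_1)$. The composition $f:=\varphi\circ\ell$ is entire on $\cbb$, and by hypothesis $f(0)=\varphi(\xi_1)\neq 0$ and $f(1)=\varphi(\xi_2)\neq 0$; in particular $f$ is not identically zero.

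Next, the zero set $Z:=f^{-1}(\{0\})\subseteq\cbb$ is therefore a discrete closed subset of $\cbb$, hence at most countable and with empty interior. It is standard that $\cbb\setminus Z$ is path-connected: any two points in $\cbb\setminus Z$ can be joined by a piecewise linear path avoiding the countable set $Z$ (almost every slope of a line through a given point misses $Z$). Choose such a path $\gamma\colon[0,1]\to\cbb\setminus Z$ with $\gamma(0)=0$ and $\gamma(1)=1$. Then $\ell\circ\gamma\colon[0,1]\to \hh$ is continuous, starts at $\xi_1$, ends at $\xi_2$, and satisfies $\varphi(\ell(\gamma(t)))=f(\gamma(t))\neq 0$ for all $t\in[0,1]$, so its image is a connected subset of $\varphi^{-1}(\cbb\setminus\{0\})$ containing $\xi_1$ and $\xi_2$.

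The argument is essentially a one-paragraph exercise; the only subtlety is the step that $\cbb\setminus Z$ is (path-)connected for a discrete set $Z$, which I do not expect to be a real obstacle. The holomorphy of $\varphi$ is used only to ensure that $f=\varphi\circ \ell$ is a nonconstant entire function on $\cbb$ whose zero set is discrete, so this reduction to the one-dimensional case is what makes the lemma essentially trivial despite $\hh$ possibly being infinite-dimensional.
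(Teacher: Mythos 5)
Your proof is correct, and it takes a slightly different (and more elementary) route than the paper. The paper restricts $\varphi$ to the \emph{linear} span $\hh_{a,b}$ of the two given points, which is a subspace of complex dimension at most $2$, and then invokes a result from Chirka's book on complex analytic sets (the complement of a proper analytic subset of a domain in $\cbb^n$ is connected) to conclude that $\{\xi\in\hh_{a,b}\colon \varphi(\xi)\neq 0\}$ is path-connected. You instead restrict to the complex \emph{affine line} through the two points, which reduces everything to one complex variable, where the only input needed is the classical fact that the zero set of an entire function that is not identically zero is discrete, together with the elementary observation that the complement of a countable closed subset of $\cbb$ is path-connected via two-segment piecewise linear paths. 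Both arguments hinge on the same reduction-to-finite-dimensions idea and on $G$-holomorphy of $\varphi$ along finite-dimensional slices; yours buys independence from the external reference at the cost of a short ad hoc connectivity argument. One small wording issue: you say holomorphy ensures $f=\varphi\circ\ell$ is a \emph{nonconstant} entire function, but $f$ may well be a nonzero constant; all you need (and all you actually use) is that $f$ is not identically zero, which follows from $f(0)=\varphi(\xi_1)\neq 0$, so that $Z=f^{-1}(\{0\})$ is discrete (possibly empty). This does not affect the validity of the argument.
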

   \begin{proof}
Take $a,b \in \varphi^{-1}(\cbb \setminus \{0\})$ such that
$a \neq b$. Let $\hh_{a,b}$ be the vector space spanned by
$\{a, b\}$. Since $\hh_{a,b}$ is finite dimensional and
$\varphi|_{\hh_{a,b}}$ is holomorphic, we infer from
\cite[Proposition 2.2.3]{Chirka} that
$\varOmega_{a,b}:=\{\xi \in \hh_{a,b} \colon
\varphi|_{\hh_{a,b}} (\xi) \neq 0\}$ is connected and thus
there exists a continuous path $f\colon [0,1] \to
\varOmega_{a,b}$ such that $f(0)=a$ and $f(1)=b$. Since
$\varOmega_{a,b} \subseteq \varphi^{-1}(\cbb \setminus
\{0\})$, we deduce that $\varphi^{-1}(\cbb \setminus
\{0\})$ is path-connected and hence connected.
    \end{proof}
   Now we are ready to prove the main result of this
section.
   \begin{thm}\label{fipsi}
Let $\varPhi \in \fscr$ and let $\varphi, \psi\colon
\hh \to \hh$ be holomorphic mappings. Assume that the
operator $C_{\varphi}$ is densely defined. Then the
following conditions are equivalent{\em :}
   \begin{enumerate}
   \item[(i)] $C_{\varphi} \subseteq C_{\psi}$,
   \item[(ii)] $C_{\varphi} = C_{\psi}$,
   \item[(iii)] there exists $\alpha \in \gfrak_{\varPhi}$
such that $\varphi(\xi) = \alpha \cdot \psi(\xi)$ for every
$\xi \in \hh$.
   \end{enumerate}
   \end{thm}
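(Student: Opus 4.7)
The plan is to establish (iii)$\Rightarrow$(ii)$\Rightarrow$(i)$\Rightarrow$(iii); since (ii)$\Rightarrow$(i) is immediate, only (iii)$\Rightarrow$(ii) and (i)$\Rightarrow$(iii) require work. For (iii)$\Rightarrow$(ii), the basic observation is that for $\alpha\in\gfrak_{\varPhi}$ and every $k\in\zscr_{\varPhi}\setminus\{0\}$ one has $\alpha^k=1$ and, since $|\alpha|=1$, also $\bar\alpha^{k}=1$. Expanding
\[
K^{\varPhi}_{\alpha w}(\eta)=\varPhi(\is{\eta}{\alpha w})=\sum_{k=0}^{\infty} a_k \bar\alpha^{k}\is{\eta}{w}^{k}
\]
term-by-term (using $a_k=0$ for $k\notin\zscr_{\varPhi}$) yields $K^{\varPhi}_{\varphi(\xi)}=K^{\varPhi}_{\alpha\psi(\xi)}=K^{\varPhi}_{\psi(\xi)}$ for every $\xi\in\hh$. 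By the ``moreover'' part of Theorem \ref{sprz}, the operator $J_{\varphi}:=C_{\varphi}^{*}|_{\kscr^{\varPhi}}$ is closable and satisfies \eqref{Jurek} with $\varphi$; thanks to the kernel identity just noted, the same $J_{\varphi}$ also satisfies \eqref{Jurek} with $\psi$ in place of $\varphi$. Theorem \ref{sprz}(ii) applied twice then gives $C_{\varphi}=J_{\varphi}^{*}=C_{\psi}$.

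For (i)$\Rightarrow$(iii) I would start from $C_{\varphi}\subseteq C_{\psi}$, which forces $C_{\psi}$ to be densely defined and, upon taking adjoints, $C_{\psi}^{*}\subseteq C_{\varphi}^{*}$. Applying \eqref{op*} to both symbols yields $K^{\varPhi}_{\varphi(\xi)}=K^{\varPhi}_{\psi(\xi)}$ for every $\xi\in\hh$. Via \eqref{rep2}, substituting $t\eta$ for $\eta$ with $t\in\cbb$, and matching Taylor coefficients in $t$, this sharpens to
\[
\is{\eta}{\varphi(\xi)}^{k}=\is{\eta}{\psi(\xi)}^{k}, \qquad \eta\in\hh,\ \xi\in\hh,\ k\in\zscr_{\varPhi}\setminus\{0\}.
\]
Whenever $\psi(\xi)=0$, fixing any such $k$ forces $\is{\eta}{\varphi(\xi)}=0$ for all $\eta$, hence $\varphi(\xi)=0$. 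Whenever $\psi(\xi)\neq 0$, Lemma \ref{symtensor} produces, for each such $k$, a scalar realizing $\varphi(\xi)=\delta\cdot\psi(\xi)$ with $\delta^k=1$; by uniqueness (coming from $\psi(\xi)\neq 0$) this scalar is independent of $k$, equals $\delta(\xi):=\is{\varphi(\xi)}{\psi(\xi)}/\|\psi(\xi)\|^{2}$, and lies in $\gfrak_{\varPhi}$.

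The main obstacle is to show that $\delta$ is constant on $\varOmega:=\{\xi\in\hh\colon\psi(\xi)\neq 0\}$. Since Lemma \ref{gcd=1} identifies $\gfrak_{\varPhi}$ with the finite group $G_{\mathrm{gcd}(\zscr_{\varPhi}\setminus\{0\})}$ and $\delta$ is continuous on $\varOmega$, it suffices to verify that $\varOmega$ is (path-)connected. My plan is to do this by restriction to complex lines: for $\xi_{1},\xi_{2}\in\varOmega$, the map $\psi|_{L}\colon\cbb\to\hh$ on the complex affine line $L=\{\xi_{1}+\lambda(\xi_{2}-\xi_{1})\colon\lambda\in\cbb\}$ is holomorphic and nonzero at $\lambda=0$, so the standard Taylor factorization at each zero shows that its zero set in $L$ is discrete; consequently $L\cap\varOmega$ is a connected open subset of $L$ containing both $\xi_{1}$ and $\xi_{2}$. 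Denoting the resulting constant value by $\alpha\in\gfrak_{\varPhi}$, the identity $\varphi=\alpha\psi$ holds on $\varOmega$ and extends to $\hh$ via the vanishing case treated above; the degenerate scenario $\varOmega=\emptyset$ (i.e.\ $\psi\equiv 0$) forces $\varphi\equiv 0$ by the same vanishing argument, and one may take $\alpha=1$.
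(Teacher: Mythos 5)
Your proof is correct, and its skeleton coincides with the paper's: (i)$\Rightarrow$(iii) via $C_{\psi}^*\subseteq C_{\varphi}^*$ and \eqref{op*}, the Taylor-coefficient comparison giving $\is{\eta}{\varphi(\xi)}^k=\is{\eta}{\psi(\xi)}^k$, Lemma \ref{symtensor}, connectedness of the non-vanishing set, and constancy of the resulting unimodular factor; (iii)$\Rightarrow$(ii) via the kernel identity $K^{\varPhi}_{\varphi(\xi)}=K^{\varPhi}_{\psi(\xi)}$ and Theorem \ref{sprz}(ii). Two sub-steps are handled differently, and in both cases your version is more elementary. For constancy of $\delta$, the paper shows $\delta$ is holomorphic on $\varphi^{-1}(\hh\setminus\{0\})$ (writing it as a quotient $\is{\psi(\cdot)}{\eta}/\is{\varphi(\cdot)}{\eta}$ locally and invoking \cite[Theorem 14.9]{chae}) and then applies the maximum modulus principle to a unimodular holomorphic function; you instead observe that $\gfrak_{\varPhi}=G_{\mathrm{gcd}(\zscr_{\varPhi}\setminus\{0\})}$ is a finite, hence discrete, subset of $\cbb$, so a continuous $\gfrak_{\varPhi}$-valued function on a connected set is constant — this sidesteps holomorphy of $\delta$ entirely. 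For connectedness, the paper chooses $\eta$ with $\is{\varphi(a)}{\eta}\neq 0\neq\is{\varphi(b)}{\eta}$ and applies Lemma \ref{spojne} (connectedness of the complement of the zero set of a scalar holomorphic function, via \cite{Chirka}); you restrict the $\hh$-valued map $\psi$ to the complex line through the two points and use that the zero set of a non-identically-zero one-variable vector-valued holomorphic function is discrete, so the line minus a countable set is path-connected. Both routes are sound; yours trades the citation of a general fact about analytic sets for a one-variable Taylor factorization. The remaining details (the degenerate cases $\psi(\xi)=0$ and $\psi\equiv 0$, the identity $\bar{\alpha}^k=1$ for $k\in\zscr_{\varPhi}$, and the double application of Theorem \ref{sprz}(ii) to a single operator $J_{\varphi}$ satisfying \eqref{Jurek} for both symbols) are all handled correctly.
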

   \begin{proof}
Since $\varPhi \in \fscr$, the set $\zscr_{\varPhi}
\setminus \{0\}$ is nonempty.

(i)$\Rightarrow$(iii) It follows from (i) that
$C_{\psi}^* \subseteq C_{\varphi}^*$ and thus, by
Theorem \ref{sprz}(i), $\kscr^{\varPhi} \subseteq
\dz{C_{\varphi}^*} \cap \dz{C_{\psi}^*}$ and
   \begin{align*}
K^{\varPhi}_{\psi(\xi)} = C_{\psi}^* K^{\varPhi}_{\xi} =
C_{\varphi}^* K^{\varPhi}_{\xi} =
K^{\varPhi}_{\varphi(\xi)}, \quad \xi \in \hh.
   \end{align*}
This implies that
   \begin{align*}
\varPhi(\is{\eta}{\psi(\xi)}) =
\is{K^{\varPhi}_{\psi(\xi)}}{K^{\varPhi}_{\eta}} =
\is{K^{\varPhi}_{\varphi(\xi)}}{K^{\varPhi}_{\eta}} =
\varPhi(\is{\eta}{\varphi(\xi)}), \quad \xi, \eta \in \hh.
   \end{align*}
Replacing $\eta$ by $z\eta$, we get
   \begin{align*}
\sum_{n\in \zscr_{\varPhi}} a_n \is{\eta}{\psi(\xi)}^n
z^n= \sum_{n\in \zscr_{\varPhi}} a_n
\is{\eta}{\varphi(\xi)}^n z^n, \quad z \in \cbb, \,
\xi, \eta \in \hh,
   \end{align*}
where the sequence $\{a_k\}_{k=0}^\infty$ is as in
\eqref{wsp}. Hence, we have
   \begin{align}   \label{nnn}
\is{\eta}{\psi(\xi)}^n = \is{\eta}{\varphi(\xi)}^n,
\quad \xi, \eta \in \hh, n \in \zscr_{\varPhi}
\setminus \{0\}.
   \end{align}
This implies that $\varphi^{-1}(\hh\setminus \{0\}) =
\psi^{-1}(\hh\setminus \{0\})$. If
$\varphi^{-1}(\hh\setminus \{0\}) = \varnothing$, then
$\alpha=1$ does the job. Suppose
$\varphi^{-1}(\hh\setminus \{0\}) \neq \varnothing$.
By \eqref{nnn} and Lemma \ref{symtensor}, for every
$\xi \in \varphi^{-1}(\hh\setminus \{0\})$, there
exists a unique $\delta(\xi) \in \gfrak_{\varPhi}$
such that
   \begin{align} \label{mmp}
\psi(\xi) = \delta(\xi) \cdot \varphi(\xi), \quad \xi \in
\varphi^{-1}(\hh\setminus \{0\}).
   \end{align}

Our next aim is to show that the open set
$\varphi^{-1}(\hh\setminus \{0\})$ is connected. Take
$a,b \in \varphi^{-1}(\hh\setminus \{0\})$ such that
$a\neq b$. Note that there exists $\eta \in \hh$ such
that $\is{\varphi(a)}{\eta} \neq 0$ and
$\is{\varphi(b)}{\eta} \neq 0$. Indeed, if
$\is{\varphi(a)}{\varphi(b)} = 0$, then we set
$\eta=\varphi(a) + \varphi(b)$. Otherwise, we set
$\eta = \varphi(b)$. Since $\is{\varphi(\cdot)}{\eta}$
is a holomorphic function, we infer from Lemma
\ref{spojne} that there exists a continuous path
$f\colon [0,1] \to \{\xi \in \hh\colon
\is{\varphi(\xi)}{\eta} \neq 0\}$ such that $f(0)=a$
and $f(1)=b$. Since $\{\xi \in \hh\colon
\is{\varphi(\xi)}{\eta} \neq 0\} \subseteq
\varphi^{-1}(\hh\setminus \{0\})$, we conclude that
$\varphi^{-1}(\hh \setminus \{0\})$ is path-connected
and thus connected.

Now we show that $\delta(\cdot)$ is a holomorphic function.
Indeed, \eqref{mmp} yields
   \begin{align*}
\delta(\xi) =
\frac{\is{\psi(\xi)}{\eta}}{\is{\varphi(\xi)}{\eta}},
\quad \xi \in \varOmega _{\eta}:= \{\zeta \in \hh
\colon \is{\varphi(\zeta)}{\eta} \neq 0\}, \, \eta \in
\hh.
   \end{align*}
Since $\varphi^{-1}(\hh\setminus \{0\}) =
\bigcup_{\eta \in \hh} \varOmega_{\eta}$ and each
$\varOmega_{\eta}$ is open, we deduce that the
function $\delta(\cdot)$ is continuous and
$G$-holomorphic on $\varphi^{-1}(\hh \setminus
\{0\})$; thus by \cite[Theorem 14.9]{chae} it is
holomorphic.

Recall that $\delta(\xi) \in \gfrak_{\varPhi}$ for all
$\xi \in \varphi^{-1}(\hh\setminus \{0\})$. Hence
$|\delta(\cdot)|=1$ on $\varphi^{-1}(\hh\setminus
\{0\})$. As $\delta(\cdot)$ is holomorphic and
$\varphi^{-1}(\hh\setminus \{0\})$ is connected, the
maximum modulus principle (cf.\ \cite[Corollary
13.9]{chae}) implies that there exists $\alpha\in
\cbb$ such that $\delta(\xi)=\bar \alpha$ for every
$\xi \in \varphi^{-1}(\hh\setminus \{0\})$. Since
$\varphi^{-1}(\hh\setminus \{0\}) =
\psi^{-1}(\hh\setminus \{0\})$, we infer from
\eqref{mmp} that (iii) holds.

(iii)$\Rightarrow$(ii) Note that
   \begin{align*}
\varPhi(\is{\eta}{\varphi(\xi)}) = \sum_{n \in
\zscr_{\varPhi}} a_n \bar \alpha^n
\is{\eta}{\psi(\xi)}^n = \sum_{n=0}^\infty a_n
\is{\eta}{\psi(\xi)}^n =
\varPhi(\is{\eta}{\psi(\xi)}), \quad \xi, \eta \in
\hh.
   \end{align*}
Therefore, by \eqref{rep2} and \eqref{op*}, we have
   \begin{align} \label{Zakop}
\is{C_{\varphi}^*
(K^{\varPhi}_{\xi})}{K^{\varPhi}_{\eta}} =
\is{K^{\varPhi}_{\varphi(\xi)}}{K^{\varPhi}_{\eta}} =
\is{K^{\varPhi}_{\psi(\xi)}}{K^{\varPhi}_{\eta}},
\quad \xi, \eta \in \hh.
   \end{align}
This implies that there exists an operator $J_{\psi}$
in $\varPhi(\hh)$ which satisfies \eqref{Jurek} with
$\psi$ in place of $\varphi$, and such that $J_{\psi}
\subseteq C_{\varphi}^*$. Hence $J_{\psi}$ is
closable. By Theorem \ref{sprz}(ii), $C_{\psi}$ is
densely defined. It follows from \eqref{op*} (applied
to $\psi$) and \eqref{Zakop} that
   \begin{align*}
C_{\varphi}^*(K^{\varPhi}_{\xi}) =
C_{\psi}^*(K^{\varPhi}_{\xi}), \quad \xi \in \hh.
   \end{align*}
Since, by Theorem \ref{sprz}(i), $\kscr^{\varPhi}$ is
a core for $C_{\varphi}^*$ and $C_{\psi}^*$, we deduce
that $C_{\varphi}^* = C_{\psi}^*$. By Proposition
\ref{stale2}(i) and the von Neumann theorem, we have
$C_{\varphi} = C_{\varphi}^{**} = C_{\psi}^{**} =
C_{\psi}$.

(ii)$\Rightarrow$(i) Evident.
   \end{proof}
   \section{\label{sekt6}Boundedness of $C_\varphi$
(only necessity)}
   We begin by proving a kind of ``cancellation''
lemma.
   \begin{lem} \label{limsup}
If $\varPhi \in \fscr$ and $f, g \colon \hh \to
[0,\infty)$ are such that $\liminf_{\|\xi\| \to
\infty} g(\xi) > 0$ and $\limsup_{\|\xi\| \to
\infty} \frac{\varPhi(f(\xi))}{\varPhi(g(\xi))} <
\infty$, then $\limsup_{\|\xi\| \to \infty}
\frac{f(\xi)}{g(\xi)} < \infty$.
   \end{lem}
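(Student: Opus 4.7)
The plan is to reduce the statement to a deterministic inequality about $\varPhi$ alone, and then establish that inequality using the one nontrivial feature of membership in $\fscr$, namely that $a_N>0$ for some $N\in\nbb$.

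First I would record the following reformulation. From the hypotheses there exist $R_0,c>0$ and $M\in[1,\infty)$ such that $g(\xi)\Ge c$ and $\varPhi(f(\xi))\Le M\varPhi(g(\xi))$ whenever $\|\xi\|\Ge R_0$. Since $\varPhi|_{[0,\infty)}$ is strictly increasing, it suffices to produce a constant $C>0$ (depending on $c,M,\varPhi$) such that
\begin{align*}
\varPhi(Cv)\Ge M\,\varPhi(v),\qquad v\Ge c;
\end{align*}
applying this with $v=g(\xi)$ yields $\varPhi(f(\xi))\Le \varPhi(Cg(\xi))$, hence $f(\xi)\Le Cg(\xi)$, giving $\limsup_{\|\xi\|\to\infty}f(\xi)/g(\xi)\Le C<\infty$.

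Next I would prove the deterministic inequality. Pick $N\in\nbb$ with $a_N>0$ (which exists by the definition of $\fscr$) and write $\varPhi(v)=P(v)+R(v)$, where $P(v)=\sum_{n=0}^{N-1}a_n v^n$ and $R(v)=\sum_{n=N}^{\infty}a_n v^n$. For $C\Ge 1$ and $v>0$, termwise comparison of the series gives
\begin{align*}
\varPhi(Cv)=\sum_{n=0}^{\infty}a_nC^n v^n\Ge C^N R(v)=C^N\bigl(\varPhi(v)-P(v)\bigr).
\end{align*}
Because $\varPhi(v)\Ge a_N v^N$ and $\deg P<N$, the ratio $P(v)/\varPhi(v)$ tends to $0$ as $v\to\infty$, so there is $v_1\Ge c$ with $P(v)\Le \tfrac12\varPhi(v)$ for $v\Ge v_1$. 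Consequently $\varPhi(Cv)\Ge \tfrac12 C^N\varPhi(v)$ for $v\Ge v_1$, and choosing $C$ with $C^N\Ge 2M$ takes care of the range $[v_1,\infty)$.

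The remaining interval $[c,v_1]$ is handled by monotonicity: $\varPhi(Cv)\Ge \varPhi(Cc)$ while $\varPhi(v)\Le \varPhi(v_1)$, and $\varPhi(Cc)\to\infty$ as $C\to\infty$ (since $\varPhi|_{[0,\infty)}$ is unbounded). So enlarging $C$ further, if necessary, so that also $\varPhi(Cc)\Ge M\varPhi(v_1)$, one obtains $\varPhi(Cv)\Ge M\varPhi(v)$ on all of $[c,\infty)$, completing the proof. The only delicate point is the estimate $P(v)/\varPhi(v)\to 0$; it is precisely where the standing requirement $a_N>0$ for some $N\in\nbb$ enters. If that condition were dropped (so $\varPhi\equiv a_0$), the lemma would clearly fail, which confirms that this is the indispensable step; everything else is routine.
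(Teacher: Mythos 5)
Your proof is correct, and it takes a genuinely different route from the paper's. The paper argues by contradiction: it rewrites the hypothesis $\varPhi(f(\xi))\Le c\,\varPhi(g(\xi))$ as $a_0(c-1)\Ge\sum_{n\Ge 1}a_n g(\xi)^n\big((f(\xi)/g(\xi))^n-c\big)$ and, along a sequence where $f/g\Ge\vartheta>c$ and $g\Ge\varepsilon$, bounds the right-hand side below by $(\vartheta-c)(\varPhi(\varepsilon)-\varPhi(0))$, which forces $\vartheta$ to stay bounded. You instead prove a deterministic growth inequality for $\varPhi$ itself, namely $\varPhi(Cv)\Ge M\varPhi(v)$ for all $v\Ge c$ once $C$ is large, via the split $\varPhi=P+R$ with $P$ of degree $<N$ and the termwise bound $\varPhi(Cv)\Ge C^N R(v)$, and then invert using the strict monotonicity of $\varPhi|_{[0,\infty)}$ to get $f(\xi)\Le Cg(\xi)$ directly. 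Both arguments ultimately rest on the same indispensable fact that $a_N>0$ for some $N\in\nbb$ (your closing remark about the failure for constant $\varPhi$ is apt), but your version is constructive rather than by contradiction and yields an explicit bound $\limsup_{\|\xi\|\to\infty}f(\xi)/g(\xi)\Le C$ with $C$ computable from $M$, the lower bound $c$ on $g$, and $\varPhi$; the paper's version is a little shorter because it never needs the intermediate threshold $v_1$ or the separate treatment of the compact range $[c,v_1]$. All the individual steps you use (the comparison $C^n\Ge C^N$ for $n\Ge N$, $C\Ge 1$; the estimate $P(v)/\varPhi(v)\to 0$; the unboundedness and strict monotonicity of $\varPhi$ on $[0,\infty)$) check out.
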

   \begin{proof}
By assumption there exist $R_0,\varepsilon \in (0,
\infty)$ and $c \in [1,\infty) $ such that
   \begin{align} \label{atuc}
g(\xi) \Ge \varepsilon \text{ and } c \Ge
\frac{\varPhi(f(\xi))}{\varPhi(g(\xi))} \text{ for all
} \xi \in \hh \text{ such that } \|\xi\| \Ge R_0.
   \end{align}
Let $\varPhi$ be as in \eqref{wsp}. Then \eqref{atuc}
implies that
   \begin{align} \label{skar}
a_0(c-1) \Ge \sum_{n=1}^\infty a_n g(\xi)^n
\bigg(\bigg(\frac{f(\xi)}{g(\xi)}\bigg)^n - c\bigg),
\quad \xi \in \hh, \, \|\xi\| \Ge R_0.
   \end{align}
Suppose the contrary, that $\limsup_{\|\xi\| \to
\infty} \frac{f(\xi)}{g(\xi)} = \infty$. Then there
exists a sequence $\{\xi_k\}_{k=1}^\infty \subseteq
\hh$ such that $\lim_{k\to\infty}\|\xi_k\| =\infty$
and $\lim_{k\to \infty}\frac{f(\xi_k)}{g(\xi_k)} =
\infty$. Take any real number $\vartheta > c$. Then,
by \eqref{atuc}, there exists an integer $k\Ge 1$ such
that $\|\xi_k\| \Ge R_0$, $g(\xi_k) \Ge \varepsilon$
and $\frac{f(\xi_k)}{g(\xi_k)} \Ge \vartheta$. This
and \eqref{skar} yield
   \begin{align*}
a_0(c-1) \Ge \sum_{n=1}^\infty a_n g(\xi_k)^n
\bigg(\bigg(\frac{f(\xi_k)}{g(\xi_k)}\bigg)^n -
c\bigg) \Ge (\vartheta - c) (\varPhi(\varepsilon) -
\varPhi(0)).
   \end{align*}
Since $\varPhi(\varepsilon) - \varPhi(0) > 0$, we
deduce that $\vartheta \Le c + \frac
{a_0(c-1)}{\varPhi(\varepsilon) - \varPhi(0)}$ for all
$\vartheta > c$, which is a contradiction.
   \end{proof}
Now we show that if a composition operator
$C_{\varphi}$ is densely defined and bounded, then its
symbol $\varphi$ is a polynomial of degree at most $1$
(or, in other words, $\varphi$ is affine), i.e.,
$\varphi = A+b$ for some $A \in \ogr{\hh}$ and $b\in
\hh$, where $(A + b)(\xi) = A(\xi) + b$ for $\xi \in
\hh$ (see \cite{c-m-s03} for a very particular case of
analytic composition operators on the Segal-Bargmann
space of finite order). As shown in Example \ref{ndd},
$C_{\varphi}$ may be densely defined even if the
symbol $\varphi$ is not a polynomial.
    \begin{pro} \label{pro1}
Suppose $\varPhi\in \fscr$, $\varphi\colon \hh \to \hh$ is
a holomorphic mapping and $\dz{C_{\varphi}}=\varPhi(\hh)$.
Then $C_{\varphi}$ is bounded and there exists a unique
pair $(A,b) \in \ogr{\hh} \times \hh$ such that $\varphi =
A + b$.
    \end{pro}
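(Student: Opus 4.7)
The plan is as follows. First, since $C_\varphi$ is closed by Proposition \ref{stale2}(i) and defined on the whole $\varPhi(\hh)$ by hypothesis, the closed graph theorem immediately yields $C_\varphi \in \ogr{\varPhi(\hh)}$. In particular $C_\varphi^* \in \ogr{\varPhi(\hh)}$, so applying Theorem \ref{sprz}(i) to $K^{\varPhi}_\xi$ together with \eqref{rep2} gives
\[
\varPhi(\|\varphi(\xi)\|^2) = \|K^{\varPhi}_{\varphi(\xi)}\|^2 = \|C_\varphi^* K^{\varPhi}_\xi\|^2 \Le \|C_\varphi\|^2 \|K^{\varPhi}_\xi\|^2 = \|C_\varphi\|^2 \varPhi(\|\xi\|^2), \quad \xi \in \hh.
\]

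Next, I apply Lemma \ref{limsup} with $f(\xi)=\|\varphi(\xi)\|^2$ and $g(\xi)=\|\xi\|^2$: the hypotheses hold since $\liminf_{\|\xi\|\to\infty}\|\xi\|^2 = \infty$ and the displayed inequality bounds $\varPhi(f)/\varPhi(g)$ by $\|C_\varphi\|^2$. The conclusion is that $\limsup_{\|\xi\|\to\infty}\|\varphi(\xi)\|^2/\|\xi\|^2 < \infty$, from which a routine estimate on a bounded neighborhood of $0$ (using continuity of $\varphi$) produces a constant $M \in (0,\infty)$ with $\|\varphi(\xi)\| \Le M(1+\|\xi\|)$ for every $\xi \in \hh$.

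Now for each fixed $\eta \in \hh$, the function $g_\eta\colon \hh \ni \xi \mapsto \is{\varphi(\xi)}{\eta} \in \cbb$ is entire and satisfies $|g_\eta(\xi)| \Le M\|\eta\|(1+\|\xi\|)$. The Cauchy estimates applied to the homogeneous expansion of $g_\eta$ at the origin (see \cite[Theorem 14.6]{chae} or the analogous statement there) force the $n$-homogeneous component to vanish for every $n \Ge 2$; hence $g_\eta$ is affine in $\xi$. Writing $A(\xi) := \varphi(\xi) - \varphi(0)$, the identity $\is{A(\alpha\xi_1+\beta\xi_2)}{\eta} = \alpha\is{A\xi_1}{\eta}+\beta\is{A\xi_2}{\eta}$ for all $\eta \in \hh$ forces $A\colon \hh \to \hh$ to be linear; the estimate $\|A\xi\| \Le M(1+\|\xi\|)+\|\varphi(0)\|$ combined with the homogeneity of $A$ (apply the estimate to $R\xi/\|\xi\|$ and let $R \to \infty$) yields $A \in \ogr{\hh}$. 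Setting $b := \varphi(0)$ gives $\varphi = A + b$; uniqueness of $(A,b)$ is clear because evaluating at $\xi=0$ determines $b$, and then $A = \varphi - b$.

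The main obstacle is passing from the multiplicative control $\varPhi(\|\varphi(\xi)\|^2) \Le \|C_\varphi\|^2 \varPhi(\|\xi\|^2)$ to a genuine linear growth bound on $\|\varphi\|$; this cancellation is delicate because $\varPhi$ may grow arbitrarily fast (e.g. super-exponentially), and it is precisely for this step that Lemma \ref{limsup} has been prepared. Once linear growth is in hand, the remainder of the argument is a soft Liouville-type observation reducing infinite-dimensional holomorphy to the scalar case via pairing with $\eta$.
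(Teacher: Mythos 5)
Your proposal follows the paper's own route almost exactly: the closed graph theorem via Proposition \ref{stale2}(i), the inequality $\varPhi(\|\varphi(\xi)\|^2)\Le\|C_{\varphi}\|^2\varPhi(\|\xi\|^2)$ obtained from Theorem \ref{sprz}(i) and \eqref{rep2}, and the cancellation Lemma \ref{limsup} are precisely the steps in the paper. The only divergence is at the end: the paper simply invokes \cite[Theorem 13.8]{chae} to pass from linear growth to the affine representation, whereas you re-derive that fact by pairing with $\eta$ and killing the higher homogeneous components via Cauchy estimates; this is a correct and self-contained substitute.

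One small point deserves care. You justify the global bound $\|\varphi(\xi)\|\Le M(1+\|\xi\|)$ on the ball $\{\|\xi\|\Le R_0\}$ by ``continuity of $\varphi$'', but in infinite dimensions continuity (or even holomorphy) does not imply boundedness on bounded sets, so that justification does not stand as written. The gap is harmless here: the displayed inequality itself gives $\varPhi(\|\varphi(\xi)\|^2)\Le\|C_{\varphi}\|^2\varPhi(R_0^2)$ for $\|\xi\|\Le R_0$, and since $\varPhi|_{[0,\infty)}$ is strictly increasing with $\varPhi(x)\to\infty$, this bounds $\|\varphi\|$ on that ball; alternatively, the maximum modulus principle applied to the slices $\lambda\mapsto g_{\eta}(\lambda\zeta)$ lets you run the Cauchy estimates using only the values of $g_{\eta}$ on spheres of radius $R\Ge R_0$, where your bound already holds. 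With either repair the argument is complete.
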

   \begin{proof}
By Proposition \ref{stale2} and the closed graph
theorem, the operator $C_{\varphi}$ and, consequently,
$C_{\varphi}^*$ are bounded. Since $\varPhi(x)
> 0$ for all $x\in (0,\infty)$ and
$\|K^{\varPhi}_{\xi}\|^2= \varPhi(\|\xi\|^2)$ for all
$\xi \in \hh$, we see that $K^{\varPhi}_{\xi} \neq 0$
for all $\xi \in \hh \setminus \{0\}$. This together
with Theorem \ref{sprz}(i) implies that
   \begin{align*}
\frac{\varPhi(\|\varphi(\xi)\|^2)}{\varPhi(\|\xi\|^2)}
\overset{\eqref{rep2}}=
\frac{\|K^{\varPhi}_{\varphi(\xi)}\|^2}{\|K^{\varPhi}_{\xi}\|^2}
= \bigg\|C_{\varphi}^*
\frac{K^{\varPhi}_{\xi}}{\|K^{\varPhi}_{\xi}\|}\bigg\|^2
\Le \|C_{\varphi}\|^2, \quad \xi\in \hh\setminus \{0\}.
   \end{align*}
Hence, by Lemma \ref{limsup}, $\limsup_{\|\xi\|
\to \infty} \frac{\|\varphi(\xi)\|}{\|\xi\|} <
\infty$. Applying \cite[Theorem 13.8]{chae}, we
get the required representation $\varphi = A +
b$.
   \end{proof}
   The study of boundedness of a composition operator
with an affine symbol reduces to investigating the
case when the linear part of the symbol is positive.
We begin by stating a lemma which is a direct
consequence of Proposition \ref{isintin+}.
   \begin{lem} \label{isintin}
Suppose $\varPhi \in \fscr$, $\varphi\colon \hh \to
\hh$ is a holomorphic mapping and $W\in \ogr{\hh}$ is
a coisometry. Then $C_W C_{\varphi} = C_{\varphi \circ
W}$.
   \end{lem}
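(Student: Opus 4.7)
The plan is to prove the equality of operators by verifying both inclusions of graphs, with the reverse inclusion being the step that actually uses the coisometry hypothesis on $W$.

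First I would handle the forward inclusion $C_W C_{\varphi} \subseteq C_{\varphi \circ W}$. Let $f \in \dz{C_W C_{\varphi}}$. Then $f \in \dz{C_{\varphi}}$, so $f \circ \varphi \in \varPhi(\hh)$, and $f \circ \varphi \in \dz{C_W}$, so $(f \circ \varphi) \circ W \in \varPhi(\hh)$. Since composition of maps is associative, $(f \circ \varphi) \circ W = f \circ (\varphi \circ W)$, hence $f \in \dz{C_{\varphi \circ W}}$ and
\begin{align*}
C_{\varphi \circ W} f = f \circ (\varphi \circ W) = (f \circ \varphi) \circ W = C_W (C_{\varphi} f) = C_W C_{\varphi} f.
\end{align*}
This direction uses nothing about $W$ beyond the fact that $\varphi \circ W$ is holomorphic (and so $C_{\varphi \circ W}$ is a well-defined composition operator).

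The reverse inclusion $C_{\varphi \circ W} \subseteq C_W C_{\varphi}$ is the substantive step, and it is precisely here that the coisometry hypothesis enters. Let $f \in \dz{C_{\varphi \circ W}}$, so $(f \circ \varphi) \circ W = f \circ (\varphi \circ W) \in \varPhi(\hh)$. To conclude that $f \in \dz{C_W C_{\varphi}}$, I need first that $f \circ \varphi \in \varPhi(\hh)$ (so that $f \in \dz{C_{\varphi}}$) and then that $f \circ \varphi \in \dz{C_W}$. Applying Proposition \ref{isintin+} to the function $f \circ \varphi$ in place of $f$ yields exactly $f \circ \varphi \in \varPhi(\hh)$, since $(f \circ \varphi) \circ W \in \varPhi(\hh)$ and $W$ is a coisometry. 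Once $f \circ \varphi \in \varPhi(\hh)$, the hypothesis that $(f \circ \varphi) \circ W \in \varPhi(\hh)$ gives $f \circ \varphi \in \dz{C_W}$, hence $f \in \dz{C_W C_{\varphi}}$, and the two operators agree on $f$ by the same associativity computation as above.

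The main obstacle is the reverse inclusion: without the coisometry assumption on $W$, the implication ``$(f \circ \varphi) \circ W \in \varPhi(\hh) \Rightarrow f \circ \varphi \in \varPhi(\hh)$'' need not hold, and Proposition \ref{isintin+} is exactly the tool that extracts membership in $\varPhi(\hh)$ of a function from membership in $\varPhi(\hh)$ of its composition with a coisometry. Everything else is formal manipulation of domains of composition operators.
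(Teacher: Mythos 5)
Your proof is correct and is exactly the argument the paper intends: the paper gives no written proof, stating only that the lemma ``is a direct consequence of Proposition \ref{isintin+}'', and your reverse inclusion applies that proposition to $f\circ\varphi$ in precisely the way required, while the forward inclusion is the formal domain bookkeeping you describe. Nothing to add.
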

   \begin{pro} \label{fipsib}
Let $\varPhi \in \fscr$, $\varphi=A+b$ and $\psi =
|A^*| + b$, where $A\in\ogr{\hh}$ and $b\in\hh$. Then
   \begin{enumerate}
   \item[(i)] $\dz{C_{\varphi}} = \dz{C_{\psi}}$,
   \item[(ii)] $C_{\varphi} \in \ogr{\varPhi(\hh)}$
if and only if $C_{\psi} \in \ogr{\varPhi(\hh)}$,
   \item[(iii)]
$\|C_{\varphi}\|=\|C_{\psi}\|$ provided $C_{\varphi}
\in \ogr{\varPhi(\hh)}$.
   \end{enumerate}
   \end{pro}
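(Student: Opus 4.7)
The plan is to relate $\varphi$ and $\psi$ via the partial isometry coming from the polar decomposition of $A$, and then to transfer RKHS membership and norms through a contraction-level kernel inequality.

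First, write $A = U|A|$ for the polar decomposition, so that $U$ is a partial isometry with initial space $\overline{\ob{A^*}}$ and final space $\overline{\ob{A}}$. The identity $|A^*|^2 = AA^* = U|A|^2 U^*$ together with $U^*U|A| = |A|$ yields $|A^*| = U|A|U^*$ and hence $A = U|A| = |A^*|U$. Since $UU^*$ is the orthogonal projection onto $\overline{\ob{A}}$ and $|A^*|$ vanishes on $\overline{\ob{A}}^{\perp}$, one also has $|A^*|UU^* = |A^*|$. Combining these identities gives
\[
\psi \circ U = \varphi \quad \text{and} \quad \varphi \circ U^* = \psi.
\]

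The main technical step will be the following strengthening of Proposition \ref{isintin+}: \emph{if $W \in \ogr{\hh}$ is a contraction and $g \in \varPhi(\hh)$, then $g\circ W \in \varPhi(\hh)$ with $\|g\circ W\|\Le\|g\|$.} To prove this, expand $K^\varPhi$ via \eqref{ajjaj} and \eqref{wsp} to rewrite
\[
\sum_{i,j=1}^n K^\varPhi(W\xi_i,W\xi_j)\,\lambda_i\bar\lambda_j = \sum_{k=0}^\infty a_k\,\bigl\|W^{\otimes k}\textstyle\sum_i \lambda_i \xi_i^{\otimes k}\bigr\|^2 \Le \sum_{i,j=1}^n K^\varPhi(\xi_i,\xi_j)\,\lambda_i\bar\lambda_j,
\]
where the inequality uses $\|W^{\otimes k}\|\Le\|W\|^k\Le 1$. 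Combined with the RKHS test for $g$ from Theorem \ref{rkhst}, this yields the RKHS test for $g\circ W$ with constant at most $\|g\|^2$.

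Applying the observation with $W=U$ and $g=f\circ\psi$ shows that $f\in\dz{C_\psi}$ implies $f\circ\varphi=(f\circ\psi)\circ U\in\varPhi(\hh)$ and $\|C_\varphi f\|\Le\|C_\psi f\|$; the symmetric argument with $W=U^*$ and $g=f\circ\varphi$ yields the reverse implication and inequality. Since both $U$ and $U^*$ are partial isometries, hence contractions, we obtain $\dz{C_\varphi}=\dz{C_\psi}$ together with the pointwise equality $\|C_\varphi f\|=\|C_\psi f\|$ on this common domain, from which (i) and (iii) are immediate, while (ii) follows from (i) together with Proposition \ref{stale2}(i) and the closed graph theorem. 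The main obstacle is that Lemma \ref{isintin} and Proposition \ref{isintin+}, the natural tools for passing between $\varphi$ and $\psi$ via composition with $U$, both require a genuine coisometry, whereas $U$ is only a partial isometry; the contraction-level kernel inequality circumvents this gap by providing the necessary one-sided inequality, which applied in both directions produces the desired equality.
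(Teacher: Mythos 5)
Your proof is correct, but it takes a genuinely different route from the paper's. The paper also starts from the polar decomposition $A=U|A|$, but because its transfer tools (Lemma \ref{isintin} and Proposition \ref{isintin+}) are stated only for coisometries, it must split into the two cases $\dim \jd{A^*} \Le \dim \jd{A}$ and $\dim \jd{A} \Le \dim \jd{A^*}$, extend the restricted partial isometry to a genuine isometry on all of $\hh$ in each case, and then obtain an exact factorization $C_W C_{\psi}=C_{\varphi}$ (resp.\ $C_VC_{\varphi}=C_{\psi}$) with $C_W$, $C_V$ isometric via Corollary \ref{unit}. You instead prove the general fact that composition with any linear contraction $W$ maps $\varPhi(\hh)$ contractively into itself --- your kernel computation via $\|W^{\otimes k}\|\Le\|W\|^k$ and Theorem \ref{rkhst} is sound --- and then exploit the two identities $\psi\circ U=\varphi$ and $\varphi\circ U^*=\psi$ (both correctly derived from $A=|A^*|U$ and $|A^*|UU^*=|A^*|$) to get the inequality $\|C_{\varphi}f\|\Le\|C_{\psi}f\|$ and its reverse simultaneously, hence equality of domains and of the individual norms. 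This avoids the dimension dichotomy and the extension step entirely, and your contraction lemma is of independent interest (it is essentially the easy half of Theorem \ref{bca}, proved without the Fock model and with no risk of circularity); what the paper's argument buys in exchange is the structural factorization of $C_{\varphi}$ through an isometric composition operator, which it reuses elsewhere.
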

   \begin{proof}
Let $A=U|A|$ be the polar decomposition of $A$. Then
$A^*=U^*|A^*|$ is the polar decomposition of $A^*$.
Consider two cases.

{\sc Case 1.} $\dim \jd{A^*} \Le \dim \jd{A}$.

Then there exists an isometry $\widetilde W \in
\ogr{\hh}$ which extends
$U^*|_{\overline{\ob{|A^*|}}}$. Hence $A^* =
\widetilde W |A^*|$ and so $A = |A^*|W$, where
$W:=(\widetilde W)^*$ is a coisometry. Therefore $\psi
\circ W = \varphi$, which by Lemma \ref{isintin}
yields $C_W C_{\psi} = C_{\varphi}$. Since by
Corollary \ref{unit}, $C_W \in \ogr{\varPhi(\hh)}$ is
an isometry, we see that (i), (ii) and (iii) hold.

{\sc Case 2.} $\dim \jd{A} \Le \dim \jd{A^*}$.

Let $P\in\ogr{\hh}$ be the orthogonal projection of
$\hh$ onto $\overline{\ob{A}}$. We will show that
there exists a coisometry $V \in \ogr{\hh}$ such that
   \begin{align}  \label{UVP}
UV=P.
   \end{align}
Indeed, since $\dim \jd{A} \Le \dim \jd{A^*}$, there
exists an isometry $\tilde{U} \in \ogr{\hh}$ which
extends $U|_{\overline{\ob{|A|}}}$. Then $V:=\tilde
U^*$ is a partial isometry with initial space
$\ob{\tilde U}$ and final space $\hh$ which extends
$U^*|_{\overline{\ob{A}}}$. Hence,
$V\big(\ob{\tilde{U}} \ominus \overline{\ob{A}}\,\big)
= \hh \ominus \overline{\ob{|A|}} = \jd{U}$. It is now
easy to see that $V$ satisfies \eqref{UVP}. As
$A=|A^*|U$ and $|A^*|(I_{\hh}-P)=0$, we have
   \begin{align*}
(\varphi \circ V)(\xi) = |A^*|U V \xi + b
\overset{\eqref{UVP}}= |A^*|P \xi + b = |A^*| \xi + b
= \psi(\xi), \quad \xi \in \hh.
   \end{align*}
This, the fact that $V$ is a coisometry and Lemma
\ref{isintin} imply that $C_V C_{\varphi} = C_{\psi}$.
Since, by Corollary \ref{unit}, $C_V \in
\ogr{\varPhi(\hh)}$ is an isometry, we deduce that the
conditions (i), (ii) and (iii) are satisfied. This
completes the proof.
   \end{proof}
   \section{\label{sekt7}Fock's type model for $C_A$}
   Recall that, by Theorem \ref{sprz},
$\kscr^{\varPhi}$ is a core for $C_{\varphi}^*$
whenever $C_{\varphi}$ is densely defined. In this
section we discuss the question of whether
$\kscr^{\varPhi}$ is a core for $C_{\varphi}$ and the
related question of whether $C_{\varphi}^* =
C_{{\varphi}^*}$ for $\varphi = A \in \ogr{\hh}$. To
answer these two questions, we build a Fock's type
model for $C_A$ (cf.\ Theorem \ref{fmod}). We begin by
proving an auxiliary lemma.
   \begin{lem} \label{lem1}
If $\varPhi \in \fscr$ and $A \in \ogr{\hh}$, then
   \begin{enumerate}
   \item[(i)] $\kscr^{\varPhi} \subseteq \dz{C_A}$
and $C_A(K_\xi^{\varPhi}) = K_{A^*\xi}^{\varPhi}$ for all
$\xi \in \hh$,
   \item[(ii)] $C_{A}^* = \overline{C_{A^*}|_{\kscr^{\varPhi}}}$,
   \item[(iii)] $(C_A|_{\kscr^{\varPhi}})^* = C_{A^*}$.
   \end{enumerate}
   \end{lem}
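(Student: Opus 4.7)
The plan is to prove (i) by a direct kernel computation, then to extract (iii) as an immediate application of Theorem \ref{sprz}(ii), and finally to deduce (ii) from (iii) by taking adjoints.

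For (i), I would simply unfold the definitions: for $\xi,\eta \in \hh$, using \eqref{ajjaj}, \eqref{kfxi} and the identity $\langle A\eta,\xi\rangle = \langle \eta,A^*\xi\rangle$, one has
\[
(C_A K_\xi^{\varPhi})(\eta) = K_\xi^{\varPhi}(A\eta) = \varPhi(\langle A\eta,\xi\rangle) = \varPhi(\langle \eta,A^*\xi\rangle) = K_{A^*\xi}^{\varPhi}(\eta).
\]
Thus $K_\xi^{\varPhi} \circ A = K_{A^*\xi}^{\varPhi} \in \varPhi(\hh)$, which gives $K_\xi^{\varPhi} \in \dz{C_A}$ and the formula in (i).

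For (iii), the point is that by (i) the restriction $T := C_A|_{\kscr^{\varPhi}}$ has domain $\kscr^{\varPhi}$ and satisfies $T(K_\xi^{\varPhi}) = K_{A^*\xi}^{\varPhi}$ for all $\xi \in \hh$. This is precisely condition \eqref{Jurek} for the holomorphic symbol $\psi := A^* \in \ogr{\hh}$, so Theorem \ref{sprz}(ii) applied to $\psi = A^*$ (whose existence and uniqueness part furnishes exactly such a $J_\psi$) yields $C_{A^*} = T^* = (C_A|_{\kscr^{\varPhi}})^*$, which is (iii).

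For (ii), I would swap $A$ and $A^*$ in (iii) — legitimate since $A^{**} = A$ for bounded $A$ — to obtain $(C_{A^*}|_{\kscr^{\varPhi}})^* = C_A$. By Proposition \ref{stale2}(i) both $C_A$ and $C_{A^*}$ are closed, and by (i) each contains $\kscr^{\varPhi}$ in its domain, so both are densely defined. Since its adjoint $C_A$ is densely defined, the operator $C_{A^*}|_{\kscr^{\varPhi}}$ is closable, and the von Neumann theorem then gives $\overline{C_{A^*}|_{\kscr^{\varPhi}}} = (C_{A^*}|_{\kscr^{\varPhi}})^{**} = C_A^*$, which is (ii). I do not expect any real obstacles here: the whole argument is a clean unfolding of definitions and an invocation of Theorem \ref{sprz}(ii); the only point requiring care is to apply the uniqueness part of that theorem with the correct choice of symbol $\psi = A^*$ and to verify closability before passing to the closure in the last step.
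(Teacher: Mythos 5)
Your proposal is correct; all three steps go through. Part (i) is identical to the paper's computation. For (ii) and (iii) you take a mildly different route: the paper first proves (ii) by combining the formula \eqref{op*} from Theorem \ref{sprz}(i) with part (i) applied to $A^*$ (which gives $C_A^*|_{\kscr^{\varPhi}} = C_{A^*}|_{\kscr^{\varPhi}}$) and then invokes the core property of $\kscr^{\varPhi}$ for $C_A^*$ to pass to closures; it then deduces (iii) from (ii) by taking adjoints. You reverse the order: you observe that $C_A|_{\kscr^{\varPhi}}$ is exactly an operator $J_{A^*}$ satisfying \eqref{Jurek} for the symbol $A^*$, so Theorem \ref{sprz}(ii) hands you (iii) at once, and you then recover (ii) by swapping $A$ and $A^*$ and applying the von Neumann theorem, after checking closability via the density of $\dz{C_A}$. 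The trade-off is minor: the paper's order gets (ii) essentially for free from the core statement in Theorem \ref{sprz}(i), whereas your order gets (iii) for free from Theorem \ref{sprz}(ii) but must supply the closability argument explicitly when deriving (ii) — which you do correctly. Both arguments are equally rigorous and of comparable length.
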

   \begin{proof}
   (i) If $\xi, \eta \in \hh$, then by \eqref{kfxi}, we
   have
   \begin{align*}
(K_\xi^{\varPhi}\circ A)(\eta) = \varPhi(\is{A\eta}{\xi}) =
\varPhi(\is{\eta}{A^*\xi}) = K_{A^*\xi}^{\varPhi}(\eta),
   \end{align*}
which implies that $K_\xi^{\varPhi} \in \dz{C_A}$ and
$C_A(K_\xi^{\varPhi}) = K_{A^*\xi}^{\varPhi}$.

(ii) Applying \eqref{op*} and (i) (the latter to
$A^*$), we see that $C_{A}^*|_{\kscr^{\varPhi}} =
C_{A^*}|_{\kscr^{\varPhi}}$. Taking closures and using
Theorem \ref{sprz}(i), we obtain (ii).

(iii) Applying (ii) to $A^*$, we get $C_{A^*}^* =
\overline{C_{A}|_{\kscr^{\varPhi}}}$. Since $C_{A^*}$
is closed (see Proposition \ref{stale2}) and densely
defined, the von Neumann theorem yields
   \begin{align*}
& (C_A|_{\kscr^{\varPhi}})^* = (C_{A^*}^*)^* =
C_{A^*}. \qedhere
   \end{align*}
   \end{proof}
For $n\in \zbb_+$, we denote by $\hh^{\odot n}$ the
$n$-th symmetric tensor power of a complex Hilbert
space $\hh$ and by $A^{\odot n}$ the $n$-th symmetric
tensor power of $A \in \ogr{\hh}$. In particular, we
follow the convention that $\hh^{\odot 0}=\cbb$,
$\xi^{\otimes 0}=1$ for all $\xi \in \hh$ and
$A^{\odot 0}=I_{\cbb}$. Given $\varPhi \in \fscr$, we
write $\varGamma_{\varPhi}(A) = \bigoplus_{n \in
\zscr_{\varPhi}} A^{\odot n}$ for $A\in \ogr{\hh}$.
The operator $\varGamma_{\varPhi}(A)$ is closed and
densely defined as an orthogonal sum of a number of
bounded operators $A^{\odot n}\in \ogr{\hh^{\odot
n}}$. The mapping $\varGamma_{\varPhi}(\cdot)$ is used
below to build a Fock's type model for $C_A$.
   \begin{thm}\label{fmod}
Suppose $\varPhi \in \fscr$, $Q$ is a conjugation on
$\hh$ and $A \in \ogr{\hh}$. Then there exists a
unitary isomorphism $U_{\varPhi,Q}\colon \varPhi(\hh)
\to \bigoplus_{n \in \zscr_{\varPhi}} \hh^{\odot n}$,
which does not depend on $A$, such that
   \begin{align}   \label{import}
C_{A}^*= U_{\varPhi,Q}^{-1} \varGamma_{\varPhi}
(\varXi_{Q}(A)) U_{\varPhi,Q},
   \end{align}
where $\varXi_{Q}(A)=QAQ$ $($see Appendix {\em
\ref{apap}}\/$)$.
   \end{thm}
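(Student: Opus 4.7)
The plan is to build $U := U_{\varPhi,Q}$ first on the kernel span $\kscr^{\varPhi}$ and then extend by continuity. I would define $U(K^{\varPhi}_{\xi}) := \bigoplus_{n \in \zscr_{\varPhi}} \sqrt{a_n}\,(Q\xi)^{\otimes n}$, where $\{a_n\}$ are the Taylor coefficients of $\varPhi$ from \eqref{wsp}; note that this formula involves only $\varPhi$, the conjugation $Q$, and $\xi$, so the resulting map is automatically independent of $A$. The isometry on $\kscr^{\varPhi}$ drops out of the computation $\langle U K^{\varPhi}_{\eta}, U K^{\varPhi}_{\xi}\rangle = \sum_{n \in \zscr_{\varPhi}} a_n \langle Q\eta, Q\xi\rangle^n = \varPhi(\langle \xi, \eta\rangle) = \langle K^{\varPhi}_{\eta}, K^{\varPhi}_{\xi}\rangle$, using the defining identity $\langle Q\eta, Q\xi\rangle = \langle \xi, \eta\rangle$ of a conjugation and \eqref{rep2}. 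Density of the range follows by a duality argument: any $v = \bigoplus v_n$ in $V := \bigoplus_{n \in \zscr_{\varPhi}} \hh^{\odot n}$ that is orthogonal to every $U(K^{\varPhi}_{t\xi})$, $t \in \cbb$, produces a vanishing entire function of $\bar t$ (using the antilinearity of $Q$), whose Taylor coefficients force $\langle (Q\xi)^{\otimes n}, v_n\rangle = 0$ for all $\xi \in \hh$ and $n \in \zscr_{\varPhi}$. Polarization then gives $v = 0$, so $U$ extends uniquely to a unitary isomorphism $\varPhi(\hh) \to V$.

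The intertwining formula on $\kscr^{\varPhi}$ is then a direct computation: Theorem \ref{sprz}(i) gives $C_{A}^{*}K^{\varPhi}_{\xi} = K^{\varPhi}_{A\xi}$, so using $Q^2 = I$ one obtains $U(C_{A}^{*}K^{\varPhi}_{\xi}) = \bigoplus_{n \in \zscr_{\varPhi}} \sqrt{a_n}\,((QAQ)(Q\xi))^{\otimes n} = \varGamma_{\varPhi}(QAQ)(UK^{\varPhi}_{\xi})$. Since $\kscr^{\varPhi}$ is a core for $C_{A}^{*}$ by Lemma \ref{lem1}(ii), and $U^{-1}\varGamma_{\varPhi}(QAQ)U$ is closed (as a unitary conjugate of an orthogonal direct sum of bounded operators), the inclusion $C_{A}^{*} \subseteq U^{-1}\varGamma_{\varPhi}(QAQ)U$ follows at once.

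The main obstacle is the reverse inclusion, which I plan to obtain by showing that $U(\kscr^{\varPhi})$ is in fact a core for $\varGamma_{\varPhi}(QAQ) = \bigoplus_{n \in \zscr_{\varPhi}}(QAQ)^{\odot n}$; once this is in hand, the two closed operators must coincide, being equal on a common core. The argument proceeds in three layers. First, truncation to finitely many summands yields a core for the direct-sum operator, and on each truncated block the operator is bounded, so graph-norm approximation reduces to norm approximation. Second, within each $\hh^{\odot n}$ the vectors $\{(Q\eta)^{\otimes n} : \eta \in \hh\}$ span a dense subspace by standard polarization. Third, each pure vector $\sqrt{a_n}(Q\eta)^{\otimes n}$ (concentrated in the $n$-th summand) can be recovered from the entire $V$-valued family $\{U(K^{\varPhi}_{t\eta})\}_{t \in \cbb}$ through the Cauchy-type formula $\sqrt{a_n}(Q\eta)^{\otimes n} = \frac{1}{2\pi r^n}\int_{0}^{2\pi} U(K^{\varPhi}_{re^{i\theta}\eta})\,e^{in\theta}\,d\theta$, valid for any $r > 0$. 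Its Riemann sums lie in $U(\kscr^{\varPhi})$ and converge in $V$-norm; applying $\varGamma_{\varPhi}(QAQ)$ inside the integrand produces the analogous formula with $A\eta$ in place of $\eta$, so the same Riemann sums converge in graph norm as well. This provides the missing core property and hence \eqref{import}.
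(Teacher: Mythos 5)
Your construction of $U_{\varPhi,Q}$ is exactly the paper's (the paper parametrizes it as $K^{\varPhi}_{Q\xi}\mapsto \bigoplus_{n}\sqrt{a_n}\,\xi^{\otimes n}$, which is the same map), and the intertwining identity on $\kscr^{\varPhi}$, the reduction to closedness of $U^{-1}\varGamma_{\varPhi}(\varXi_Q(A))U$, and the final appeal to the core property of the image $\yscr$ of $\kscr^{\varPhi}$ all match the paper's skeleton. The one step where you genuinely diverge is the proof that $\yscr$ is a core for $T:=\varGamma_{\varPhi}(\varXi_Q(A))$. The paper argues by duality in the graph inner product: a vector $h=\bigoplus h_n\in\dz{T}$ graph-orthogonal to $\yscr$ gives a vanishing power series in $\lambda$, whence $(I+(\varXi_Q(A)^{\odot n})^*\varXi_Q(A)^{\odot n})h_n\perp\xi^{\otimes n}$ for all $\xi$, and totality of elementary tensors plus invertibility of $I+S^*S$ force $h=0$. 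You instead build explicit graph-norm approximants: truncation to finitely many blocks, polarization within each block, and recovery of each homogeneous component $\sqrt{a_n}(Q\eta)^{\otimes n}$ by the vector-valued Cauchy integral $\frac{1}{2\pi r^n}\int_0^{2\pi}U(K^{\varPhi}_{r\E^{\I\theta}\eta})\E^{\I n\theta}\,\D\theta$, whose Riemann sums lie in $\yscr$; since $TU(K^{\varPhi}_{t\eta})=U(K^{\varPhi}_{tA\eta})$, the same sums converge in graph norm. Both arguments are correct and both ultimately exploit the identity theorem for power series (your coefficient-extracting integral is its constructive face); yours has the small advantage of producing explicit approximating sequences and of not needing the remark that $I+S^*S$ is injective, at the cost of some bookkeeping with the truncation and the iterated graph-norm limits. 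One cosmetic point: the statement that $\kscr^{\varPhi}$ is a core for $C_A^*$ is Theorem \ref{sprz}(i) directly; Lemma \ref{lem1}(ii) gives it only after identifying $C_A^*|_{\kscr^{\varPhi}}$ with $C_{A^*}|_{\kscr^{\varPhi}}$.
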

   \begin{proof}
Set $\mm=\bigoplus_{n \in \zscr_{\varPhi}} \hh^{\odot n}$
and $T=\varGamma_{\varPhi}(\varXi_{Q}(A))$. Let
$\{a_n\}_{n=0}^{\infty}$ be as in \eqref{wsp}. Since
   \begin{align} \label{CCC}
\sum_{n \in \zscr_{\varPhi}} \|\sqrt{a_n} \, \xi^{\otimes
n}\|^2 = \varPhi(\|\xi\|^2), \quad \xi \in \hh,
   \end{align}
we can define the function $Y\colon \hh \to \mm$ by
   \begin{align} \label{defY}
Y(\xi) = \bigoplus_{n \in \zscr_{\varPhi}} \sqrt{a_n} \;
\xi^{\otimes n}, \quad \xi \in \hh.
   \end{align}
Denote by $\yscr$ the linear span of $\{Y(\xi)\colon \xi
\in \hh\}$. By \eqref{CCC}, we have
   \begin{align*}
\sum_{n \in \zscr_{\varPhi}} \|\sqrt{a_n} \,
\varXi_Q(A)^{\odot n} \xi^{\otimes n}\|^2 =
\varPhi(\|\varXi_Q(A)\xi\|^2) < \infty, \quad \xi \in \hh,
   \end{align*}
which implies that $\yscr \subseteq \dz{T}$. We will show
that $\yscr$ is a core for $T$. To this end, we take $h \in
\dz{T}$ which is orthogonal to $\yscr$ with respect to the
graph inner product $\is{\cdot}{\mbox{-}}_T$. Since
$h=\bigoplus_{n \in \zscr_{\varPhi}} h_n$ with $h_n \in
\hh^{\odot n}$, we get
   \begin{align*}
0 & = \is{h}{Y(\bar \lambda\xi)} + \is{Th}{TY(\bar
\lambda\xi)}
   \\
& = \sum_{n\in \zscr_{\varPhi}} \sqrt{a_n} \;
\Big(\is{h_n}{\xi^{\otimes n}} + \is{\varXi_Q(A)^{\odot n}
h_n}{\varXi_Q(A)^{\odot n}\xi^{\otimes n}}\Big) \lambda^n,
\quad \lambda \in \cbb, \, \xi \in \hh.
   \end{align*}
By the identity theorem for power series, this implies that
   \begin{align*}
\is{(I+(\varXi_Q(A)^{\odot n})^*\varXi_Q(A)^{\odot
n})h_n}{\xi^{\otimes n}} = 0, \quad \xi \in \hh, \, n \in
\zscr_{\varPhi}.
   \end{align*}
Using the polarization formula (cf.\ \cite[Theorem
4.6]{chae}), one can show that the set $\{\xi^{\otimes
n}\colon \xi \in \hh\}$ is total in $\hh^{\odot n}$.
Hence, we have
   \begin{align*}
(I+(\varXi_Q(A)^{\odot n})^*\varXi_Q(A)^{\odot n})h_n = 0,
\quad n \in \zscr_{\varPhi}.
   \end{align*}
As a consequence, we see that $h_n=0$ for all
$n\in\zbb_+$, which yields $h=0$. This combined with
the fact that $(\dz{T}, \is{\cdot}{\mbox{-}}_T)$ is a
Hilbert space (cf.\ \cite[Theorem 5.1]{Weid}) implies
that $\yscr$ is a core for $T$. Since $T$ is densely
defined, we get
   \begin{align} \label{yden}
\overline{\yscr}=\mm.
   \end{align}

It follows from \eqref{defY} that
   \begin{align} \label{UEQ}
\is{Y(\xi)}{Y(\eta)} = \varPhi(\is{\xi}{\eta}) =
\varPhi(\is{Q\eta}{Q\xi}) \overset{\eqref{rep2}}=
\is{K^{\varPhi}_{Q\xi}}{K^{\varPhi}_{Q\eta}}, \quad \xi, \,
\eta \in \hh.
   \end{align}
Since $\kscr^{\varPhi}$ is dense in $\varPhi(\hh)$ and
$Q$ is surjective, we deduce from \eqref{yden} and
\eqref{UEQ} that there exists a unique unitary
isomorphism $U=U_{\varPhi,Q}\colon \varPhi(\hh) \to
\mm$ such that (compare with \cite[Proposition
1]{jerz2})
   \begin{align} \label{defU}
UK^{\varPhi}_{Q\xi} = Y(\xi), \quad \xi \in \hh.
   \end{align}
Applying Theorem \ref{sprz} and the equalities
\eqref{defY} and \eqref{defU} , we see that
$\kscr^{\varPhi} \subseteq \dz{C_{A}^*}$,
$C_{A}^*(\kscr^{\varPhi}) \subseteq \kscr^{\varPhi}$
and
   \begin{align*}
T U(K^{\varPhi}_{Q\xi}) & = T Y(\xi) = \bigoplus_{n
\in\zscr_{\varPhi}} \sqrt{a_n} \; (\varXi_Q(A)\xi)^{\otimes
n}
   \\
&= Y(\varXi_Q(A)\xi) = U K^{\varPhi}_{AQ\xi} = U C_A^*
(K^{\varPhi}_{Q\xi}), \quad \xi \in \hh.
   \end{align*}
This combined with the surjectivity of $Q$, the
injectivity of $U$ and \eqref{defU} implies that $U
\kscr^{\varPhi} =\yscr$, $T\yscr \subseteq \yscr$ and
$T|_{\yscr} U = U C_{A}^*|_{\kscr^{\varPhi}}$. Hence
$U^{-1}T|_{\yscr} U = C_{A}^*|_{\kscr^{\varPhi}}$
which together with Theorem \ref{sprz}, Lemma
\ref{lem1} and the fact that $\yscr$ is a core for $T$
yields
   \begin{align*}
C_{A}^* = \overline{C_{A^*}|_{\kscr^{\varPhi}}}= \overline{
C_{A}^*|_{\kscr^{\varPhi}}} = U^{-1}\overline{T|_{\yscr}} U
= U^{-1} T U.
   \end{align*}
This completes the proof.
   \end{proof}
   Before answering the two questions posed at the
beginning of this section, we state and prove one more
lemma.
   \begin{lem} \label{lem2}
Suppose $\varPhi \in \fscr$ and $A \in \ogr{\hh}$.
Then the following conditions are equivalent{\em :}
   \begin{enumerate}
   \item[(i)] $C_A^*=C_{A^*}$,
   \item[(ii)] $\kscr^{\varPhi}$ is a core for $C_A$,
   \item[(iii)] $\jd{I + C_AC_{A^*}} = \{0\}$,
   \item[(iv)] if $f \in \dz{C_A}$ and $f(AA^*\xi)=-f(\xi)$
for all $\xi \in \hh$, then $f=0$.
   \end{enumerate}
   \end{lem}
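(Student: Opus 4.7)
The plan is to establish the cycle (i)$\Leftrightarrow$(ii), (i)$\Leftrightarrow$(iii), and (i)$\Leftrightarrow$(iv), relying on Lemma~\ref{lem1} together with the classical von Neumann theory of the products $T^*T$ and $TT^*$ attached to a closed densely defined operator $T$.

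For (i)$\Leftrightarrow$(ii), I would apply Lemma~\ref{lem1}(ii) with $A^*$ in place of $A$ to obtain $C_{A^*}^* = \overline{C_A|_{\kscr^{\varPhi}}}$. Since $C_A$ is closed by Proposition~\ref{stale2}(i), $\kscr^{\varPhi}$ is a core for $C_A$ precisely when $C_A = C_{A^*}^*$; on the other hand, taking adjoints of the equality $C_A^* = C_{A^*}$ and using that both $C_A$ and $C_{A^*}$ are closed and densely defined yields the same reformulation $C_A = C_{A^*}^*$.

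For (i)$\Rightarrow$(iii): under (i) the operator $C_A C_{A^*}$ equals $C_A C_A^*$, and the von Neumann theorem ensures that $I + C_A C_A^*$ is positive selfadjoint with bounded inverse, so in particular $\jd{I+C_A C_{A^*}}=\{0\}$. For the more delicate direction (iii)$\Rightarrow$(i): Lemma~\ref{lem1}(ii) gives $C_A^* \subseteq C_{A^*}$, and hence $I + C_A C_A^* \subseteq I + C_A C_{A^*}$. The smaller operator is surjective with bounded inverse, while the larger is injective by hypothesis; given $f \in \dz{C_A C_{A^*}}$, one chooses $f' \in \dz{C_A C_A^*}$ with $(I+C_A C_A^*)f' = (I+C_A C_{A^*})f$ and invokes the injectivity of the extension to conclude $f=f'$, so that $\dz{C_A C_{A^*}} = \dz{C_A C_A^*}$. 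This common set contains $\dz{C_{A^*}^* C_{A^*}}$ (because $C_{A^*}^* \subseteq C_A$), hence is a core for $C_{A^*}$; being equal to $\dz{C_A C_A^*}$, it is also a core for $C_A^*$. Since $C_A^* \subseteq C_{A^*}$ and the two closed operators agree on this common core, their closures coincide and $C_A^* = C_{A^*}$.

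Finally, (i)$\Leftrightarrow$(iv) follows by applying the equivalence (i)$\Leftrightarrow$(iii) with $A$ replaced by $A^*$: condition (i) is unchanged by this exchange, while (iv) is exactly the reformulation of $\jd{I+C_{A^*}C_A}=\{0\}$, since $(C_{A^*}C_A f)(\xi)=f(AA^*\xi)$ matches the equation appearing in (iv), and the requirement $f\circ AA^*=-f\in\varPhi(\hh)$ automatically forces $C_A f \in \dz{C_{A^*}}$. The main obstacle is the implication (iii)$\Rightarrow$(i): one has to observe that the common domain produced by the extension-plus-injectivity argument simultaneously serves as a core for $C_{A^*}$ (via $\dz{C_{A^*}^* C_{A^*}}$) and for $C_A^*$ (via $\dz{C_A C_A^*}$), which is precisely what permits passing from equality of the second-factor products to equality of the outer composition operators.
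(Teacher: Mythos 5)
Your argument is correct. For (i)$\Leftrightarrow$(ii) you follow essentially the paper's route: both reduce the two conditions to the single identity $C_A=C_{A^*}^*=\overline{C_A|_{\kscr^{\varPhi}}}$ via Lemma \ref{lem1} and the von Neumann theorem. For (i)$\Leftrightarrow$(iii) the paper simply invokes, without proof, the general fact that for closed densely defined $S\subseteq T$ one has $S=T$ iff $\jd{I+S^*T}=\{0\}$ (applied to $S=C_A^*\subseteq C_{A^*}=T$); your surjectivity-plus-injectivity argument, showing $\dz{C_AC_{A^*}}=\dz{C_AC_A^*}$ and then matching the two operators on a common core furnished by von Neumann's theorem, is precisely a proof of that general fact, so you have filled in a step the paper leaves to the reader. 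Where you genuinely diverge is condition (iv): the paper proves (ii)$\Leftrightarrow$(iv) directly, by computing the graph inner product $\is{f}{K^{\varPhi}_{\xi}}+\is{C_Af}{C_AK^{\varPhi}_{\xi}}=f(\xi)+f(AA^*\xi)$ and interpreting the core property as the vanishing of the graph-orthogonal complement of $\kscr^{\varPhi}$, whereas you obtain (i)$\Leftrightarrow$(iv) by exploiting the symmetry of (i) under $A\leftrightarrow A^*$ and identifying the solution set of the functional equation in (iv) with $\jd{I+C_{A^*}C_A}$; your observation that $f\circ(AA^*)=-f\in\varPhi(\hh)$ automatically puts $C_Af$ in $\dz{C_{A^*}}$ is exactly the point needed to make that identification, and it is correct. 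The paper's route for (iv) is more self-contained (it needs only the reproducing property), while yours is shorter once (i)$\Leftrightarrow$(iii) is in hand; both are valid.
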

   \begin{proof}
(i)$\Leftrightarrow$(ii) If $C_A^*=C_{A^*}$, then
Proposition \ref{stale2}(i), Lemma \ref{lem1}(iii) and
the von Neumann theorem yield
   \begin{align*}
C_A=C_A^{**} = C_{A^*}^* =
(C_A|_{\kscr^{\varPhi}})^{**} =
\overline{C_A|_{\kscr^{\varPhi}}}.
   \end{align*}
Similar reasoning gives the reverse implication.

(i)$\Leftrightarrow$(iii) This is a direct consequence
of the inclusion $C_{A}^* \subseteq C_{A^*}$ (apply
Lemma \ref{lem1}(ii)) and the following general fact:
if $S$ and $T$ are closed densely defined operators in
a complex Hilbert space $\hh$ such that $S \subseteq
T$, then $S=T$ if and only if $\jd{I+S^*T}=\{0\}$.

(ii)$\Leftrightarrow$(iv) Note that $\kscr^{\varPhi}$
is a core for $C_A$ if and only if $\kscr^{\varPhi}$
is dense in $\dz{C_A}$ with respect to the graph norm
or equivalently if and only if the only function $f$
in $\dz{C_A}$ such that $\is{f}{K_\xi^{\varPhi}} +
\is{C_Af}{C_A K_\xi^{\varPhi}} = 0$ for all $\xi \in
\hh$ is the zero function. Since, by Lemma \ref{lem1}
and Theorem \ref{sprz}(i),
   \begin{align*}
\is{f}{K_\xi^{\varPhi}} + \is{C_Af}{C_A
K_\xi^{\varPhi}} & \overset{\eqref{rep}}= f(\xi) +
\is{C_Af}{K_{A^*\xi}^{\varPhi}}
   \\
& \hspace{1ex}= f(\xi) + \is{f}{K_{AA^*\xi}^{\varPhi}}
\overset{\eqref{rep}}= f(\xi) + f(AA^*\xi), \quad \xi
\in \hh,
   \end{align*}
we conclude that the conditions (ii) and (iv) are
equivalent.
   \end{proof}
   Now we are in a position to answer the
aforementioned questions.
   \begin{thm} \label{wnoca}
Suppose $\varPhi \in \fscr$ and $A \in \ogr{\hh}$. Then
   \begin{enumerate}
   \item[(i)] $C_{A}^*=C_{A^*}$,
   \item[(ii)] $\kscr^{\varPhi}$ is a core for $C_A$.
   \end{enumerate}
   \end{thm}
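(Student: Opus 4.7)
The plan is to obtain both statements at once from the Fock-type model in Theorem \ref{fmod}, together with the equivalences of Lemma \ref{lem2}. Since Lemma \ref{lem2} gives (i)$\Leftrightarrow$(ii), it is enough to prove (i); (ii) then follows automatically.

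Before invoking Theorem \ref{fmod}, I would record two compatibility properties. First, because $Q$ is an antilinear involution with $\is{Q\xi}{Q\eta}=\is{\eta}{\xi}$, a direct two-line computation gives $(QAQ)^* = QA^*Q$, i.e.\ $\varXi_Q(A^*) = \varXi_Q(A)^*$. Second, since $(B^{\odot n})^* = (B^*)^{\odot n}$ for every $n\in\zbb_+$ and $B\in \ogr{\hh}$, and since for an orthogonal sum of bounded operators the adjoint of $\bigoplus_n B_n$ (viewed as a closed densely defined operator on $\bigoplus_n \hh^{\odot n}$) equals $\bigoplus_n B_n^*$, we obtain
\[
\varGamma_{\varPhi}(B^*) \;=\; \varGamma_{\varPhi}(B)^*, \qquad B \in \ogr{\hh}.
\]

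With these ingredients, applying Theorem \ref{fmod} to both $A$ and $A^*$ yields, with $U := U_{\varPhi,Q}$ unitary,
\[
C_{A}^* \;=\; U^{-1} \varGamma_{\varPhi}(\varXi_Q(A))\, U, \qquad C_{A^*}^* \;=\; U^{-1} \varGamma_{\varPhi}(\varXi_Q(A))^*\, U.
\]
Taking the adjoint of the second equality, using that $U$ is unitary and that by Proposition \ref{stale2}(i) the operator $C_{A^*}$ is closed (so $C_{A^*}^{**}=C_{A^*}$ by the von Neumann theorem), gives
\[
C_{A^*} \;=\; U^{-1} \varGamma_{\varPhi}(\varXi_Q(A))\, U \;=\; C_A^*,
\]
which is (i). Now Lemma \ref{lem2}, equivalence (i)$\Leftrightarrow$(ii), delivers (ii).

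The only potentially subtle step is the identity $\varGamma_{\varPhi}(B^*) = \varGamma_{\varPhi}(B)^*$, since $\varGamma_{\varPhi}(B)$ is in general unbounded; but it is a standard consequence of the general fact that the Hilbert space adjoint of an orthogonal direct sum of bounded operators acts coordinatewise. Everything else is bookkeeping: substituting $A^*$ into the model, using that $\varXi_Q$ intertwines with taking adjoints, and invoking closedness of $C_{A^*}$ to justify $C_{A^*}^{**}=C_{A^*}$.
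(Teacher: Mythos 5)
Your proposal is correct and follows essentially the same route as the paper: both arguments rest on the Fock-type model of Theorem \ref{fmod}, the identities $\varXi_Q(A^*)=\varXi_Q(A)^*$ and $(B^{\odot n})^*=(B^*)^{\odot n}$ together with the coordinatewise action of adjoints on orthogonal sums, closedness and dense definedness of the composition operators plus the von Neumann theorem, and finally Lemma \ref{lem2} for part (ii). The only cosmetic difference is that the paper first derives $C_A=(C_{A^*})^*$ and then passes to adjoints, whereas you take the adjoint of the model for $C_{A^*}^*$ directly; this is the same computation read in the opposite order.
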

   \begin{proof}
(i) First note that $(B^*)^{\odot n} = (B^{\odot
n})^*$ for every $B\in \ogr{\hh}$ and for all $n\in
\zbb_+$. Fix any conjugation $Q$ on $\hh$. Since $C_A$
is closed and densely defined (cf.\ Lemma \ref{lem1}),
we infer from Theorem \ref{fmod} and Proposition
\ref{abcon}(v) that
   \begin{align*}
C_A = (C_A^*)^* &\overset{\eqref{import}}=
U_{\varPhi,Q}^{-1}\bigg(\bigoplus_{n \in \zscr_{\varPhi}}
\varXi_Q(A)^{\odot n}\bigg)^* U_{\varPhi,Q}
   \\
&\hspace{1.5ex} = U_{\varPhi,Q}^{-1}\bigg(\bigoplus_{n \in
\zscr_{\varPhi}} \varXi_Q(A^*)^{\odot n}\bigg)
U_{\varPhi,Q}\overset{\eqref{import}}= (C_{A^*})^*.
   \end{align*}
Applying the above to $A^*$ in place of $A$ (or taking
adjoints), we get $C_{A}^*=C_{A^*}$.

(ii) Apply Lemma \ref{lem2}.
   \end{proof}
   \begin{cor} \label{osfn}
Suppose $\varPhi \in \fscr$ and $A \in \ogr{\hh}$.
Then $C_A$ is unitarily equivalent to $\bigoplus_{n
\in \zscr_{\varPhi}} C^{\langle n\rangle}_A$, where
$C^{\langle 0 \rangle}_A$ is the identity operator on
$\cbb$ and for every $n\in \nbb$, $C^{\langle n
\rangle}_A$ denotes the composition operator in
$\varPhi_n(\hh)$ with the symbol $A$ and
$\varPhi_n(z)=z^n$ for $z\in \cbb$. Moreover, if $Q$
is a conjugation on $\hh$, then for every $n\in
\zbb_+$, $C^{\langle n \rangle}_{\varXi_{Q}(A^*)}$ is
unitarily equivalent to $A^{\odot n}$.
   \end{cor}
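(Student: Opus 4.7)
The plan is to derive everything from the Fock-type model of Theorem~\ref{fmod} by specializing it to the monomial weights $\varPhi_n(z) = z^n$. Since $C_A$ is closed (Proposition~\ref{stale2}(i)) and densely defined (Lemma~\ref{lem1}(i)), I have $C_A = C_A^{**}$. Taking adjoints in the identity \eqref{import} and using that $(B^{\odot n})^* = (B^*)^{\odot n}$ for every $B \in \ogr{\hh}$ (as in the proof of Theorem~\ref{wnoca}(i)), together with $\varXi_Q(A)^* = \varXi_Q(A^*)$ (Proposition~\ref{abcon}(v)), I obtain
\[
C_A = U_{\varPhi,Q}^{-1} \Bigg(\bigoplus_{n \in \zscr_\varPhi} \varXi_Q(A^*)^{\odot n}\Bigg) U_{\varPhi,Q}.
\]

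Next, I specialize this identity to $\varPhi = \varPhi_n$ for each $n \in \nbb$ (note that $\varPhi_n \in \fscr$ and $\zscr_{\varPhi_n} = \{n\}$), which yields the single-block identity
\[
C^{\langle n\rangle}_A = U_{\varPhi_n,Q}^{-1}\, \varXi_Q(A^*)^{\odot n}\, U_{\varPhi_n,Q}, \quad n \in \nbb.
\]
Composing the first display with the direct sum of the unitaries $U_{\varPhi_n,Q}$ (indexed by $n \in \zscr_\varPhi$), I identify $C_A$ unitarily with $\bigoplus_{n \in \zscr_\varPhi} C^{\langle n\rangle}_A$. The possible $n = 0$ summand is handled by convention: $C^{\langle 0\rangle}_A$ and $\varXi_Q(A^*)^{\odot 0}$ both equal the identity on $\cbb = \hh^{\odot 0}$.

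For the moreover part, I substitute $\varXi_Q(A^*)$ in place of $A$ in the single-block identity above. Since $\varXi_Q$ is an involution and commutes with taking adjoints (Proposition~\ref{abcon}), I have $\varXi_Q\bigl((\varXi_Q(A^*))^*\bigr) = \varXi_Q(\varXi_Q(A)) = A$, hence
\[
C^{\langle n\rangle}_{\varXi_Q(A^*)} = U_{\varPhi_n,Q}^{-1}\, A^{\odot n}\, U_{\varPhi_n,Q}, \quad n \in \nbb,
\]
exhibiting the desired unitary equivalence; the $n = 0$ case is trivial as both sides equal $I_\cbb$.

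The only subtlety is bookkeeping: matching the $\zscr_\varPhi$-indexed decomposition arising from Theorem~\ref{fmod} with the family of single-block models, and separately treating $n = 0$ (since $\varPhi_0 \notin \fscr$, so Theorem~\ref{fmod} itself does not apply to it, but the conventions $C^{\langle 0\rangle}_A = I_\cbb$ and $A^{\odot 0} = I_\cbb$ make the identification automatic). No genuinely new computation beyond Theorem~\ref{fmod} is required.
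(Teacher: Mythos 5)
Your proposal is correct and follows essentially the same route as the paper: both rest on Theorem~\ref{fmod} applied to $\varPhi$ and to each $\varPhi_n$, the adjoint computation from Theorem~\ref{wnoca}(i), and the involution properties of $\varXi_Q$ from Proposition~\ref{abcon}, with the $n=0$ block handled by convention. The only difference is that you write out the adjoint-taking step explicitly rather than citing $C_A = C_{A^*}^*$ directly, which changes nothing of substance.
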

   \begin{proof}
Applying Theorems \ref{fmod} and \ref{wnoca}(i) to
$\varPhi$ (resp.,\ $\varPhi_n$ with $n\in \nbb$), we
deduce that $C_A$ is unitarily equivalent to
$\bigoplus_{n \in \zscr_{\varPhi}}
\varXi_{Q}(A^*)^{\odot n}$ (resp.,\ $C^{\langle n
\rangle}_A$ is unitarily equivalent to
$\varXi_{Q}(A^*)^{\odot n}$ for every $n\in \nbb$).
This and Proposition \ref{abcon} yield the
``moreover'' part and the unitary equivalence of $C_A$
and $\bigoplus_{n \in \zscr_{\varPhi}} C^{\langle n
\rangle}_A$.
   \end{proof}
   \begin{cor} \label{sa-sa}
Suppose $\varPhi \in \fscr$ and $A \in \ogr{\hh}$. Then
$C_A$ is selfadjoint if and only if there exists $\alpha
\in \gfrak_{\varPhi}$ such that $A^* = \alpha A$.
   \end{cor}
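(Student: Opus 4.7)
The plan is to combine the adjoint formula from Theorem \ref{wnoca}(i) with the uniqueness-of-symbol result Theorem \ref{fipsi}. By Lemma \ref{lem1}(i) we have $\kscr^{\varPhi} \subseteq \dz{C_A} \cap \dz{C_{A^*}}$, and since $\kscr^{\varPhi}$ is dense in $\varPhi(\hh)$, both $C_A$ and $C_{A^*}$ are densely defined. Theorem \ref{wnoca}(i) gives $C_A^* = C_{A^*}$, so $C_A$ is selfadjoint if and only if the operator identity $C_A = C_{A^*}$ holds.

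Next I would apply Theorem \ref{fipsi} with $\varphi = A$ and $\psi = A^*$, both viewed as (in fact, linear) holomorphic mappings $\hh \to \hh$. The equivalence (ii)$\Leftrightarrow$(iii) of that theorem says that $C_A = C_{A^*}$ is equivalent to the existence of some $\beta \in \gfrak_{\varPhi}$ with $A\xi = \beta\cdot A^*\xi$ for every $\xi \in \hh$, i.e.\ $A = \beta A^*$.

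Finally, one observes that $\gfrak_{\varPhi}$ is a subgroup of the unit circle (a finite group of roots of unity, by Lemma \ref{gcd=1}), so it is closed under inversion in $\cbb \setminus \{0\}$. Thus $A = \beta A^*$ is equivalent to $A^* = \beta^{-1} A$ with $\beta^{-1} \in \gfrak_{\varPhi}$, and relabeling $\alpha := \beta^{-1}$ produces the statement of the corollary.

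I do not anticipate any serious obstacle here, since the two substantial inputs (computing $C_A^*$ for linear $A$, and classifying coincidence of composition operators) have already been established. The only points that need a sentence of verification are (a) the dense-definedness hypothesis of Theorem \ref{fipsi}, which follows at once from Lemma \ref{lem1}(i), and (b) the purely algebraic passage between $A = \beta A^*$ and $A^* = \alpha A$, which uses only that $\gfrak_{\varPhi}$ is a group.
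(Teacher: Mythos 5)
Your proposal is correct and follows exactly the route the paper takes: the paper's proof is precisely "a direct consequence of Theorem \ref{wnoca}(i) and Theorem \ref{fipsi}," and your verification of the dense-definedness hypothesis via Lemma \ref{lem1}(i) and the passage from $A=\beta A^*$ to $A^*=\beta^{-1}A$ using that $\gfrak_{\varPhi}$ is a group are the right (routine) details to fill in.
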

   \begin{proof}
This is a direct consequence of Theorem \ref{wnoca}(i)
and Theorem \ref{fipsi}.
   \end{proof}
   \section{\label{sekt8}Boundedness and partial isometricity
of $C_A$}
   The following lemma will be used to calculate the
norm of the composition operator $C_A$, where $A \in
\ogr{\hh}$ (see Theorem \ref{bca}). For
self-containedness, we include its proof.
   \begin{lem} \label{lempom}
If $A \in \ogr{\hh}$ and $n\in \zbb_+$, then $A^{\odot
n} \in \ogr{\hh^{\odot n}}$ and $\|A^{\odot
n}\|=\|A\|^n$.
   \end{lem}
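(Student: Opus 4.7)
The plan is to reduce the claim to the corresponding fact for the full tensor power $A^{\otimes n}$ on $\hh^{\otimes n}$ and then exploit the elementary symmetric tensors $\xi^{\otimes n}$ to get matching upper and lower bounds.

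First I would dispose of the boundary case $n=0$, where by convention $A^{\odot 0} = I_{\cbb}$, so $\|A^{\odot 0}\| = 1 = \|A\|^0$. For $n \in \nbb$, recall that $\hh^{\odot n}$ sits as a closed (in fact $A^{\otimes n}$-invariant) subspace of $\hh^{\otimes n}$, and that $A^{\odot n}$ is precisely the restriction $A^{\otimes n}|_{\hh^{\odot n}}$. Consequently
\[
\|A^{\odot n}\| \Le \|A^{\otimes n}\|.
\]
For the full tensor power one knows $\|A^{\otimes n}\| = \|A\|^n$ (proved by induction using $\|A \otimes B\| = \|A\|\|B\|$, see e.g.\ \cite{Gui,R-S,Part}). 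This yields the bounded\-ness of $A^{\odot n}$ on $\hh^{\odot n}$ together with the upper estimate $\|A^{\odot n}\| \Le \|A\|^n$.

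For the reverse estimate, the key observation is that the elementary symmetric tensors $\xi^{\otimes n}$ lie in $\hh^{\odot n}$, with $\|\xi^{\otimes n}\| = \|\xi\|^n$, and the action of $A^{\odot n}$ on them is $A^{\odot n}(\xi^{\otimes n}) = (A\xi)^{\otimes n}$, whose norm equals $\|A\xi\|^n$. Hence for every $\xi \in \hh \setminus \{0\}$,
\[
\|A^{\odot n}\| \; \Ge \; \frac{\|A^{\odot n}(\xi^{\otimes n})\|}{\|\xi^{\otimes n}\|} \;=\; \frac{\|A\xi\|^n}{\|\xi\|^n}.
\]
Taking the supremum over $\xi \neq 0$ gives $\|A^{\odot n}\| \Ge \|A\|^n$, which combined with the upper bound yields the desired equality.

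There is no real obstacle here; the only point that one should make explicit is the identification $A^{\odot n} = A^{\otimes n}|_{\hh^{\odot n}}$ (equivalently, that $\hh^{\odot n}$ reduces $A^{\otimes n}$ when $A^{\otimes n}$ is viewed as being symmetrized, or simply that $A^{\otimes n}$ preserves the symmetric subspace), and the multiplicativity $\|A^{\otimes n}\| = \|A\|^n$, both of which are standard facts from the references \cite{Gui,R-S,Part} cited earlier.
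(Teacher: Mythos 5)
Your proof is correct and follows essentially the same route as the paper: the upper bound via $A^{\odot n}\subseteq A^{\otimes n}$ and $\|A^{\otimes n}\|=\|A\|^n$, and the lower bound by testing $A^{\odot n}$ on the elementary symmetric tensors $\xi^{\otimes n}$, using $\|A^{\odot n}\xi^{\otimes n}\|=\|A\xi\|^n$ and $\|\xi^{\otimes n}\|=\|\xi\|^n$. No issues.
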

   \begin{proof}
It suffices to consider the case of $n\in \nbb$. Since
$A^{\odot n} \subseteq A^{\otimes n} \in
\ogr{\hh^{\otimes n}}$, we see that $A^{\odot n} \in
\ogr{\hh^{\odot n}}$ and $\|A^{\odot n}\|\Le
\|A^{\otimes n}\|=\|A\|^n$. However
   \begin{align*}
\|Af\|^n = \|A^{\odot n} f^{\otimes n}\| \Le
\|A^{\odot n}\| \|f\|^n, \quad f \in \hh,
   \end{align*}
which implies that $\|A\|^n \Le \|A^{\odot n}\|$. This
completes the proof.
   \end{proof}
   Theorem \ref{bca} below provides necessary and
sufficient conditions for $C_A$ to be bounded and the
explicit formulas for the norm and the spectral radius
of $C_A$. Given $m\in \zbb_+$ and $n\in \zbb_+ \cup
\{\infty\}$, we define the function $q_{m,n}\colon
[0,\infty) \to [0,\infty]$ by
   \begin{align*}
q_{m,n}(\vartheta)
=\vartheta^m\max\{1,\vartheta^{n-m}\}, \quad \vartheta
\in [0,\infty),
   \end{align*}
where $\vartheta^0=1$ for $\vartheta \in [0,\infty)$,
$\vartheta^{\infty} = \infty$ for $\vartheta \in
(1,\infty)$, $\vartheta^\infty = 0$ for $\vartheta \in
[0,1)$ and $1^\infty=1$ (cf.\ \cite[Lemma 2.1]{js1}).
   \begin{thm}\label{bca}
Suppose $\varPhi \in \fscr$ and $A\in \ogr{\hh}$.
Set\/\footnote{\;Note that $0$ is a zero of $\varPhi$
of multiplicity $m$ and $\infty$ is a pole of
$\varPhi$ of order $n$.} $m=\min \zscr_{\varPhi}$ and
$n=\sup \zscr_{\varPhi}$. Then
   \begin{enumerate}
   \item[(i)] if $n < \infty$, then $C_A
\in \ogr{\varPhi(\hh)}$,
   \item[(ii)] if $n =  \infty$, then
$C_A\in \ogr{\varPhi(\hh)}$ if and only if $\|A\|\Le
1$.
   \end{enumerate}
Moreover, if $C_A\in \ogr{\varPhi(\hh)}$, then
   \begin{align}  \label{normca}
\|C_A\| & = q_{m,n}(\|A\|),
   \\ \label{prsp}
r(C_A) & = q_{m,n}(r(A)).
   \end{align}
   \end{thm}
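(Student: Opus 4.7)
The plan is to read off the norm and spectral radius from the Fock-type decomposition supplied by Corollary~\ref{osfn}. Fix any conjugation $Q$ on $\hh$ (which exists by Appendix~\ref{apap}). Corollary~\ref{osfn} gives the unitary equivalence $C_A \simeq \bigoplus_{k \in \zscr_{\varPhi}} C^{\langle k \rangle}_A$, and furthermore each $C^{\langle k \rangle}_A$ is unitarily equivalent to $(\varXi_Q(A^*))^{\odot k}$: to read the latter off the ``moreover'' part of Corollary~\ref{osfn}, substitute $B = \varXi_Q(A^*)$ for the symbol and use $\varXi_Q\circ\varXi_Q = \mathrm{id}$ together with $\varXi_Q(T^*)=\varXi_Q(T)^*$ from Appendix~\ref{apap}. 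Since $\varXi_Q$ preserves norm and spectrum (Appendix~\ref{apap}) and $\|B^{\odot k}\|=\|B\|^k$ by Lemma~\ref{lempom}, one immediately gets $\|C^{\langle k\rangle}_A\|=\|A\|^k$.

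Because the norm of an orthogonal direct sum is the supremum of the summand norms (and the sum is bounded iff that supremum is finite), this yields
\begin{align*}
\|C_A\| = \sup_{k\in \zscr_{\varPhi}} \|A\|^k,
\end{align*}
where ``$\infty$'' signals unboundedness. When $n<\infty$ the set $\zscr_{\varPhi}$ is finite, so the supremum is finite, giving (i); when $n=\infty$, the supremum is finite iff $\|A\|\Le 1$, giving (ii). A short case split identifies the supremum with $q_{m,n}(\|A\|)$: if $\|A\|\Le 1$ the maximum of $\vartheta^k$ over $k\in\zscr_{\varPhi}$ is attained at $k=m$ and equals $\|A\|^m$; if $\|A\|>1$ (whence necessarily $n<\infty$ for boundedness) it is attained at $k=n$ and equals $\|A\|^n$. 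The boundary values $\|A\|\in\{0,1\}$ together with $n=\infty$ are absorbed by the conventions $0^0=1^{\infty}=1$ and $\vartheta^{\infty}=0$ for $\vartheta\in[0,1)$, so \eqref{normca} holds in every case.

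For \eqref{prsp} we need $r(B^{\odot k}) = r(B)^k$. This follows from Gelfand's formula combined with the identity $(B^{\odot k})^j = (B^j)^{\odot k}$ and Lemma~\ref{lempom}: $\|(B^{\odot k})^j\|^{1/j} = \|B^j\|^{k/j} \to r(B)^k$. Since the spectral radius of a bounded orthogonal direct sum is the supremum of the summand spectral radii, we obtain $r(C_A) = \sup_{k\in \zscr_{\varPhi}} r(A)^k$, and the same case split as above turns this into $q_{m,n}(r(A))$. The only mildly delicate issue is keeping the boundary conventions consistent; once the decomposition from Corollary~\ref{osfn} is invoked, the rest is bookkeeping with the function $q_{m,n}$.
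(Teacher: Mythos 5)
Your treatment of boundedness and of the norm formula \eqref{normca} is correct and follows essentially the paper's own route: the Fock-type model (Theorem \ref{fmod}, repackaged as Corollary \ref{osfn}) together with Lemma \ref{lempom} reduces everything to $\sup_{k\in\zscr_{\varPhi}}\|A\|^{k}$, and your case split correctly identifies this supremum with $q_{m,n}(\|A\|)$, including the boundary conventions.

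The spectral radius argument, however, rests on a false general principle. It is not true that the spectral radius of a bounded orthogonal direct sum equals the supremum of the spectral radii of the summands; since each summand's space reduces the sum, one only gets $r\big(\bigoplus_{\omega}T_{\omega}\big)\Ge\sup_{\omega}r(T_{\omega})$ in general. For a counterexample to the reverse inequality, let $J_{k}\in\ogr{\cbb^{k}}$ be the nilpotent Jordan block; then $r(J_{k})=0$ for every $k$, yet $T=\bigoplus_{k\Ge 1}J_{k}$ is a contraction with $\|T^{j}\|=1$ for all $j$ (because $\|J_{k}^{j}\|=1$ whenever $k>j$), so $r(T)=1>0=\sup_{k}r(J_{k})$. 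Consequently your step ``$r(C_{A})=\sup_{k}r(A)^{k}$'' is not justified as written (your computation $r(B^{\odot k})=r(B)^{k}$ for the individual summands is fine; it is the passage to the direct sum that fails). The repair is exactly what the paper does: apply Gelfand's formula to $C_{A}$ itself. From \eqref{normca} and $C_{A}^{j}=C_{A^{j}}$ one gets $\|C_{A}^{j}\|^{1/j}=q_{m,n}(\|A^{j}\|^{1/j})$, and one passes to the limit using the continuity of $q_{m,n}$ on $[0,\infty)$ when $n<\infty$, and on $[0,1]$ (which contains every $\|A^{j}\|^{1/j}$, since $\|A\|\Le 1$ in that case) when $n=\infty$. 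You already have all the ingredients for this; only the one-line appeal to the direct-sum ``principle'' needs to be replaced by this limit argument.
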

   \begin{proof}
Clearly, $C_A \in \ogr{\hh}$ if and only if $C_A^* \in
\ogr{\hh}$, or equivalently by Theorem \ref{fmod},
Lemma \ref{lempom} and Proposition \ref{abcon}, if and
only if $\sup_{k \in \zscr_{\varPhi}}
\|\varXi_{Q}(A)^{\odot k}\| = \sup_{k \in
\zscr_{\varPhi}} \|A\|^k < \infty$. This implies (i)
and (ii). Moreover, we have
   \begin{align*}
\|C_A\| = \|C_A^*\| \overset{\eqref{import}} = \sup_{k
\in \zscr_{\varPhi}} \|\varXi_{Q}(A)^{\odot k}\| =
\sup_{k \in \zscr_{\varPhi}} \|A\|^k = q_{m,n}(\|A\|).
   \end{align*}

To prove \eqref{prsp}, assume that $C_A\in
\ogr{\varPhi(\hh)}$. Since $C_{A^k}=C_A^k \in
\ogr{\varPhi(\hh)}$ for every $k\in \nbb$, we infer
from \eqref{normca} that
   \begin{align*}
\|C_A^k\|^{1/k} = q_{m,n}(\|A^k\|^{1/k}), \quad k \in
\nbb.
   \end{align*}
If $n < \infty$, then $q_{m,n}$ is continuous on
$[0,\infty)$. This, combined with Gelfand's formula
for spectral radius, yields
   \begin{align}   \label{kilo}
r(C_A) = \lim_{k\to\infty} q_{m,n}(\|A^k\|^{1/k}) =
q_{m,n}(r(A)).
   \end{align}
In turn, if $n = \infty$, then $q_{m,n}$ is continuous
on $[0,1]$ and, by (ii), $\|A^k\|^{1/k} \in [0,1]$ for
every $k\in \nbb$. Passing to the limit, as in
\eqref{kilo}, we get \eqref{prsp}.
   \end{proof}
   \begin{cor} \label{normaloid}
Suppose $\varPhi \in \fscr$, $A\in \ogr{\hh}$ and $C_A
\in \ogr{\varPhi(\hh)}$. Then
   \begin{enumerate}
   \item[(i)] if $A$ is normaloid, then so is $C_{A}$,
   \item[(ii)] if $\varPhi(0) \neq 0$ and $\|A\| \Le 1$,
then $C_A$ is normaloid and $r(C_A)=\|C_A\|=1$,
   \item[(iii)] if either $\varPhi(0) = 0$ or $\|A\| > 1$,
then $C_A$ is normaloid if and only if $A$ is
normaloid.
   \end{enumerate}
   \end{cor}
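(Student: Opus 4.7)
The plan is to reduce everything to the explicit formulas $\|C_A\|=q_{m,n}(\|A\|)$ and $r(C_A)=q_{m,n}(r(A))$ supplied by Theorem \ref{bca}, and then to read off the normaloid condition $r(T)=\|T\|$ as a statement about the shape of the function $q_{m,n}$.

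Statement (i) is immediate: if $r(A)=\|A\|$, applying $q_{m,n}$ to both sides gives $r(C_A)=\|C_A\|$. For (ii), the hypothesis $\varPhi(0)\neq 0$ forces $a_0>0$, so $0\in\zscr_\varPhi$ and $m=0$; hence $q_{0,n}(\vartheta)=\max\{1,\vartheta^n\}$, which equals $1$ for every $\vartheta\in[0,1]$. Since $r(A)\le\|A\|\le 1$, both $\|C_A\|$ and $r(C_A)$ evaluate to $1$ (consistent with the lower bound $r(C_A)\ge 1$ supplied by Proposition \ref{stale2}(ii)).

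For (iii), the direction ``$A$ normaloid $\Rightarrow$ $C_A$ normaloid'' is just (i), so I would concentrate on the converse. The idea is to exploit injectivity of $q_{m,n}$ on $[0,\|A\|]$ under either hypothesis. If $\varPhi(0)=0$, then $m\ge 1$, and $q_{m,n}(\vartheta)=\vartheta^m$ on $[0,1]$ is strictly increasing (which already covers the case $\|A\|\le 1$); if in addition $\|A\|>1$, Theorem \ref{bca}(ii) forces $n<\infty$ and the continuation $q_{m,n}(\vartheta)=\vartheta^n$ on $[1,\|A\|]$ is also strictly increasing, so the two monotone pieces glue at $\vartheta=1$ to yield strict monotonicity on $[0,\|A\|]$. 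If instead $\|A\|>1$ while $\varPhi(0)\neq 0$, then $m=0$ and $n<\infty$; here $q_{0,n}$ is constant on $[0,1]$ but strictly increasing on $[1,\infty)$ with $q_{0,n}(\|A\|)=\|A\|^n>1$, and a direct case split according to whether $r(A)<1$ or $r(A)\in[1,\|A\|)$ shows that $q_{0,n}(r(A))<q_{0,n}(\|A\|)$ whenever $r(A)<\|A\|$.

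The only subtle point is the second subcase of (iii): because $q_{0,n}$ fails to be injective on $[0,\infty)$, one must genuinely use the hypothesis $\|A\|>1$ to pass from normaloidness of $C_A$ back to $r(A)=\|A\|$. This lack of injectivity is precisely what makes (ii) a separate phenomenon, where $C_A$ is forced to be normaloid independently of whether $A$ is.
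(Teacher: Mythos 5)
Your proposal is correct and follows essentially the same route as the paper: both reduce everything to the formulas $\|C_A\|=q_{m,n}(\|A\|)$ and $r(C_A)=q_{m,n}(r(A))$ from Theorem \ref{bca} and then analyze where $q_{m,n}$ is constant versus strictly increasing, splitting into the same cases according to whether $m=0$ (i.e., $\varPhi(0)\neq 0$) and whether $\|A\|>1$. Your treatment of the subcase $m=0$, $\|A\|>1$ is in fact slightly more explicit than the paper's, which simply asserts that $\max\{1,r(A)^n\}=\|A\|^n$ forces $r(A)=\|A\|$.
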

   \begin{proof}
The assertions (i) and (ii) follow directly from
Theorem \ref{bca}. In view of (i), it remains to prove
the ``only if'' part of (iii). Assume $C_A$ is
normaloid. Using Theorem \ref{bca}, we show that in
each of the three possible cases $A$ is normaloid.
Indeed, if $\varPhi(0) \neq 0$ and $\|A\|
> 1$, then $m=0$ and $1 \Le n < \infty$, and  thus,
by \eqref{normca} and \eqref{prsp}, $\max\{1, r(A)^n\}
= \|A\|^n$, which implies that $r(A) = \|A\|$. In
turn, if $\varPhi(0) = 0$ and $\|A\|\Le 1$, then $1
\Le m \Le n \Le \infty$ and so $r(A)^m = \|A\|^m$,
which gives $r(A) = \|A\|$. Finally, if $\varPhi(0) =
0$ and $\|A\| > 1$, then $1 \Le m \Le n < \infty$ and
thus
   \begin{align*}
r(A)^m \max\{1,r(A)^{n-m}\}=\|A\|^n,
   \end{align*}
which yields $r(A) = \|A\|$. This completes the proof.
   \end{proof}
   \begin{cor} \label{unit}
Suppose $\varPhi \in \fscr$ and $A \in \ogr{\hh}$.
Then $C_A$ is an isometry $($resp.,\ a coisometry, a
unitary operator$)$ if and only if $A$ is a coisometry
$($resp.,\ an isometry, a unitary operator$)$.
   \end{cor}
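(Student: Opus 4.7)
The plan is to reduce the corollary to the two tools of Section \ref{sekt7}: the identity $C_A^* = C_{A^*}$ (Theorem \ref{wnoca}(i)) and the description of the action of $C_A$ on reproducing kernels (Lemma \ref{lem1}(i)). Since a unitary operator is precisely one that is both an isometry and a coisometry, and since the adjoint of an isometry is a coisometry (and vice versa), only the isometry equivalence needs a separate argument. I would therefore focus on proving that $C_A$ is an isometry if and only if $A$ is a coisometry, and then deduce the coisometry and unitary cases by taking adjoints and invoking $C_A^{**}=C_A$.

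For the easy direction, if $AA^* = I_{\hh}$ then $\|A\| = \|A^*\| = 1$, so Theorem \ref{bca} guarantees that $C_A$ and $C_A^* = C_{A^*}$ both lie in $\ogr{\varPhi(\hh)}$. Two applications of Lemma \ref{lem1}(i) give
\begin{align*}
C_A^* C_A K_\xi^{\varPhi} = C_{A^*} K_{A^*\xi}^{\varPhi} = K_{AA^*\xi}^{\varPhi} = K_\xi^{\varPhi}, \quad \xi \in \hh,
\end{align*}
and density of $\kscr^{\varPhi}$ in $\varPhi(\hh)$ together with boundedness of $C_A^* C_A$ forces $C_A^* C_A = I$.

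For the converse, from $C_A^* C_A = I$ the same chain of equalities yields $K_\xi^{\varPhi} = K_{AA^*\xi}^{\varPhi}$ for every $\xi \in \hh$. Pairing both sides with $K_\eta^{\varPhi}$ via \eqref{rep2} and then substituting $\eta \mapsto z\eta$ produces the scalar identity
\begin{align*}
\varPhi\big(z\is{\eta}{AA^*\xi}\big) = \varPhi\big(z\is{\eta}{\xi}\big), \quad z \in \cbb, \; \xi,\eta \in \hh.
\end{align*}
Comparing Taylor coefficients in $z$ and choosing some $k_0 \in \zscr_{\varPhi}\setminus\{0\}$ (which is nonempty by the definition of $\fscr$) one extracts $\is{\eta}{AA^*\xi}^{k_0} = \is{\eta}{\xi}^{k_0}$. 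The main obstacle is that this equality of $k_0$-th powers a priori determines $\is{\eta}{AA^*\xi}$ only up to a root of unity, so one cannot read off $AA^* = I$ directly. The remedy will be to specialize $\eta = \xi$: since both $\is{AA^*\xi}{\xi}$ and $\|\xi\|^2$ are nonnegative reals (the former because $AA^* \Ge 0$), taking real $k_0$-th roots yields $\is{AA^*\xi}{\xi} = \|\xi\|^2$ for every $\xi \in \hh$, and hence $AA^* = I$.
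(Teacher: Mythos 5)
Your argument is correct, and its skeleton coincides with the paper's: reduce everything to the isometry case via $C_A^*=C_{A^*}$ (Theorem \ref{wnoca}), observe that $C_A$ is an isometry precisely when $C_A^*C_A$ acts as the identity, and compute $C_A^*C_AK_\xi^{\varPhi}=K_{AA^*\xi}^{\varPhi}$ from Lemma \ref{lem1}(i). Where you diverge is the last step. The paper reads the identity $C_{AA^*}=C_I$ through Theorem \ref{fipsi} (maximality of composition operators), which immediately gives $AA^*=\alpha I$ for some $\alpha\in\gfrak_{\varPhi}$, and then kills $\alpha$ by positivity of $AA^*$. You instead stay at the level of kernels: from $K_{AA^*\xi}^{\varPhi}=K_\xi^{\varPhi}$ you extract $\is{\eta}{AA^*\xi}^{k_0}=\is{\eta}{\xi}^{k_0}$ by comparing Taylor coefficients, and the specialization $\eta=\xi$ turns both sides into $k_0$-th powers of the nonnegative reals $\|A^*\xi\|^2$ and $\|\xi\|^2$, so that the root-of-unity ambiguity disappears and $\|A^*\xi\|=\|\xi\|$ follows directly. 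This buys you independence from Theorem \ref{fipsi} (whose proof needs the connectedness of $\varphi^{-1}(\hh\setminus\{0\})$ and the maximum modulus principle), at the cost of redoing, in this special positive case, the coefficient-comparison step that Theorem \ref{fipsi} already packages; the paper's version is shorter on the page because that machinery is available, while yours is more elementary and self-contained. Both proofs ultimately rest on the same positivity of $AA^*$ to rule out nontrivial roots of unity.
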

   \begin{proof}
By Theorem \ref{bca}, we may assume that $C_A \in
\ogr{\varPhi(\hh)}$. We will consider only the case
when $C_A$ is an isometry, leaving the remaining cases
to the reader. It follows from Theorem \ref{wnoca}
that $C_A$ is an isometry if and only if
$C_{AA^*}=C_A^*C_A = I_{\varPhi(\hh)} = C_I$, or
equivalently, by Theorem \ref{fipsi}, if and only if
there exists $\alpha \in \gfrak_{\varPhi}$ such that
$AA^* = \alpha \cdot I$. Since $|\alpha| = 1$ and
$AA^* \Ge 0$, we have $\alpha = 1$ (because, by
\eqref{SA}, $\hh\neq \{0\}$). This completes the
proof.
   \end{proof}
   \begin{cor}\label{opr}
Let $\varPhi \in \fscr$ and $P \in \ogr{\hh}$. Then
$C_P$ is an orthogonal projection if and only if there
exists $\alpha \in \gfrak_{\varPhi}$ such that $\alpha
P$ is an orthogonal projection.
   \end{cor}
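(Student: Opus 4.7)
The plan is to characterize $C_P$ as an orthogonal projection via the two conditions $C_P = C_P^*$ and $C_P = C_P^2$. Boundedness of $C_P$ is automatic here: in the forward direction it comes from the definition of a projection, while in the converse direction $|\gamma|=1$ forces $\|P\| = \|\gamma P\| \le 1$, so Theorem~\ref{bca} yields $C_P \in \ogr{\varPhi(\hh)}$. Once $C_P$ is bounded with domain $\varPhi(\hh)$, the identity $C_P^2 f = f \circ P \circ P = f \circ P^2 = C_{P^2} f$ on all of $\varPhi(\hh)$ gives $C_P^2 = C_{P^2}$. Corollary~\ref{sa-sa} then translates selfadjointness into $P^* = \alpha P$ for some $\alpha \in \gfrak_{\varPhi}$, while Theorem~\ref{fipsi} translates idempotency into $P = \beta P^2$ for some $\beta \in \gfrak_{\varPhi}$.

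For the ``only if'' direction, take $\alpha, \beta \in \gfrak_{\varPhi}$ as above. If $P = 0$, then $\gamma := 1 \in \gfrak_{\varPhi}$ makes $\gamma P = 0$ a trivial orthogonal projection. Otherwise $P^2 \ne 0$ (else $P = \beta P^2 = 0$), and I would compare two expressions for $P^*$: directly, $P^* = \alpha P = \alpha\beta P^2$, and by taking the adjoint of $P = \beta P^2$, $P^* = \bar\beta (P^*)^2 = \bar\beta \alpha^2 P^2$. Cancelling the nonzero $P^2$ and using $\bar\beta = 1/\beta$ gives $\alpha = \beta^2$. Setting $\gamma := \beta$, direct computation confirms $(\gamma P)^* = \bar\beta P^* = \bar\beta \alpha P = \bar\beta \beta^2 P = \beta P = \gamma P$ and $(\gamma P)^2 = \beta^2 P^2 = \beta(\beta P^2) = \beta P = \gamma P$, so $\gamma P$ is a selfadjoint idempotent, hence an orthogonal projection.

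For the ``if'' direction, given that $\gamma P$ is an orthogonal projection, expanding $(\gamma P)^* = \gamma P$ with $|\gamma|=1$ yields $P^* = \gamma^2 P$, and since $\gamma^2 \in \gfrak_{\varPhi}$ (the intersection of the groups $G_k$ is itself a group and thus closed under squaring), Corollary~\ref{sa-sa} gives $C_P = C_P^*$. Similarly $(\gamma P)^2 = \gamma P$ yields $P = \gamma P^2$, so Theorem~\ref{fipsi} gives $C_P = C_{P^2}$; combined with $C_P^2 = C_{P^2}$ from boundedness this produces $C_P^2 = C_P$. The only mildly delicate step is extracting $\alpha = \beta^2$ in the forward direction, but this reduces, after discarding the degenerate case $P=0$, to cancelling $P^2 \ne 0$ from a single matching identity, so I anticipate no real obstacle.
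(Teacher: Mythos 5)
Your argument is correct, and it reaches the conclusion by a slightly different decomposition than the paper's. The paper's proof of Corollary \ref{opr} runs through the single identity $C_P=C_PC_P^*=C_PC_{P^*}\subseteq C_{P^*P}$ (using Theorem \ref{wnoca}), applies Theorem \ref{fipsi} once to get $P=\beta P^*P$, and then observes that $Q:=\bar\beta P$ satisfies $Q^*Q=Q$, which already characterizes orthogonal projections; the converse is likewise a one-line computation with $C_Q^*=C_Q=C_Q^2$ followed by $C_P=C_Q$ via Theorem \ref{fipsi}. You instead split ``orthogonal projection'' into selfadjointness and idempotency, invoke Corollary \ref{sa-sa} for the first (giving $P^*=\alpha P$) and Theorem \ref{fipsi} together with $C_P^2=C_{P^2}$ for the second (giving $P=\beta P^2$), and then reconcile the two unimodular constants by the computation $\alpha=\beta^2$ after discarding $P=0$. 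All the individual steps check out: $C_P^2=C_{P^2}$ is legitimate once $C_P$ is everywhere defined, $\gfrak_{\varPhi}$ is indeed a group of roots of unity so it is closed under squaring and its elements have modulus one, and the cancellation of $P^2\neq 0$ from $\alpha\beta P^2=\bar\beta\alpha^2P^2$ is valid. What the paper's route buys is economy — one application of Theorem \ref{fipsi} and no case distinction or constant-matching — because the criterion $Q^*Q=Q$ packages selfadjointness and idempotency simultaneously; what your route buys is that it makes explicit how the two scalars coming from the two separate characterizations must interact, at the cost of the extra (but harmless) computation.
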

   \begin{proof}
If $C_P$ is an orthogonal projection, then by Theorem
\ref{wnoca}, $C_{P^*P} \supseteq C_P C_P^* = C_P$.
Hence, by Theorem \ref{fipsi}, there exists $\beta \in
\gfrak_{\varPhi}$ such that $P = \beta P^*P$. Set
$\alpha=\bar \beta$ and $Q=\alpha P$. Since
$|\alpha|=1$, we see that $Q^*Q = Q$.

To prove the reverse implication, set $Q=\alpha P$.
Since $P,Q$ are contractions, we infer from Theorem
\ref{bca} that $C_P, C_Q \in \ogr{\varPhi(\hh)}$. This
and Theorem \ref{wnoca} imply that $C_Q^* = C_Q =
C_Q^2$. By Theorem \ref{fipsi}, $C_P = C_Q$, which
completes the proof.
   \end{proof}
   \begin{cor} \label{partis}
Let $\varPhi \in \fscr$ and $A \in \ogr{ \hh}$. Then
$C_A$ is a partial isometry if and only if $A$ is a
partial isometry.
   \end{cor}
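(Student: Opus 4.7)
The plan is to reduce partial isometricity of $C_A$ to an orthogonal projection condition on $C_A^*C_A$, compute this product explicitly as another composition operator, and then apply Corollary \ref{opr} to translate back to a statement about $A$.

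First I would dispose of boundedness. If $A$ is a partial isometry, then $\|A\|\Le 1$, so by Theorem \ref{bca}, $C_A\in\ogr{\varPhi(\hh)}$. If $C_A$ is a partial isometry, it is bounded by definition. Hence in both directions we work with a bounded $C_A$.

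Next I would compute $C_A^*C_A$. By Theorem \ref{wnoca}(i), $C_A^*=C_{A^*}$, and for bounded composition operators with linear symbols the straightforward identity $(C_B C_A)f = f\circ (A\circ B)$ holds, so
\begin{align*}
C_A^*\, C_A \;=\; C_{A^*}\, C_A \;=\; C_{A\circ A^*} \;=\; C_{AA^*}.
\end{align*}
Thus $C_A$ is a partial isometry if and only if $C_{AA^*}$ is an orthogonal projection. By Corollary \ref{opr}, this holds if and only if there exists $\alpha\in\gfrak_{\varPhi}$ such that $\alpha AA^*$ is an orthogonal projection.

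Now I would analyse the constant $\alpha$. Since $\alpha AA^*$ must be selfadjoint and $AA^*$ is selfadjoint, we get $\alpha AA^* = \bar\alpha AA^*$. If $AA^*\neq 0$ then $\alpha=\bar\alpha$ is real, and since $\alpha\in\gfrak_{\varPhi}\subseteq\{z\in\cbb\colon |z|=1\}$, we have $\alpha\in\{-1,+1\}$. The case $\alpha=-1$ is impossible: then $-AA^*$ would be both $\Ge 0$ (being a projection) and $\Le 0$ (since $AA^*\Ge 0$), forcing $AA^*=0$, a contradiction. Therefore $\alpha=1$ and $AA^*$ itself is an orthogonal projection, which is the standard characterisation of $A$ being a partial isometry. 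The remaining case $AA^*=0$ gives $A=0$, which is trivially a partial isometry. For the converse, if $A$ is a partial isometry then $AA^*$ is an orthogonal projection, and since $1\in\gfrak_{\varPhi}$ always, Corollary \ref{opr} gives that $C_{AA^*}=C_A^*C_A$ is an orthogonal projection, so $C_A$ is a partial isometry.

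No step is a serious obstacle; the only subtlety is the ruling out of $\alpha=-1$, handled by positivity of $AA^*$. The key substitutions are Theorem \ref{wnoca}(i) (to identify $C_A^*$ with $C_{A^*}$) and Corollary \ref{opr} (to transfer projection-ness between $C_B$ and $B$ up to a root of unity).
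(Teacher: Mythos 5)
Your proof is correct and follows essentially the same route as the paper's: both reduce partial isometricity of $C_A$ to the projection condition on a product of composition operators via Theorem \ref{wnoca}(i) and Corollary \ref{opr}, and both rule out $\alpha\neq 1$ by positivity of the relevant operator (the paper works with $C_AC_A^*=C_{A^*A}$ rather than your $C_A^*C_A=C_{AA^*}$, an immaterial difference). Your explicit handling of boundedness and of the constant $\alpha$ just spells out what the paper leaves terse.
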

   \begin{proof}
Here we use the well-known characterizations of
partial isometries, see \cite[Problem 127 and
Corollary 2]{Hal}. Suppose $C_A$ is a partial
isometry. By Theorem \ref{wnoca}, we have
   \begin{align*}
C_A C_A^* = C_{A^*A}.
   \end{align*}
As $C_A C_A^*$ is an orthogonal projection, we deduce
from the above equality and Corollary \ref{opr} that
there exists $\alpha \in \gfrak_{\varPhi}$ such that
$\alpha A^*A$ is an orthogonal projection. Since
orthogonal projections are positive, we conclude that
$\alpha = 1$ whenever $A \neq 0$ (the case of $A=0$ is
obvious). Reversing the above reasoning and using
Theorem \ref{bca} we complete the proof.
   \end{proof}
   \section{\label{sekt9}Positivity and the polar
decomposition of $C_A$}
   We begin by giving necessary and sufficient
conditions for the composition operator $C_A$ to be
positive. In particular, we will show that if $C_{A}$
is positive, then $C_{A}$ is selfadjoint. First, we
state a simple lemma.
   \begin{lem} \label{alai}
Suppose $T\in \ogr{\hh}$ is a nonzero operator and
$\alpha_1, \alpha_2 \in \cbb$ are such that
$|\alpha_j| = 1$ and $\alpha_j T \Ge 0$ for $j=1,2$.
Then $\alpha_1=\alpha_2$.
   \end{lem}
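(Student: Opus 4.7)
The plan is to exploit selfadjointness of each $\alpha_j T$. Since $\alpha_j T \Ge 0$, it is selfadjoint, so $(\alpha_j T)^* = \alpha_j T$, i.e., $\bar{\alpha}_j T^* = \alpha_j T$. Using $|\alpha_j|=1$ (so $1/\bar{\alpha}_j = \alpha_j$), this rearranges to $T^* = \alpha_j^2\, T$ for $j=1,2$.

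Equating the two expressions for $T^*$ gives $\alpha_1^2\, T = \alpha_2^2\, T$, and since $T\neq 0$ we conclude $\alpha_1^2 = \alpha_2^2$. Thus $\alpha_1 = \pm \alpha_2$.

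It remains to rule out the sign $\alpha_1 = -\alpha_2$. If this held, then $\alpha_1 T \Ge 0$ and $-\alpha_1 T = \alpha_2 T \Ge 0$ would both be satisfied, forcing $\alpha_1 T = 0$; since $|\alpha_1| = 1 \neq 0$, this would give $T = 0$, contradicting the hypothesis. Hence $\alpha_1 = \alpha_2$.

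There is essentially no obstacle here; the only point to be careful about is the passage from $\bar\alpha_j T^* = \alpha_j T$ to $T^* = \alpha_j^2 T$, which uses only the unimodularity of $\alpha_j$, and the final sign elimination, which uses that positivity of both $\alpha_1 T$ and $-\alpha_1 T$ forces $\alpha_1 T = 0$.
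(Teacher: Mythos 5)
Your proof is correct and follows essentially the same route as the paper: selfadjointness of $\alpha_j T$ gives $T^* = \alpha_j^2 T$, hence $\alpha_1^2 = \alpha_2^2$ since $T \neq 0$, and the sign $\alpha_2 = -\alpha_1$ is excluded because $\alpha_1 T \Ge 0$ and $-\alpha_1 T \Ge 0$ together force $T = 0$. (The implicit step that $\alpha_j T \Ge 0$ implies selfadjointness is valid here because the Hilbert space is complex.)
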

   \begin{proof}
Since $\alpha_j T$ is selfadjoint, we see that
$\alpha_j^2T = T^*$, which yields
$\alpha_1^2=\alpha_2^2$. If $\alpha_2=-\alpha_1$, then
$0 \Le \alpha_2 T = - \alpha_1 T \Le 0$ and thus
$T=0$, a contradiction.
   \end{proof}
   \begin{thm} \label{gcd3}
Suppose $\varPhi \in \fscr$ and $A\in\ogr{\hh}$. Then
the following conditions are equivalent{\em :}
   \begin{enumerate}
   \item[(i)] $C_A \Ge 0$,
   \item[(ii)] there exists $\alpha \in \gfrak_{\varPhi}$
such that $\alpha A \Ge 0$,
   \item[(iii)] there exists $B\in \ogr{\hh}$ such that
$B \Ge 0$ and $C_A=C_B$.
   \end{enumerate}
Moreover, if $A \Ge 0$, then $C_A$ is selfadjoint and
$C_A=C_{A^{1/2}}^*C_{A^{1/2}}$.
   \end{thm}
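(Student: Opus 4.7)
My plan is to prove the cycle (iii) $\Rightarrow$ (i) $\Rightarrow$ (ii) $\Rightarrow$ (iii), absorbing the ``moreover'' part into the proof of (iii) $\Rightarrow$ (i). The workhorse will be Fock's type model (Theorem \ref{fmod}), which realizes $C_A^*$ as the orthogonal sum $\bigoplus_{n \in \zscr_{\varPhi}} \varXi_Q(A)^{\odot n}$ and thereby reduces operator-theoretic questions about $C_A$ to questions about bounded symmetric tensor powers of a single bounded operator. Two facts about conjugations will be used throughout: (a) $\varXi_Q$ preserves positivity, which follows from $\is{\varXi_Q(T)\xi}{\xi} = \overline{\is{T Q\xi}{Q\xi}}$; and (b) $\alpha \varXi_Q(A^*) = \varXi_Q(\bar\alpha A^*)$ by antilinearity, hence $\alpha \varXi_Q(A^*) \Ge 0$ if and only if $\alpha A \Ge 0$ (the latter via adjoints).

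For (iii) $\Rightarrow$ (i) together with the ``moreover'' part, I would start with $A \Ge 0$. Theorem \ref{wnoca}(i) gives $C_A^* = C_{A^*} = C_A$, so $C_A$ is selfadjoint; combining Theorem \ref{fmod} with (a) identifies $C_A$ with an orthogonal sum of positive bounded operators, so $C_A \Ge 0$. Condition (iii) then yields (i) by writing $C_A = C_B$. For the factorization $C_A = C_{A^{1/2}}^* C_{A^{1/2}}$, the same argument applied to $A^{1/2}$ gives $C_{A^{1/2}}^* = C_{A^{1/2}}$, so $C_{A^{1/2}}^* C_{A^{1/2}} = C_{A^{1/2}}^2$. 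In the Fock model this is the composition of $\bigoplus_n \varXi_Q(A^{1/2})^{\odot n}$ with itself; multiplicativity of $\varXi_Q$ and of symmetric tensor powers identifies the entrywise square as $\bigoplus_n \varXi_Q(A)^{\odot n}$, and the inequality $\|T_n f_n\|^2 \Le \|T_n^2 f_n\| \|f_n\|$ (valid for every bounded positive $T_n$) combined with Cauchy--Schwarz equates the two domains.

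The implication (ii) $\Rightarrow$ (iii) is immediate: set $B = \alpha A$, so that $B \Ge 0$ and $A = \bar\alpha B$ with $\bar\alpha \in \gfrak_{\varPhi}$, then Theorem \ref{fipsi} yields $C_A = C_B$.

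The main obstacle is (i) $\Rightarrow$ (ii). Via Theorem \ref{fmod}, $C_A \Ge 0$ translates into $B^{\odot n} \Ge 0$ for every $n \in \zscr_{\varPhi}$, where $B := \varXi_Q(A^*)$. The selfadjointness of $B^{\odot n}$ reads $(B\xi)^{\otimes n} = (B^*\xi)^{\otimes n}$ for all $\xi \in \hh$; Lemma \ref{symtensor} and a connectedness argument on $\hh \setminus \jd{B}$ analogous to the one in the proof of Theorem \ref{fipsi} promote this to $B = \delta_n B^*$ with $\delta_n^n = 1$. When $B \neq 0$ the $\delta_n$ must coincide, producing a single $\delta \in \gfrak_{\varPhi}$ with $B = \delta B^*$; in particular $B$ is normal. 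Choosing any complex square root $\beta$ of $\bar\delta$ and setting $S = \beta B$, one has $S = S^*$ and $B^{\odot n} = \beta^{-n} S^{\odot n}$; selfadjointness of $B^{\odot n}$ then forces $\beta^n \in \{\pm 1\}$, while its positivity (together with the standard observation that, for selfadjoint $S$, $S^{\odot n} \Ge 0$ is equivalent to $S \Ge 0$ for $n$ odd and to $S$ being semidefinite for $n \Ge 2$ even) forces $S$ itself to be semidefinite. A short parity check over $\zscr_{\varPhi}$ shows that in all cases one of $\pm\beta$ lies in $\gfrak_{\varPhi}$ and yields $\alpha B \Ge 0$; fact (b) above then converts this into $\alpha A \Ge 0$, as required.
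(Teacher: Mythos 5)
Your overall architecture is sound and in fact runs parallel to the paper's: both proofs funnel everything through the Fock-type model of Theorem \ref{fmod}, reduce (i)$\Rightarrow$(ii) to positivity of the symmetric tensor powers, extract a relation $B=\delta B^*$ with $\delta$ a root of unity (the paper gets this by quoting Corollary \ref{sa-sa} for $\varPhi_n$, you by redoing the Lemma \ref{symtensor}/connectedness argument), and finish with a parity analysis; your treatment of (ii)$\Rightarrow$(iii) and of the ``moreover'' part is correct, the latter replacing the paper's appeal to maximality of selfadjoint operators by an explicit domain comparison via $\|T_nf\|^2\Le\|T_n^2f\|\,\|f\|$ and Cauchy--Schwarz, which works.

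The genuine gap is the parenthetical you label ``the standard observation'': that for a selfadjoint $S$ and \emph{even} $n\Ge 2$, $S^{\odot n}\Ge 0$ forces $S$ to be semidefinite. For odd $n$ the analogous claim is immediate (evaluate the quadratic form at $\xi^{\otimes n}$ to get $\is{S\xi}{\xi}^n\Ge 0$ and take odd roots), but for even $n$ the diagonal values $\is{S\xi}{\xi}^n$ are automatically nonnegative whatever the signature of $S$, so no argument of that kind can work and the claim is not at all standard --- it is essentially Corollary \ref{asp1} of this very paper, i.e.\ the theorem you are trying to prove in the special case $\varPhi(z)=z^n$. This is exactly where the paper spends its effort (Case 3 of Step 1): assuming $\hh_-\neq\{0\}\neq\hh_+$ in the spectral decomposition of $S$, it picks $e_\pm$ with $\is{S_\pm e_\pm}{e_\pm}=1$, tests $S^{\odot n}$ against the vector $\xi_1^{\otimes n}+\xi_2^{\otimes n}$ with $\xi_i=\lambda_{i,-}e_-+\lambda_{i,+}e_+$, and the specific choice $\lambda_{1,-}=w$, $\lambda_{1,+}=1$, $\lambda_{2,-}=1$, $\lambda_{2,+}=-\bar w$ with $w=\exp(\I\pi/n)$ produces the value $(-1)^n2^{n+1}\mathfrak{Re}(w^n)=-2^{n+1}<0$, a contradiction. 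Without this (or an equivalent off-diagonal computation) your proof of (i)$\Rightarrow$(ii) does not close; everything downstream of that observation (the identification $\beta^n=\pm1$, the final parity check placing $\pm\beta$ in $\gfrak_{\varPhi}$, and the transfer back from $\varXi_Q(A^*)$ to $A$) is fine.
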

   \begin{proof}
(i)$\Rightarrow$(ii) We split the proof of this
implication into two steps.

{\sc Step 1.} Suppose $C_A \Ge 0$ and $n\in
\zscr_{\varPhi} \setminus \{0\}$. Then there exists
$\alpha \in G_n$ such that $\alpha A \Ge 0$.

Indeed, it follows from Corollary \ref{osfn} and
Theorem \ref{bca} that $C^{\langle n \rangle}_A \in
\ogr{\varPhi_n(\hh)}$ and $C^{\langle n \rangle}_A \Ge
0$. Hence, by Corollary \ref{sa-sa} (applied to
$\varPhi_n$), there exists $\beta \in
\gfrak_{\varPhi_n}=G_n$ such that $A^* = \beta A$.
Then there exists $k\in \{0, \ldots, n-1\}$ such that
$\beta=\exp(\I \frac{k}{n} 2 \pi)$. Set $z = \exp(\I
\frac{k}{n} \pi)$ and $B=zA$. It is easily seen that
$B^* = B$. Let $Q$ be a conjugation on $\hh$. Since
$C^{\langle n \rangle}_A \Ge 0$, we infer from Theorem
\ref{fmod} (applied to $\varPhi_n$) that
   \begin{align} \label{xiqa}
\varXi_{Q}(A)^{\odot n} \Ge 0.
   \end{align}
This implies that
   \begin{align} \label{xiqa2}
\is{A\xi}{\xi}^n = \is{(Q\xi)^{\otimes
n}}{\varXi_{Q}(A)^{\odot n} (Q\xi)^{\otimes n}} \Ge 0,
\quad \xi \in \hh.
   \end{align}
Now we consider three cases.

{\sc Case 1.} $n$ is odd and $k$ is even.

Then, $z^n = (-1)^k=1$ and thus
   \begin{align*}
\is{B\xi}{\xi}^n = \is{A\xi}{\xi}^n
\overset{\eqref{xiqa2}}\Ge 0, \quad \xi \in \hh.
   \end{align*}
Since $B=B^*$ and $n$ is odd, we see that $B\Ge 0$.
Hence $\alpha A \Ge 0$ with $\alpha=z \in G_n$.

{\sc Case 2.} $n$ and $k$ are odd.

Then $\alpha:=(-z) \in G_n$, and so
   \begin{align*}
\is{(-B)\xi}{\xi}^n = \is{A\xi}{\xi}^n
\overset{\eqref{xiqa2}}\Ge 0, \quad \xi \in \hh.
   \end{align*}
Hence, as in Case 1, we see that $-B \Ge 0$ and
consequently $\alpha A \Ge 0$.

{\sc Case 3.} $n$ is even.

We can assume that $A \neq 0$. Since $B$ is
selfadjoint and
   \begin{align*}
(-1)^k \is{B\xi}{\xi}^n = \is{A\xi}{\xi}^n
\overset{\eqref{xiqa2}} \Ge 0, \quad \xi \in \hh,
   \end{align*}
we deduce that $k$ is even. Hence $z\in G_n$. We show
that either $B\Ge 0$ or $-B \Ge 0$. In view of
Proposition \ref{abcon}(vi), this reduces to showing
that either $T\Ge 0$ or $-T \Ge 0$, where
$T:=\varXi_{Q}(B)$. To prove the latter, first observe
that by Proposition \ref{abcon}(v), $T^*=T$. Let $E$
be the spectral measure of $T$. Then the closed vector
spaces $\hh_{-} := \ob{E((-\infty,0))}$, $\hh_{0} :=
\ob{E(\{0\})}$ and $\hh_{+} := \ob{E((0,\infty))}$
reduce $T$. Moreover, $T_{-}:= - T|_{\hh_{-}} \Ge 0$,
$\jd{T_{-}}=\{0\}$, $T_{+}:= T|_{\hh_{+}} \Ge 0$, $\jd
{T_{+}}=\{0\}$ and
   \begin{align} \label{Kr12}
T = (-T_{-}) \oplus 0|_{\hh_{0}} \oplus T_{+}.
   \end{align}
We will prove that either $\hh_{-}=\{0\}$ or
$\hh_{+}=\{0\}$. Indeed, otherwise there exist $e_{-}
\in \hh_{-}$ and $e_{+} \in \hh_{+}$ such that
   \begin{align} \label{pm}
\is{T_{-}e_{-}}{e_{-}} = 1=\is{T_{+}e_{+}}{e_{+}}.
   \end{align}
Take $\lambda_{1,-}, \lambda_{1,+},\lambda_{2,-},
\lambda_{2,+} \in \cbb$. Set $\xi_{i,\pm} =
\lambda_{i,\pm} \cdot e_{\pm}$ and $\xi_i= \xi_{i,-} +
\xi_{i,+}$ for $i=1,2$. Clearly $\xi_{i,\pm} \in
\hh_{\pm}$ for $i=1,2$. Since $z\in G_n$, we see that
$T^{\odot n} = \varXi_{Q}(A)^{\odot n}$ and thus, by
\eqref{xiqa}, \eqref{Kr12} and \eqref{pm}, we have
   \allowdisplaybreaks
   \begin{align} \notag
0 &\Le \Big\langle T^{\odot n}\Big(\sum_{i=1}^2
\xi_i^{\otimes n}\Big), \sum_{j=1}^2 \xi_j^{\otimes
n}\Big\rangle
   \\ \notag
&= \sum_{i,j=1}^2 \is{((-T_{-}) \oplus 0|_{\hh_{0}}
\oplus T_{+})(\xi_{i,-} \oplus 0 \oplus
\xi_{i,+})}{\xi_{j,-} \oplus 0 \oplus \xi_{j,+}}^n
   \\ \notag
& =\sum_{i,j=1}^2 (\is{T_{+}\xi_{i,+}}{\xi_{j,+}} -
\is{T_{-}\xi_{i,-}}{\xi_{j,-}}\big)^n
   \\ \label{pds}
& =\sum_{i,j=1}^2 (\lambda_{i,+}\bar \lambda_{j,+} -
\lambda_{i,-}\bar \lambda_{j,-})^n.
   \end{align}
Set $\lambda_{1,-} = w$, $\lambda_{1,+} = 1$,
$\lambda_{2,-} = 1$ and $\lambda_{2,+} = - \overline
w$ with $w=\exp(\I \frac{\pi}{n})$. It follows from
\eqref{pds} that
   \begin{align*}
0 \Le \sum_{i,j=1}^2 (\lambda_{i,+}\bar \lambda_{j,+}
- \lambda_{i,-}\bar \lambda_{j,-})^n = (-1)^n 2^{n+1}
\mathfrak{Re}(w^n) =- 2^{n+1} < 0,
   \end{align*}
which gives a contradiction. Hence either
$\hh_{-}=\{0\}$ or $\hh_{+}=\{0\}$, which together
with \eqref{Kr12} implies that either $T\Ge 0$ or $-T
\Ge 0$, or equivalently that either $B\Ge 0$ or $-B
\Ge 0$. If $B\Ge 0$, then $\alpha A \Ge 0$ with
$\alpha=z \in G_n$. Otherwise, $\alpha A \Ge 0$ with
$\alpha=-z \in G_n$. This completes the proof of Step
1.

{\sc Step 2.} Suppose $C_A \Ge 0$. Then there exists
$\alpha \in \gfrak_{\varPhi}$ such that $\alpha A \Ge
0$.

Indeed, by Step 1, for every $n\in
\zscr_{\varPhi}\setminus \{0\}$ there exists $\alpha_n
\in G_n$ such that $\alpha_n A \Ge 0$. Assuming that
$A \neq 0$ (which is no loss of generality), we deduce
from Lemma \ref{alai} that $\alpha_n=\alpha$ for every
$n\in \zscr_{\varPhi} \setminus \{0\}$, where
$\alpha:=\alpha_{\inf\zscr_{\varPhi}\setminus \{0\}}$.
Hence $\alpha \in \bigcap_{n\in
\zscr_{\varPhi}\setminus \{0\}} G_n =
\gfrak_{\varPhi}$ and $\alpha A \Ge 0$, which
completes the proof of Step 2, and thus of the
implication (i)$\Rightarrow$(ii).

(ii)$\Leftrightarrow$(iii) Apply Lemma \ref{lem1}(i)
and Theorem \ref{fipsi}.

(iii)$\Rightarrow$(i) Without loss of generality we
can assume that $A\Ge 0$. By Lemma \ref{lem1},
$C_{A^{1/2}}$ is closed and densely defined. Hence,
$C_{A^{1/2}}^*C_{A^{1/2}}$ is positive and selfadjoint
(cf.\ \cite[Theorem 5.39]{Weid}). In view of Theorem
\ref{wnoca}(i), we have
   \begin{align*}
C_{A^{1/2}}^*C_{A^{1/2}} = C_{A^{1/2}}C_{A^{1/2}}
\subseteq C_A = C_A^*.
   \end{align*}
By maximality of selfadjoint operators (cf.\
\cite[Theorem 5.31]{Weid}), we deduce that $C_A =
C_{A^{1/2}}^*C_{A^{1/2}}$, which implies that $C_A \Ge
0$. This also proves the ``moreover'' part.
   \end{proof}
   \begin{cor} \label{asp1}
Suppose $A\in \ogr{\hh}$ and $n \in \nbb$. Then the
following conditions are equivalent{\em :}
   \begin{enumerate}
   \item[(i)] $A^{\odot n} \Ge 0$,
   \item[(ii)] there exists $\alpha \in G_n$ such
that $\alpha A \Ge 0$,
   \item[(iii)] there exists $B \in \ogr{\hh}$ such
that $B \Ge 0$ and $B^{\odot n} = A^{\odot n}$,
   \item[(iv)] $A^{\otimes n} \Ge 0$.
   \end{enumerate}
   \end{cor}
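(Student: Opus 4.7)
The plan is to reduce the statement to Theorem \ref{gcd3}, specialized to the one-term weight $\varPhi_n \in \fscr$ defined by $\varPhi_n(z) = z^n$, for which $\zscr_{\varPhi_n} = \{n\}$ and $\gfrak_{\varPhi_n} = G_n$. The bridge between the RKHS setting and the bare symmetric tensor power is supplied by Corollary \ref{osfn}, which identifies $A^{\odot n}$ (up to unitary equivalence) with $C^{\langle n \rangle}_{\varXi_Q(A^*)}$ for any fixed conjugation $Q$ on $\hh$.

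First I would establish (i)$\Leftrightarrow$(ii). By the unitary equivalence from Corollary \ref{osfn}, (i) is equivalent to $C^{\langle n\rangle}_{\varXi_Q(A^*)} \Ge 0$, which by Theorem \ref{gcd3} applied to $\varPhi_n$ amounts to the existence of $\beta \in G_n$ with $\beta\varXi_Q(A^*) \Ge 0$. Since $\varXi_Q$ is conjugate-linear, involutive, and preserves positivity (Proposition \ref{abcon}), applying $\varXi_Q$ to $\beta\varXi_Q(A^*)$ yields $\bar\beta A^*$, and $\bar\beta A^* \Ge 0$ in turn means (by selfadjointness plus taking adjoints) that $\beta A \Ge 0$. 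Setting $\alpha = \beta$ gives (ii); the chain is reversible, so (ii)$\Rightarrow$(i) follows as well.

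The remaining implications are purely algebraic. For (ii)$\Rightarrow$(iii), set $B = \alpha A$; then $B \Ge 0$ and, because $\alpha^n = 1$, we have $B^{\odot n} = \alpha^n A^{\odot n} = A^{\odot n}$. For (iii)$\Rightarrow$(i), given $B \Ge 0$ write $B = C^*C$ with $C = B^{1/2}$ and use the multiplicativity of the $n$-th symmetric tensor power to obtain
\[
A^{\odot n} \;=\; B^{\odot n} \;=\; (C^*C)^{\odot n} \;=\; (C^{\odot n})^* C^{\odot n} \;\Ge\; 0.
\]
The equivalence (ii)$\Leftrightarrow$(iv) is analogous: from (ii) one has $B := \alpha A \Ge 0$ with $B^{\otimes n} = A^{\otimes n}$ and $B^{\otimes n} \Ge 0$ as a tensor product of positive operators; for (iv)$\Rightarrow$(i) one uses that $\hh^{\odot n}$ is a reducing subspace of $A^{\otimes n}$ on which the restriction equals $A^{\odot n}$.

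The only genuine content lies in (i)$\Leftrightarrow$(ii), and the main obstacle there is the careful bookkeeping of how the scalar $\beta \in G_n$ is transported through the conjugate-linear, involutive map $\varXi_Q$, respecting the identity $\varXi_Q(A^*) = \varXi_Q(A)^*$ and the invariance of $G_n$ under complex conjugation. Once this is dispatched, everything else reduces to the relations $\alpha^n = 1$ and the standard functoriality of $\odot n$ and $\otimes n$.
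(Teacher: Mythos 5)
Your proposal is correct and follows essentially the same route as the paper: the equivalence of (i)--(iii) is obtained by specializing Theorems \ref{fmod} and \ref{gcd3} to $\varPhi_n$ and transporting the scalar through $\varXi_Q$ via Proposition \ref{abcon}, while (ii)$\Rightarrow$(iv) uses the same factorization $(\alpha A)^{\otimes n}=(((\alpha A)^{1/2})^{\otimes n})^*((\alpha A)^{1/2})^{\otimes n}$ and (iv)$\Rightarrow$(i) is the restriction of the quadratic form to $\hh^{\odot n}$. The only (harmless) deviation is that you verify (ii)$\Rightarrow$(iii)$\Rightarrow$(i) by direct multiplicativity of $\odot n$ instead of quoting the corresponding equivalence inside Theorem \ref{gcd3}, which merely makes explicit what the paper leaves to the cited results.
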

   \begin{proof}
Applying Theorems \ref{fmod} and \ref{gcd3} to
$\varPhi=\varPhi_n$ and using Proposition \ref{abcon}
(see also Corollary \ref{osfn}), we deduce that the
conditions (i)-(iii) are equivalent.

(iv)$\Rightarrow$(i) Obvious.

(ii)$\Rightarrow$(iv) Since $\alpha A \Ge 0$ and
$\alpha \in G_n$, we get
   \begin{align*}
& A^{\otimes n} = (\alpha A)^{\otimes n} = (((\alpha
A)^{1/2})^{\otimes n})^*((\alpha A)^{1/2})^{\otimes n}
\Ge 0. \qedhere
   \end{align*}
   \end{proof}
   \begin{cor}
Suppose $Y$ is a nonempty subset of $\nbb$ and $z\in
\cbb$. Then the following conditions are
equivalent{\em :}
   \begin{enumerate}
   \item[(i)] $z^n \Ge 0$ for every $n \in Y$,
   \item[(ii)]  there exists $\alpha \in \cbb$ such
that $\alpha^{\,\mathrm{gcd}(Y)}=1$ and $\alpha z \Ge
0$,
   \item[(iii)] there exists $b \in \rbb_+$ such that
$b^n = z^n$ for every $n \in Y$.
   \end{enumerate}
   \end{cor}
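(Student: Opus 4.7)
The plan is to prove the chain \mbox{(iii)$\Rightarrow$(i)$\Rightarrow$(ii)$\Rightarrow$(iii)} directly, invoking Lemma~\ref{gcd=1} at the crucial step. The implication (iii)$\Rightarrow$(i) is immediate, since $z^n = b^n \in \rbb_+$ for every $n \in Y$.

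For (i)$\Rightarrow$(ii), the case $z=0$ is handled by $\alpha=1$, so assume $z \neq 0$ and write $z = |z|\,\E^{\I\theta}$. The hypothesis $z^n \Ge 0$ forces $\E^{\I n \theta}$ to be a nonnegative real number of modulus $1$, hence $\E^{\I n\theta} = 1$ for every $n \in Y$; equivalently, $\E^{\I\theta} \in G_n$ for every $n \in Y$. Lemma~\ref{gcd=1}(i) then gives $\E^{\I\theta} \in \bigcap_{n\in Y} G_n = G_{\mathrm{gcd}(Y)}$. Setting $\alpha := \E^{-\I\theta}$ yields $\alpha^{\mathrm{gcd}(Y)} = 1$ and $\alpha z = |z| \Ge 0$.

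For (ii)$\Rightarrow$(iii), put $b := \alpha z \in \rbb_+$. Since $\mathrm{gcd}(Y)$ divides every $n \in Y$, we have $\alpha^n = (\alpha^{\mathrm{gcd}(Y)})^{n/\mathrm{gcd}(Y)} = 1$, and therefore $b^n = \alpha^n z^n = z^n$ for all $n \in Y$.

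No serious obstacle is anticipated; the entire content of the corollary reduces to the observation of Lemma~\ref{gcd=1} that intersections of the groups $G_n$ are governed by the greatest common divisor. Alternatively, one could deduce the result from Corollary~\ref{asp1} applied to $\hh = \cbb$ with $A := z$ (viewed as a multiplication operator) for each individual $n \in Y$, and then use Lemma~\ref{alai} to merge the resulting scalars $\alpha_n$ into a single $\alpha$ lying in $\bigcap_{n\in Y} G_n = G_{\mathrm{gcd}(Y)}$; this is essentially the same argument repackaged through the operator-theoretic machinery developed earlier in the section.
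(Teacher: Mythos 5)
Your argument is correct. The three implications check out: (iii)$\Rightarrow$(i) is immediate; in (i)$\Rightarrow$(ii) the observation that $z^n\Ge 0$ with $z\neq 0$ forces $\E^{\I n\theta}=1$, hence $\E^{\I\theta}\in\bigcap_{n\in Y}G_n=G_{\mathrm{gcd}(Y)}$ by Lemma~\ref{gcd=1}(i), is exactly what is needed; and (ii)$\Rightarrow$(iii) follows since $\mathrm{gcd}(Y)\mid n$ gives $\alpha^n=1$. Your primary route is, however, not the one the paper takes: the paper deduces the corollary from Corollary~\ref{asp1} (the operator-theoretic statement about positivity of $A^{\odot n}$, applied here in the scalar case), using Lemma~\ref{alai} to force the various $n$-th roots of unity $\alpha_n$ obtained for each $n\in Y$ to coincide, and then Lemma~\ref{gcd=1} to place the common value in $G_{\mathrm{gcd}(Y)}$ --- which is precisely the alternative you sketch in your closing paragraph. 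Your direct polar-form computation is more elementary and entirely self-contained (it uses only the easy inclusion $\bigcap_{n\in Y}G_n\subseteq G_{\mathrm{gcd}(Y)}$ and none of the tensor-power machinery), whereas the paper's derivation has the virtue of exhibiting the corollary as the one-dimensional shadow of Corollary~\ref{asp1}, which is presumably why the authors state it there. Either proof is acceptable; yours is arguably cleaner as a standalone verification.
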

   \begin{proof}
This can be deduced from Corollary \ref{asp1}, Lemma
\ref{alai} and Lemma \ref{gcd=1}.
   \end{proof}
   \begin{cor} \label{f-f}
Let $\varPhi \in \fscr$ and $A \in \ogr{\hh}$ be such
that $A \Ge 0$. If $f \in \varPhi(\hh)$ satisfies the
following equality
   \begin{align} \label{jur1+}
f(A\xi) = - f(\xi), \quad \xi \in \hh,
   \end{align}
then $f=0$.
   \end{cor}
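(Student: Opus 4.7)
The plan is very short because the hypothesis essentially says that $f$ is an eigenvector of $C_A$ for the eigenvalue $-1$, which is forbidden by positivity.

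First I would rewrite the identity \eqref{jur1+} as $f \circ A = -f$. Since $f \in \varPhi(\hh)$, this forces $f \circ A \in \varPhi(\hh)$, so $f$ lies in $\dz{C_A}$ and $C_A f = -f$. Next, since $A \Ge 0$, Theorem \ref{gcd3} (the implication (iii)$\Rightarrow$(i), with $B = A$) gives $C_A \Ge 0$ as a densely defined operator on $\varPhi(\hh)$, i.e., $\is{C_A g}{g} \Ge 0$ for every $g \in \dz{C_A}$.

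Applying this positivity to $g = f$ and combining with the eigenvalue equation yields
\begin{align*}
0 \Le \is{C_A f}{f} = \is{-f}{f} = -\|f\|^2,
\end{align*}
which forces $\|f\| = 0$, hence $f = 0$.

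There is no real obstacle here — the only point worth being careful about is the verification that $f$ actually lies in $\dz{C_A}$, which is immediate from the hypothesis (the right-hand side $-f$ belongs to $\varPhi(\hh)$). All the serious work has already been done in establishing the equivalence (i)$\Leftrightarrow$(iii) of Theorem \ref{gcd3}; this corollary is then a one-line consequence via the positivity inner product inequality.
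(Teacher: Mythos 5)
Your proof is correct and follows the same route as the paper: observe that \eqref{jur1+} means $f\in\dz{C_A}$ with $C_Af=-f$, invoke Theorem \ref{gcd3} to get $C_A\Ge 0$, and conclude $0\Le\is{C_Af}{f}=-\|f\|^2$. You merely spell out the final positivity step that the paper leaves implicit.
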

   \begin{proof}
It follows from \eqref{jur1+} that $f \in \dz{C_A}$
and $C_A f = -f$. Since, by Theorem \ref{gcd3}, $C_A
\Ge 0$, we deduce that $f=0$.
   \end{proof}
Corollary \ref{f-f} is no longer true if $A$ is not
positive (even if $\dim \hh < \infty$). Regarding
Theorem \ref{gcd3}, it is worth mentioning that, in
general, the positivity of $C_A$ does not imply the
positivity of $A$ (e.g., if $\varPhi(z)=z^2$ for $z\in
\cbb$ and $A=-I_{\hh}$, then, by Theorem \ref{fipsi}
and Lemma \ref{lem1}(i), $C_A = C_{I_{\hh}} =
I_{\varPhi(\hh)} \Ge 0$).

Now we give an explicit description of powers
$C_A^{t}$ of $C_A$ with positive real exponents $t$ in
the case when $A\Ge 0$.
   \begin{thm} \label{alpharoot}
Let $\varPhi \in \fscr$, $A\in \ogr{\hh}$ and $t \in
(0,\infty)$. Suppose $A\Ge 0$. Then
   \begin{enumerate}
   \item[(i)] $C_A$ is selfadjoint and $C_A \Ge 0$,
   \item[(ii)] $C_A^{t} = C_{A^{t}}$,
   \item[(iii)] $\dz{C_{A^{t}}} \subseteq \dz{C_{A^{s}}}$
for every $s \in (0,t)$.
   \end{enumerate}
   \end{thm}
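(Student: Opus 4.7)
The plan is to deduce (i) directly from Theorem \ref{gcd3}, to establish (ii) by transporting $C_A$ and $C_{A^t}$ through the Fock-type model of Section \ref{sekt7}, and to obtain (iii) from (ii) using the spectral calculus of the positive selfadjoint operator $C_A$.

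Part (i) is simply the ``moreover'' clause of Theorem \ref{gcd3} applied to our positive $A$.

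For (ii), fix any conjugation $Q$ on $\hh$, set $B := \varXi_Q(A) = QAQ$ and $U := U_{\varPhi,Q}$. Using the properties of $\varXi_Q$ collected in Appendix \ref{apap} (Proposition \ref{abcon}), the map $\varXi_Q$ is a multiplicative involutive bijection of $\ogr{\hh}$ which preserves positivity and is continuous, and therefore commutes with the continuous functional calculus of positive operators; in particular $B \in \ogrp{\hh}$ and $\varXi_Q(A^t) = B^t$. Since by (i) both $C_A$ and $C_{A^t}$ are selfadjoint, Theorem \ref{fmod} applied to $A$ and to $A^t$ gives
\begin{align*}
C_A = U^{-1} \Big(\bigoplus_{n \in \zscr_{\varPhi}} B^{\odot n}\Big) U
\quad\text{and}\quad
C_{A^t} = U^{-1} \Big(\bigoplus_{n \in \zscr_{\varPhi}} (B^t)^{\odot n}\Big) U.
\end{align*}
As $\varGamma_{\varPhi}(B) = \bigoplus_{n \in \zscr_\varPhi} B^{\odot n}$ is an orthogonal sum of bounded positive operators, its $t$-th power is the orthogonal sum of the $t$-th powers, so it suffices to verify the identity
\begin{align*}
(B^{\odot n})^t = (B^t)^{\odot n}, \quad n\in \zbb_+, \quad B \in \ogrp{\hh}.
\end{align*}
This is a standard consequence of the spectral theorem: writing $B$ spectrally, $B^{\odot n}$ acts diagonally on simple symmetric tensors of eigenvectors of $B$, with corresponding eigenvalue equal to a product $\lambda_{i_1}\cdots \lambda_{i_n}$, so both sides act there as multiplication by $(\lambda_{i_1}\cdots \lambda_{i_n})^t = \lambda_{i_1}^t \cdots \lambda_{i_n}^t$.

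For (iii), once (ii) is established we have $\dz{C_{A^t}} = \dz{C_A^t}$ and $\dz{C_{A^s}} = \dz{C_A^s}$. Since $C_A \Ge 0$ is selfadjoint by (i), the spectral theorem combined with the elementary pointwise bound $\lambda^{2s} \Le 1 + \lambda^{2t}$ valid on $[0,\infty)$ for $0 < s < t$ yields $\|C_A^s f\|^2 \Le \|f\|^2 + \|C_A^t f\|^2 < \infty$ for every $f \in \dz{C_A^t}$, giving the required inclusion. The main obstacle in this plan is the clean justification of the tensor-power identity $(B^{\odot n})^t = (B^t)^{\odot n}$ and the companion fact $\varXi_Q(A^t) = \varXi_Q(A)^t$; both are standard but must be verified via the spectral-theoretic arguments indicated above, after which the rest is bookkeeping on top of the Fock model.
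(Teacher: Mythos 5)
Your proof takes essentially the same route as the paper's: (i) is the ``moreover'' part of Theorem \ref{gcd3}, and (ii) is obtained, exactly as in the paper, by conjugating with the Fock model of Theorem \ref{fmod} and using $\varXi_Q(A^t)=\varXi_Q(A)^t$ (Theorem \ref{qaq}(v)) together with $(B^{\odot n})^t=(B^t)^{\odot n}$ --- though for the latter your ``eigenvector'' phrasing should be replaced by the multiplication-operator form of the spectral theorem, since a general positive $B$ need not have eigenvectors. For (iii) the paper simply cites \cite[Lemma A.1]{St-Seb}, whereas your direct spectral-calculus estimate via $\lambda^{2s}\Le 1+\lambda^{2t}$ is a correct self-contained substitute.
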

   \begin{proof}
(i) According to Corollary \ref{asp1}, $A^{\odot n}
\Ge 0$ and consequently $\big(A^{\odot n}\big)^{t} =
(A^{t})^{\odot n}$ for all $n\in \zbb_+$. By Theorem
\ref{gcd3}, $C_A$ is selfadjoint and $C_A \Ge 0$.

(ii) Fix a conjugation $Q$ on $\hh$. It follows from
Proposition \ref{abcon}(vi) that $\varXi_Q(A) \Ge 0$.
Applying Theorem \ref{qaq}(v), we deduce that
$\varXi_{Q}(A)^{t}=\varXi_{Q}(A^{t})$. Since
   \begin{align*}
\big(\bigoplus_{\omega \in \varOmega}
S_{\omega}\big)^{t} = \bigoplus_{\omega \in \varOmega}
S_{\omega}^{t},
   \end{align*}
whenever $\{S_{\omega}\}_{\omega \in \varOmega}$ is a
family of positive selfadjoint operators, we deduce
that
   \allowdisplaybreaks
   \begin{align*}
C_A^{t} \overset{\eqref{import}}=
\Big(U_{\varPhi,Q}^{-1}\varGamma_{\varPhi}(\varXi_{Q}(A))
U_{\varPhi,Q}\Big)^{t} & =
U_{\varPhi,Q}^{-1}\big(\varGamma_{\varPhi}(\varXi_{Q}(A))\big)^{t}
U_{\varPhi,Q}
      \\
& = U_{\varPhi,Q}^{-1}
\varGamma_{\varPhi}(\varXi_{Q}(A)^{t}) U_{\varPhi,Q}
   \\
&= U_{\varPhi,Q}^{-1}
\varGamma_{\varPhi}(\varXi_{Q}(A^{t})) U_{\varPhi,Q}
   \\
& \hspace{-1ex}\overset{\eqref{import}}= C_{A^{t}}.
   \end{align*}

(iii) This is a direct consequence of (ii) and
\cite[Lemma A.1]{St-Seb}.
   \end{proof}
As shown below, the polar decomposition of $C_A$ can
be explicitly written in terms of the polar
decomposition of $A^*$.
   \begin{thm} \label{PolDec}
Suppose $\varPhi \in \fscr$ and $A \in \ogr{\hh}$. Let
$A=U |A|$ be the polar decomposition of $A$. Then $C_A
= C_{U} C_{|A^*|}$ is the polar decomposition of
$C_A$. In particular, $|C_A| = C_{|A^*|}$.
   \end{thm}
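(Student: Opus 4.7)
The plan is to prove $C_A = C_U C_{|A^*|}$ as an equality of operators and then invoke the uniqueness of the polar decomposition of a closed densely defined operator. The starting point is the standard identity $|A^*| = U|A|U^*$ (which follows from $AA^* = U|A|^2 U^*$ together with $U^*U|A| = |A|$), yielding the two dual factorizations $A = |A^*|U$ and $|A^*| = AU^*$. These are the algebraic engine of everything that follows.

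First I would verify the operator equality $C_A = C_U C_{|A^*|}$. Because $U$ and $U^*$ are partial isometries of norm at most one, Theorem~\ref{bca} makes $C_U$ and $C_{U^*}$ bounded members of $\ogr{\varPhi(\hh)}$. If $f \in \dz{C_{|A^*|}}$, then $f \circ |A^*| \in \varPhi(\hh)$, and applying the bounded operator $C_U$ gives $(f \circ |A^*|) \circ U = f \circ A \in \varPhi(\hh)$; hence $f \in \dz{C_A}$. The reverse inclusion $\dz{C_A} \subseteq \dz{C_{|A^*|}}$ follows symmetrically from $|A^*| = A U^*$ and the boundedness of $C_{U^*}$. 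Therefore $\dz{C_A} = \dz{C_{|A^*|}} = \dz{C_U C_{|A^*|}}$ and on this common domain both operators act as $f \mapsto f \circ A$.

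Second, $C_U$ is a partial isometry by Corollary~\ref{partis}, and $C_{|A^*|}$ is positive selfadjoint by the ``moreover'' part of Theorem~\ref{gcd3}. It remains to check the initial-space condition $\jd{C_U} = \jd{C_{|A^*|}}$. The claim is that both kernels coincide with $\{f \in \varPhi(\hh) : f|_{\overline{\ob{A}}} = 0\}$. For $\jd{C_U}$ this uses $\ob{U} = \overline{\ob{A}}$ (a property of the final space of the partial isometry $U$) together with the continuity of each holomorphic $f \in \varPhi(\hh)$. For $\jd{C_{|A^*|}}$ one uses the density of $\ob{|A^*|}$ in $\overline{\ob{A}}$ and the same continuity, observing that a function vanishing on $\overline{\ob{A}}$ is automatically in $\dz{C_{|A^*|}}$. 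Combining these three ingredients with the uniqueness of the polar decomposition of a closed densely defined operator gives $C_A = C_U C_{|A^*|}$ as the polar decomposition, and hence $|C_A| = C_{|A^*|}$.

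The main obstacle is the domain equality $\dz{C_A} = \dz{C_{|A^*|}}$ in the first step: in the unbounded setting no single factorization is enough, and one has to combine the two dual identities $A = |A^*|U$ and $|A^*| = AU^*$ with the (nontrivial, Theorem~\ref{bca}-based) fact that both $C_U$ and $C_{U^*}$ are globally defined bounded operators on $\varPhi(\hh)$.
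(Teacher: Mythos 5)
Your proof is correct, but it runs in the opposite direction from the paper's. The paper first identifies $|C_A|$: it shows $C_A^*C_A = C_{A^*}C_A \subseteq C_{|A^*|^2}$, notes that both sides are selfadjoint (the left by the von Neumann--Weidmann theorem, the right by Theorem \ref{gcd3}), concludes $C_A^*C_A = C_{|A^*|^2}$ by maximality of selfadjoint operators, and extracts $|C_A| = C_{|A^*|}$ via Theorem \ref{alpharoot}(ii); only then does it upgrade the inclusion $C_UC_{|A^*|} \subseteq C_A$ to an equality, using the general identity $\dz{C_{|A^*|}} = \dz{|C_A|} = \dz{C_A}$, and finish by checking $\jd{C_U} = \jd{C_A}$. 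You instead establish the factorization $C_A = C_UC_{|A^*|}$ \emph{with exact domains} first, by a purely symbol-level argument: the two dual identities $A = |A^*|U$ and $|A^*| = AU^*$ combined with the boundedness of $C_U$ and $C_{U^*}$ give $\dz{C_A} = \dz{C_{|A^*|}}$ directly, and then $|C_A| = C_{|A^*|}$ drops out of the uniqueness of the polar decomposition once the kernel condition $\jd{C_U} = \jd{C_{|A^*|}}$ is verified (your identification of both kernels with the functions vanishing on $\overline{\ob{A}}$ is the same computation the paper does for $\jd{C_U} = \jd{C_A}$). Your route avoids the maximality-of-selfadjoint-operators step and the square-root machinery of Theorem \ref{alpharoot}, at the cost of invoking the uniqueness statement for polar decompositions of closed densely defined operators; the paper's route produces $C_A^*C_A = C_{|A^*|^2}$ as a by-product, which is of independent interest and consistent with how it handles the Aluthge transform afterwards. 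Both arguments are complete; the trick of getting the reverse domain inclusion from $|A^*| = AU^*$ and the everywhere-defined $C_{U^*}$ is a genuinely nice alternative to the paper's appeal to $\dz{|C_A|} = \dz{C_A}$.
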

   \begin{proof}
In view of Corollary \ref{partis}, $C_U$ is a partial
isometry. It follows from Proposition \ref{stale2}(i),
Lemma \ref{lem1}(i), \cite[Theorem 5.39]{Weid} and
Theorem \ref{wnoca}(i) that $C_A^* C_A$ is positive
and selfadjoint, and
   \begin{align*}
C_A^* C_A = C_{A^*} C_A \subseteq C_{|A^*|^2} =
(C_{|A^*|^2})^*.
   \end{align*}
Hence, by maximality of selfadjoint operators, $C_A^*
C_A = C_{|A^*|^2}$. Applying Theorem
\ref{alpharoot}(ii), we see that $|C_A| = C_{|A^*|}$.

Since $A^* = U^*|A^*|$ is the polar decomposition of
$A^*$, we get $A = |A^*| U$. Hence
   \begin{align} \label{cacas}
C_U C_{|A^*|} \subseteq C_A.
   \end{align}
However, $\dz{C_{|A^*|}} = \dz{|C_{A}|}=\dz{C_A}$, and
thus by \eqref{cacas}, $C_U C_{|A^*|} = C_A$. It
remains to show that $\jd{C_U} = \jd{C_{A}}$. To prove
the last equality, observe that for $f\in
\varPhi(\hh)$, $f\in \jd{C_U}$ if and only if $f$
vanishes on $\ob{U}$, or equivalently, by the equality
$\ob{U} = \overline{\ob{A}}$, if and only if $f \in
\jd{C_{A}}$.
   \end{proof}
   Theorem \ref{PolDec} can be used do describe the
generalized Aluthge transform of $C_{A}$. The Aluthge
transform was first studied in \cite{Alu} and has been
studied extensively since, mostly in the context of
$p$-hyponormal operators (see e.g.\ \cite{J-K-P}).
Given a complex Hilbert space $\hh$ and an operator $T
\in \ogr{\hh}$ with the polar decomposition $T=U|T|$,
we write
   \begin{align*}
\alu_{s,t}(T) = |T|^{s} U |T|^{t}, \quad s, t \in
(0,\infty).
   \end{align*}
We call $\alu_{s,t}(T)$ the {\em $(s,t)$-Aluthge
transform} of $T$. Clearly, $\alu_{s,t}(T) \in
\ogr{\hh}$. In case $s=t=\frac{1}{2}$, we get the
usual Aluthge transform of $T$.
   \begin{thm}
Let $\varPhi \in \fscr$ and $A \in \ogr{\hh}$ be such
that $C_{A} \in \ogr{\varPhi(\hh)}$ and let $s,t\in
(0,\infty)$. Then
$\alu_{s,t}(C_{A})=C_{\alu_{s,t}(A^*)}^*$.
   \end{thm}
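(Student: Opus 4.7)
The plan is to chain together the polar decomposition of $C_A$ from Theorem \ref{PolDec}, the power formula from Theorem \ref{alpharoot}(ii), the adjoint identity from Theorem \ref{wnoca}(i), and the elementary multiplication rule for composition operators. Writing $A=U|A|$ for the polar decomposition of $A$, Theorem \ref{PolDec} gives $C_A=C_U C_{|A^*|}$ as the polar decomposition, so the partial isometry part of $C_A$ equals $C_U$ and $|C_A|=C_{|A^*|}$. Since $|A^*|\Ge 0$, Theorem \ref{alpharoot}(ii) yields $|C_A|^s=C_{|A^*|^s}$ and $|C_A|^t=C_{|A^*|^t}$. By the definition of the generalized Aluthge transform,
\[
\alu_{s,t}(C_A)=|C_A|^s\,C_U\,|C_A|^t=C_{|A^*|^s}\,C_U\,C_{|A^*|^t}.
\]

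Next I would invoke the obvious multiplication rule $C_\psi C_\varphi = C_{\varphi\circ\psi}$ for composition operators defined by bounded linear symbols. This rule is immediate on the level of elements: for $f \in \varPhi(\hh)$, $(C_\psi C_\varphi)f = f\circ\varphi\circ\psi$. Since each of $C_{|A^*|^s}$, $C_U$, $C_{|A^*|^t}$ lies in $\ogr{\varPhi(\hh)}$ (each of the symbols has norm at most $\|A\|^s$, $1$, $\|A\|^t$ respectively, so Theorem \ref{bca} applies), no domain issues arise, and iterating the rule gives
\[
\alu_{s,t}(C_A)=C_{|A^*|^t\,U\,|A^*|^s}.
\]

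For the right-hand side, note that the polar decomposition of $A^*$ is $A^*=U^*|A^*|$, whence $\alu_{s,t}(A^*)=|A^*|^s U^* |A^*|^t$ and therefore $\alu_{s,t}(A^*)^*=|A^*|^t U |A^*|^s$. Applying Theorem \ref{wnoca}(i) to the bounded linear symbol $\alu_{s,t}(A^*)$ gives $C_{\alu_{s,t}(A^*)}^*=C_{\alu_{s,t}(A^*)^*}=C_{|A^*|^t U|A^*|^s}$, which matches the expression obtained above. The whole proof is thus just a careful bookkeeping of already-established identities, and the only mild subtlety to watch is verifying that all factors are bounded composition operators so that the product rule applies without domain complications; this is handled by Theorem \ref{bca} together with the assumption $C_A \in \ogr{\varPhi(\hh)}$.
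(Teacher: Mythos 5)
Your proof is correct and follows essentially the same route as the paper: polar decomposition of $C_A$ via Theorem \ref{PolDec}, the power formula $|C_A|^s=C_{|A^*|^s}$ from Theorem \ref{alpharoot}(ii), the multiplication rule for composition operators with linear symbols, and the adjoint identity $C_B^*=C_{B^*}$ from Theorem \ref{wnoca}(i). The only cosmetic remark is that your boundedness check should be phrased as a dichotomy (either $\sup\zscr_{\varPhi}<\infty$, so every $C_B$ is bounded, or $\|A\|\Le 1$, so all three symbols are contractions); but this does not affect the argument.
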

   \begin{proof}
Let $A=U |A|$ be the polar decomposition of $A$. By
Theorem \ref{PolDec}, $C_A = C_{U} C_{|A^*|}$ is the
polar decomposition of $C_A$. Hence, by Theorem
\ref{alpharoot}, we have
   \begin{align} \label{alu1}
\alu_{s,t}(C_{A}) = C_{|A^*|}^{s} C_{U} C_{|A^*|}^{t}
= C_{|A^*|^{s}} C_{U} C_{|A^*|^{t}} =
C_{|A^*|^{t}U|A^*|^{s}}.
   \end{align}
Since $A^*=U^*|A^*|$ is the polar decomposition of
$A^*$, we deduce from Theorem \ref{wnoca} that
   \begin{align}  \label{alu2}
C_{|A^*|^{t}U|A^*|^{s}} = C_{|A^*|^{s}U^*|A^*|^{t}}^*
= C_{\alu_{s,t}(A^*)}^*.
   \end{align}
Combining \eqref{alu1} and \eqref{alu2} completes the
proof.
   \end{proof}
   \section{\label{sekt10}Seminormality of $C_A$ and related
questions}
   The following theorem, which is interesting in
itself, will be used to characterize seminormality of
composition operators $C_A$.
   \begin{thm} \label{nier}
If $\varPhi \in \fscr$ and $A,B \in \ogr{\hh}$, then
the following conditions are equivalent{\em :}
   \begin{enumerate}
   \item[(i)] $\dz{C_B} \subseteq \dz{C_A}$ and $\|C_A f\|
\Le \|C_B f\|$ for all $f \in \dz{C_B}$,
   \item[(ii)] $\|C_A f\|  \Le \|C_B f\|$ for all
$f \in \kscr^{\varPhi}$,
   \item[(iii)] $\|A^*\xi\| \Le \|B^*\xi\|$ for all $\xi  \in
\hh$.
   \end{enumerate}
   \end{thm}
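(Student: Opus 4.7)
The implication (i)$\Rightarrow$(ii) is essentially free: by Lemma~\ref{lem1}(i) we have $\kscr^{\varPhi}\subseteq\dz{C_A}\cap\dz{C_B}$, so the inequality on $\dz{C_B}$ restricts to $\kscr^{\varPhi}$. Hence the real content lies in (ii)$\Rightarrow$(iii) and (iii)$\Rightarrow$(i).

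For (ii)$\Rightarrow$(iii), I would test (ii) on the single kernel functions. By Lemma~\ref{lem1}(i), $C_A K^{\varPhi}_{\xi}=K^{\varPhi}_{A^*\xi}$ and similarly for $B$, while \eqref{rep2} gives $\|K^{\varPhi}_{\eta}\|^2=\varPhi(\|\eta\|^2)$. Thus (ii), applied to $f=K^{\varPhi}_{\xi}$, reads
\begin{align*}
\varPhi(\|A^*\xi\|^2) \Le \varPhi(\|B^*\xi\|^2), \quad \xi \in \hh.
\end{align*}
Since $\varPhi|_{[0,\infty)}$ is strictly increasing (a fact recorded right after \eqref{wsp}), we get (iii). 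No need to invoke linear combinations of kernels.

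For the substantive step (iii)$\Rightarrow$(i), my plan is to invoke Douglas's factorization theorem (available as \cite[Theorem~1]{Doug}, already used in the proof of Lemma~\ref{cnier}). The hypothesis $\|A^*\xi\|\Le\|B^*\xi\|$ for all $\xi$ is equivalent to $AA^*\Le BB^*$, and Douglas's theorem then yields a contraction $C\in\ogr{\hh}$ with $A=BC$. Consequently, for every function $f\colon\hh\to\cbb$,
\begin{align*}
(C_A f)(\xi) = f(A\xi) = f(B(C\xi)) = (C_B f)(C\xi) = (C_C\, C_B f)(\xi), \quad \xi\in\hh.
\end{align*}
Now $\|C\|\Le 1$, so by Theorem~\ref{bca} the operator $C_C$ is bounded on $\varPhi(\hh)$, and $\|C_C\|=q_{m,n}(\|C\|)\Le 1$ (indeed, $q_{m,n}(\vartheta)=\vartheta^m$ for $\vartheta\Le 1$). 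Therefore, whenever $f\in\dz{C_B}$, the function $C_B f$ lies in $\varPhi(\hh)$, so $f\circ A = C_C(C_B f)\in\varPhi(\hh)$, giving $f\in\dz{C_A}$ and
\begin{align*}
\|C_A f\| = \|C_C(C_B f)\| \Le \|C_C\|\,\|C_B f\| \Le \|C_B f\|,
\end{align*}
which is exactly~(i).

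The only step requiring any care is (iii)$\Rightarrow$(i); everything hinges on combining Douglas's theorem (to get the factorization $A=BC$ with $\|C\|\Le 1$) with the boundedness of composition operators induced by contractive symbols, which is exactly what Theorem~\ref{bca} provides. The other two implications are routine once one remembers that $\kscr^{\varPhi}$ consists of kernel functions and $C_A$ acts on them by the formula in Lemma~\ref{lem1}(i).
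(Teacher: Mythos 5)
Your proof is correct, and the cycle (i)$\Rightarrow$(ii)$\Rightarrow$(iii)$\Rightarrow$(i) is a legitimate way to close the equivalences. The first two implications coincide with the paper's argument (testing (ii) on single kernel functions and using the strict monotonicity of $\varPhi|_{[0,\infty)}$). Where you genuinely diverge is in (iii)$\Rightarrow$(i): the paper fixes a conjugation $Q$, passes to $E=\varXi_Q(A^*)$ and $F=\varXi_Q(B^*)$, upgrades $E^*E\Le F^*F$ to $(E^*E)^{\otimes n}\Le (F^*F)^{\otimes n}$ via \cite[Proposition 2.2]{js2}, and then reads off the domain inclusion and norm inequality from the Fock-type model $\varGamma_{\varPhi}$ of Theorem \ref{fmod}. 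You instead factor $A=BC$ with $\|C\|\Le 1$ by Douglas's theorem \cite[Theorem 1]{Doug} (correct, since $\|A^*\xi\|\Le\|B^*\xi\|$ for all $\xi$ is exactly $AA^*\Le BB^*$), note the identity $f\circ A=(C_Bf)\circ C$, and use Theorem \ref{bca} to see that $C_C\in\ogr{\varPhi(\hh)}$ with $\|C_C\|=q_{m,n}(\|C\|)\Le 1$; this immediately gives both $\dz{C_B}\subseteq\dz{C_A}$ and the norm inequality. Your route is shorter and avoids the conjugation machinery and the tensor-power monotonicity lemma at this point (though Theorem \ref{bca} itself rests on the Fock model, so the dependence is only displaced, not removed); it is also non-circular, since Theorem \ref{bca} is established in Section \ref{sekt8}, before this theorem. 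The paper's route has the mild advantage of producing the statement directly in the model where all the related results (positivity, polar decomposition, powers) are proved, but as a standalone proof yours is perfectly adequate.
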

   \begin{proof}
(i)$\Rightarrow$(ii) Obvious (due to Lemma
\ref{lem1}(i)).

(ii)$\Rightarrow$(i) This can be deduced from Theorem
\ref{wnoca}(ii) and Proposition \ref{stale2}(i).

(ii)$\Rightarrow$(iii) It follows from \eqref{rep2}
and Lemma \ref{lem1}(i) that
   \begin{align*}
\varPhi(\|A^*\xi\|^2) = \|C_A K_\xi^{\varPhi}\|^2 \Le
\|C_B K_\xi^{\varPhi}\|^2 = \varPhi(\|B^*\xi\|^2),
\quad \xi \in \hh.
   \end{align*}
Hence, by the strict monotonicity of $\varPhi$, (iii)
holds.

(iii)$\Rightarrow$(i) Fix a conjugation $Q$ on $\hh$,
and set $E=\varXi_Q(A^*)$ and $F=\varXi_Q(B^*)$ (see
Appendix \ref{apap}). It follows from (iii) that $\|E
\xi\| \Le \|F \xi\|$ for all $\xi \in \hh$. Hence
$E^*E \Le F^*F$. Applying \cite[Proposition 2.2]{js2},
we deduce that
   \begin{align*}
(E^*E)^{\otimes n} \Le (F^*F)^{\otimes n}, \quad n \in
\zbb_+.
   \end{align*}
This implies that
   \begin{align*}
\|E^{\odot n} \xi\|^2 = \is{(E^*E)^{\otimes n}
\xi}{\xi} \Le \is{(F^*F)^{\otimes n} \xi}{\xi}=
\|F^{\odot n} \xi\|^2, \quad \xi \in \hh^{\odot n}, \;
n \in \zbb_+.
   \end{align*}
As a consequence, we see that
$\dz{\varGamma_{\varPhi}(F)} \subseteq
\dz{\varGamma_{\varPhi}(E)}$ and
$\|\varGamma_{\varPhi}(E) \xi\| \Le
\|\varGamma_{\varPhi}(F) \xi\|$ for all $\xi \in
\dz{\varGamma_{\varPhi}(F)}$. Applying Theorems
\ref{fmod} and \ref{wnoca}(i) completes the proof.
   \end{proof}
Regarding Theorem \ref{nier}, we note that if
$\dz{C_B} \subseteq \dz{C_A}$, then there exists
$\alpha \in \rbb_+$ such that $\|C_A f\| \Le \alpha
(\|f\| + \|C_B f\|)$ for all $f \in \dz{C_B}$ (use
Proposition \ref{stale2}(i) and the closed graph
theorem).

The question of seminormality of $C_A$ can now be
fully answered (see also Corollary \ref{sa-sa} for the
characterization of selfadjoint composition operators
of the form $C_A$).
   \begin{thm} \label{chca}
If $\varPhi \in \fscr$ and $A \in \ogr{\hh}$, then the
following conditions are equivalent{\em :}
   \begin{enumerate}
   \item[(i)] $C_A$ is cohyponormal $($resp.,\ hyponormal\/$)$,
   \item[(ii)] $A$ is hyponormal $($resp.,\ cohyponormal\/$)$.
   \end{enumerate}
   \end{thm}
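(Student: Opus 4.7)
The plan is to reduce the statement to a direct combination of Theorem \ref{wnoca}(i) and Theorem \ref{nier}. First, by Theorem \ref{wnoca}(i), we have $C_A^* = C_{A^*}$; in particular $C_{A^*}$ is a closed densely defined operator. This means that the defining inclusion of domains and the norm inequality in the definition of ``$C_A$ is cohyponormal'' can be rewritten as
\[
\dz{C_{A^*}} \subseteq \dz{C_A} \quad\text{and}\quad \|C_A f\| \Le \|C_{A^*} f\| \text{ for all } f \in \dz{C_{A^*}}.
\]

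Next, I would apply Theorem \ref{nier} with $B := A^*$. The equivalence (i)$\Leftrightarrow$(iii) there says that the pair of conditions displayed above is equivalent to
\[
\|A^*\xi\| \Le \|(A^*)^*\xi\| = \|A\xi\| \quad \text{for all } \xi \in \hh,
\]
which is precisely the statement that $A$ is hyponormal. This gives the equivalence in the cohyponormal case.

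For the hyponormal case, the cleanest route is to use the von Neumann-type fact mentioned in the preliminaries: a closed operator $T$ is hyponormal if and only if $T^*$ is cohyponormal. Combined with Theorem \ref{wnoca}(i), $C_A$ is hyponormal if and only if $C_A^* = C_{A^*}$ is cohyponormal, which by the case already proved is equivalent to $A^*$ being hyponormal, i.e., $A$ being cohyponormal.

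Since all the heavy lifting (the Fock-type model, the fact that $\kscr^{\varPhi}$ is a core for $C_A$, the comparison of graph norms via symmetric tensor powers, and the identity $C_A^* = C_{A^*}$) has already been done, there is essentially no obstacle beyond bookkeeping. The only point requiring a little care is to make sure that the asymmetric hypotheses in Theorem \ref{nier}(i) (inclusion of domains plus norm inequality on the smaller one) match the definitions of (co)hyponormality verbatim, which they do once $C_A^* = C_{A^*}$ is used.
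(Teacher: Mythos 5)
Your proof is correct and follows essentially the same route as the paper, which simply applies Theorem \ref{wnoca}(i) together with Theorem \ref{nier} to the pairs $(A,A^*)$ and $(A^*,A)$. The only cosmetic difference is that for the hyponormal case you pass through the von Neumann duality ($C_A$ hyponormal iff $C_A^*$ cohyponormal) instead of applying Theorem \ref{nier} directly to the swapped pair; both are valid and equivalent in substance.
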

   \begin{proof}
Apply Theorem \ref{wnoca}(i) and Theorem \ref{nier} to
the pairs $(A,A^*)$ and $(A^*,A)$, respectively.
   \end{proof}
As a consequence of the above considerations, we
obtain the following characterizations of
hyponormality, normality, unitarity, isometricity and
coisometricity of $C_{A+b}$ (see \cite[Proposition
5.1]{LeT} for a particular case of Corollary \ref{wn3}
where $\varPhi=\exp$ and the composition operator
$C_{\varphi}$ is assumed to be bounded).
   \begin{cor} \label{wn3}
If $\varPhi \in \fscr$, $A \in \ogr{\hh}$ and $b\in
\hh$, then $C_{A+b}$ is hyponormal $($resp.,\ normal,
unitary, isometric, coisometric$)$ if and only if
$b=0$ and $A$ is cohyponormal $($resp.,\ normal,
unitary, coisometric, isometric$)$.
   \end{cor}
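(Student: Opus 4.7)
The strategy is to extract the condition $b=0$ in each of the five cases; once $b=0$, the statement reduces to $C_{A+b}=C_A$ and then follows immediately from Theorem \ref{chca} (in the (co)hyponormal and normal cases) or Corollary \ref{unit} (in the isometric, coisometric, and unitary cases). Thus the real content is the five forward directions, four of which will be handled uniformly by Corollary \ref{hypo} and one of which requires a single extra kernel-function computation.

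For the hyponormal, normal, isometric, and unitary cases I would simply observe that each of these hypotheses makes $C_{A+b}$ hyponormal, so Corollary \ref{hypo}, applied to $\varphi=A+b$, forces $(A+b)(0)=b=0$. (Here one uses the standard fact that an isometry $T$ is hyponormal: $T^*T=I$ makes $TT^*$ an orthogonal projection, hence $\|T^*f\|\Le\|f\|=\|Tf\|$.) For the coisometric case Corollary \ref{hypo} does not apply, so a separate argument is needed, and this is the only place where new work occurs; it is short. If $C_{A+b}$ is coisometric, then it is bounded and densely defined, so Theorem \ref{sprz}(i) gives
\begin{equation*}
C_{A+b}^*K_0^{\varPhi}=K_{(A+b)(0)}^{\varPhi}=K_b^{\varPhi}.
\end{equation*}
Since $C_{A+b}^*$ is an isometry, applying \eqref{rep2} on both sides yields
\begin{equation*}
\varPhi(\|b\|^2)=\|K_b^{\varPhi}\|^2=\|C_{A+b}^*K_0^{\varPhi}\|^2=\|K_0^{\varPhi}\|^2=\varPhi(0),
\end{equation*}
and strict monotonicity of $\varPhi|_{[0,\infty)}$ (noted right after the definition of $\fscr$) forces $b=0$.

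With $b=0$ secured in every case, the remaining equivalences are direct quotations of earlier results. Hyponormality of $C_A$ translates to cohyponormality of $A$ by Theorem \ref{chca}; the normal case then follows by running the same argument in both directions of seminormality. Isometricity, coisometricity, and unitarity of $C_A$ match coisometricity, isometricity, and unitarity of $A$ respectively by Corollary \ref{unit}. All five converse implications reduce to the same two results applied with $\varphi=A+b=A$, so nothing more is needed. I do not expect a serious obstacle: the only nontrivial ingredient is the short reproducing-kernel computation that eliminates $b$ in the coisometric case.
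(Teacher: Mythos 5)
Your proposal is correct and follows essentially the same route as the paper: the hyponormal, normal, isometric and unitary cases are reduced to $b=0$ via Corollary \ref{hypo}, the coisometric case is handled by the same reproducing-kernel computation (the paper evaluates $(C_{A+b}C_{A+b}^*K^{\varPhi}_0)(0)=K^{\varPhi}_b(b)=\varPhi(\|b\|^2)=\varPhi(0)$, which is exactly your norm identity for the isometry $C_{A+b}^*$), and the rest is quoted from Theorem \ref{chca} and Corollary \ref{unit}. No gaps.
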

   \begin{proof}
The characterizations of hyponormality, normality,
unitarity and isometricity of $C_{A+b}$ follow easily
from Corollaries \ref{hypo} and \ref{unit} and Theorem
\ref{chca}.

Suppose now that $C_{A+b}$ is a coisometry. Then, by
Theorem \ref{sprz}(i), we have
   \begin{align*}
\varPhi(0) = K^{\varPhi}_0(0) = (C_{A+b}C_{A+b}^*
K^{\varPhi}_0)(0) = (C_{A+b} K^{\varPhi}_{b})(0) =
K^{\varPhi}_{b}(b) = \varPhi(\|b\|^2),
   \end{align*}
which implies that $b=0$. Hence, the characterization
of coisometricity of $C_{A+b}$ follows directly from
Corollary \ref{unit}.
   \end{proof}
In the next corollary, we restrict our attention to
operators $A\in \ogr{\hh}$ such~ that
   \begin{align} \label{klop}
   \begin{minipage}{70ex} {\em
either the planar measure of the spectrum of $A$ is
equal to $0$ or one of the operators $\mathfrak{Re} A$
or $\mathfrak{Im} A$ is compact.}
   \end{minipage}
   \end{align}
Clearly, any compact operator $A\in \ogr{\hh}$
satisfies both requirements of \eqref{klop}.
   \begin{cor}\label{dim-fin}
Suppose $\varPhi \in \fscr$ and $A \in \ogr{\hh}$
satisfies \eqref{klop}. Then $C_A$ is seminormal if
and only if $A$ is normal.
   \end{cor}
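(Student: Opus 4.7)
The plan is to use Theorem~\ref{chca} to translate the question of seminormality of $C_A$ into one about seminormality of $A$, and then apply Putnam's classical theorems on hyponormal operators to upgrade seminormality of $A$ to normality under the hypothesis~\eqref{klop}.

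The ``if'' direction is immediate: if $A$ is normal, it is simultaneously hyponormal and cohyponormal, so two applications of Theorem~\ref{chca} show that $C_A$ is simultaneously cohyponormal and hyponormal, and in particular seminormal. For the nontrivial direction, assume that $C_A$ is seminormal. Theorem~\ref{chca} then guarantees that $A$ itself is seminormal. Since $\mathfrak{Re}\, A^* = \mathfrak{Re}\, A$, $\mathfrak{Im}\, A^* = -\mathfrak{Im}\, A$, and $\sigma(A^*) = \{\bar\lambda : \lambda \in \sigma(A)\}$ has the same planar Lebesgue measure as $\sigma(A)$, the condition~\eqref{klop} is invariant under $A \mapsto A^*$. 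I may therefore pass to the adjoint if necessary and assume without loss of generality that $A$ is hyponormal, reducing the corollary to the statement that every hyponormal operator $A \in \ogr{\hh}$ satisfying~\eqref{klop} is normal.

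The key ingredient is Putnam's inequality: for any hyponormal $T \in \ogr{\hh}$,
\[
\|T^*T - TT^*\| \le \frac{m_2(\sigma(T))}{\pi},
\]
where $m_2$ denotes planar Lebesgue measure. In the first alternative of~\eqref{klop} the right-hand side is zero, so $A^*A = AA^*$ and $A$ is normal. In the second alternative I would invoke Putnam's spectral identity for hyponormal operators,
\[
\sigma(\mathfrak{Re}\, T) = \{\mathfrak{Re}\,\lambda : \lambda \in \sigma(T)\}, \qquad \sigma(\mathfrak{Im}\, T) = \{\mathfrak{Im}\,\lambda : \lambda \in \sigma(T)\}.
\]
If $\mathfrak{Re}\, A$ (respectively $\mathfrak{Im}\, A$) is compact, its spectrum is countable, so $\sigma(A)$ is contained in a countable union of vertical (respectively horizontal) lines and therefore has planar measure zero. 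Both alternatives of~\eqref{klop} thus funnel into Putnam's inequality, forcing $A$ to be normal.

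The argument encounters no genuine technical obstacle; everything is achieved by quoting the two standard results of Putnam. The only conceptual point requiring attention is the symmetry of~\eqref{klop} under $A \mapsto A^*$, which is exactly what enables the reduction to the hyponormal case.
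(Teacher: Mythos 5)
Your argument is correct, and its skeleton coincides with the paper's: Theorem~\ref{chca} converts seminormality of $C_A$ into seminormality of $A$ (with hyponormality and cohyponormality swapped, which is harmless since \eqref{klop} and normality are invariant under $A\mapsto A^*$, as you observe), and Putnam's inequality then kills the self-commutator. The one place where you diverge is the second alternative of \eqref{klop}: the paper disposes of the compact real/imaginary part case by quoting directly the fact that a bounded seminormal operator with compact real or imaginary part is normal (Halmos, Problem 207), whereas you derive this from the projection property of the spectrum of a seminormal operator, $\sigma(\mathfrak{Re}\,T)=\{\mathfrak{Re}\,\lambda:\lambda\in\sigma(T)\}$ (also due to Putnam), concluding that $\sigma(A)$ lies in countably many vertical or horizontal lines and hence has planar measure zero, so that both alternatives funnel into Putnam's inequality. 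This is a legitimate and somewhat more unified route; the cost is that the projection property is itself a nontrivial theorem about seminormal operators (and you should cite it as such rather than treat it as folklore), while the paper's citation of the Halmos problem is the more economical off-the-shelf reference. Either way the proof stands.
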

   \begin{proof}
Apply Theorem \ref{chca}, the fact that bounded
seminormal operators with compact real or imaginary
parts are normal (cf.\ \cite[Problem 207]{Hal}), and
Putnam's inequality (cf.\ \cite[Theorem 1]{Put}).
   \end{proof}
   If $\varPhi^{\prime}(0) \neq 0$, then we can also
characterize subnormal composition operators.
   \begin{pro} \label{subnwkw}
Suppose $\varPhi \in \fscr$ is such that
$\varPhi^{\prime}(0) \neq 0$. If $A \in \ogr{\hh}$,
then $C_{A}$ $($resp.,\ $C_{A}^*$$)$ is subnormal if
and only if $A^*$ $($resp.,\ $A$$)$ is subnormal.
   \end{pro}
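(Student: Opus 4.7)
The plan is to translate subnormality of $C_A$ (or of $C_A^*$) through the Fock-type model of Theorem \ref{fmod} into a question about the summands $\varXi_Q(A^*)^{\odot n}$, $n \in \zscr_\varPhi$. The standing hypothesis $\varPhi^\prime(0) \neq 0$ is precisely what guarantees $1 \in \zscr_\varPhi$, so that the first-order summand is present; this is the decisive ingredient. Fixing a conjugation $Q$ on $\hh$, Theorem \ref{fmod} identifies $C_A^*$ unitarily with $\bigoplus_{n \in \zscr_\varPhi} \varXi_Q(A)^{\odot n}$. Taking adjoints and using $\varXi_Q(A)^* = \varXi_Q(A^*)$ from Proposition \ref{abcon}(v), together with the identity $\bigl(\bigoplus B_n\bigr)^* = \bigoplus B_n^*$ for orthogonal sums of bounded operators, one concludes that $C_A$ is unitarily equivalent to $\bigoplus_{n \in \zscr_\varPhi} \varXi_Q(A^*)^{\odot n}$, whose $n=1$ summand is $\varXi_Q(A^*) = QA^*Q$.

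Next I would record four preservation properties of subnormality. First, if $T$ is subnormal and $\mathcal{L}$ reduces $T$, then $T|_\mathcal{L}$ is subnormal, because any normal extension of $T$ is simultaneously a normal extension of $T|_\mathcal{L}$. Second, if $B \in \ogr{\hh}$ is subnormal with bounded normal extension $N$ on $\mathcal{K} \supseteq \hh$, then $B^{\odot n}$ is subnormal for every $n$: the tensor power $N^{\otimes n}$ is normal on $\mathcal{K}^{\otimes n}$, the symmetric subspace $\mathcal{K}^{\odot n}$ reduces $N^{\otimes n}$, so $N^{\odot n}$ is normal on $\mathcal{K}^{\odot n} \supseteq \hh^{\odot n}$, and it agrees with $B^{\odot n}$ on simple tensors. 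Third, an orthogonal sum $\bigoplus_n T_n$ of bounded subnormal operators $T_n$ is subnormal, the normal extensions $N_n$ assembling into $\bigoplus_n N_n$; boundedness of each $T_n$ makes the domain condition of the definition in Section \ref{sek2} automatic. Fourth, $\varXi_Q$ preserves subnormality: extending $Q$ to a conjugation $\tilde Q$ on $\mathcal{K}$, the operator $\tilde Q N \tilde Q$ is normal (by the analogue of Proposition \ref{abcon} applied in $\mathcal{K}$) and extends $QBQ$.

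Combining these ingredients, the ``only if'' direction follows by applying the first property to the $n=1$ summand: subnormality of $C_A$ forces subnormality of $\varXi_Q(A^*)$, hence of $A^*$ by the fourth property. The ``if'' direction follows by applying the fourth, second, and third properties in sequence: if $A^*$ is subnormal, so is $\varXi_Q(A^*)$, hence $\varXi_Q(A^*)^{\odot n}$ for every $n \in \zscr_\varPhi$, hence their direct sum, hence $C_A$. The equivalence for $C_A^*$ is obtained by the same argument with $A$ in place of $A^*$, using the unitary equivalence $C_A^* \cong \bigoplus_n \varXi_Q(A)^{\odot n}$.

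The main subtlety I expect lies in the verification that symmetric tensor powers inherit subnormality; this is where the bounded normal tensor-power machinery does the essential work, and one must check carefully that $\mathcal{K}^{\odot n}$ reduces $N^{\otimes n}$ and that $N^{\odot n}$ actually restricts to $B^{\odot n}$ on $\hh^{\odot n}$. This is also the exact point where the obstruction in the general, non-$\varPhi^\prime(0) \neq 0$ case lives: without a first-order summand one would be forced to infer subnormality of $A^*$ from that of some higher symmetric power $\varXi_Q(A^*)^{\odot n}$ with $n \geq 2$, which is precisely the open problem flagged by the authors in the $\varPhi(z) = z^2$ case.
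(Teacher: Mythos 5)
Your argument is correct and follows essentially the same route as the paper: the paper's proof is a one-line appeal to Theorem \ref{fmod}, Theorem \ref{wnoca}(i), Theorem \ref{qaq}(i) (conjugations preserve subnormality) and the fact that $T^{\odot n}$ is subnormal whenever $T$ is (citing \cite[Theorem 2.4]{js2}), and you have simply supplied the details of exactly these ingredients, including correct self-contained proofs of the two cited facts and the correct identification of $\varPhi'(0)\neq 0$ with the presence of the $n=1$ summand.
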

   \begin{proof}
This is a direct consequence of Theorems \ref{fmod},
\ref{wnoca}(i) and \ref{qaq}(i), and the fact that if
$T\in \ogr{\hh}$ is subnormal, then $T^{\odot n}$ is
subnormal for every $n\in \zbb_+$ (cf.\ \cite[Theorem
2.4]{js2}).
   \end{proof}
It is an open question as to whether the ``only if''
part of Proposition \ref{subnwkw} remains true if
$\varPhi^{\prime}(0)=0$ (the ``if'' part is true
independently of whether $\varPhi^{\prime}(0)=0$ or
not). If $\varPhi(z)=z^2$ for $z\in \cbb$, then the
question reduces to that in \cite[p.\ 139]{js2}.

Below we give necessary and sufficient conditions for
positive operators $A, B \in \ogr{\hh}$ to satisfy the
inequality $C_A \preccurlyeq C_B$.
   \begin{thm} \label{wn2}
Let $\varPhi \in \fscr$ and let $A, B \in \ogrp{\hh}$.
Then the following conditions are equivalent{\em :}
   \begin{enumerate}
   \item[(i)] $C_A \preccurlyeq C_B$,
   \item[(ii)] $\is{C_A f}{f} \Le \is{C_B f}{f}$ for all
$f \in \kscr^{\varPhi}$,
   \item[(iii)] $A \Le B$.
   \end{enumerate}
   \end{thm}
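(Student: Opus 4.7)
\emph{Proof proposal.} The plan is to travel around the cycle (i)$\Rightarrow$(ii)$\Rightarrow$(iii)$\Rightarrow$(i), leaning on two earlier results. Theorem~\ref{gcd3} tells us that for $A,B\in\ogrp{\hh}$ the operators $C_A,C_B$ are positive selfadjoint with $C_A=C_{A^{1/2}}^*C_{A^{1/2}}$ (and similarly for $B$); Theorem~\ref{alpharoot}(ii) identifies $C_A^{1/2}=C_{A^{1/2}}$ and $C_B^{1/2}=C_{B^{1/2}}$. Combined with Lemma~\ref{lem1}(i), which places $\kscr^{\varPhi}$ inside every domain in sight, these identifications reduce the whole theorem to simple manipulations with kernel vectors.

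For (i)$\Rightarrow$(ii), given $f\in\kscr^{\varPhi}$ we have $f\in\dz{C_B^{1/2}}\cap\dz{C_A^{1/2}}$ and may compute $\is{C_A f}{f}=\|C_{A^{1/2}}f\|^2=\|C_A^{1/2}f\|^2$, and likewise for $B$; the hypothesis $C_A\preccurlyeq C_B$ then yields the inequality in (ii) at once. For (ii)$\Rightarrow$(iii), I would substitute $f=K^{\varPhi}_{\xi}$ into (ii). Since $A$ is selfadjoint, Lemma~\ref{lem1}(i) gives $C_A K^{\varPhi}_{\xi}=K^{\varPhi}_{A\xi}$, and \eqref{rep2} then yields $\is{C_A K^{\varPhi}_{\xi}}{K^{\varPhi}_{\xi}}=\varPhi(\is{A\xi}{\xi})$, and analogously for $B$. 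So (ii) reads $\varPhi(\is{A\xi}{\xi})\Le\varPhi(\is{B\xi}{\xi})$ for every $\xi\in\hh$, and the strict monotonicity of $\varPhi|_{[0,\infty)}$ (recorded at the beginning of Section~\ref{sekt3}) upgrades this to $\is{A\xi}{\xi}\Le\is{B\xi}{\xi}$, i.e., $A\Le B$.

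For (iii)$\Rightarrow$(i) the natural move is to apply Theorem~\ref{nier} to the pair $(A^{1/2},B^{1/2})$. Since $A^{1/2}$ and $B^{1/2}$ are selfadjoint and $A\Le B$ is exactly $\|A^{1/2}\xi\|\Le\|B^{1/2}\xi\|$ for every $\xi\in\hh$, condition (iii) of Theorem~\ref{nier} holds for this pair; that theorem then produces $\dz{C_{B^{1/2}}}\subseteq\dz{C_{A^{1/2}}}$ together with $\|C_{A^{1/2}}f\|\Le\|C_{B^{1/2}}f\|$ on $\dz{C_{B^{1/2}}}$, and rewriting $C_{A^{1/2}}=C_A^{1/2}$, $C_{B^{1/2}}=C_B^{1/2}$ via Theorem~\ref{alpharoot}(ii) gives precisely $C_A\preccurlyeq C_B$.

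I do not expect a genuine obstacle: the substance is carried by Theorems~\ref{gcd3}, \ref{alpharoot} and \ref{nier}. The only point rewarding attention is the identification $C_A^{1/2}=C_{A^{1/2}}$ together with the matching domain equality, since $\preccurlyeq$ is defined through unbounded square roots; once this is in place, each implication becomes a short formal consequence.
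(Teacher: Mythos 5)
Your proof is correct and follows essentially the same route as the paper: both rest on the identity $\is{C_A f}{f}=\|C_{A^{1/2}}f\|^2$ on $\kscr^{\varPhi}$, the identification $C_A^{1/2}=C_{A^{1/2}}$ from Theorem~\ref{alpharoot}(ii), and Theorem~\ref{nier} applied to the pair $(A^{1/2},B^{1/2})$. The only (harmless) deviation is that you prove (ii)$\Rightarrow$(iii) directly by evaluating at kernel functions and using the strict monotonicity of $\varPhi$, whereas the paper obtains all three equivalences at once by matching them to the three conditions of Theorem~\ref{nier}.
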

   \begin{proof}
By Theorem \ref{gcd3} both operators $C_A$ and $C_B$
are positive and selfadjoint. It follows from Lemma
\ref{lem1}(i) and Theorem \ref{wnoca}(i) that
$\kscr^{\varPhi}$ is invariant for $C_{A^{1/2}}$ and
$C_{A^{1/2}}^* C_{A^{1/2}}f = C_A f$ for all $f\in
\kscr^{\varPhi}$. This implies that $\|C_{A^{1/2}}
f\|^2 = \is{C_A f}{f}$ for all $f \in
\kscr^{\varPhi}$. The same is true for $B$. Hence,
applying Theorems \ref{alpharoot} and \ref{nier}
completes the proof.
   \end{proof}
We conclude this section by calculating the limit of a
net of composition operators which are orthogonal
projections.
   \begin{pro} \label{suport}
If $\varPhi \in \fscr$ and $\pcal \subseteq \ogr{\hh}$
is an upward-directed partially ordered set of
orthogonal projections, then
   \begin{align*}
\lim_{P\in \pcal} C_Pf = C_{Q}f, \quad f \in
\varPhi(\hh),
   \end{align*}
where $Q$ is the orthogonal projection of $\hh$ onto
$\bigvee_{P\in \pcal} \ob{P}$.
   \end{pro}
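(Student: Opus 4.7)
The plan is to view $\{C_P\}_{P\in\pcal}$ as a monotonically increasing net of orthogonal projections in $\ogr{\varPhi(\hh)}$ and to establish pointwise convergence to $C_Q$ via density of $\kscr^{\varPhi}$. Since $1\in\gfrak_{\varPhi}$, Corollary \ref{opr} yields that each $C_P$ and also $C_Q$ is an orthogonal projection on $\varPhi(\hh)$; in particular, $\|C_P\|\Le 1$ for every $P\in\pcal$ and $\|C_Q\|\Le 1$. Moreover, $\{P\}_{P\in\pcal}$ is a monotonically increasing net of orthogonal projections on $\hh$ bounded above by $I$, so by a classical result it converges in the strong operator topology to the projection onto $\bigvee_{P\in\pcal}\ob{P}$, which is precisely $Q$; in particular, $P\xi\to Q\xi$ in $\hh$ for every $\xi\in\hh$.

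Next I would verify convergence on the total set $\{K^{\varPhi}_{\xi}\colon\xi\in\hh\}$. Lemma \ref{lem1}(i) gives $C_PK^{\varPhi}_{\xi}=K^{\varPhi}_{P\xi}$ and $C_QK^{\varPhi}_{\xi}=K^{\varPhi}_{Q\xi}$, and expanding the norm square by means of \eqref{rep2} produces
\begin{align*}
\|C_PK^{\varPhi}_{\xi}-C_QK^{\varPhi}_{\xi}\|^2
&=\|K^{\varPhi}_{P\xi}-K^{\varPhi}_{Q\xi}\|^2
\\
&=\varPhi(\|P\xi\|^2)-\varPhi(\is{Q\xi}{P\xi})-\varPhi(\is{P\xi}{Q\xi})+\varPhi(\|Q\xi\|^2).
\end{align*}
Since $P\xi\to Q\xi$ in $\hh$ and $\varPhi$ is entire (hence continuous), each of the four $\varPhi$-values tends to $\varPhi(\|Q\xi\|^2)$, forcing the right-hand side to $0$. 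Therefore $C_PK^{\varPhi}_{\xi}\to C_QK^{\varPhi}_{\xi}$ in $\varPhi(\hh)$ for every $\xi\in\hh$, and by linearity $C_Ph\to C_Qh$ for every $h\in\kscr^{\varPhi}$.

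To extend the convergence to arbitrary $f\in\varPhi(\hh)$, I would invoke a standard $\varepsilon/3$ argument, using the density of $\kscr^{\varPhi}$ in $\varPhi(\hh)$ together with the uniform bound $\sup_{P\in\pcal}\|C_P\|\Le 1$ and $\|C_Q\|\Le 1$: given $\varepsilon>0$, choose $h\in\kscr^{\varPhi}$ with $\|f-h\|<\varepsilon/3$, then $P_0\in\pcal$ with $\|C_Ph-C_Qh\|<\varepsilon/3$ for all $P\Ge P_0$, and estimate $\|C_Pf-C_Qf\|\Le \|C_P\|\,\|f-h\|+\|C_Ph-C_Qh\|+\|C_Q\|\,\|h-f\|<\varepsilon$. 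I do not anticipate a genuine obstacle; the only step requiring mild care is the classical strong-operator convergence of the monotone net $\{P\}$ on $\hh$ when $\pcal$ is only upward-directed rather than linearly ordered, but the argument (separate $\xi$ into a component in $\bigvee_{P\in\pcal}\ob{P}$ approximable by some $\eta\in\ob{P_0}$ with $P\eta=\eta$ for $P\Ge P_0$, and its orthogonal complement annihilated by every $P$) is standard.
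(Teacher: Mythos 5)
Your proof is correct, but it takes a genuinely different route from the paper's. The paper works entirely inside $\varPhi(\hh)$: it combines Corollary \ref{opr} with Theorem \ref{wn2} to see that $\{C_P\}_{P\in\pcal}$ is a \emph{monotonically increasing} net of orthogonal projections, invokes the classical convergence theorem for such nets to get a limit projection $T$ onto $\bigvee_{P\in\pcal}\ob{C_P}$, and then identifies $\ob{T}$ with $\ob{C_Q}$ by a reproducing-kernel density argument. You instead transfer the convergence from $\hh$ to $\varPhi(\hh)$ directly: you use only that each $C_P$ is a contraction (which already follows from Theorem \ref{bca}, or from Corollary \ref{opr} as you argue), verify $C_PK^{\varPhi}_{\xi}\to C_QK^{\varPhi}_{\xi}$ by the explicit norm expansion via \eqref{rep2} and the continuity of $\varPhi$ together with $P\xi\to Q\xi$ in $\hh$, and finish with the standard density-plus-uniform-boundedness argument on $\kscr^{\varPhi}$. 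Your computation is sound (in fact, since $PQ=QP=P$, the four-term expression collapses to $\varPhi(\|Q\xi\|^2)-\varPhi(\|P\xi\|^2)$), and your sketch of the strong convergence of the monotone net $\{P\}$ on $\hh$ is the standard one. What the paper's route buys is that it never needs the continuity of $\varPhi$ or any explicit computation with kernels beyond identifying the range of the limit; what your route buys is that it bypasses the order-theoretic machinery (Theorem \ref{wn2} and the monotone-net theorem for projections in $\varPhi(\hh)$) entirely, using only uniform boundedness and convergence on a total set, which makes the argument more elementary and self-contained.
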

   \begin{proof}
It follows from Corollary \ref{opr} and Theorem
\ref{wn2} that $\{C_P\}_{P\in \pcal}$ is a
monotonically increasing net of orthogonal
projections. Hence, $\{C_P\}_{P\in \pcal}$ converges
in the strong operator topology to the orthogonal
projection $T$ of $\varPhi(\hh)$ onto $\bigvee_{P\in
\pcal} \ob{C_P}$ (cf.\ \cite[Theorem 4.3.4]{Mlak}). If
$P\in \pcal$, then $P \Le Q$ and thus $C_P \Le C_{Q}$,
which implies that $\ob{T}=\bigvee_{P\in \pcal}
\ob{C_P} \subseteq \ob{C_Q}$. Take $f \in \ob{C_Q}
\ominus \ob{T}$. Then $f = C_Q g$ for some $g \in
\varPhi(\hh)$. Since $QP=P$ for every $P \in \pcal$,
we have
   \begin{align*}
g(P\xi)= g(QP\xi) \overset{\eqref{rep}} = \is{C_Q
g}{K^{\varPhi}_{P\xi}} \overset{\eqref{op*}}=
\is{f}{C_P K^{\varPhi}_{\xi}} = 0, \quad P \in \pcal,
\, \xi \in \hh,
   \end{align*}
and thus $g(\eta)=0$ for every $\eta \in
\overline{\bigcup_{P\in \pcal} \ob{P}}$. As
$\bigcup_{P\in \pcal} \ob{P}$ is a vector space
(because $\pcal$ is upward-directed), we see that
$g|_{\ob{Q}}=0$, which yields $f=C_Q g =0$. This
implies that $\ob{C_Q} = \ob{T}$ and thus $C_Q = T$,
which completes the proof.
   \end{proof}
   \section{\label{sekt11}Boundedness and seminormality
of $C_{A+b}$ in $\exp(\hh)$ (necessity)}
   From now on, we abbreviate $K^{\exp}$ and
$\kscr^{\exp}$ to $K$ and $\kscr$ respectively. The
corresponding reproducing kernel Hilbert space
$\exp(\hh)$ is called the Segal-Bargmann space of
order $\dim \hh$ (see e.g., \cite{jerz,jerz2}).

We begin by providing explicit formulae for
$C_{\varphi} K_{\xi}$ and $C_{\varphi}^*$ if $\varphi$
is affine. Let us note that part (iii) of Lemma
\ref{cfkz} appeared in \cite[Lemma 2]{c-m-s03} under
the assumption that $\hh$ is finite dimensional and
$C_{\varphi}$ is bounded.
   \begin{lem} \label{cfkz}
Suppose $\varphi = A + b$ with $A \in \ogr{\hh}$ and
$b \in \hh$. Then
   \begin{enumerate}
   \item[(i)] $\kscr \subseteq \dz{C_{\varphi}}
\cap \dz{C_{\varphi}^*}$,
   \item[(ii)] $C_{\varphi} K_{\xi} = \exp(\is{b}{\xi})
K_{A^*\xi}$ for every $\xi \in \hh$,
   \item[(iii)] $(C_{\varphi}^* f)(\xi) = \exp(\is{\xi}{b})
f(A^* \xi)$ for all $\xi \in \hh$ and $f\in
\dz{C_{\varphi}^*}$,
   \item[(iv)] $(C_{\varphi}^*C_{\varphi} f)(\xi) =
\exp(\is{\xi}{b}) f(AA^* \xi + b)$ for all $\xi \in \hh$
and $f \in \dz{C_{\varphi}^*C_{\varphi}}$.
   \end{enumerate}
   \end{lem}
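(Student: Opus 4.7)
The plan is to derive all four assertions from a single direct computation on reproducing kernels, together with the general adjoint description from Theorem~\ref{sprz}. Using $K_\xi(\eta)=\exp(\is{\eta}{\xi})$ and $\varphi(\eta)=A\eta+b$, a one-line calculation gives
\[
(C_\varphi K_\xi)(\eta) \;=\; \exp(\is{A\eta+b}{\xi}) \;=\; \exp(\is{b}{\xi})\,\exp(\is{\eta}{A^*\xi}) \;=\; \exp(\is{b}{\xi})\,K_{A^*\xi}(\eta).
\]
Since the right-hand side lies in $\exp(\hh)$, this simultaneously proves (ii) and yields the inclusion $\kscr\subseteq\dz{C_\varphi}$. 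In particular $C_\varphi$ is densely defined (as $\kscr$ is dense in $\exp(\hh)$), so Theorem~\ref{sprz}(i) supplies the remaining inclusion $\kscr\subseteq\dz{C_\varphi^*}$, completing (i).

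For (iii), take $f\in\dz{C_\varphi^*}$ and apply the reproducing property together with (ii):
\[
(C_\varphi^* f)(\xi) \;=\; \is{C_\varphi^* f}{K_\xi} \;=\; \is{f}{C_\varphi K_\xi} \;=\; \overline{\exp(\is{b}{\xi})}\,\is{f}{K_{A^*\xi}} \;=\; \exp(\is{\xi}{b})\,f(A^*\xi),
\]
where anti-linearity of $\is{\cdot}{\mbox{-}}$ in its second slot turns $\exp(\is{b}{\xi})$ into $\exp(\is{\xi}{b})$, and the final equality is one more application of reproducing. Assertion (iv) is then immediate: if $f\in\dz{C_\varphi^*C_\varphi}$ then $C_\varphi f\in\dz{C_\varphi^*}$, so applying (iii) with $C_\varphi f$ in place of $f$ yields $(C_\varphi^*C_\varphi f)(\xi)=\exp(\is{\xi}{b})(C_\varphi f)(A^*\xi)=\exp(\is{\xi}{b})\,f(\varphi(A^*\xi))=\exp(\is{\xi}{b})\,f(AA^*\xi+b)$.

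No step presents a genuine obstacle; the only subtlety worth flagging is the conjugation that appears when pulling the scalar $\exp(\is{b}{\xi})$ out of the second argument of the inner product, since this is precisely what distinguishes the factor in (ii) from the factors in (iii) and (iv).
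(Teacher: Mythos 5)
Your proposal is correct, and parts (i), (ii) and (iv) follow the paper's own route exactly (direct evaluation of $K_\xi\circ\varphi$, then Theorem~\ref{sprz}(i) for the adjoint inclusion, then iteration for (iv)). The one place where you genuinely diverge is (iii): the paper first verifies the formula on the kernel functions via $C_\varphi^*K_\eta=K_{\varphi(\eta)}$ and then extends it to all of $\dz{C_\varphi^*}$ using that $\kscr$ is a core for $C_\varphi^*$ together with the fact that norm convergence implies pointwise convergence in $\exp(\hh)$, whereas you obtain it for arbitrary $f\in\dz{C_\varphi^*}$ in one stroke from the identity $(C_\varphi^*f)(\xi)=\is{C_\varphi^*f}{K_\xi}=\is{f}{C_\varphi K_\xi}$ and part (ii). Your version is slightly cleaner since it bypasses the density/approximation step entirely; the paper's version has the mild advantage of only invoking data already recorded in Theorem~\ref{sprz}(i). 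Both are sound, and your bookkeeping of the conjugation turning $\exp(\is{b}{\xi})$ into $\exp(\is{\xi}{b})$ is exactly the point that needs care.
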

   \begin{proof}
As $(K_{\xi} \circ \varphi) (\eta) = \exp(\is{b}{\xi})
K_{A^*\xi}(\eta)$ for all $\xi, \eta \in \hh$, we
infer from Theorem \ref{sprz}(i) that (i) and (ii)
hold. By (i) and Theorem \ref{sprz}(i), we have
$(C_{\varphi}^* K_\eta)(\xi) = K_b(\xi) K_{\eta}(A^*
\xi)$ for all $\xi, \eta\in \hh$. Since $\kscr$ is a
core for $C_{\varphi}^*$ (cf.\ Theorem \ref{sprz}(i))
and the norm convergence implies the pointwise one in
$\exp(\hh)$, we obtain (iii). The condition (iv) is a
direct consequence of (iii).
   \end{proof}
The next result is of some general interest (see also
\cite{LeT} for a recent independent approach in the
case of bounded operators). Before stating it, we
define the generalized inverse of a (not necessarily
bounded) selfadjoint operator $B$ in a complex Hilbert
space $\hh$. Since $\overline{\ob{B}}$ reduces $B$ and
$B_1:=B|_{\overline{\ob{B}}}$ is a selfadjoint
operator in $\overline{\ob{B}}$ such that $\jd{B_1} =
\{0\}$ and $\ob{B}=\ob{B_1}$, we can define $B^{-1} =
B_1^{-1}$. The generalized inverse $B^{-1}$ is a
selfadjoint operator in $\overline{\ob{B}}$. If
moreover $B\Ge 0$, then we set $B^{-1/2} =
(B^{1/2})^{-1}$. Clearly, $B^{-1/2} = (B^{-1})^{1/2}$.
   \begin{lem} \label{range}
Let $B$ be a selfadjoint operator in a complex Hilbert
space $\hh$, $e \in \hh$ and $c \in \rbb$. Then the
following conditions are equivalent{\em :}
   \begin{enumerate}
   \item[(i)] $\is{B \xi}{\xi} - 2 \mathfrak{Re}
\is{\xi}{e} + c \Ge 0$ for all $\xi\in \dz{B}$,
   \item[(ii)] $B \Ge 0$, $e \in \ob{B^{1/2}}$
and $\|B^{-1/2} e \|^2 \Le c$.
   \end{enumerate}
Moreover, $\|B^{-1/2} e \|^2$ is the least constant
$c$ for which {\em (i)} holds.
   \end{lem}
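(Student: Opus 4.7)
The assertion is a ``completion of squares'' / Cauchy–Schwarz statement in disguise, so the natural route is to derive (i) from the Cauchy–Schwarz inequality in the (ii)$\Rightarrow$(i) direction, and to extract (ii) from (i) by optimizing the quadratic in $\xi$ and then invoking a Riesz/Hahn–Banach argument (an unbounded analogue of Lemma \ref{cfkz-0}).

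For (ii)$\Rightarrow$(i) I would write $e = B^{1/2}\eta$ with $\eta \in \overline{\ob{B^{1/2}}}$ and $\|\eta\|^2 = \|B^{-1/2}e\|^2 \Le c$. For $\xi\in\dz{B}\subseteq\dz{B^{1/2}}$, the identity
\[
\is{B\xi}{\xi}-2\mathfrak{Re}\is{\xi}{e}+c
=\|B^{1/2}\xi\|^2-2\mathfrak{Re}\is{B^{1/2}\xi}{\eta}+\|\eta\|^2+(c-\|\eta\|^2)
\]
is the sum of $\|B^{1/2}\xi-\eta\|^2$ and a nonnegative constant, hence $\Ge 0$.

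For (i)$\Rightarrow$(ii) I would first show $B\Ge 0$: replacing $\xi$ by $t\xi$ in (i) gives $t^2\is{B\xi}{\xi}-2t\mathfrak{Re}\is{\xi}{e}+c\Ge 0$ for every $t\in\rbb$, and letting $|t|\to\infty$ forces $\is{B\xi}{\xi}\Ge 0$. Next, replace $\xi$ by $\E^{\I\theta}\xi$ and choose $\theta$ so that $\E^{\I\theta}\is{\xi}{e}=|\is{\xi}{e}|$ to obtain $t^2\|B^{1/2}\xi\|^2-2t|\is{\xi}{e}|+c\Ge 0$ for all $t\in\rbb$ and all $\xi\in\dz{B}$; the vanishing discriminant condition yields
\[
|\is{\xi}{e}|^2 \Le c\,\|B^{1/2}\xi\|^2, \quad \xi\in\dz{B}.
\]
Since $\dz{B}$ is a core for the selfadjoint operator $B^{1/2}$ (by the spectral theorem), this inequality extends to all $\xi\in\dz{B^{1/2}}$.

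The heart of the argument, and the step requiring the most care because $B$ is unbounded (so Lemma \ref{cfkz-0} does not apply verbatim), is to convert the inequality into the range/norm statement in (ii). Note that $\jd{B^{1/2}}\subseteq\{e\}^{\perp}$ by the displayed inequality, so the formula $\Lambda(B^{1/2}\xi):=\is{\xi}{e}$ defines a linear functional $\Lambda\colon \ob{B^{1/2}}\to\cbb$ with $|\Lambda(\zeta)|\Le\sqrt{c}\,\|\zeta\|$. Extend $\Lambda$ by continuity to $\overline{\ob{B^{1/2}}}$ and apply the Riesz representation theorem to obtain a unique $\eta_0\in\overline{\ob{B^{1/2}}}$ with $\|\eta_0\|\Le\sqrt{c}$ and $\is{\xi}{e}=\is{B^{1/2}\xi}{\eta_0}$ for all $\xi\in\dz{B^{1/2}}$. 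The defining identity of $\eta_0$ says exactly that $\eta_0\in\dz{(B^{1/2})^*}=\dz{B^{1/2}}$ and $B^{1/2}\eta_0=e$; since $\eta_0\in\overline{\ob{B^{1/2}}}$, the construction of the generalized inverse gives $\eta_0=B^{-1/2}e$, whence $e\in\ob{B^{1/2}}$ and $\|B^{-1/2}e\|^2\Le c$.

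The ``moreover'' part is then immediate: (ii)$\Rightarrow$(i) with $c=\|B^{-1/2}e\|^2$ shows this value is admissible, and (i)$\Rightarrow$(ii) shows every admissible $c$ is at least $\|B^{-1/2}e\|^2$. The only real obstacle is the core/selfadjointness step that lets one treat $\overline{\ob{B^{1/2}}}$ as a genuine Hilbert space on which Riesz representation applies and then recover the identity $B^{1/2}\eta_0 = e$ from the selfadjointness of $B^{1/2}$; everything else is elementary quadratic optimization.
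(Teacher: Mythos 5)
Your proposal is correct and follows essentially the same route as the paper: completion of the square via $e=B^{1/2}\eta$ for (ii)$\Rightarrow$(i), and for (i)$\Rightarrow$(ii) the quadratic-in-$t$ argument, phase rotation, the core property of $\dz{B}$ for $B^{1/2}$, and the Riesz representation theorem on $\overline{\ob{B^{1/2}}}$ combined with selfadjointness of $B^{1/2}$ to conclude $e\in\ob{B^{1/2}}$. Your explicit remark that $\jd{B^{1/2}}\subseteq\{e\}^{\perp}$ (so the functional $\varLambda$ is well defined on $\ob{B^{1/2}}$) is a detail the paper leaves implicit, but otherwise the two arguments coincide.
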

   \begin{proof}
(i)$\Rightarrow$(ii) Since (i) holds for $\xi=0$, we
see that $c\Ge 0$. Substituting $t\xi$ in place of
$\xi$ into (i), we get
   \begin{align*}
t^2\is{B \xi}{\xi} - 2 t \mathfrak{Re} \is{\xi}{e} + c
\Ge 0, \quad t \in \rbb, \, \xi\in \dz{B}.
   \end{align*}
This implies that $B \Ge 0$ and $(\mathfrak{Re}
\is{\xi}{e})^2 \Le c \is{B \xi}{\xi}$ for all $\xi \in
\dz{B}$. Next, substituting $z \xi$ in place of $\xi$
into the last inequality with appropriate $z \in
\cbb$, we get
   \begin{align} \label{nier2}
|\is{\xi}{e}| \Le \sqrt{c} \, \|B^{1/2} \xi\|, \quad
\xi \in \dz{B}.
   \end{align}
Since $\dz{B}$ is a core for $B^{1/2}$ (cf.\
\cite[Theorem 4.5.1]{b-s}), we deduce that
\eqref{nier2} holds for all $\xi \in \dz{B^{1/2}}$.
This implies that there exists a continuous linear
functional $\varLambda\colon \overline{\ob{B^{1/2}}}
\to \cbb$ such that $\|\varLambda\| \Le \sqrt{c}$ and
$\varLambda(B^{1/2}\xi) = \is{\xi}{e}$ for all $\xi
\in \dz{B^{1/2}}$. By the Riesz representation
theorem, there exists a vector $\eta \in
\overline{\ob{B^{1/2}}}$ such that $\|\eta\| =
\|\varLambda\|$ and $\varLambda(B^{1/2}\xi) =
\is{B^{1/2}\xi}{\eta}$ for all $\xi\in \dz{B^{1/2}}$.
Hence $\|\eta\| \Le \sqrt{c}$ and $\is{\xi}{e} =
\is{B^{1/2}\xi}{\eta}$ for all $\xi\in \dz{B^{1/2}}$.
Since $B^{1/2}$ is selfadjoint, we see that $\eta \in
\dz{B^{1/2}}$ and $e = B^{1/2} \eta$. This implies
(ii).

(ii)$\Rightarrow$(i) It follows from (ii) that
   \begin{align*}
\is{B \xi}{\xi} - 2 \mathfrak{Re} \is{\xi}{e} +
\|B^{-1/2} e \|^2 & = \|B^{1/2} \xi\|^2 - 2
\mathfrak{Re} \is{B^{1/2}\xi}{B^{-1/2}e} + \|B^{-1/2}
e \|^2
   \\
& = \|B^{1/2} \xi - B^{-1/2}e\|^2 \Ge 0, \quad \xi \in
\dz{B},
   \end{align*}
which completes the proof.
   \end{proof}
Below we provide necessary conditions for the
boundedness of $C_{A+b}$ in $\exp(\hh)$. It turns out
that they are also sufficient (cf.\ Theorem
\ref{glowne2}).
   \begin{lem}\label{n&sc4b}
Suppose $C_{\varphi} \in \ogr{\exp(\hh)}$, where
$\varphi = A + b$ with $A \in \ogr{\hh}$ and $b \in
\hh$. Then
   \begin{enumerate}
   \item[(i)] $\|A\| \Le 1$ and $b \in \ob{(I-AA^*)^{1/2}}$,
   \item[(ii)] $\exp(\|(I-AA^*)^{-1/2} b\|^2) \Le
\|C_{\varphi}\|^2$.
   \end{enumerate}
   \end{lem}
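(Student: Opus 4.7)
The plan is to combine the explicit formula for $C_{\varphi}K_{\xi}$ from Lemma \ref{cfkz}(ii) with the Mac Nerney–Shmul'yan-type range characterization in Lemma \ref{range}. The starting observation is that boundedness of $C_{\varphi}$ gives the pointwise estimate $\|C_{\varphi} K_{\xi}\| \Le \|C_{\varphi}\|\, \|K_{\xi}\|$ for every $\xi \in \hh$. Since $\|K_{\xi}\|^2 = \exp(\|\xi\|^2)$ by \eqref{rep2} and Lemma \ref{cfkz}(ii) yields
\[
\|C_{\varphi} K_{\xi}\|^2 = \exp\bigl(2\mathfrak{Re}\is{b}{\xi}\bigr)\exp(\|A^*\xi\|^2),
\]
this inequality becomes, after taking logarithms,
\[
\|A^*\xi\|^2 + 2\mathfrak{Re}\is{b}{\xi} - \|\xi\|^2 \Le 2 \log\|C_{\varphi}\|, \quad \xi \in \hh.
\]
Rearranging yields
\[
\is{(I - AA^*)\xi}{\xi} - 2\mathfrak{Re}\is{\xi}{b} + 2\log\|C_{\varphi}\| \Ge 0, \quad \xi \in \hh.
\]

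The second step is to apply Lemma \ref{range} to $B := I - AA^* \in \ogr{\hh}$, $e := b$, and $c := 2 \log \|C_{\varphi}\|$. The equivalence there forces $B \Ge 0$, which is exactly $\|A\| = \|A^*\| \Le 1$, and $b \in \ob{(I - AA^*)^{1/2}}$, with the sharp estimate $\|(I-AA^*)^{-1/2}b\|^2 \Le 2\log\|C_{\varphi}\|$. Exponentiating gives assertion (ii), and (i) is immediate from the first two consequences.

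There is essentially no obstacle: the proof is a direct two-step reduction. The only subtle point is making sure the inequality is set up so that the operator appearing on the left is $I - AA^*$ (to match the statement) rather than $I - A^*A$; this is automatic because the kernel-vector computation in Lemma \ref{cfkz}(ii) produces $A^*\xi$ in the exponent, so $\|A^*\xi\|^2 = \is{AA^*\xi}{\xi}$ is precisely what appears. Note also that $\log\|C_{\varphi}\| \Ge 0$ is guaranteed by Proposition \ref{stale2}(ii) (since $\exp(0) = 1 \neq 0$), so no issue arises with the sign of $c$ in Lemma \ref{range}.
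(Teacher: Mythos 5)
Your proposal is correct and follows essentially the same route as the paper: compute $\|C_{\varphi}K_{\xi}\|^2$ from Lemma \ref{cfkz}(ii), compare with $\|C_{\varphi}\|^2\exp(\|\xi\|^2)$, take logarithms to obtain $\is{(I-AA^*)\xi}{\xi}-2\mathfrak{Re}\is{\xi}{b}+2\log\|C_{\varphi}\|\Ge 0$, and apply Lemma \ref{range}. The only cosmetic difference is that the paper justifies that $\log\|C_{\varphi}\|$ is defined via Proposition \ref{nonzero}(i), while you invoke Proposition \ref{stale2}(ii); both work.
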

   \begin{proof}
It follows from Proposition \ref{nonzero}(i) that
$\|C_{\varphi}\| > 0$. By \eqref{rep2} and Lemma
\ref{cfkz}(ii), we have
   \begin{align*}
\exp(2 \mathfrak{Re}\is{\xi}{b} + \|A^*\xi\|^2) =
\|C_{\varphi} K_{\xi}\|^2 \Le \|C_{\varphi}\|^2
\exp(\|\xi\|^2), \quad \xi \in \hh,
   \end{align*}
which yields
   \begin{align*}
\is{(I-AA^*) \xi}{\xi} - 2 \mathfrak{Re} \is{\xi}{b} +
2 \log \|C_{\varphi}\| \Ge 0, \quad \xi\in \hh.
   \end{align*}
   This, combined with Lemma \ref{range}, implies (i)
and (ii).
   \end{proof}
   \begin{cor}\label{n&sc4b-cor}
Under the assumptions of Lemma {\em \ref{n&sc4b}}, the
following holds{\em :}
   \begin{enumerate}
   \item[(i)] if $\xi  \in \hh$ is such that $\|A^*\xi\|
= \|\xi\|$, then $\xi \in \{b\}^{\perp}$,
   \item[(ii)] $\jd{\lambda I - A^*} \subseteq \{b\}^{\perp}$
for every $\lambda \in \mathbb T$, where $\mathbb T =
\{\lambda \in \cbb \colon |\lambda|=1\}$,
\item[(iii)] if $b \in \jd{\lambda I - A^*}$ for some
$\lambda \in \mathbb T$, then $b=0$.
   \end{enumerate}
   \end{cor}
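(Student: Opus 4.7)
The plan is to deduce all three parts essentially from Lemma \ref{n&sc4b}, which gives $\|A\|\le 1$ and $b\in\ob{(I-AA^{*})^{1/2}}$. In particular, since $\|A\|\le 1$, the operator $I-AA^{*}$ is positive, and there exists $\eta\in\overline{\ob{(I-AA^{*})^{1/2}}}$ with $b=(I-AA^{*})^{1/2}\eta$ (one may take $\eta=(I-AA^{*})^{-1/2}b$).

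For part (i), I would start from the assumption $\|A^{*}\xi\|=\|\xi\|$, which rewrites as $\is{(I-AA^{*})\xi}{\xi}=0$. Because $I-AA^{*}\Ge 0$, this forces $\|(I-AA^{*})^{1/2}\xi\|^{2}=0$, hence $(I-AA^{*})^{1/2}\xi=0$. Using selfadjointness of $(I-AA^{*})^{1/2}$ and the factorization $b=(I-AA^{*})^{1/2}\eta$ from above, I would then compute
\begin{align*}
\is{\xi}{b}=\is{\xi}{(I-AA^{*})^{1/2}\eta}=\is{(I-AA^{*})^{1/2}\xi}{\eta}=0,
\end{align*}
which is precisely $\xi\in\{b\}^{\perp}$.

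Part (ii) is then immediate: if $\xi\in\jd{\lambda I-A^{*}}$ with $\lambda\in\mathbb T$, then $A^{*}\xi=\lambda\xi$ gives $\|A^{*}\xi\|=|\lambda|\|\xi\|=\|\xi\|$, so (i) applies. Part (iii) follows by specializing (ii) to $\xi=b$: if $b\in\jd{\lambda I-A^{*}}$, then $b\in\{b\}^{\perp}$, so $\|b\|^{2}=\is{b}{b}=0$ and $b=0$. No serious obstacle is anticipated; the only subtlety is handling the generalized inverse $(I-AA^{*})^{-1/2}$ correctly, which is already done in Lemma \ref{range} and transparently encoded in $b\in\ob{(I-AA^{*})^{1/2}}$.
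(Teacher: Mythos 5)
Your proposal is correct and follows essentially the same route as the paper: Lemma \ref{n&sc4b}(i) gives $\|A\|\Le 1$ and $b\in\ob{(I-AA^*)^{1/2}}$, the equality $\|A^*\xi\|=\|\xi\|$ is identified with $\xi\in\jd{(I-AA^*)^{1/2}}$, and the orthogonality of kernel and range of the selfadjoint operator $(I-AA^*)^{1/2}$ (which you verify explicitly via the factorization $b=(I-AA^*)^{1/2}\eta$) yields (i), with (ii) and (iii) as immediate consequences.
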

   \begin{proof}
   (i) By Lemma \ref{n&sc4b}(i), $\|A\| \Le 1$ and
thus the equality $\|A^*\xi\| = \|\xi\|$ holds if and
only if $\xi \in \jd{(I-AA^*)^{1/2}}$. Since
$\jd{(I-AA^*)^{1/2}} \perp \ob{(I - AA^*)^{1/2}}$ and
$b \in \ob{(I-AA^*)^{1/2}}$ (cf.\ Lemma
\ref{n&sc4b}(i)), we get (i).

   The conditions (ii) and (iii) follow from (i).
   \end{proof}
Now we give necessary conditions for the
cohyponormality of $C_{A+b}$ in $\exp(\hh)$.
   \begin{pro}\label{cohyp}
Suppose $C_{\varphi} \in \ogr{\exp(\hh)}$ and $\lambda
\in \mathbb T$, where $\varphi = A + b$ with $A \in
\ogr{\hh}$ and $b \in \hh$. Then
   \begin{enumerate}
   \item[(i)] if $C_{\varphi}$ is cohyponormal, then
$A$ is hyponormal, $(I-A^*)b \in \ob{[A^*,A]^{1/2}}$
and $\|[A^*,A]^{-1/2} (I-A^*)b\| \Le \|b\|$, where
$[A^*,A]:=A^*A - AA^*$,
   \item[(ii)]
$\big($$C_{\varphi}$ is cohyponormal and $b \in
\jd{\lambda I - A^*}$$\big)$ $\Leftrightarrow$ $($$A$
is hyponormal and $b=0$$)$,
   \item[(iii)] if $A$ satisfies \eqref{klop}, then
$C_{\varphi}$ is seminormal\/\footnote{\;Consult
Corollary \ref{wn3}.} if and only if both $A$ is
normal and $b=0$, or equivalently, if and only if
$C_{\varphi}$ is normal.
   \end{enumerate}
   \end{pro}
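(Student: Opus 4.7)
The plan is to prove the three assertions in sequence, with part (i) carrying the main analytic content and parts (ii)--(iii) following by assembly from previously established results.

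For (i), I will test the defining cohyponormality inequality $\|C_{\varphi}f\|\Le\|C_{\varphi}^{*}f\|$ on the kernel functions $f=K_{\xi}$, which lie in $\dz{C_{\varphi}}\cap\dz{C_{\varphi}^{*}}$ by Lemma \ref{cfkz}(i). Lemma \ref{cfkz}(ii) together with \eqref{rep2} gives
\begin{align*}
\|C_{\varphi}K_{\xi}\|^{2}=\exp\bigl(2\mathfrak{Re}\is{\xi}{b}+\|A^{*}\xi\|^{2}\bigr),
\end{align*}
while \eqref{op*} yields $C_{\varphi}^{*}K_{\xi}=K_{A\xi+b}$ and hence
\begin{align*}
\|C_{\varphi}^{*}K_{\xi}\|^{2}=\exp\bigl(\|A\xi\|^{2}+2\mathfrak{Re}\is{A\xi}{b}+\|b\|^{2}\bigr).
\end{align*}
Passing to logarithms, using $\is{A\xi}{b}=\is{\xi}{A^{*}b}$, and rearranging converts the inequality into
\begin{align*}
\is{[A^{*},A]\xi}{\xi}-2\mathfrak{Re}\is{\xi}{(I-A^{*})b}+\|b\|^{2}\Ge 0,\quad \xi\in\hh.
\end{align*}
Lemma \ref{range} applied with $B=[A^{*},A]$, $e=(I-A^{*})b$ and $c=\|b\|^{2}$ then simultaneously delivers the positivity of $[A^{*},A]$ (that is, hyponormality of $A$), the membership $(I-A^{*})b\in\ob{[A^{*},A]^{1/2}}$, and the norm bound $\|[A^{*},A]^{-1/2}(I-A^{*})b\|\Le\|b\|$.

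For (ii), the forward direction combines two inputs: boundedness of $C_{\varphi}$ together with $b\in\jd{\lambda I-A^{*}}$ forces $b=0$ by Corollary \ref{n&sc4b-cor}(iii); then $\varphi=A$, and Theorem \ref{chca} identifies cohyponormality of $C_{A}$ with hyponormality of $A$. The reverse direction is immediate, since $b=0$ trivially belongs to $\jd{\lambda I-A^{*}}$ and Theorem \ref{chca} converts hyponormality of $A$ back into cohyponormality of $C_{A}$.

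For (iii), the nontrivial direction is "$C_{\varphi}$ seminormal $\Rightarrow$ $A$ normal and $b=0$", which I handle according to which branch of seminormality holds. If $C_{\varphi}$ is hyponormal, Corollary \ref{hypo} forces $b=\varphi(0)=0$, so $C_{\varphi}=C_{A}$ and Corollary \ref{dim-fin} (which uses exactly \eqref{klop}) upgrades seminormality of $C_{A}$ to normality of $A$. If instead $C_{\varphi}$ is cohyponormal, part (i) yields hyponormality of $A$, and under \eqref{klop} the combination of Putnam's inequality and \cite[Problem 207]{Hal} forces $A$ to be normal; hence $[A^{*},A]=0$, so $\ob{[A^{*},A]^{1/2}}=\{0\}$, which via (i) gives $(I-A^{*})b=0$, placing $b$ in $\jd{I-A^{*}}$ with $\lambda=1\in\mathbb{T}$, and part (ii) concludes $b=0$. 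The converse and the stated equivalence with normality of $C_{\varphi}$ are provided directly by Corollary \ref{wn3}. The main obstacle is the careful algebraic manipulation in (i) that fits the logarithmic form of the cohyponormality inequality into the precise quadratic shape demanded by Lemma \ref{range}; once that reduction is made, the remaining structural steps are routine applications of the cited results.
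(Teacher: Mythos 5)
Your proposal is correct and follows essentially the same route as the paper: testing cohyponormality on the kernel functions, taking logarithms to obtain the quadratic inequality $\is{[A^*,A]\xi}{\xi}-2\mathfrak{Re}\is{\xi}{(I-A^*)b}+\|b\|^2\Ge 0$, and invoking Lemma \ref{range} for (i), then assembling (ii) and (iii) from Corollary \ref{n&sc4b-cor}(iii), Theorem \ref{chca}, Corollaries \ref{hypo}, \ref{dim-fin}, \ref{wn3}, Putnam's inequality and \cite[Problem 207]{Hal} exactly as the paper does.
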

   \begin{proof}
(i) Since $C_{\varphi}$ is cohyponormal, we infer from
Lemma \ref{cfkz} that
   \begin{align*}
\exp(\|\varphi(\xi)\|^2) \overset{\eqref{rep2}}=
\|K_{\varphi(\xi)}\|^2 & \overset{\eqref{op*}} =
\|C_{\varphi}^* K_{\xi}\|^2
   \\
& \hspace{1ex}\Ge \|C_{\varphi} K_{\xi}\|^2
   \\
& \hspace{1ex} = |\exp(\is{b}{\xi})|^2 \exp(\|A^*\xi\|^2)
   \\
& \hspace{1ex} = \exp(2 \mathfrak{Re}\is{b}{\xi} +
\|A^*\xi\|^2), \quad \xi \in \hh,
   \end{align*}
which leads to
   \begin{align*}
\is{[A^*,A] \xi}{\xi} - 2 \mathfrak{Re}
\is{\xi}{(I-A^*)b} + \|b\|^2 \Ge 0, \quad \xi \in \hh.
   \end{align*}
This, together with Lemma \ref{range}, yields (i).

(ii) Apply Corollary \ref{n&sc4b-cor}(iii) and Theorem
\ref{chca}.

   (iii) Assume $A$ satisfies \eqref{klop}. Suppose
first that $C_{\varphi}$ is cohyponormal. Then, by
(i), $A$ is hyponormal. If the planar measure of the
spectrum of $A$ equals $0$, then by Putnam's
inequality (cf.\ \cite[Theorem 1]{Put}) $A$ is normal.
In turn, if one of the operators $\mathfrak{Re} A$ or
$\mathfrak{Im} A$ is compact, then by \cite[Problem
207]{Hal} $A$ is again normal. In both cases, (i)
implies that $b \in \jd{I - A^*}$. This and (ii) yield
$b=0$. Suppose now that $C_{\varphi}$ is hyponormal.
Then, by Corollary \ref{hypo}, $b=0$. Applying
Corollary \ref{dim-fin} we deduce that $A$ is normal.
Finally, if $A$ is normal and $b=0$, then Corollary
\ref{wn3} implies that $C_{\varphi}$ is normal. This
completes the proof.
   \end{proof}
   Using Proposition \ref{cohyp}(i) and Corollary
\ref{n&sc4b-cor}(iii), we get the following.
   \begin{cor} \label{b=0}
Suppose $C_{\varphi} \in \ogr{\exp(\hh)}$, where
$\varphi = A + b$ with $A \in \ogr{\hh}$ and $b \in
\hh$. If $C_{\varphi}$ is cohyponormal and $b\neq 0$,
then $A$ is not normal.
   \end{cor}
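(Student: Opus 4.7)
The plan is to argue by contradiction, exploiting the two ingredients the paper has already prepared. Suppose, for the sake of contradiction, that $A$ is normal while $b \neq 0$ and $C_{\varphi}$ is cohyponormal. Normality of $A$ forces the self-commutator $[A^*,A] = A^*A - AA^*$ to vanish, so $[A^*,A]^{1/2} = 0$ and hence $\ob{[A^*,A]^{1/2}} = \{0\}$. Applying Proposition \ref{cohyp}(i) to the cohyponormal operator $C_{\varphi}$, we obtain $(I - A^*)b \in \ob{[A^*,A]^{1/2}} = \{0\}$, which means precisely $A^*b = b$, i.e., $b \in \jd{I - A^*}$.

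Since $1 \in \mathbb{T}$, we are now in position to invoke Corollary \ref{n&sc4b-cor}(iii) with $\lambda = 1$, which yields $b = 0$, contradicting our standing assumption $b \neq 0$. There is essentially no obstacle here: all the analytic work has already been absorbed into Proposition \ref{cohyp}(i) (through Lemma \ref{range}) and into Corollary \ref{n&sc4b-cor}(iii). The only point requiring a moment's thought is the collapse of the membership assertion $(I-A^*)b \in \ob{[A^*,A]^{1/2}}$ to the equation $A^*b = b$ under the assumption of normality, which is immediate once one observes that a zero operator has zero range.
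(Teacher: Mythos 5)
Your proof is correct and follows exactly the route the paper intends: it combines Proposition \ref{cohyp}(i) (which, for normal $A$, collapses $(I-A^*)b\in\ob{[A^*,A]^{1/2}}=\{0\}$ to $b\in\jd{I-A^*}$) with Corollary \ref{n&sc4b-cor}(iii) applied to $\lambda=1$. Nothing is missing.
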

In view of Propositions \ref{pro1} and
\ref{cohyp}(iii), if $\hh$ is finite dimensional, then
bounded seminormal composition operators $C_{\varphi}$
on $\exp(\hh)$ are always normal. This is no longer
true if $\dim \hh = \infty$ (see Example \ref{optim}).
Note also that the situation described in Corollary
\ref{b=0} can really happen (see Example \ref{optim}).
   \section{\label{sec9}Composition operators in Segal-Bargmann
spaces of finite order}
   Given $d \in \nbb$, we denote by $\mu_d$ the Borel
probability measure on $\cbb^d$ defined by
   \begin{align*}
\mu_d(\varDelta) = \frac{1}{\pi^d} \int_{\varDelta}
\exp(-\|\xi\|^2) \D V_d(\xi), \quad \varDelta\text{ -- a
Borel subset of } \cbb^d,
   \end{align*}
where $\|\xi\|^2 = |\xi_1|^2 + \ldots + |\xi_d|^2$ for
$\xi = (\xi_1, \ldots, \xi_d) \in \cbb^d$ and $V_d$
stands for the $2d$-dimensional Lebesgue measure on
$\cbb^d$. Denote by $\bb_d$ the Segal-Bargmann space
of order $d$, i.e.,
   \begin{align*}
\bb_d = \{f \colon f \text{ is an entire function on }
\cbb^d \text{ and } f \in L^2(\mu_d)\}.
   \end{align*}
It is well-known that $\bb_d = \exp(\cbb^d)$, where
$\cbb^d$ is equipped with the standard inner product
(cf.\ \cite[Section 1C]{Barg})
   \begin{align*}
\text{$\is{\xi}{\eta} = \xi_1\bar \eta_1 + \ldots +
\xi_d\bar \eta_d$ for $\xi = (\xi_1, \ldots, \xi_d) \in
\cbb^d$ and $\eta = (\eta_1, \ldots, \eta_d) \in \cbb^d$.}
   \end{align*}

We begin by calculating the norm of $C_{\varphi}$ in
the case of $d=1$ and $|A|<1$. First we give an upper
estimate for $\|C_{\varphi}\|$ (as mentioned in the
proof Lemma \ref{sb4}, the operator $D$ appearing in
Lemma \ref{sb2} below is of the form
$D=C_{\varphi}^*C_{\varphi}$ with $\alpha=|A|^2$,
where $\varphi(z) = A z + b$ for $z \in \cbb$).
   \begin{lem} \label{sb2}
Fix $\alpha \in [0,1)$ and $b \in \cbb$. Let $D$ be an
operator in $\bb_1$ given by
   \begin{align*}
(Df)(z) = f(\alpha z + b) \exp(z \bar b), \quad z \in \cbb,
\, f \in \bb_1.
   \end{align*}
Then $D \in \ogr{\bb_1}$ and
   \begin{align*}
\|D\| \Le
\frac{\exp\big(\frac{|b|^2}{1-\alpha}\big)}{\sqrt{1-\alpha^2}}.
   \end{align*}
   \end{lem}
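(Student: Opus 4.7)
The plan is to bound $\|Df\|$ directly from the Bargmann integral representation $\|g\|^2 = \frac{1}{\pi}\int_\cbb |g(z)|^2 \E^{-|z|^2}\,\D V(z)$ on $\bb_1$, combining the pointwise reproducing-kernel estimate for $f$ at the moving point $\alpha z + b$ with one translated Gaussian integral in $z$. The hypothesis $\alpha<1$ is what makes that Gaussian convergent, and is needed nowhere else.

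From the definition of $D$,
\begin{equation*}
\|Df\|^2 = \frac{1}{\pi}\int_\cbb |f(\alpha z + b)|^2 \, \E^{2\mathfrak{Re}(z\bar b) - |z|^2}\,\D V(z).
\end{equation*}
The reproducing property, together with the Cauchy--Schwarz inequality and $\|K_w\|^2 = \E^{|w|^2}$, yields $|f(\alpha z + b)|^2 = |\is{f}{K_{\alpha z + b}}|^2 \Le \|f\|^2 \, \E^{|\alpha z + b|^2}$ for every $z \in \cbb$. Inserting this pointwise bound, the total exponent in the resulting integrand is
\begin{equation*}
-(1-\alpha^2)|z|^2 + 2(1+\alpha)\mathfrak{Re}(z\bar b) + |b|^2.
\end{equation*}

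Completing the square in $z$ (which is legitimate because $\alpha<1$) rewrites this exponent as $-(1-\alpha^2)\big|z - \tfrac{b}{1-\alpha}\big|^2 + \tfrac{2|b|^2}{1-\alpha}$. The translated Gaussian identity $\frac{1}{\pi}\int_\cbb \exp(-(1-\alpha^2)|z-c|^2)\,\D V(z) = \frac{1}{1-\alpha^2}$ then produces
\begin{equation*}
\|Df\|^2 \Le \|f\|^2 \cdot \frac{\exp(2|b|^2/(1-\alpha))}{1-\alpha^2},
\end{equation*}
whose square root is the asserted estimate. As the right-hand side is finite for every $f\in\bb_1$ and $Df$ is manifestly entire, this also shows $Df \in \bb_1$, so $D\in\ogr{\bb_1}$.

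The only genuine obstacle is the algebraic bookkeeping: one must verify that the residual constant after completing the square is exactly $\tfrac{2|b|^2}{1-\alpha}$, and that the quadratic coefficient $1-\alpha^2$ survives unmolested so as to give the claimed $1/\sqrt{1-\alpha^2}$. The entire argument degenerates as $\alpha \uparrow 1$, which reflects the expected boundary behavior for composition operators on $\bb_1$.
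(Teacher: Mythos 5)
Your proof is correct and follows essentially the same route as the paper: the pointwise bound $|f(w)|^2 \Le \|f\|^2 \E^{|w|^2}$ from the reproducing kernel, followed by completing the square in the resulting Gaussian integral, with the residual constant $\tfrac{2|b|^2}{1-\alpha}$ and quadratic coefficient $1-\alpha^2$ exactly as in the paper's computation. The algebraic bookkeeping you flag checks out, so nothing further is needed.
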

   \begin{proof}
By \eqref{rep} and \eqref{rep2}, $|f(z)|^2 \Le \|f\|^2
\E^{|z|^2}$ for all $z \in \cbb$ and $f\in \bb_1$.
This and the fact that the Lebesgue measure $V_1$ is
translation-invariant yield
   \allowdisplaybreaks
   \begin{align*}
\pi \int_{\cbb} |Df|^2 \D\mu_1 & = \int_{\cbb}
|f(\alpha z + b)|^2 \E^{2 \mathfrak{Re} (z \bar b)}
\E^{-|z|^2} \D V_1(z)
   \\
& \Le \|f\|^2 \int_{\cbb} \E^{|\alpha z + b|^2 + 2
\mathfrak{Re} (z \bar b) - |z|^2} \D V_1(z)
   \\
& = \|f\|^2 \exp\Big(\frac{2|b|^2}{1-\alpha}\Big)
\int_{\cbb} \E^{-(1-\alpha^2) \left |z -
\frac{b}{1-\alpha}\right |^2} \D V_1(z)
   \\
& = \|f\|^2 \exp\Big(\frac{2|b|^2}{1-\alpha}\Big)
\int_{\cbb} \E^{-(1-\alpha^2) |z|^2} \D V_1(z)
   \\
& = \pi \|f\|^2
\frac{\exp\Big(\frac{2|b|^2}{1-\alpha}\Big)}{ 1-\alpha^2},
\quad f \in \bb_1,
   \end{align*}
which completes the proof.
   \end{proof}
The following lemma can be proved by a simple induction
argument.
   \begin{lem} \label{sb3}
If $D$ is as in Lemma {\em \ref{sb2}}, then
   \begin{align*}
(D^n f)(z) = f\Big(\alpha^n z + b_n \Big) \E^{z \bar b_n}
\exp\Big(\frac{|b|^2}{1-\alpha}\Big(n-1 -
\frac{\alpha-\alpha^{n}}{1-\alpha}\Big)\Big),
   \end{align*}
for all $z \in \cbb$, $f \in \bb_1$ and $n \in \nbb $,
where $b_n = \frac{1-\alpha^n}{1-\alpha} b$ for $n \in
\nbb$.
    \end{lem}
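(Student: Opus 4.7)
The proof goes by induction on $n\in\nbb$, as the authors hint. For the base case $n=1$, one has $b_1 = \frac{1-\alpha}{1-\alpha}b = b$, and the scalar prefactor reduces to $\exp\bigl(\frac{|b|^2}{1-\alpha}(0-\frac{\alpha-\alpha}{1-\alpha})\bigr)=1$, so the stated formula becomes $(Df)(z) = f(\alpha z + b)\E^{z\bar b}$, which is just the definition of $D$ from Lemma \ref{sb2}.

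For the inductive step, assume the formula holds at some $n\in\nbb$. Writing $D^{n+1}f = D(D^n f)$ and unfolding the definition of $D$ yields $(D^{n+1}f)(z) = (D^n f)(\alpha z + b)\,\E^{z\bar b}$, and substituting $\alpha z + b$ for $z$ in the inductive hypothesis reduces the verification to three short algebraic checks. First, the argument of $f$ becomes $\alpha^n(\alpha z + b) + b_n = \alpha^{n+1}z + (\alpha^n b + b_n)$, and the identity $\alpha^n b + \frac{1-\alpha^n}{1-\alpha}b = \frac{1-\alpha^{n+1}}{1-\alpha}b = b_{n+1}$ gives the required $\alpha^{n+1}z + b_{n+1}$. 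Second, the exponent linear in $z$ rearranges as $(\alpha z + b)\bar b_n + z\bar b = z(\alpha\bar b_n + \bar b) + b\bar b_n$; the companion identity $\alpha b_n + b = b_{n+1}$ (from the same manipulation) produces the desired factor $\E^{z\bar b_{n+1}}$ together with a residual constant $\E^{b\bar b_n} = \exp\bigl(|b|^2\frac{1-\alpha^n}{1-\alpha}\bigr)$.

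Third, combining this residual with the inductive prefactor, the total constant exponent becomes $\frac{|b|^2}{1-\alpha}\bigl[(n-1) - \frac{\alpha-\alpha^n}{1-\alpha} + (1-\alpha^n)\bigr]$, and using $\alpha^n(1-\alpha) + (\alpha-\alpha^n) = \alpha - \alpha^{n+1}$ one sees this collapses to $\frac{|b|^2}{1-\alpha}\bigl(n - \frac{\alpha-\alpha^{n+1}}{1-\alpha}\bigr)$, matching the formula at step $n+1$. The whole argument is bookkeeping and no conceptual obstacle arises; the only moment requiring some care is the last simplification of the constant exponent, where the $\alpha^n$ terms must be tracked carefully.
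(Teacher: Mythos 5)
Your proof is correct and is exactly the ``simple induction argument'' the paper invokes (the paper itself omits the details); the base case, the three algebraic identities $\alpha^n b+b_n=b_{n+1}$, $\alpha b_n+b=b_{n+1}$, and the collapse of the constant exponent all check out, using that $\alpha\in[0,1)$ is real. Nothing further is needed.
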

Now we can calculate the norm of $C_{\varphi}$ when $d=1$
and $|A|<1$.
   \begin{lem} \label{sb4}
Let $A \in \cbb$ be such that $|A|<1$ and let $b \in \cbb$.
Set $\varphi(z) = Az +b$ for $z\in \cbb$. Then $C_{\varphi}
\in \ogr{\bb_1}$ and
   \begin{align} \label{cfnorma}
\|C_{\varphi}\|^2 = \exp\Big(\frac{|b|^2}{1-|A|^2}\Big).
   \end{align}
   \end{lem}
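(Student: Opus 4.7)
The plan is to prove the identity \eqref{cfnorma} by matching upper and lower bounds, both equal to $\exp(|b|^2/(1-|A|^2))$.

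\emph{Boundedness.} First I would show $C_{\varphi} \in \ogr{\bb_1}$ by a direct estimate. Using the identification $\bb_1 = \{f \text{ entire on } \cbb\} \cap L^2(\mu_1)$ from Example~\ref{phih} together with the pointwise bound $|f(w)|^2 \Le \|f\|^2 \E^{|w|^2}$ from~\eqref{uncov} with $\varPhi = \exp$, a straightforward Gaussian integral (with completion of the square in $-(1-|A|^2)|z|^2 + 2\mathfrak{Re}(A\bar b z) + |b|^2$) yields
\[
\|C_{\varphi} f\|^2 \Le \frac{\exp(|b|^2/(1-|A|^2))}{1-|A|^2}\,\|f\|^2,
\]
so $C_{\varphi}\in\ogr{\bb_1}$.

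\emph{Sharp upper bound via spectral radius.} Since $C_{\varphi}$ is bounded, Lemma~\ref{cfkz}(iv) tells us that $C_{\varphi}^* C_{\varphi}$ coincides on all of $\bb_1$ with the operator $D$ of Lemma~\ref{sb2} taken with $\alpha = |A|^2$. By Lemma~\ref{sb3}, $D^n$ is a scalar multiple $c_n$ of an operator of the same form as $D$ but with parameters $\alpha^n$ and $b_n = \tfrac{1-\alpha^n}{1-\alpha}b$, where $c_n = \exp\!\bigl(\tfrac{|b|^2}{1-\alpha}(n-1 - \tfrac{\alpha-\alpha^n}{1-\alpha})\bigr)$. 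Applying Lemma~\ref{sb2} to the parameters $(\alpha^n, b_n)$ gives
\[
\|D^n\| \Le c_n \cdot \frac{\exp\!\bigl(|b_n|^2/(1-\alpha^n)\bigr)}{\sqrt{1-\alpha^{2n}}}.
\]
Since $c_n^{1/n} \to \exp(|b|^2/(1-\alpha))$ and $|b_n|^2/(1-\alpha^n) = (1-\alpha^n)|b|^2/(1-\alpha)^2$ stays bounded while $\alpha^{2n}\to 0$, the second factor's $n$-th root tends to $1$. Gelfand's formula then yields $r(D) \Le \exp(|b|^2/(1-|A|^2))$. Because $D = C_{\varphi}^*C_{\varphi}\Ge 0$ is selfadjoint, $\|D\| = r(D)$, hence $\|C_{\varphi}\|^2 = \|C_{\varphi}^* C_{\varphi}\| = \|D\| \Le \exp(|b|^2/(1-|A|^2))$.

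\emph{Lower bound.} By Lemma~\ref{n&sc4b}(ii), $\|C_{\varphi}\|^2 \Ge \exp(\|(I-AA^*)^{-1/2} b\|^2)$. In the scalar setting with $|A|<1$, $(I-AA^*)^{-1/2}$ is multiplication by $(1-|A|^2)^{-1/2}$, so $\|(I-AA^*)^{-1/2}b\|^2 = |b|^2/(1-|A|^2)$, matching the upper bound and establishing \eqref{cfnorma}.

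\emph{Main obstacle.} The crux is replacing the naive estimate of Lemma~\ref{sb2}, which is off from the sharp value by a factor of $(1-|A|^2)^{-1/2}$, with the sharp bound obtained through iteration and Gelfand's formula; extracting that factor asymptotically in the $n$-th root limit is the one nontrivial step. Everything else reduces to tracking coefficients and applying facts already established in the paper.
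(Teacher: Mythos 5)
Your proposal is correct and follows essentially the same route as the paper: identify $C_{\varphi}^*C_{\varphi}$ with the operator $D$ of Lemma~\ref{sb2} via Lemma~\ref{cfkz}(iv), get the sharp upper bound $r(D)\Le \exp(|b|^2/(1-|A|^2))$ from Lemma~\ref{sb3} and Gelfand's formula, and close with the lower bound from Lemma~\ref{n&sc4b}(ii). The only (harmless) divergence is that you establish boundedness by a direct Gaussian estimate on $\|C_{\varphi}f\|$, whereas the paper deduces it from the boundedness of $D=C_{\varphi}^*C_{\varphi}$ itself.
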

   \begin{proof}
By Proposition \ref{stale2}(i) and Lemma
\ref{cfkz}(i), the operator $C_{\varphi}^*C_{\varphi}$
is closed and densely defined. Applying Lemma
\ref{cfkz}(iv) and Lemma \ref{sb2} with
$\alpha=|A|^2$, we deduce that
$C_{\varphi}^*C_{\varphi}=D$ is bounded. Hence
$C_{\varphi} \in \ogr{\bb_1}$.

Now we show that \eqref{cfnorma} holds. Since $D$ is
selfadjoint, $\|D\|=r(D)$. Applying Lemmata \ref{sb2} and
\ref{sb3}, we deduce that for every $n\in \nbb$,
   \begin{align*}
\|D^n\|^{1/n} \Le \frac{1}{(1-\alpha^{2n})^{1/2n}}
\exp\Big(\frac{|b|^2(1-\alpha^{n})}{n(1-\alpha)^2}\Big)
\exp\Big(\frac{|b|^2}{1-\alpha}\Big(\frac{n-1}{n} -
\frac{\alpha-\alpha^{n}}{n(1-\alpha)}\Big)\Big).
   \end{align*}
Using Gelfand's formula for the spectral radius, we
conclude that
   \begin{align*}
\|C_{\varphi}\|^2 = \|D\| = r(D) \Le
\exp\Big(\frac{|b|^2}{1-|A|^2}\Big).
   \end{align*}
The reverse inequality follows from Lemma
\ref{n&sc4b}(ii).
   \end{proof}
We are now in a position to discuss the general
$d$-dimensional case.
   \begin{thm} \label{s-b-norm}
Let $\varphi\colon \cbb^d \to \cbb^d$ be a holomorphic
mapping $($$d \in \nbb$$)$. Then $C_{\varphi} \in
\ogr{\bb_d}$ if and only if there exist $A \in
\ogr{\cbb^d}$ and $b\in \cbb^d$ such that $\varphi = A
+ b$, $\|A\| \Le 1$ and $b \in \ob{I-AA^*}$. Moreover,
if $C_{\varphi} \in \ogr{\bb_d}$, then
   \begin{align} \label{norm}
1 \Le \|C_{\varphi}\|^2 = \exp(\is{(I-AA^*)^{-1}b}{b}).
   \end{align}
   \end{thm}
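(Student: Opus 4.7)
The plan is to dispose of necessity using results already available, then reduce sufficiency together with the norm formula to the one-dimensional calculation of Lemma \ref{sb4} via the tensor-product structure of $\bb_d$.

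For necessity, assume $C_{\varphi}\in\ogr{\bb_d}$; in particular $\dz{C_{\varphi}}=\bb_d$, so Proposition \ref{pro1} forces $\varphi=A+b$ with $A\in\ogr{\cbb^d}$ and $b\in\cbb^d$. Lemma \ref{n&sc4b}(i) then yields $\|A\|\Le 1$ and $b\in\ob{(I-AA^*)^{1/2}}$. Since $\cbb^d$ is finite dimensional, $I-AA^*\in\ogrp{\cbb^d}$ has closed range, and Lemma \ref{a-12} gives $\ob{(I-AA^*)^{1/2}}=\ob{I-AA^*}$, which is the stated condition.

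For sufficiency and the norm formula, first apply Proposition \ref{fipsib} with $\psi=|A^*|+b$: since $|A^*|\cdot|A^*|^*=AA^*$ leaves both the hypothesis $b\in\ob{I-AA^*}$ and the right-hand side of \eqref{norm} unchanged, we may assume $A\Ge 0$. Diagonalize $A$ in an orthonormal basis $\{e_i\}_{i=1}^d$ of $\cbb^d$ with eigenvalues $a_i\in[0,1]$, and write $b=\sum_i b_i e_i$. The hypothesis $b\in\ob{I-A^2}$ is equivalent to $b_i=0$ whenever $a_i=1$.

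The factorization $\exp(\is{\xi}{\eta})=\prod_{i=1}^d\exp(\xi_i\bar\eta_i)$ of the reproducing kernel in these coordinates produces a unitary isomorphism $\bb_d\cong\bb_1\otimes\cdots\otimes\bb_1$ sending $K_\xi$ to $K_{\xi_1}\otimes\cdots\otimes K_{\xi_d}$. Because $\varphi$ acts coordinatewise as $\xi_i\mapsto \varphi_i(\xi_i)=a_i\xi_i+b_i$, Lemma \ref{cfkz}(ii) implies that, on the total set $\kscr$ (a core for $C_{\varphi}^*$ by Theorem \ref{sprz}(i)), the operator $C_{\varphi}^*$ corresponds to $C_{\varphi_1}^*\otimes\cdots\otimes C_{\varphi_d}^*$. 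For indices with $a_i<1$, Lemma \ref{sb4} provides $C_{\varphi_i}\in\ogr{\bb_1}$ with $\|C_{\varphi_i}\|^2=\exp(|b_i|^2/(1-a_i^2))$; for indices with $a_i=1$ the forced equality $b_i=0$ makes $C_{\varphi_i}=I_{\bb_1}$, of norm one. Consequently $C_{\varphi}\in\ogr{\bb_d}$ and, because tensor-product norms of bounded operators multiply,
\[
\|C_{\varphi}\|^2=\prod_{i=1}^d\|C_{\varphi_i}\|^2=\exp\Big(\sum_{a_i<1}\frac{|b_i|^2}{1-a_i^2}\Big)=\exp(\is{(I-AA^*)^{-1}b}{b}),
\]
the last equality following from the diagonal representation of the generalized inverse $(I-AA^*)^{-1}$. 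The lower bound $1\Le\|C_{\varphi}\|^2$ is immediate from the nonnegativity of the exponent (or from Proposition \ref{stale2}(ii)). The only nontrivial step is the tensor factorization of $C_{\varphi}^*$ together with the handling of the eigenvalue-one block, both of which are resolved by the core property of $\kscr$ and by the hypothesis $b\in\ob{I-AA^*}$.
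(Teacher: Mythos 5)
Your proof is correct and follows essentially the same route as the paper: necessity via Proposition \ref{pro1} and Lemma \ref{n&sc4b}(i) (with the finite-dimensional range identity), and sufficiency by reducing to $A=|A^*|\Ge 0$ through Proposition \ref{fipsib}, diagonalizing, factoring $\bb_d$ as $\bb_1\otimes\cdots\otimes\bb_1$, and invoking Lemma \ref{sb4} coordinatewise. The only (harmless) deviation is that you establish the tensor factorization by checking $C_{\varphi}^*$ on the kernel functions and using the core property of $\kscr$, whereas the paper checks $C_{\rho}$ on elementary tensors $f_1\otimes\cdots\otimes f_d$ and uses closedness of $C_{\rho}$; both arguments are valid.
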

   \begin{proof}
By Proposition \ref{pro1}, there is no loss of
generality in assuming that $\varphi = A + b$, where
$A \in \ogr{\cbb^d}$ and $b \in \cbb^d$. Thus, the
``only if'' part follows from Lemma \ref{n&sc4b}(i)
(recall that $\ob{E^{1/2}} = \ob{E}$ whenever $E$ is a
bounded positive operator on a finite dimensional
Hilbert space). To prove the converse implication and
\eqref{norm}, we assume that $\|A\| \Le 1$ and $b \in
\ob{I-AA^*}$. If $\ob{I-AA^*}=\{0\}$, then $A$ is
unitary, and, by Corollary \ref{unit},
$C_{\varphi}=C_A$ is unitary and \eqref{norm} holds.
Suppose now that $\ob{I-AA^*} \neq \{0\}$, or
equivalently that $|A^*| \neq I$. Set $\psi =
|A^*|+b$. Since $|A^*|$ is a positive contraction,
there exist a monotonically increasing sequence
$\{t_n\}_{n=1}^d \subseteq [0,1]$ and an orthonormal
basis $\{g_n\}_{n=1}^d$ of $\cbb^d$ such that $|A^*|
g_n = t_n g_n$ for all $n \in \{1, \ldots, d\}$. Then
$\ob{I-AA^*}$ is the linear span of $\{g_n\}_{n=1}^k$
and thus $b=\sum_{n=1}^k \beta_n g_n$ with $\beta_n =
\is{b}{g_n}$ for $n\in \{1, \ldots, k\}$, where
$k=\max\{n\colon t_n < 1\}$. Define the unitary
operator $T \in \ogr{\cbb^d}$ and the holomorphic
mapping $\rho\colon \cbb^d \to \cbb^d$ by
$T\big(\sum_{n=1}^d \lambda_n g_n\big) = (\lambda_1,
\ldots, \lambda_d)$ and $\rho(\lambda_1, \ldots,
\lambda_d) = (t_1 \lambda_1 + \beta_1, \ldots, t_d
\lambda_d + \beta_d)$ for $(\lambda_1, \ldots,
\lambda_d) \in \cbb^d$ with $\beta_n=0$ for $n \Ge
k+1$. Clearly, by Theorem \ref{wnoca} and Corollary
\ref{unit}, $C_{T}$ is unitary and
$C_{T}^{*}=C_{T^{-1}}$. Since $\rho =T\circ \psi \circ
T^{-1}$, we deduce that $C_{\rho} = C_{T}^* C_{\psi}
C_{T}$. This means that $C_{\psi} \in \ogr{\bb_d}$ if
and only if $C_{\rho} \in \ogr{\bb_d}$, and if this is
the case, then $\|C_{\psi}\| = \|C_{\rho}\|$. Let
$\rho_n\colon \cbb \to \cbb$ be the polynomial given
by $\rho_n(\lambda)=t_n \lambda + \beta_n$ for
$\lambda\in \cbb$ and $n\in \{1,\ldots,d\}$. By Lemma
\ref{sb4}, $C_{\rho_n} \in \ogr{\bb_1}$ for every $n
\in \{1, \ldots, k\}$. Clearly, $C_{\rho_n} =
I_{\bb_1}$ for every $n \in \{k+1, \ldots, d\}$. Note
that if $(f_1, \ldots, f_d) \in \bb_1^d$, then $f_1
\otimes \cdots \otimes f_d \in \dz{C_{\rho}}$ (because
$(f_1 \otimes \cdots \otimes f_d) \circ \rho =
C_{\rho_1} f_1 \otimes \cdots \otimes C_{\rho_d} f_d
\in \bb_d$) and
   \begin{align} \label{tensor}
C_{\rho} (f_1 \otimes \cdots \otimes f_d) = C_{\rho_1}
f_1 \otimes \cdots \otimes C_{\rho_d} f_d,
   \end{align}
where $(f_1 \otimes \cdots \otimes f_d)(\lambda_1,
\ldots, \lambda_d) = f_1(\lambda_1 ) \cdots
f_d(\lambda_d)$ for all $(\lambda_1, \dots, \lambda_d)
\in \cbb^d$. Since\footnote{\;Recall that if $\mm_j$
is a reproducing kernel Hilbert space on $X_j$ with
the reproducing kernel $K_j\colon X_j\times X_j \to
\cbb$ for $j=1, \ldots, n$, then $\mm_1 \otimes \cdots
\otimes \mm_n$ is a reproducing kernel Hilbert space
on $X=X_1 \times \cdots \times X_n$ with the
reproducing kernel $K\colon X\times X \to \cbb$
defined by $K((x_1, \ldots, x_n),(y_1, \ldots,
y_n))=K_1(x_1,y_1) \cdots K_n(x_n,y_n)$ for $(x_1,
\ldots, x_n),(y_1, \ldots, y_n) \in X$ (cf.\
\cite[Theorem I, page 361]{Aronsz}).} $\bb_d = \bb_1
\otimes \cdots \otimes \bb_1$, Proposition
\ref{stale2}(i) and \eqref{tensor} yield $C_{\rho} =
C_{\rho_1} \otimes \cdots \otimes C_{\rho_d}$. This
implies that $C_{\psi} \in \ogr{\bb_d}$ and
   \begin{align*}
\|C_{\psi}\| = \|C_{\rho}\| = \prod_{n=1}^k
\|C_{\rho_n}\| \overset{\eqref{cfnorma}}=
\prod_{n=1}^k
\exp\Big(\frac{|\beta_n|^2}{1-t_n^2}\Big) =
\exp(\is{(I-AA^*)^{-1}b}{b}).
   \end{align*}
Applying Proposition \ref{fipsib} completes the proof.
   \end{proof}
The following proposition sheds more light on the
relationships between \cite[Theorems 1 and 2]{c-m-s03}
and Theorem \ref{s-b-norm}.
   \begin{pro}  \label{naszeq}
If $\hh$ is a finite dimensional complex Hilbert
space, $A\in \ogr{\hh}$ is a contraction and $b\in
\hh$, then the following conditions are equivalent{\em
:}
   \begin{enumerate}
   \item[(i)] $\is{A\xi}{b}=0$ for every $\xi \in \hh$
such that $\|A\xi\|=\|\xi\|$,
   \item[(ii)] $A^*b \in \ob{I-A^*A}$,
   \item[(iii)] $b \in \ob{I-AA^*}$.
   \end{enumerate}
   \end{pro}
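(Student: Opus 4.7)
Since $\hh$ is finite dimensional and both $I-A^*A, I-AA^* \in \ogrp{\hh}$ are selfadjoint, their ranges are closed and satisfy $\ob{I-A^*A} = \jd{I-A^*A}^{\perp}$ and $\ob{I-AA^*} = \jd{I-AA^*}^{\perp}$. Moreover, because $A$ is a contraction, the familiar identities
\begin{align*}
\jd{I-A^*A} = \{\xi \in \hh\colon \|A\xi\|=\|\xi\|\}, \qquad
\jd{I-AA^*} = \{\eta \in \hh\colon \|A^*\eta\|=\|\eta\|\}
\end{align*}
hold. I will use these two facts throughout and reduce the equivalences to a statement about orthogonality.

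The implication (i)$\Leftrightarrow$(ii) is essentially a consequence of the adjoint relation. Condition (ii) says $A^*b \perp \jd{I-A^*A}$, i.e., $\is{A^*b}{\xi}=0$ for every $\xi$ with $\|A\xi\|=\|\xi\|$. Taking complex conjugates and moving $A$ across the inner product turns this into $\is{A\xi}{b}=0$ for every such $\xi$, which is exactly (i).

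The main step is (i)$\Leftrightarrow$(iii). The key observation is that $A$ restricts to a bijection
\begin{align*}
A|_{\jd{I-A^*A}}\colon \jd{I-A^*A} \longrightarrow \jd{I-AA^*},
\end{align*}
with $A^*$ providing the inverse. Indeed, if $\xi\in\jd{I-A^*A}$, then $A^*A\xi=\xi$, whence $(I-AA^*)(A\xi)=A\xi-A A^*A\xi=0$, so $A\xi\in\jd{I-AA^*}$; symmetrically, if $\eta\in\jd{I-AA^*}$, then $A^*\eta\in\jd{I-A^*A}$ and $A(A^*\eta)=\eta$. Consequently,
\begin{align*}
\{A\xi\colon \xi\in\hh,\ \|A\xi\|=\|\xi\|\} = \jd{I-AA^*}.
\end{align*}
Therefore (i) is equivalent to $\is{\eta}{b}=0$ for all $\eta\in\jd{I-AA^*}$, which by the orthogonal decomposition $\hh=\jd{I-AA^*}\oplus\ob{I-AA^*}$ is equivalent to $b\in\ob{I-AA^*}$, i.e., (iii).

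The only mildly delicate point is the bijection $A|_{\jd{I-A^*A}}\to\jd{I-AA^*}$; once that is in place, both equivalences are immediate from the selfadjoint range/kernel duality available in finite dimensions. No real obstacle arises, and the finite-dimensionality hypothesis is used exactly to identify $\ob{I-AA^*}$ with the orthogonal complement of $\jd{I-AA^*}$ (and similarly for $I-A^*A$).
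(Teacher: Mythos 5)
Your proof is correct. The core ingredients are the same as in the paper: the Cauchy--Schwarz identification $\{\xi\colon \|A\xi\|=\|\xi\|\}=\jd{I-A^*A}$ and the finite-dimensional fact that $\ob{T}=\overline{\ob{T}}=\jd{T}^{\perp}$ for selfadjoint $T$. Where you diverge is in the organization of (i)$\Leftrightarrow$(iii). The paper runs a cycle (i)$\Rightarrow$(ii)$\Rightarrow$(iii)$\Rightarrow$(i), and in the last two steps it picks an explicit preimage ($A^*b=(I-A^*A)\eta$, resp.\ $b=(I-AA^*)\eta$) and computes, implicitly exploiting the intertwining relation $A(I-A^*A)=(I-AA^*)A$. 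You instead isolate the structural fact that $A$ restricts to a bijection $\jd{I-A^*A}\to\jd{I-AA^*}$ with inverse $A^*$, so that the image of the isometry set under $A$ is exactly $\jd{I-AA^*}$, and then both equivalences collapse to orthogonality statements against kernels. Your route makes the symmetry between the two defect spaces explicit and avoids any computation with preimages; the paper's route is more elementary in that it never needs the bijection, only two short one-line identities. Both are complete; your version of (i)$\Leftrightarrow$(ii) (pure adjoint transposition) is in fact an honest two-way equivalence, whereas the paper only records the forward implication as part of its cycle.
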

   \begin{proof}
Since $\|A\|\Le 1$, we deduce from the Cauchy-Schwarz
inequality that
   \begin{align} \label{csin}
\text{$\forall \xi \in \hh\colon \|A\xi\|=\|\xi\| \iff
\xi \in \jd{I-A^*A}$.}
   \end{align}

(i)$\Rightarrow$(ii) It follows from \eqref{csin} that
$\is{\xi}{A^*b}=0$ for all $\xi \in \jd{I-A^*A}$, or
equivalently that $A^*b \perp \jd{I-A^*A}$. Therefore
$A^*b \in \overline{\ob{I-A^*A}} = \ob{I-A^*A}$.

(ii)$\Rightarrow$(iii) Let $\eta \in \hh$ be such that
$A^*b = (I-A^*A)\eta$. If $\xi\in \jd{I-AA^*}$, then
   \begin{align*}
\is{b}{\xi} & = \is{A^*b}{A^*\xi} =
\is{(I-A^*A)\eta}{A^*\xi}
   \\
& = \is{A \eta}{\xi} - \is{A \eta}{AA^*\xi} = \is{A
\eta}{\xi} - \is{A \eta}{\xi} =0,
   \end{align*}
which means that $b \perp \jd{I-AA^*}$. Hence $b\in
\overline{\ob{I-AA^*}}=\ob{I-AA^*}$.

   (iii)$\Rightarrow$(i) Let $\xi \in \hh$ be such
that $\|A\xi\|=\|\xi\|$. By \eqref{csin}, $\xi \in
\jd{I-A^*A}$. Since $b \in \ob{I-AA^*}$, there exists
$\eta \in \hh$ such that $b=(I-AA^*)\eta$. Thus, we
have
   \begin{align*}
\is{A\xi}{b} = \is{(I-AA^*)A\xi}{\eta} =
\is{A(I-A^*A)\xi}{\eta} = 0.
   \end{align*}
This completes the proof.
   \end{proof}
   Theorem \ref{s-b-norm} has two important
consequences. The first is related to spectral radius
of $C_{\varphi}$ (see Theorem \ref{s-b-prsp}), while
the other to normaloidity of $C_{\varphi}$ (see
Theorem \ref{semi-lid}). Theorems \ref{s-b-prsp} and
\ref{semi-lid}, as well as Corollary \ref{semi-nor},
are no longer true if $\bb_d$ is replaced by
$\exp(\hh)$ with $\dim \hh=\infty$ (cf.\ Example
\ref{optim}). Let us also mention that Theorem
\ref{s-b-prsp} has been recently proved by Trieu Le
using a different method (see \cite[Theorem
1.4.]{LeT}).
   \begin{thm} \label{s-b-prsp}
Let $\varphi\colon \cbb^d \to \cbb^d$ be a holomorphic
mapping $($$d \in \nbb$$)$. Assume $C_{\varphi} \in
\ogr{\bb_d}$. Then $r(C_{\varphi})= 1$.
   \end{thm}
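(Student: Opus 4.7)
The lower bound $r(C_{\varphi}) \Ge 1$ is immediate from Proposition \ref{stale2}(ii) applied to $\varPhi=\exp$, since $\exp(0)=1\neq 0$. For the upper bound, the plan is to invoke Gelfand's formula $r(C_{\varphi})=\lim_n \|C_{\varphi}^n\|^{1/n}$ and show that the sequence $\|C_{\varphi}^n\|$ is in fact bounded. By Proposition \ref{pro1} and Theorem \ref{s-b-norm} we may write $\varphi = A + b$ with $A\in \ogr{\cbb^d}$, $\|A\|\Le 1$ and $b\in \ob{I-AA^*}$. A routine induction shows $\varphi^{(n)}=A^n + S_n b$, where $S_n := \sum_{k=0}^{n-1} A^k$, so that $C_{\varphi}^n=C_{\varphi^{(n)}}$ is bounded and Theorem \ref{s-b-norm} yields
\[
\|C_{\varphi}^n\|^2 = \exp(\is{(I-A^n(A^n)^*)^{-1} S_n b}{S_n b}),
\]
with the generalized inverse in the sense of Section \ref{sek2}.

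Next, I would exploit the finite-dimensional structure of the contraction $A$. If $v$ is an eigenvector of $A$ for an eigenvalue $\lambda$ with $|\lambda|=1$, then a direct computation of $\|A^*v-\bar\lambda v\|^2$ using $\|A\|\Le 1$ forces $A^*v=\bar\lambda v$. Consequently the subspace $\hh_u:=\bigoplus_{\lambda\in\sigma(A),|\lambda|=1}\jd{\lambda I - A}$ reduces both $A$ and $A^*$; on $\hh_u$ the operator $A_u:=A|_{\hh_u}$ is unitary, while on $\hh_s:=\hh_u^{\perp}$ the operator $A_s:=A|_{\hh_s}$ has no unit-modulus eigenvalues, so $r(A_s)<1$ and $\|A_s^n\|\to 0$ geometrically. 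By Corollary \ref{n&sc4b-cor}(ii), $\jd{\lambda I - A^*}\subseteq\{b\}^{\perp}$ for every $\lambda$ with $|\lambda|=1$, so $b\perp \hh_u$, i.e.\ $b\in \hh_s$, and hence $S_n b\in \hh_s$ for every $n$.

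Finally, since $I-A^n(A^n)^* = 0_{\hh_u}\oplus(I_{\hh_s}-A_s^n(A_s^n)^*)$ and $\|A_s^n\|\to 0$, the operators $I_{\hh_s}-A_s^n(A_s^n)^*$ are invertible for large $n$ and converge to $I_{\hh_s}$ in norm; moreover, as $1\notin\sigma(A_s)$, one has
\[
S_n b = (I_{\hh_s}-A_s)^{-1}(I_{\hh_s}-A_s^n)b \longrightarrow (I_{\hh_s}-A_s)^{-1}b.
\]
Hence $\is{(I-A^n(A^n)^*)^{-1} S_n b}{S_n b}\to \|(I_{\hh_s}-A_s)^{-1}b\|^2$, which is finite, so $\|C_{\varphi}^n\|$ is a bounded sequence and $\|C_{\varphi}^n\|^{1/n}\to 1$; combined with the lower bound this gives $r(C_{\varphi})=1$. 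The main obstacle is the Wold-type decomposition step: one must verify that eigenvectors of the contraction $A$ for unit-modulus eigenvalues automatically live in a common reducing subspace on which $A$ is unitary, and then combine this with Corollary \ref{n&sc4b-cor}(ii) to locate $b$ in the complementary ``stable'' part $\hh_s$---this is what both makes the generalized inverse in the formula well-behaved for all $n$ and produces the finite limit.
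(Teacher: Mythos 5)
Your argument is correct, and its overall skeleton (Gelfand's formula applied to $\|C_{\varphi}^n\|=\|C_{\varphi^n}\|$ together with the norm formula of Theorem \ref{s-b-norm}) coincides with the paper's. Where you genuinely diverge is in the key step of bounding the exponents $\is{(I-A^nA^{*n})^{-1}b_n}{b_n}$: the paper splits into the cases $\|A\|<1$ (handled by a Neumann-series estimate) and $\|A\|=1$ (where it simply cites Lemma 3.2(ii) of \cite{D-St}), whereas you give a unified, self-contained argument by decomposing the finite-dimensional contraction $A$ as $A_u\oplus A_s$ with $A_u$ unitary on the span $\hh_u$ of unit-modulus eigenvectors and $r(A_s)<1$ on $\hh_s=\hh_u^{\perp}$. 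The two ingredients you need both check out: the identity $\|A^*v-\bar\lambda v\|^2=\|A^*v\|^2-\|v\|^2\Le 0$ shows unit-modulus eigenvectors of a contraction are reducing (and that eigenspaces for distinct unit-modulus eigenvalues are orthogonal, so $A_u$ is indeed unitary), and Corollary \ref{n&sc4b-cor}(ii) places $b$ in $\hh_s$, after which $S_nb=(I_{\hh_s}-A_s)^{-1}(I_{\hh_s}-A_s^n)b$ converges and $(I_{\hh_s}-A_s^nA_s^{*n})^{-1}\to I_{\hh_s}$ in norm, giving a convergent (hence bounded) sequence of exponents. What your route buys is independence from the external reference \cite{D-St} and a single argument covering both $\|A\|<1$ and $\|A\|=1$; what it costs is the extra (though standard and correctly executed) reduction to the unitary-plus-stable decomposition, which is only available because $d<\infty$ makes the spectrum of $A_s$ consist of eigenvalues.
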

   \begin{proof}
By Theorem \ref{s-b-norm}, there exist $A \in
\ogr{\cbb^d}$ and $b\in \cbb^d$ such that $\varphi = A
+ b$, $\|A\| \Le 1$ and $b \in \ob{I-AA^*}$. Since
$C_{\varphi}^n = C_{\varphi^n}$ and $\varphi^n = A^n +
b_n$ for every $n \in \nbb$, where $b_n = b + \ldots +
A^{n-1}b$ for $n \in \nbb$, we infer from Theorem
\ref{s-b-norm} that $b_n \in \ob{I-A^nA^{*n}}$ for
every $n\in \nbb$. If $\|A\| < 1$, then
   \begin{align*}
\|b_n\| \Le \|b\| \sum_{n=0}^{\infty} \|A\|^n \Le
\frac{\|b\|}{1-\|A\|}, \quad n \in \nbb,
   \end{align*}
and thus (use the C. Neumann's series expansion)
   \begin{align*}
\is{(I-A^nA^{*n})^{-1}b_n}{b_n} & \Le
\|(I-A^nA^{*n})^{-1}\| \|b_n\|^2
   \\
& \Le \frac{\|b_n\|^2}{1-\|A^nA^{*n}\|}
   \\
& \Le \frac{\|b\|^2}{(1-\|A\|)^3}, \quad n \in \nbb.
   \end{align*}
If $\|A\|=1$, then by\footnote{\;Lemma 3.2 in
\cite{D-St} remains valid in the complex case as well
(with the same proof).} \cite[Lemma 3.2(ii)]{D-St} the
sequence
$\{\is{(I-A^nA^{*n})^{-1}b_n}{b_n}\}_{n=1}^{\infty}$
is bounded. Hence, in both cases, $c:= \sup_{n \in
\nbb} \is{(I-A^nA^{*n})^{-1}b_n}{b_n} < \infty$. This
and \eqref{norm} applied to $\varphi^n$ yield
   \begin{align*}
1 \Le \|C_{\varphi}^n\|^{1/n} & = \|C_{\varphi^n}\|^{1/n}
   \\
& =
\exp\left(\frac{\is{(I-A^nA^{*n})^{-1}b_n}{b_n}}{2n}\right)
   \\
& \Le \exp\left(\frac{c}{2n}\right), \quad n \in \nbb.
   \end{align*}
Now, applying Gelfand's formula for spectral radius
completes the proof.
   \end{proof}
   \begin{thm} \label{semi-lid}
Assume $\varphi = A + b$ with $A\in \ogr{\cbb^d}$ and
$b\in \cbb^d$, and $C_{\varphi} \in \ogr{\bb_d}$
$($$d\in \nbb$$)$. Then the following conditions are
equivalent{\em :}
   \begin{enumerate}
   \item[(i)] $C_{\varphi}$ is normaloid,
   \item[(ii)]  $b=0$.
   \end{enumerate}
Moreover, if $C_{\varphi}$ is normaloid, then
$r(C_{\varphi})=\|C_{\varphi}\|=1$.
   \end{thm}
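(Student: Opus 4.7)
The plan is to combine the already-established spectral radius computation from Theorem \ref{s-b-prsp} with the explicit norm formula from Theorem \ref{s-b-norm} and reduce the theorem to an elementary observation about generalized inverses of finite-dimensional positive operators.

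First, since $C_{\varphi} \in \ogr{\bb_d}$, Theorem \ref{s-b-prsp} gives $r(C_{\varphi}) = 1$. Therefore $C_{\varphi}$ is normaloid if and only if $\|C_{\varphi}\| = 1$, and the ``moreover'' clause is automatic: any proof of (i)$\Leftrightarrow$(ii) will simultaneously yield $r(C_{\varphi}) = \|C_{\varphi}\| = 1$ in the normaloid case.

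For the implication (i)$\Rightarrow$(ii), I would invoke Theorem \ref{s-b-norm} to rewrite the normaloid condition $\|C_{\varphi}\| = 1$ as $\is{(I-AA^*)^{-1}b}{b} = 0$. Since $\cbb^d$ is finite dimensional, the operator $I - AA^* \in \ogrp{\cbb^d}$ has closed range, so Lemma \ref{a-12} applies and gives $\is{(I-AA^*)^{-1}b}{b} = \|(I-AA^*)^{-1/2}b\|^2$. Thus $(I-AA^*)^{-1/2}b = 0$. By Theorem \ref{s-b-norm} we know $b \in \ob{I - AA^*} = \ob{(I-AA^*)^{1/2}}$, and on this range $(I-AA^*)^{-1/2}$ is a bijection by definition of the generalized inverse; this forces $b = 0$.

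For the converse (ii)$\Rightarrow$(i), suppose $b = 0$. Then $\varphi = A$ with $\|A\| \Le 1$ by Lemma \ref{n&sc4b}(i). Since $\exp(0) = 1 \neq 0$, Corollary \ref{normaloid}(ii) applies directly to yield that $C_A$ is normaloid with $r(C_A) = \|C_A\| = 1$. No genuine obstacle arises: all the structural work has already been carried out in Theorems \ref{s-b-prsp} and \ref{s-b-norm}, and the only small point to verify is the injectivity of $(I-AA^*)^{-1/2}$ on $\ob{(I-AA^*)^{1/2}}$, which is immediate from Lemma \ref{a-12}.
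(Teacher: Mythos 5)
Your proposal is correct and follows essentially the same route as the paper: combine Theorem \ref{s-b-prsp} ($r(C_{\varphi})=1$) with the norm formula of Theorem \ref{s-b-norm} to reduce normaloidity to $\is{(I-AA^*)^{-1}b}{b}=0$, and then conclude $b=0$ from positivity of $I-AA^*$. The only (cosmetic) difference is that you pass through Lemma \ref{a-12} to write this as $\|(I-AA^*)^{-1/2}b\|^2=0$, whereas the paper sets $c=(I-AA^*)^{-1}b$ and argues directly from $\is{c}{(I-AA^*)c}=0$; both are the same argument in substance.
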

   \begin{proof}
(i)$\Rightarrow$(ii) Suppose $C_{\varphi}$ is normaloid. It
follows from Theorems \ref{s-b-norm} and \ref{s-b-prsp}
that $\is{c}{b}=0$, where $c=(I-AA^*)^{-1}b$. This implies
that $\is{c}{(I-AA^*)c}=0$. Since, by Theorem
\ref{s-b-norm}, $I-AA^* \Ge 0$, we deduce that $b =
(I-AA^*)c=0$.

The implication (ii)$\Rightarrow$(i) as well as the
``moreover'' part follow directly from Theorems
\ref{s-b-norm} and \ref{s-b-prsp} (see also Corollary
\ref{normaloid}(ii)).
   \end{proof}
The following corollary, which is a particular case of
Proposition \ref{cohyp}(iii), can be also deduced from
Corollaries \ref{wn3} and \ref{dim-fin}, and Theorem
\ref{semi-lid} because seminormal operators are
normaloid (cf.\ \cite[Theorem 1 in \S2.6.2]{Fur}).
   \begin{cor} \label{semi-nor}
Assume $\varphi = A + b$ with $A\in \ogr{\cbb^d}$ and
$b\in \cbb^d$, and $C_{\varphi} \in \ogr{\bb_d}$
$($$d\in \nbb$$)$. Then $C_{\varphi}$ is seminormal if
and only if $C_{\varphi}$ is normal. Moreover,
$C_{\varphi}$ is normal if and only if $A$ is normal
and $b=0$.
   \end{cor}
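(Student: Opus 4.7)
The plan is to reduce the corollary to results already established in the paper, essentially packaging two known facts into the required two-part statement. The key observation is that since $\cbb^d$ is finite dimensional, every operator $A\in\ogr{\cbb^d}$ automatically satisfies the dichotomy \eqref{klop}: indeed, the spectrum $\sigma(A)$ is a finite set, so it has planar Lebesgue measure zero (equivalently, $\mathfrak{Re} A$ and $\mathfrak{Im} A$ are finite rank, hence compact). This places us squarely in the hypothesis of Proposition \ref{cohyp}(iii).

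For the first equivalence ($C_{\varphi}$ seminormal iff $C_{\varphi}$ normal), I would simply invoke Proposition \ref{cohyp}(iii): under the standing assumption $C_{\varphi}\in\ogr{\bb_d}$ and with $A$ satisfying \eqref{klop}, that proposition directly gives the equivalence of seminormality of $C_{\varphi}$ with the conjunction ``$A$ normal and $b=0$'', and equivalence with normality of $C_{\varphi}$. Alternatively, one could argue sequentially: seminormal operators are normaloid (cf.\ \cite[Theorem 1 in \S2.6.2]{Fur}), so by Theorem \ref{semi-lid} we must have $b=0$; thus $C_{\varphi}=C_A$ with $A$ finite-dimensional, and Corollary \ref{dim-fin} yields normality of $A$; finally Corollary \ref{wn3} then gives normality of $C_{\varphi}$. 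The reverse direction is immediate since normal operators are seminormal.

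For the ``moreover'' part (normality of $C_{\varphi}$ iff $A$ normal and $b=0$), I would cite Corollary \ref{wn3} verbatim, which contains exactly this statement for the affine symbol $\varphi = A+b$.

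The main (and only) non-trivial step is the verification that finite-dimensional operators satisfy \eqref{klop}, but this is immediate as noted above. Everything else is bookkeeping: the corollary is genuinely a synthesis of Proposition \ref{cohyp}(iii), Theorem \ref{semi-lid}, Corollary \ref{dim-fin}, and Corollary \ref{wn3}, so no new estimate or construction is needed.
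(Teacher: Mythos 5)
Your proposal is correct and matches the paper's own justification: the paper states that the corollary is a particular case of Proposition \ref{cohyp}(iii) and can alternatively be deduced from Corollaries \ref{wn3} and \ref{dim-fin} together with Theorem \ref{semi-lid} via the fact that seminormal operators are normaloid, which are exactly your two routes. Your observation that every operator on $\cbb^d$ satisfies \eqref{klop} is the right (and only) verification needed.
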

We conclude this section with an example of an
unbounded densely defined composition operator
$C_{\varphi}$ in $\bb_1$ with a holomorphic
symbol $\varphi$ which is not a polynomial.
      \begin{exa} \label{ndd}
Let $\varphi=\exp$. We will show that $C_{\varphi}$
is densely defined as an operator in $\bb_1$. Set
$e_n(\xi) = \xi^n$ for $\xi \in \cbb$ and $n\in
\zbb_+$. Since $\big\{\frac{1}{\sqrt{n!}}
e_n\big\}_{n=1}^{\infty}$ is an orthonormal basis
of $\bb_1$ (cf.\ \cite{Barg}), it suffices to show
that $e_n \in \dz{C_{\varphi}}$ for every $n\in
\zbb_+$. For this, fix $n\in \zbb_+$ and set
$\varOmega_n = \{\xi \in \cbb\colon |\xi| \Ge
4n\}$. Since
      \begin{align*}
2n \mathfrak{Re} \hspace{.1ex}\xi - |\xi|^2 \Le
\frac{1}{2} |\xi|^2 - |\xi|^2 = - \frac{1}{2} |\xi|^2
\quad \xi \in \varOmega_n,
      \end{align*}
we see that
      \begin{align*}
\int_{\varOmega_n} |e_n \circ \varphi|^2 \D \mu_1 & =
\frac{1}{\pi} \int_{\varOmega_n} \exp(2n \mathfrak{Re}
\hspace{.1ex} \xi - |\xi|^2) \D V_1(\xi)
      \\
& \Le \frac{1}{\pi}\int_{\cbb} \exp(-
\frac{1}{2}|\xi|^2) \D V_1(\xi) < \infty,
      \end{align*}
which completes the proof.
      \end{exa}
Arguing as in Example \ref{ndd}, we see that a
composition operator $C_{\varphi}$ in $\bb_1$ with a
polynomial symbol $\varphi$ of an arbitrary degree is
always densely defined (because $\limsup_{|z| \to
\infty} \frac{|\varphi(z)|}{|z|^{\deg \varphi}} <
\infty$ whenever $\varphi \neq 0$); moreover, if $\deg
\varphi \Ge 2$, then $C_{\varphi}$ is unbounded (see
Theorem \ref{s-b-norm}).
   \section{\label{Sek13}Composition operators in $L^2(\mu_d)$}
   Below we discuss the relationship between
composition operators acting in function spaces
$\bb_d$ and $L^2(\mu_d)$ respectively, whose symbols
are holomorphic. If $\varphi\colon \cbb^d \to \cbb^d$
is a Borel function, then $\tilde C_{\varphi}$ stands
for the operator in $L^2(\mu_d)$ defined by
   \begin{align*}
\dz{\tilde C_{\varphi}} & = \{f \in
L^2(\mu_d)\colon f \circ \varphi \in
L^2(\mu_d)\},
   \\
\tilde C_{\varphi} f &= f \circ \varphi, \quad f \in
\dz{\tilde C_{\varphi}}.
   \end{align*}
It is well-known that
   \begin{align} \label{w-d}
   \begin{minipage}{63ex}
{\em the operator $\tilde C_{\varphi}$ is well-defined
$($no matter what the domain of $\tilde C_{\varphi}$
is\/$)$ if and only if $\mu_d \circ \varphi^{-1} \ll \mu_d$
$($absolute continuity$)$, where $\mu_d \circ
\varphi^{-1}(\varDelta) = \mu_d (\varphi^{-1}(\varDelta))$
for every Borel subsubset $\varDelta$ of $\cbb^d$.}
   \end{minipage}
   \end{align}

We begin by extending \cite[Corollary 2.5]{D-St}.
   \begin{thm} \label{bl2G}
Suppose $\varphi\colon \cbb^d \to \cbb^d$ is a
holomorphic mapping $($$d\in \nbb$$)$. Then the
following two conditions are equivalent{\em :}
   \begin{enumerate}
   \item[(i)] $\tilde
C_{\varphi}$ is well-defined and $\dz{\tilde
C_{\varphi}}=L^2(\mu_d)$,
   \item[(ii)] there exist $A\in \ogr{\cbb^d}$
and $b\in \cbb^d$ such that $\varphi = A + b$, $A$ is
nonsingular, $\|A\| \Le 1$ and $b \in \ob{I-AA^*}$.
   \end{enumerate}
Moreover, if {\em (ii)} holds, then $\tilde C_{\varphi} \in
\ogr{L^2(\mu_d)}$ and
   \begin{align*}
\|\tilde C_{\varphi}\|^2 & = \frac{1}{|\det
A|^2}\exp(\is{(I-AA^*)^{-1}b}{b}),
   \\
r(\tilde C_{\varphi}) & = \frac{1}{|\det A|},
   \end{align*}
where $\det A$ stands for the determinant of a complex
matrix associated with $A$.
   \end{thm}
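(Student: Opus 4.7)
The plan is to reduce to the Segal-Bargmann setting handled by Theorem \ref{s-b-norm}, to compute the Radon-Nikodym derivative $h := d(\mu_d \circ \varphi^{-1})/d\mu_d$ explicitly by an affine change of variables, and to exploit the standard fact that if $\dz{\tilde C_\varphi} = L^2(\mu_d)$, then $\tilde C_\varphi$ is automatically bounded (closed graph theorem plus closedness of multiplication-type operators) with $\|\tilde C_\varphi\|^2 = \|h\|_\infty$.

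For (i)$\Rightarrow$(ii), if $\dz{\tilde C_\varphi} = L^2(\mu_d)$, then every $f \in \bb_d \subseteq L^2(\mu_d)$ satisfies $f \circ \varphi \in L^2(\mu_d)$; since $f \circ \varphi$ is entire, it actually lies in $\bb_d$, so $\dz{C_\varphi} = \bb_d$. Proposition \ref{pro1} and Theorem \ref{s-b-norm} then force $\varphi = A + b$ with $\|A\| \Le 1$ and $b \in \ob{I - AA^*}$. To see $A$ must also be invertible, note that if $A$ were singular then $\varphi(\cbb^d) \subseteq A\cbb^d + b$ would be contained in a proper affine subspace of $\cbb^d$, hence be $\mu_d$-null, while $\mu_d \circ \varphi^{-1}(\varphi(\cbb^d)) = \mu_d(\cbb^d) = 1$, contradicting \eqref{w-d}.

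For (ii)$\Rightarrow$(i) together with the norm formula, viewing invertible $A$ as a real-linear automorphism of $\rbb^{2d}$ with real Jacobian $|\det A|^2$, the change of variables $\eta = A\xi + b$ yields
\[
\|\tilde C_{\varphi} f\|^2 = \int_{\cbb^d} |f(\eta)|^2 h(\eta) \D \mu_d(\eta), \quad h(\eta) = \frac{\exp(\|\eta\|^2 - \|A^{-1}(\eta - b)\|^2)}{|\det A|^2}.
\]
Substituting $\xi = A^{-1}\eta$, the exponent becomes the quadratic form $-\is{(I - A^*A)\xi}{\xi} + 2\mathfrak{Re}\is{\xi}{A^{-1} b} - \|A^{-1}b\|^2$, which by Lemma \ref{range} is bounded above if and only if $A^{-1}b \in \ob{I - A^*A}$; multiplying the critical-point equation $(I - A^*A)\xi_0 = A^{-1} b$ by $A$ shows this is equivalent to $b \in \ob{I - AA^*}$. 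Hence $h \in L^\infty(\mu_d)$, so $\dz{\tilde C_\varphi} = L^2(\mu_d)$, $\tilde C_\varphi \in \ogr{L^2(\mu_d)}$, and $\|\tilde C_\varphi\|^2 = \|h\|_\infty$. The exact evaluation $\|h\|_\infty = |\det A|^{-2}\exp(\is{(I-AA^*)^{-1}b}{b})$ is then obtained by identifying the supremum of the quadratic form via the algebraic identity $A(I-A^*A)^{-1}A^* = (I-AA^*)^{-1} - I$, which holds on $\ob{I - AA^*}$ when $A$ is invertible.

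The spectral radius formula follows from $\tilde C_\varphi^n = \tilde C_{\varphi^n}$ with $\varphi^n = A^n + b_n$, $b_n = (I + A + \cdots + A^{n-1})b$. Applying the norm formula just established to $\varphi^n$ and taking $n$-th roots, the factor $|\det(A^n)|^{-1/n} = |\det A|^{-1}$ persists, while $\exp(\is{(I - A^n A^{*n})^{-1} b_n}{b_n}/(2n))$ tends to $1$ by the same uniform bound on $\is{(I - A^n A^{*n})^{-1} b_n}{b_n}$ that drives the proof of Theorem \ref{s-b-prsp} (via \cite[Lemma 3.2(ii)]{D-St}), so Gelfand's formula yields $r(\tilde C_\varphi) = 1/|\det A|$. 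The main technical obstacle is the algebraic identification of the optimum of the quadratic form with $\is{(I-AA^*)^{-1}b}{b}$; if the Woodbury-type identity is inconvenient to verify directly, a cleaner alternative is to test the norm formula on the reproducing kernels $K^{\exp}_\xi$ via Lemma \ref{cfkz}(ii), thereby recovering the exponential factor from Theorem \ref{s-b-norm} and picking up the Jacobian correction from the $L^2(\mu_d)$ norms of the kernels.
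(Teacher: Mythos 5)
Your implication (i)$\Rightarrow$(ii) is exactly the paper's: pass to $\dz{C_\varphi}=\bb_d$ via holomorphy, invoke Proposition \ref{pro1} and Theorem \ref{s-b-norm}, and get nonsingularity of $A$ from absolute continuity. The real difference is in (ii)$\Rightarrow$(i) and the two formulas: the paper simply cites \cite[Corollary 2.5, Theorem 3.4(i), Remark $3^{\circ}$]{D-St}, whereas you give a self-contained computation of the Radon--Nikodym derivative $h$, identify $\|\tilde C_\varphi\|^2=\|h\|_\infty$ by completing the square via Lemma \ref{range}, and obtain the spectral radius from $\tilde C_{\varphi}^n=\tilde C_{\varphi^n}$, the persistence of $|\det A^n|^{-1/n}=|\det A|^{-1}$, and the boundedness of $\is{(I-A^nA^{*n})^{-1}b_n}{b_n}$ as in Theorem \ref{s-b-prsp}. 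This is correct (I checked the quadratic-form bookkeeping, including the reduction of $b\in\ob{I-AA^*}$ to $A^{-1}b\in\ob{I-A^*A}$ via the intertwining $(I-AA^*)A=A(I-A^*A)$, and the identity $\is{(I-A^*A)^{-1}A^{-1}b}{A^{-1}b}-\|A^{-1}b\|^2=\is{(I-AA^*)^{-1}b}{b}$, which survives the passage to generalized inverses when $\|A\|=1$), and it buys a proof independent of \cite{D-St}. Two small caveats: when $I-A^*A$ is singular but nonzero your Woodbury-type identity must be read on the reducing subspace $\ob{I-A^*A}$ with the paper's generalized inverses, which your parenthetical remark only gestures at; and the ``cleaner alternative'' of testing on the kernels $K^{\exp}_{\xi}$ does not actually recover the factor $|\det A|^{-2}$ --- the supremum of $\|\tilde C_\varphi K_\xi\|^2/\|K_\xi\|^2$ equals only $\exp(\is{(I-AA^*)^{-1}b}{b})$, a strict lower bound for $\|h\|_\infty$ when $|\det A|<1$, since the kernels are not norming vectors for the multiplication operator $M_{h}$ --- so that aside should be dropped; the main argument does not depend on it.
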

   \begin{proof}
It follows from \eqref{w-d} that $\tilde C_{\varphi}$
is well-defined if and only if $V_d \circ \varphi^{-1}
\ll V_d$. Using the change-of-variables formula and
the fact that $V_d(M)=0$ for every proper linear
subspace $M$ of $\cbb^d$, we deduce that if $\varphi =
A + b$ with $A\in \ogr{\cbb^d}$ and $b\in \cbb^d$,
then $\tilde C_{\varphi}$ is well-defined if and only
if $A$ is nonsingular.

(i)$\Rightarrow$(ii) By assumption, if $f \in \bb_d$,
then $f\circ \varphi \in L^2(\mu_d)$ and thus, because
$f\circ \varphi$ is holomorphic, $f \in
\dz{C_{\varphi}}$. This means that $\dz{C_{\varphi}} =
\bb_d$. By Proposition \ref{stale2} and the closed
graph theorem, $C_{\varphi} \in \ogr{\bb_d}$. It
follows from Theorem \ref{s-b-norm} that there exist
$A\in \ogr{\cbb^d}$ and $b\in \cbb^d$ such that
$\varphi = A + b$, $\|A\| \Le 1$ and $b \in
\ob{I-AA^*}$. Since $\tilde C_{\varphi}$ is
well-defined, $A$ is nonsingular.

The implication (ii)$\Rightarrow$(i) and the
``moreover'' part can be deduced from \cite[Corollary
2.5]{D-St}, \cite[Theorem 3.4(i)]{D-St} and
\cite[Remark 3$^{\circ}$ in Section 6]{D-St}.
   \end{proof}
   \begin{rem}  \label{mamde}
It follows from Theorems \ref{s-b-norm} and \ref{bl2G}
that if $\varphi\colon \cbb^d \to \cbb^d$ is a
holomorphic mapping such that the operator $\tilde
C_{\varphi}$ is well-defined, then $\tilde C_{\varphi}
\in \ogr{L^2(\mu_d)}$ if and only if $C_{\varphi} \in
\ogr{\bb_d}$. Moreover, by Theorem \ref{s-b-prsp}, if
$\tilde C_{\varphi} \in \ogr{L^2(\mu_d)}$, then
   \begin{align*}
\frac{\|\tilde C_{\varphi}\|}{\|C_{\varphi}\|} =
\frac{r(\tilde C_{\varphi})}{r(C_{\varphi})} =
\frac{1}{|\det A|},
   \end{align*}
where $\varphi = A + b$ with $A\in \ogr{\cbb^d}$ and
$b\in \cbb^d$. In particular, if $d=1$ and $b \in
\cbb$, then $\lim_{A \to 0}\|C_{A+b}\|^2 =
\exp(|b|^2)$ and $r(C_{A+b}) = 1$ for all $A \in \cbb$
with $|A| < 1$, while $\lim_{A \to 0}\|\tilde
C_{A+b}\| = \lim_{A \to 0} r(\tilde C_{A+b}) =
\infty$.

By Theorem \ref{s-b-norm} and Corollary
\ref{semi-nor}, bounded seminormal composition
operators $C_{\varphi}$ in $\bb_d$ with holomorphic
symbols $\varphi$ are always normal. The situation is
quite different for composition operators $\tilde
C_{\varphi}$ in $L^2(\mu_d)$. Since the measure
$\mu_d$ is finite, bounded hyponormal composition
operators $\tilde C_{\varphi}$ in $L^2(\mu_d)$ with
holomorphic symbols $\varphi$ are always unitary (see
Theorem \ref{bl2G} and \cite[Lemma 7 and Theorem
0]{ha-wh}). However, if $d > 1$, then there exists a
bounded composition operator $\tilde C_{A}$ in
$L^2(\mu_d)$ with a nonsingular $A \in \ogr{\cbb^d}$
such that $(\tilde C_{A})^*$ is hyponormal but not
subnormal (see \cite[Example 2.6 and (UE)]{js1}).
Moreover, if $d\in \nbb$, then there exists a bounded
composition operator $\tilde C_{A}$ in $L^2(\mu_d)$
with a nonsingular $A \in \ogr{\cbb^d}$ such that
$(\tilde C_{A})^*$ is subnormal but not normal.
Indeed, take a nonsingular nonunitary normal operator
$A\in \ogr{\cbb^d}$ such that $\|A\| \Le 1$. Then, by
\cite[Proposition 2.2 and Theorem 2.5]{js1} applied to
$\varphi=\exp$, $\tilde C_{A} \in \ogr{\bb_d}$ and
$(\tilde C_{A})^*$ is subnormal, while, by
\cite[Proposition 2.3]{js1}, $\tilde C_{A}$ is not
normal. Note that if $d=1$, then the adjoint of any
bounded composition operator $\tilde C_{A}$ in
$L^2(\mu_1)$ with $A \in \cbb \setminus \{0\}$ is
subnormal (cf.\ \cite[Theorem 2.5]{js1}).
   \end{rem}
   \section{\label{Sek14}Boundedness of $C_{A+b}$ in $\exp(\hh)$}
   In this section we will give necessary and
sufficient conditions for $C_{A+b}$ to be a bounded
operator on $\exp(\hh)$. We begin by proving two
lemmata that will be used in the proof of Theorem
\ref{glowne2} which is the main result of this
section.
   \begin{lem} \label{bl1}
Suppose $A\in \ogrp{\hh}$, $b \in \hh$ and $\dim
\ob{A} < \infty$. Then $C_{A+b} \in \ogr{\exp(\hh)}$
if and only if $\|A\| \Le 1$ and $b\in \ob{I-A^2}$.
Moreover, if $C_{A+b} \in \ogr{\exp(\hh)}$, then
   \begin{align*}
\|C_{A+b}\|^2 = \exp(\is{(I - A^2)^{-1}b}{b}).
   \end{align*}
   \end{lem}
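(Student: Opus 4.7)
The plan is to reduce to the finite-dimensional Segal--Bargmann setting covered by Theorem \ref{s-b-norm} via a tensor-product factorization of $\exp(\hh)$ adapted to the orthogonal splitting $\hh = \ob{A} \oplus \jd{A}$. Since $A \in \ogrp{\hh}$ is selfadjoint, this decomposition is orthogonal, with $\hh_1 := \ob{A}$ finite-dimensional of dimension $d$ and $\hh_2 := \jd{A}$; the operator $A$ respects the splitting, restricting to a positive operator $A_1 := A|_{\hh_1} \in \ogrp{\hh_1}$ on $\hh_1$ and vanishing on $\hh_2$. I decompose $b = b_1 + b_2$ with $b_j \in \hh_j$ accordingly.

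Using $\hh_1 \perp \hh_2$, the reproducing kernel factors as $\exp(\is{\xi}{\eta}) = \exp(\is{\xi_1}{\eta_1})\,\exp(\is{\xi_2}{\eta_2})$, producing a unique unitary isomorphism $U\colon \exp(\hh) \to \exp(\hh_1) \otimes \exp(\hh_2)$ determined by $U K^{\exp,\hh}_{\xi} = K^{\exp,\hh_1}_{\xi_1} \otimes K^{\exp,\hh_2}_{\xi_2}$ for $\xi = \xi_1 + \xi_2$. Using Lemma \ref{cfkz}(ii), the selfadjointness $A_1^* = A_1$, and the identity $K^{\exp,\hh_2}_0 = \mathbb{1}$, a direct calculation on kernel vectors will show that
\[
U\, C_{A+b}\, U^{-1} = C_{A_1+b_1} \otimes E_{b_2},
\]
where $E_{b_2}\colon \exp(\hh_2) \to \exp(\hh_2)$ is the composition operator with constant symbol $b_2$, namely the rank-one operator $g \mapsto g(b_2)\cdot \mathbb{1}$. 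By Proposition \ref{nonzero}(iv), $E_{b_2} \in \ogr{\exp(\hh_2)}$ with $\|E_{b_2}\|^2 = \exp(\|b_2\|^2)$; hence $C_{A+b}$ will be bounded iff $C_{A_1+b_1}$ is bounded on $\exp(\hh_1) = \bb_d$, with $\|C_{A+b}\|^2 = \|C_{A_1+b_1}\|^2 \exp(\|b_2\|^2)$.

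Theorem \ref{s-b-norm} applied to $\bb_d$ then says that $C_{A_1+b_1}$ is bounded iff $\|A_1\| \Le 1$ and $b_1 \in \ob{I_{\hh_1} - A_1 A_1^*} = \ob{I_{\hh_1} - A_1^2}$, with $\|C_{A_1+b_1}\|^2 = \exp(\is{(I_{\hh_1} - A_1^2)^{-1}b_1}{b_1})$. To translate back, $\|A_1\| = \|A\|$, and the orthogonal decomposition $I - A^2 = (I_{\hh_1} - A_1^2) \oplus I_{\hh_2}$ implies that $\ob{I-A^2} = \ob{I_{\hh_1} - A_1^2} \oplus \hh_2$ is closed and $(I-A^2)^{-1}b = (I_{\hh_1} - A_1^2)^{-1}b_1 + b_2$, whence $b \in \ob{I-A^2}$ iff $b_1 \in \ob{I_{\hh_1} - A_1^2}$ and $\is{(I-A^2)^{-1}b}{b} = \is{(I_{\hh_1} - A_1^2)^{-1}b_1}{b_1} + \|b_2\|^2$. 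This produces the asserted equivalence and norm formula. The main obstacle will be rigorously justifying the operator identity $UC_{A+b}U^{-1} = C_{A_1+b_1}\otimes E_{b_2}$ as equality of closed operators; both sides are closed (Proposition \ref{stale2}(i), together with boundedness of $E_{b_2}$), and I will verify agreement on the core provided by Theorem \ref{sprz}(i).
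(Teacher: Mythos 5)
Your proposal is correct and follows essentially the same route as the paper: decompose $\hh=\ob{A}\oplus\jd{A}$, factor $\exp(\hh)$ as a tensor product of the corresponding Segal--Bargmann spaces via the reproducing kernels, identify $C_{A+b}$ (up to this unitary) with $C_{A_1+b_1}\otimes C_{b_2}$ checked on kernel vectors using Lemma \ref{cfkz}(ii), and then invoke Theorem \ref{s-b-norm} together with Proposition \ref{nonzero}(iv). The only cosmetic difference is the order of the tensor factors and your explicit bookkeeping of $(I-A^2)^{-1}$ through the splitting, which the paper also records via its identity $\ob{(I-A^s)^t}=\ob{I-A^s}$.
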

   \begin{proof}
In view of Lemma \ref{n&sc4b}(i), there is no loss of
generality in assuming that $\|A\|\Le 1$. Note that $A
= 0_{\hh_0} \oplus A_1$, where $0_{\hh_0}$ is the zero
operator on $\hh_0=\jd{A}$ and $A_1$ is an injective
positive operator on $\hh_1=\ob{A}$. This implies that
$(I-A^s)^t = I_{\hh_0} \oplus (I_{\hh_1} - A_1^s)^t$
for all $s,t \in (0,\infty)$. Since $\dim \hh_1 <
\infty$, we get
   \begin{align} \label{dobr}
\ob{(I-A^s)^t} & = \hh_0 \oplus \ob{I_{\hh_1}-A_1^s} =
\ob{I-A^s}, \quad s,t \in (0,\infty).
   \end{align}

Suppose $C_{A+b} \in \ogr{\exp(\hh)}$. By Lemma
\ref{n&sc4b}(i) and \eqref{dobr}, $b \in \ob{I-A^2} =
\overline{\ob{I-A^2}}$ and $b=b_0 \oplus b_1$ for some
$b_0 \in \hh_0$ and $b_1 \in \ob{I_{\hh_1}-A_1^2}$. By
Theorem \ref{s-b-norm}, $C_{A_1+b_1} \in
\ogr{\exp(\hh_1)}$ and $\|C_{A_1+b_1}\|^2 =
\exp(\is{(I_{\hh_1}-A_1^2)^{-1}b_1}{b_1})$. In turn,
by Proposition \ref{nonzero}(iv), $C_{b_0} \in
\ogr{\exp(\hh_0)}$ and $\|C_{b_0}\|^2 =
\exp(\|b_0\|^2)$. Note that there exists a unitary
operator $U\colon \exp(\hh) \to \exp(\hh_0) \otimes
\exp(\hh_1)$ such that
   \begin{align*}
U K^{\exp,\hh}_{\xi} = K^{\exp,\hh_0}_{P_0\xi} \otimes
K^{\exp,\hh_1}_{P_1\xi}, \quad \xi \in \hh,
   \end{align*}
where $P_0$ and $P_1$ are orthogonal projections of
$\hh$ onto $\hh_0$ and $\hh_1$, respectively (cf.\
\cite[Proposition 1.31]{A-R-K-L-S}). Using Lemma
\ref{cfkz}(ii), we verify that
   \begin{align} \label{ucak}
U C_{A+b} K^{\exp,\hh}_{\xi} = (C_{b_0} \otimes
C_{A_1+b_1}) U K^{\exp,\hh}_{\xi}, \quad \xi \in \hh.
   \end{align}
This implies that $C_{A+b}$ is unitarily equivalent to
$C_{b_0} \otimes C_{A_1+b_1}$ and
   \begin{align*}
\|C_{A+b}\|^2 &= \|C_{b_0}\|^2 \|C_{A_1+b_1}\|^2
   \\
& = \exp(\|b_0\|^2)
\exp(\is{(I_{\hh_1}-A_1^2)^{-1}b_1}{b_1})
   \\
& = \exp(\is{(I-A^2)^{-1}b}{b}).
   \end{align*}

The ''if'' part can be derived from \eqref{dobr},
\eqref{ucak} and Theorem \ref{s-b-norm}.
   \end{proof}
   \begin{lem} \label{bl2}
Suppose $A\in \ogrp{\hh}$, $b \in \hh$ and $\pcal
\subseteq \ogr{\hh}$ is an upward-directed partially
ordered set of finite rank orthogonal projections such
that $\bigvee_{P\in \pcal} \ob{P} = \hh$. Then the
following conditions are equivalent{\em :}
   \begin{enumerate}
   \item[(i)] $C_{A+b}
\in \ogr{\exp(\hh)}$,
   \item[(ii)] $\|A\| \Le 1$,  $b\in \ob{I-APA}$ for every
$P\in \pcal$ and
   \begin{align*}
S(A,b):=\sup\{\is{(I-APA)^{-1}b}{b}\colon P \in
\pcal\} < \infty.
   \end{align*}
   \end{enumerate}
Moreover, if {\em (ii)} holds, then
   \begin{align*}
\|C_{A+b}\|^2 = \exp(S(A,b)).
   \end{align*}
   \end{lem}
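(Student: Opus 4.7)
The plan is to reduce Lemma \ref{bl2} to the finite-rank result Lemma \ref{bl1} via the approximation $AP \to A$ (SOT), using Proposition \ref{fipsib} to convert the non-positive linear part $AP$ into a positive one. First, note that since $\pcal$ is upward directed with $\bigvee_{P\in\pcal}\ob{P}=\hh$, the net $\{P\}_{P \in \pcal}$ converges to $I$ in the strong operator topology, and consequently $AP\xi \to A\xi$ for every $\xi\in\hh$. Also, for each $P\in\pcal$, the operator $APA$ has finite rank, so $APA$ is a compact positive operator with $\|APA\| \Le 1$ (assuming $\|A\|\Le 1$), whence $I-APA$ is a Fredholm perturbation of $I$; in particular $\ob{I-APA}$ is closed, and by Lemma \ref{a-12} we have $\ob{I-APA}=\ob{(I-APA)^{1/2}}$ with $\is{(I-APA)^{-1}b}{b}=\|(I-APA)^{-1/2}b\|^2$ for $b\in\ob{I-APA}$.

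\textbf{Direction (i)$\Rightarrow$(ii).} Lemma \ref{n&sc4b} gives $\|A\|\Le 1$, $b\in \ob{(I-A^2)^{1/2}}$ (since $A=A^*$) and $\|(I-A^2)^{-1/2}b\|^2 \Le \log\|C_{A+b}\|^2$. Because $P\Le I$ and $A\Ge 0$, we have $APA \Le A^2$, hence $I-A^2 \Le I-APA$. Applying Lemma \ref{cnier} to these two positive bounded operators yields $(I-APA)^{-1}\preccurlyeq (I-A^2)^{-1}$, which gives the inclusion $\ob{(I-A^2)^{1/2}}\subseteq \ob{(I-APA)^{1/2}}=\ob{I-APA}$ (the last equality by the closed-range remark above) and the bound $\|(I-APA)^{-1/2}b\|\Le \|(I-A^2)^{-1/2}b\|$. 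Hence $b\in\ob{I-APA}$ and $\is{(I-APA)^{-1}b}{b}\Le \log\|C_{A+b}\|^2$ for every $P\in\pcal$, i.e.\ $S(A,b)\Le \log\|C_{A+b}\|^2$.

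\textbf{Direction (ii)$\Rightarrow$(i).} Fix $P\in \pcal$ and set $\varphi_P := AP + b$. The linear part $AP$ has finite-dimensional range, and its polar-type adjoint satisfies $|(AP)^*|=|PA|=(APA)^{1/2}$, a positive finite-rank operator. By Proposition \ref{fipsib}, $C_{\varphi_P}$ and $C_{(APA)^{1/2}+b}$ are either both bounded with the same norm, or both unbounded. The hypothesis $b\in\ob{I-APA}=\ob{(I-(APA)^{1/2})^2)}$ together with Lemma \ref{bl1} (applied to the positive finite-rank operator $(APA)^{1/2}$) gives $C_{\varphi_P}\in\ogr{\exp(\hh)}$ with
\begin{align*}
\|C_{\varphi_P}\|^2 = \exp(\is{(I-APA)^{-1}b}{b}) \Le \exp(S(A,b)).
\end{align*}
Now we transfer this uniform bound to $C_{A+b}$ via Theorem \ref{sprz}(ii). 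For $h=\sum_{i=1}^n \lambda_i K_{\xi_i}\in\kscr^{\exp}$, using $J_{\varphi_P}K_{\xi}=K_{\varphi_P(\xi)}$ and \eqref{rep2}, one computes
\begin{align*}
\|J_{\varphi_P}h\|^2 = \sum_{i,j=1}^n \lambda_i\bar\lambda_j \exp(\is{AP\xi_i+b}{AP\xi_j+b}),
\end{align*}
and the analogous formula for $J_{\varphi}$ with $AP$ replaced by $A$. Since $AP\xi_i\to A\xi_i$ and the sum is finite, passing to the limit along $\pcal$ together with $\|J_{\varphi_P}h\|=\|C_{\varphi_P}^*h\|\Le \|C_{\varphi_P}\|\|h\|$ yields
\begin{align*}
\|J_{\varphi}h\|^2 = \lim_{P\in\pcal}\|J_{\varphi_P}h\|^2 \Le \exp(S(A,b))\,\|h\|^2.
\end{align*}
Hence $J_{\varphi}$ extends to a bounded operator on $\exp(\hh)$, so by Theorem \ref{sprz}(ii) it is closable and $C_{A+b}=J_\varphi^* \in \ogr{\exp(\hh)}$ with $\|C_{A+b}\|^2\Le \exp(S(A,b))$.

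Combining the two inequalities $\exp(S(A,b))\Le \|C_{A+b}\|^2$ and $\|C_{A+b}\|^2\Le\exp(S(A,b))$ from the two directions gives the equality $\|C_{A+b}\|^2 = \exp(S(A,b))$. The main obstacle I anticipate is the interchange of the limit along $\pcal$ with the square-root/range operations; the key technical point that makes this clean is that each $APA$ is finite-rank, so $I-APA$ has closed range and the generalized inverse $(I-APA)^{-1/2}b$ is well-defined and satisfies the monotone comparison with $(I-A^2)^{-1/2}b$ supplied by Lemma \ref{cnier}.
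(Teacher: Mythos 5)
Your proof is correct, and its backbone is the same as the paper's: for each $P\in\pcal$ you reduce to the finite-rank case by passing from $AP+b$ to $(APA)^{1/2}+b$ via Proposition \ref{fipsib} and then invoking Lemma \ref{bl1}. The two places where you genuinely diverge are the limiting arguments. For $\exp(S(A,b))\Le\|C_{A+b}\|^2$, the paper factors $C_{AP+b}=C_PC_{A+b}$ and uses $\|C_PC_{A+b}\|\Le\|C_{A+b}\|$ together with the norm formula of Lemma \ref{bl1}; you instead bound each term $\is{(I-APA)^{-1}b}{b}$ by $\|(I-A^2)^{-1/2}b\|^2$ using $I-A^2\Le I-APA$ and Lemma \ref{cnier}, and then appeal to Lemma \ref{n&sc4b}(ii). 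Both work; your route has the small advantage of not needing the identity $C_{AP+b}=C_PC_{A+b}$, while the paper's route avoids invoking the order-reversal lemma. For the converse bound, the paper applies the uniform estimate to $\|C_PC_{A+b}f\|$ and lets $P\to I$ via Proposition \ref{suport} (strong convergence of the projections $C_P$), whereas you pass to the limit in the explicit quadratic form $\|J_{\varphi_P}h\|^2$ for $h$ in the span of kernel functions and conclude via Theorem \ref{sprz}(ii); this is a clean alternative that bypasses Proposition \ref{suport} entirely. Two small points you should tidy up: (a) when you write $\|J_{\varphi}h\|^2=\lim_{P}\|J_{\varphi_P}h\|^2$ you are implicitly also proving that the assignment $K_{\xi}\mapsto K_{A\xi+b}$ is well defined on $\kscr^{\exp}$ (if $h=0$ then every $\|J_{\varphi_P}h\|^2=0$, so the limiting Gram form vanishes and $\sum_i\lambda_iK_{A\xi_i+b}=0$) --- this should be said explicitly before Theorem \ref{sprz}(ii) is invoked; (b) the displayed identity $\ob{I-APA}=\ob{(I-(APA)^{1/2})^2}$ is a typo for $\ob{I-((APA)^{1/2})^2}$, which is what Lemma \ref{bl1} applied to $(APA)^{1/2}$ actually requires.
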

   \begin{proof}
(i)$\Rightarrow$(ii) By Lemma \ref{n&sc4b}(i), $\|A\|
\Le 1$ and $b \in \ob{(I-A^2)^{1/2}}$. Take $P \in
\pcal$. Since $APA \Le A^2$, we see that $I-APA \Ge I
- A^2 \Ge 0$. By the Douglas theorem (cf.\
\cite[Theorem 1]{Doug}) and \eqref{dobr}, we have
   \begin{align*}
b \in \ob{(I - A^2)^{1/2}} \subseteq \ob{(I -
APA)^{1/2}} = \ob{I - APA}.
   \end{align*}
This, $\dim \ob{(APA)^{1/2}} < \infty$ and Lemma
\ref{bl1} yield $C_{(APA)^{1/2}+b} \in
\ogr{\exp(\hh)}$. Since $C_P$ is an orthogonal
projection (cf.\ Corollary \ref{opr}) and $C_{AP +
b}=C_PC_{A+b}\in \ogr{\exp(\hh)}$, we infer from Lemma
\ref{bl1} and Proposition \ref{fipsib} that
   \begin{align*}
\exp(\is{(I - APA)^{-1}b}{b}) &= \|C_{(APA)^{1/2} +
b}\|^2
   \\
& = \|C_{AP + b}\|^2 = \|C_PC_{A+b}\|^2 \Le
\|C_{A+b}\|^2.
   \end{align*}
This implies that $\exp(S(A,b)) \Le \|C_{A+b}\|^2$.

(ii)$\Rightarrow$(i) Take $P\in \pcal$. By Lemma
\ref{bl1} and Proposition \ref{fipsib}, $C_{AP+b} \in
\ogr{\exp(\hh)}$, $C_{(APA)^{1/2}+b} \in
\ogr{\exp(\hh)}$, $\|C_{AP+b}\|=\|C_{(APA)^{1/2}+b}\|$
and
   \begin{align*}
\|C_P C_{A+b} f\|^2 & = \|C_{AP+b} f\|^2
   \\
& \Le \|C_{(APA)^{1/2}+b}\|^2 \|f\|^2
   \\
&= \exp(\is{(I - APA)^{-1}b}{b}) \|f\|^2
   \\
& \Le \exp(S(A,b)) \|f\|^2, \quad f\in \dz{C_{A+b}}.
   \end{align*}
Applying Proposition \ref{suport}, we deduce that
$\|C_{A+b} f\|^2 \Le \exp(S(A,b)) \|f\|^2$ for all
$f\in \dz{C_{A+b}}$. By Proposition \ref{stale2}(i)
and Lemma \ref{cfkz}(i), this implies that $C_{A+b}
\in \ogr{\exp(\hh)}$ and $\|C_{A+b}\|^2 \Le
\exp(S(A,b))$.
   \end{proof}
Now we are in a position to characterize the
boundedness of $C_{A+b}$. The equivalence
(i)$\Leftrightarrow$(iii) and the equality
$\|C_{\varphi}\|^2 = \exp(\|(I-AA^*)^{-1/2}b\|^2)$
have been proved independently by Trieu Le using a
different approach (cf.\ \cite{LeT}).
   \begin{thm} \label{glowne2}
Let $\varphi\colon \hh \to \hh$ be a holomorphic
mapping and $\pcal \subseteq \ogr{\hh}$ be an
upward-directed partially ordered set of finite rank
orthogonal projections such that\/\footnote{\;Such a
$\pcal$ always exists, e.g., the set of all finite
rank orthogonal projections in $\hh$ has the required
properties (see also Example \ref{uww}).}
$\bigvee_{P\in \pcal} \ob{P} = \hh$. Then the
following conditions are equivalent{\em :}
   \begin{enumerate}
   \item[(i)] $C_{\varphi}
\in \ogr{\exp(\hh)}$,
   \item[(ii)] $\varphi=A+b$, where $A\in \ogr{\hh}$,
$\|A\| \Le 1$, $b \in \ob{I-|A^*|P|A^*|}$ for every
$P\in \pcal$ and $S(A,b):=
\sup\{\is{(I-|A^*|P|A^*|)^{-1}b}{b} \colon P \in
\pcal\} < \infty$,
   \item[(ii$^{\prime}$)] $\varphi=A+b$, where $A\in
\ogr{\hh}$, $\||A^*|P|A^*|\| \Le 1$ for every $P\in
\pcal$, $b \in \ob{I-|A^*|P|A^*|}$ for every $P\in
\pcal$ and $S(A,b) < \infty$,
   \item[(iii)] $\varphi=A+b$, where $A\in \ogr{\hh}$,
$\|A\| \Le 1$ and $b \in \ob{(I-AA^*)^{1/2}}$.
   \end{enumerate}
Moreover, if {\em (iii)} holds, then
   \begin{align} \label{Jur3}
\|C_{\varphi}\|^2 = \exp(\|(I-AA^*)^{-1/2}b\|^2)
=\exp(S(A,b)).
   \end{align}
   \end{thm}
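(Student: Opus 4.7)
The plan is to combine the finite-rank approximation characterization of Lemma \ref{bl2} with the monotone range criterion of Lemma \ref{cfkz-2}, after first reducing to the case of a positive linear part via Proposition \ref{fipsib}. Boundedness forces $\varphi$ to be affine by Proposition \ref{pro1}, so I write $\varphi=A+b$. Proposition \ref{fipsib} then lets me replace $A$ by $T:=|A^*|$ without altering the boundedness or the norm of $C_{\varphi}$; and since $TT=|A^*|^2=AA^*$ and $|T^*|P|T^*|=TPT=|A^*|P|A^*|$, each of (ii), (ii$'$), (iii) together with $S(A,b)$ is preserved under this substitution. Hence I may and shall assume $A\in\ogrp{\hh}$ throughout.

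For such a positive $A$, Lemma \ref{bl2} gives directly the equivalence (i)$\Leftrightarrow$(ii) together with $\|C_{A+b}\|^2=\exp(S(A,b))$. To bring (ii$'$) into the picture I would observe that $\{APA\}_{P\in\pcal}$ is a monotonically increasing net in $\ogrp{\hh}$, since $P\Le Q$ gives $\is{APA\xi}{\xi}=\|P^{1/2}A\xi\|^2\Le\|Q^{1/2}A\xi\|^2=\is{AQA\xi}{\xi}$; moreover, it converges to $A^2$ in the strong, hence weak, operator topology because $P\to I$ strongly (a routine consequence of the upward-directedness of $\pcal$ and the hypothesis $\bigvee_{P\in\pcal}\ob{P}=\hh$). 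From $\|APA\|\Le\|A\|^2$ one direction of the norm equivalence is immediate, while WOT-lower semicontinuity of the operator norm yields $\|A\|^2=\|A^2\|\Le 1$ whenever $\|APA\|\Le 1$ for every $P\in\pcal$, delivering (ii)$\Leftrightarrow$(ii$'$).

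The heart of the argument is the equivalence (ii)$\Leftrightarrow$(iii) together with the identity $\|(I-A^2)^{-1/2}b\|^2=S(A,b)$, which I would settle by applying Lemma \ref{cfkz-2} to the monotonically decreasing net $\{I-APA\}_{P\in\pcal}\subseteq\ogrp{\hh}$, whose WOT-limit is $I-A^2\in\ogrp{\hh}$. Each $I-APA$ is a finite-rank perturbation of $I$, hence Fredholm with closed range, so Lemma \ref{a-12} identifies $\ob{I-APA}$ with $\ob{(I-APA)^{1/2}}$ and yields $\is{(I-APA)^{-1}b}{b}=\|(I-APA)^{-1/2}b\|^2$. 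Consequently $S(A,b)=\sup_{P\in\pcal}\|(I-APA)^{-1/2}b\|^2$, and Lemma \ref{cfkz-2} translates the finite-rank conditions of (ii) precisely into $b\in\ob{(I-A^2)^{1/2}}$, with the supremum equal to $\|(I-A^2)^{-1/2}b\|^2$. Pushing this identity back through the reduction to the original $A$ (where $A^2$ becomes $AA^*$) delivers both the equivalence with (iii) and the norm formula \eqref{Jur3}.

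The principal obstacle I foresee is verifying all the hypotheses needed for the clean application of Lemma \ref{cfkz-2}, particularly the monotonicity and WOT-convergence of $\{I-APA\}_{P\in\pcal}$ and the invariance of the relevant ranges and of $S(A,b)$ under the reduction $A\mapsto|A^*|$. Once these technicalities are in place, the substantive work has already been performed in Lemmata \ref{bl2} and \ref{cfkz-2}, and what remains is essentially bookkeeping.
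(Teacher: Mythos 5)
Your proposal is correct and follows essentially the same route as the paper's proof: reduce to an affine symbol via Proposition \ref{pro1}, use Proposition \ref{fipsib} and Lemma \ref{bl2} for (i)$\Leftrightarrow$(ii) and the formula $\|C_{\varphi}\|^2=\exp(S(A,b))$, and then apply Lemma \ref{a-12} and Lemma \ref{cfkz-2} to the monotonically decreasing net $\{I-|A^*|P|A^*|\}_{P\in\pcal}$ to obtain (ii)$\Leftrightarrow$(iii) and the identity $S(A,b)=\|(I-AA^*)^{-1/2}b\|^2$. Your explicit verification of (ii)$\Leftrightarrow$(ii$'$) fills in a step the paper dismisses as "easily seen," and the rest matches the published argument.
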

   \begin{proof}
In view of Proposition \ref{pro1}, there is no loss of
generality in assuming that $\varphi = A + b$, where
$A\in \ogr{\hh}$ and $b\in \hh$. That the conditions
(i) and (ii) are equivalent and $\|C_{\varphi}\|^2
=\exp(S(A,b))$ follows from Proposition \ref{fipsib}
and Lemma \ref{bl2}.

(ii)$\Leftrightarrow$(iii) Assume $A\in \ogr{\hh}$ is
a contraction. Set $A_P=I - |A^*|P|A^*|$ for $P\in
\pcal$. Then $A_P\in\ogrp{\hh}$ for all $P\in \pcal$.
Since $\bigvee_{P\in \pcal} \ob{P} = \hh$, we see that
$\{P\}_{P\in \pcal}$ is a monotonically increasing net
which converges in the strong operator topology to
$I$. This implies that $\{A_{P}\}_{P \in \pcal}
\subseteq \ogrp{\hh}$ is a monotonically decreasing
net which converges in the strong operator topology to
$I - |A^*|^2$. Since $\dim \ob{|A^*|P|A^*|} < \infty$
for all $P\in \pcal$, we infer from \eqref{dobr} that
$\ob{A_P}$ is closed and $\ob{A_P}=\ob{A_P^{1/2}}$ for
all $P\in \pcal$. By Lemma \ref{a-12},
$\is{A_P^{-1}\xi}{\xi} = \|A_P^{-1/2}\xi\|^2$ for all
$\xi \in \ob{A_P}$ and $P\in \pcal$. Now applying
Lemma \ref{cfkz-2}, we deduce that the conditions (ii)
and (iii) are equivalent and
$\exp(\|(I-AA^*)^{-1/2}b\|^2) =\exp(S(A,b))$.

The conditions (ii) and (ii$^{\prime}$) are easily
seen to be equivalent.
   \end{proof}
Noting that $\jd{A^*} \subseteq \ob{I-AA^*}$, we get
the following corollary.
   \begin{cor}
If $A\in \ogr{\hh}$ and $b\in \jd{A^*}$, then $C_{A+b}
\in \ogr{\exp(\hh)}$ if and only if $\|A\| \Le 1$.
   \end{cor}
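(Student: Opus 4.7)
The plan is to derive the corollary directly from Theorem \ref{glowne2} by observing that the hypothesis $b \in \jd{A^*}$ automatically forces $b$ into the range $\ob{(I-AA^*)^{1/2}}$, so condition (iii) of that theorem collapses to the single constraint $\|A\| \Le 1$.

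First, I would handle the easy direction: if $C_{A+b} \in \ogr{\exp(\hh)}$, then Lemma \ref{n&sc4b}(i) immediately yields $\|A\| \Le 1$, with no need to invoke the assumption $A^*b = 0$.

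For the converse, assume $\|A\| \Le 1$ and $A^*b = 0$. The key observation (the hint noted right before the corollary) is that $\jd{A^*} \subseteq \ob{(I-AA^*)^{1/2}}$. Indeed, $A^*b = 0$ gives $AA^*b=0$, so $(I-AA^*)b = b$; applying $(I-AA^*)^{1/2}$ twice then yields
\begin{align*}
b \;=\; (I-AA^*)b \;=\; (I-AA^*)^{1/2}\bigl((I-AA^*)^{1/2} b\bigr) \;\in\; \ob{(I-AA^*)^{1/2}}.
\end{align*}
With $\|A\| \Le 1$ and $b \in \ob{(I-AA^*)^{1/2}}$, condition (iii) of Theorem \ref{glowne2} is satisfied, so $C_{A+b} \in \ogr{\exp(\hh)}$.

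There is essentially no obstacle here: the whole argument is a one-line verification once the theorem is in hand, and the only subtlety is checking that membership in $\ob{(I-AA^*)^{1/2}}$ (rather than just $\ob{I-AA^*}$) is what Theorem \ref{glowne2}(iii) demands — which is precisely why I factor $(I-AA^*)$ as a product of two square roots rather than merely writing $b = (I-AA^*)b$. One could additionally note from \eqref{Jur3} that in this situation $\|C_{A+b}\|^2 = \exp(\|(I-AA^*)^{-1/2}b\|^2) = \exp(\|b\|^2)$, since $(I-AA^*)^{-1/2}$ acts as the identity on $\jd{A^*}$.
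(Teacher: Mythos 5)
Your proof is correct and is essentially the paper's argument: the corollary is stated immediately after the remark that $\jd{A^*} \subseteq \ob{I-AA^*} \subseteq \ob{(I-AA^*)^{1/2}}$, and both you and the paper then simply invoke the equivalence (i)$\Leftrightarrow$(iii) of Theorem \ref{glowne2} (with Lemma \ref{n&sc4b}(i) covering the forward direction). Your added observation that $\|C_{A+b}\|^2=\exp(\|b\|^2)$ via \eqref{Jur3} is also correct and consistent with Example \ref{optim}.
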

   The next result provides an estimate for the rate
of growth of the sequence
$\{\|(I-A^nA^{*n})^{-1/2}b_n\|\}_{n=1}^{\infty}$ whose
$n$-th term appears in the formula for the norm of the
$n$-th power of $C_{A+b}$. As shown in Example
\ref{optim} below, this estimate is optimal.
   \begin{pro}\label{jakwn}
Suppose $C_{\varphi} \in \ogr{\exp(\hh)}$, where
$\varphi = A + b$ with $A \in \ogr{\hh}$ and $b \in
\hh$. Set $b_n=(I + \ldots + A^{n-1})b$ for
$n\in\nbb$. Then the following holds{\em :}
   \begin{enumerate}
   \item[(i)] $\varphi^n = A^n+b_n$ and $b_n \in
\ob{(I-A^nA^{*n})^{1/2}}$ for all $n\in \nbb$,
   \item[(ii)] there exists a constant $M \in (0,\infty)$ such
that
   \begin{align} \label{optosz}
\|(I-A^nA^{*n})^{-1/2}b_n\| \Le M \sqrt{n}, \quad n
\in \nbb.
   \end{align}
   \end{enumerate}
   \end{pro}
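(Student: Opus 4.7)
The plan is to derive (i) by a direct induction on $n$ together with Theorem \ref{glowne2}, and then to deduce (ii) from (i), the norm formula \eqref{Jur3}, and submultiplicativity of the operator norm.

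For (i), I will first verify by induction on $n\in \nbb$ that $\varphi^n(\xi) = A^n \xi + (I + A + \ldots + A^{n-1})b$ for every $\xi \in \hh$, which is exactly $\varphi^n = A^n + b_n$. Since $C_{\varphi}$ is bounded on $\exp(\hh)$, its iterates $C_{\varphi}^n$ are everywhere defined, and a direct computation shows $C_{\varphi}^n f = f \circ \varphi^n$ for every $f\in \exp(\hh)$, so $C_{\varphi}^n = C_{\varphi^n}$. In particular $C_{\varphi^n} = C_{A^n+b_n} \in \ogr{\exp(\hh)}$. Applying the equivalence (i)$\Leftrightarrow$(iii) of Theorem \ref{glowne2} to the symbol $\varphi^n$ yields $\|A^n\| \Le 1$ and $b_n \in \ob{(I - A^n A^{*n})^{1/2}}$, which finishes the proof of (i).

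For (ii), I will invoke the first equality in \eqref{Jur3}, now applied to $\varphi^n$ in place of $\varphi$. Together with $\|C_{\varphi^n}\| = \|C_{\varphi}^n\| \Le \|C_{\varphi}\|^n$ this gives
\begin{align*}
\exp(\|(I - A^n A^{*n})^{-1/2} b_n\|^2) = \|C_{\varphi^n}\|^2 \Le \|C_{\varphi}\|^{2n}.
\end{align*}
Taking logarithms, one obtains $\|(I - A^n A^{*n})^{-1/2} b_n\|^2 \Le 2n \log \|C_{\varphi}\|$. By Proposition \ref{stale2}(ii), $\|C_{\varphi}\| \Ge 1$, so $\log \|C_{\varphi}\| \Ge 0$; setting $M = \max\{1, \sqrt{2\log \|C_{\varphi}\|}\}$ (the constant $1$ only matters in the degenerate case $\|C_{\varphi}\|=1$, which forces $b_n=0$ for all $n$) yields \eqref{optosz}.

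There is no substantive obstacle here: all the analytic content is already absorbed into Theorem \ref{glowne2}, and the argument reduces to the elementary observations that the symbol of $C_{\varphi}^n$ is $\varphi^n$ and that operator norm is submultiplicative. The only minor point warranting explicit verification is the formula $\varphi^n = A^n + b_n$, which is standard.
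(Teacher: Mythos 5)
Your proof is correct and follows essentially the same route as the paper: part (i) is obtained exactly as in the paper from $C_{\varphi}^n=C_{\varphi^n}$ and Theorem \ref{glowne2} applied to $\varphi^n=A^n+b_n$. For part (ii) the paper instead feeds the norm formula \eqref{Jur3} for $\varphi^n$ into Gelfand's formula, concluding that $\exp\big(\tfrac{1}{2n}\|(I-A^nA^{*n})^{-1/2}b_n\|^2\big)$ converges to $r(C_{\varphi})<\infty$ and is therefore bounded; you use submultiplicativity $\|C_{\varphi}^n\|\Le\|C_{\varphi}\|^n$ directly, which is marginally more elementary and yields the explicit constant $M=\max\{1,\sqrt{2\log\|C_{\varphi}\|}\}$ valid for every $n$ rather than only for $n$ large. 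Your handling of the degenerate case $\|C_{\varphi}\|=1$ (which forces $(I-A^nA^{*n})^{-1/2}b_n=0$, hence $b_n=0$ by injectivity of the generalized inverse on its domain) is also sound.
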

   \begin{proof}
Since $C_{\varphi}^n=C_{\varphi^n}$ for all $n\in
\nbb$, the assertion (i) follows from Theorem
\ref{glowne2}. This theorem, combined with Gelfand's
formula for the spectral radius, yields
   \begin{align} \label{Jur2}
r(C_{\varphi}) = \lim_{n\to\infty} \|C_{\varphi}^n
\|^{1/n} = \lim_{n\to\infty}
\exp\Big(\frac{1}{2n}\|(I-A^nA^{*n})^{-1/2}b_n\|^2\Big).
   \end{align}
Hence, there exists $R\in (0,\infty)$ such that
$\|(I-A^nA^{*n})^{-1/2} b_n\| \Le R \sqrt{n}$ for $n$
large enough. This implies (ii).
   \end{proof}
As shown in Example \ref{optim} below, Theorem
\ref{s-b-prsp} is no longer true if $\bb_d$ is
replaced by $\exp(\hh)$, where $\hh$ is an infinite
dimensional Hilbert space. However, under some
circumstances, the conclusion of this theorem is still
valid (cf.\ \cite[Proposition 3.9]{LeT}).
   \begin{pro} \label{spr1}
Suppose $\varphi = A + b$, where $A \in \ogr{\hh}$, $b
\in \hh$ and $\|A\|<1$. Then $C_{\varphi} \in
\ogr{\exp(\hh)}$ and $r(C_{\varphi})=1$. Moreover, if
$b\neq 0$, then $C_{\varphi}$ is not normaloid.
   \end{pro}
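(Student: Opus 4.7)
The plan is to reduce everything to Theorem \ref{glowne2} and its consequences, exploiting the strict contractivity of $A$ to obtain both the boundedness and a precise asymptotic control on $\|C_\varphi^n\|$.

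First, I would verify that $C_\varphi\in\ogr{\exp(\hh)}$. Since $\|A\|<1$, we have $\sigma(AA^*)\subseteq[0,\|A\|^2]\subseteq[0,1)$, so $I-AA^*$ is invertible in $\ogr{\hh}$; in particular $\ob{(I-AA^*)^{1/2}}=\hh$, so $b\in\ob{(I-AA^*)^{1/2}}$ trivially. Theorem \ref{glowne2} then delivers $C_\varphi\in\ogr{\exp(\hh)}$ together with the norm formula
\begin{align*}
\|C_\varphi\|^2=\exp(\|(I-AA^*)^{-1/2}b\|^2).
\end{align*}

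Next I would compute $r(C_\varphi)$. For each $n\in\nbb$ one checks by induction that $\varphi^n=A^n+b_n$ with $b_n=(I+A+\ldots+A^{n-1})b$, so $C_\varphi^n=C_{\varphi^n}$ and Theorem \ref{glowne2} (applicable because $\|A^n\|\Le\|A\|^n<1$) gives
\begin{align*}
\|C_\varphi^n\|^2=\exp(\|(I-A^nA^{*n})^{-1/2}b_n\|^2).
\end{align*}
Here $\|b_n\|\Le\|b\|/(1-\|A\|)$ and, by the Neumann series, $\|(I-A^nA^{*n})^{-1}\|\Le 1/(1-\|A\|^{2n})$, so
\begin{align*}
\|(I-A^nA^{*n})^{-1/2}b_n\|^2\Le\frac{\|b\|^2}{(1-\|A\|)^2(1-\|A\|^{2n})}\Le M
\end{align*}
for a constant $M$ independent of $n$. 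Consequently $\|C_\varphi^n\|^{1/n}\Le\exp(M/(2n))\to 1$, and Gelfand's formula yields $r(C_\varphi)\Le 1$. The reverse inequality $r(C_\varphi)\Ge 1$ is Proposition \ref{stale2}(ii) (which applies since $\exp(0)=1\neq0$), so $r(C_\varphi)=1$.

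For the ``moreover'' part, assume $b\neq 0$. Then $C_\varphi$ is normaloid iff $\|C_\varphi\|=r(C_\varphi)=1$, i.e.\ iff $(I-AA^*)^{-1/2}b=0$. But $(I-AA^*)^{-1/2}$ is an invertible positive operator in $\ogr{\hh}$ (since $\|A\|<1$), so $(I-AA^*)^{-1/2}b\neq 0$ and hence $\|C_\varphi\|>1=r(C_\varphi)$. Thus $C_\varphi$ is not normaloid. There is no real obstacle here beyond assembling the ingredients; the only point demanding care is confirming that $\|A\|<1$ genuinely forces $\|(I-A^nA^{*n})^{-1/2}\|$ to stay bounded (it in fact tends to $1$), which is what makes the estimate on $\|C_\varphi^n\|$ strong enough to yield $r(C_\varphi)=1$ rather than the weaker bound coming from Proposition \ref{jakwn}.
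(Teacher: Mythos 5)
Your proposal is correct and follows essentially the same route as the paper: Theorem \ref{glowne2} for boundedness and the norm formula, the Neumann-series bounds $\|b_n\|\Le\|b\|/(1-\|A\|)$ and $\|(I-A^nA^{*n})^{-1}\|\Le(1-\|A\|^{2n})^{-1}$ to keep $\|(I-A^nA^{*n})^{-1/2}b_n\|$ uniformly bounded, Gelfand's formula for $r(C_{\varphi})=1$, and the invertibility of $(I-AA^*)^{-1/2}$ to get $\|C_{\varphi}\|>1$ when $b\neq0$. The only cosmetic difference is that the paper reads $r(C_{\varphi})=1$ directly off the equality \eqref{Jur2} (so no separate lower bound is needed), whereas you invoke Proposition \ref{stale2}(ii) for $r(C_{\varphi})\Ge1$; both are fine.
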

   \begin{proof}
It follows from C. Neumann's theorem and Theorem
\ref{glowne2} that $C_{\varphi} \in \ogr{\exp(\hh)}$
and \eqref{Jur2} holds, where $\{b_n\}_{n=1}^{\infty}$
is as in Proposition \ref{jakwn}. Since $\|A\|<1$, we
deduce from C. Neumann's theorem that $(I-A)^{-1} \in
\ogr{\hh}$ and
   \begin{align} \label{Jur1}
b_n = (I-A^n)(I-A)^{-1}b, \quad n\in \nbb.
   \end{align}
Applying C. Neumann's theorem again, we see that
$(I-A^nA^{*n})^{-1} \in \ogr{\hh}$ for all $n\in \nbb$
and
   \allowdisplaybreaks
   \begin{align*}
\|(I-A^nA^{*n})^{-1/2}b_n\|^2 & =
\is{(I-A^nA^{*n})^{-1}b_n}{b_n}
   \\
&\hspace{-1.5ex} \overset{\eqref{Jur1}}\Le
\frac{\|(I-A^n)(I-A)^{-1}b\|^2}{1-\|A\|^{2n}}
   \\
& \Le \frac{4\|b\|^2}{(1-\|A\|^{2n})(1-\|A\|)^2},
\quad n\in \nbb.
   \end{align*}
This, together with \eqref{Jur2}, gives
$r(C_{\varphi})=1$. Finally, if $b\neq 0$, then by
\eqref{Jur3}, $\|C_{\varphi}\| > 1$. This completes
the proof.
   \end{proof}
   \begin{exa} \label{optim}
Let $\hh$ be an infinite dimensional Hilbert space,
$V\in \ogr{\hh}$ be an isometry and $b\in \hh$. Set
$\varphi=V+b$. By Theorem \ref{glowne2},
$C_{\varphi}\in \ogr{\exp(\hh)}$ if and only if $b\in
\jd{V^*}$. Suppose that $V$ is not unitary, i.e.,
$\jd{V^*} \neq \{0\}$. Take $b\in \jd{V^*}\setminus
\{0\}$. Then $\{V^n b\}_{n=0}^{\infty}$ is an
orthogonal sequence,
$\ob{(I-V^nV^{*n})^{1/2}}=\jd{V^{*n}}$ for every $n\in
\nbb$ and
   \begin{align*}
\|(I-V^nV^{*n})^{-1/2}b_n\|^2 = \|b_n\|^2 = \|b +
\ldots + V^{n-1} b\|^2 = \|b\|^2 n, \quad n\in \nbb,
   \end{align*}
which means that the inequality in \eqref{optosz}
becomes an equality with $M=\|b\|$. Now we show that
$\E^{-\|b\|^2/2} C_{\varphi}$ is a coisometry. Indeed,
by Theorem \ref{sprz}(i) and Lemma \ref{cfkz}(ii), we
have
   \begin{align*}
C_{\varphi} C_{\varphi}^* K_{\xi} = C_{\varphi}
K_{V\xi + b} = \E^{\is{b}{V\xi+b}} K_{V^{*}(V\xi + b)}
= \E^{\|b\|^2} K_{\xi}, \quad \xi \in \hh.
   \end{align*}
Since $\{K_{\xi}\}_{\xi \in \hh}$ is total in
$\exp(\hh)$, we deduce that $C_{\varphi} C_{\varphi}^*
= \E^{\|b\|^2} I$, which implies that $\E^{-\|b\|^2/2}
C_{\varphi}$ is a coisometry. In particular,
$C_{\varphi}$ is cohyponormal. Hence, $C_{\varphi}$ is
normaloid and consequently, by Theorem \ref{glowne2},
we have (cf.\ \cite[Proposition 3.8]{LeT})
   \begin{align} \label{nonc}
r(C_{\varphi}) = \|C_{\varphi}\| = \E^{\|b\|^2/2}.
   \end{align}
It follows from Corollary \ref{hypo} that
$C_{\varphi}$ is not normal. In view of \eqref{nonc},
the spectral radius of $C_{\varphi}$ can take any
value in the interval $(1,\infty)$ if $b$ ranges over
the set $\jd{V^*}\setminus \{0\}$. Summarizing, we see
that Theorems \ref{s-b-prsp} and \ref{semi-lid}, and
Corollary \ref{semi-nor} are no longer true if $\bb_d$
is replaced by $\exp(\hh)$ with $\dim \hh=\infty$.
   \end{exa}
Finally, we illustrate Theorem \ref{glowne2} and
Proposition \ref{jakwn} in the context of diagonal
operators.
   \begin{exa}\label{uww}
Let $\hh$ be an infinite dimensional separable complex
Hilbert space, $\{e_n\}_{n=0}^{\infty}$ be its
orthonormal basis and $A\in \ogr{\hh}$ be a positive
contractive diagonal operator (subordinated to
$\{e_n\}_{n=0}^{\infty}$) with diagonal
$\{\alpha_n\}_{n=0}^{\infty}$, i.e., $A e_n = \alpha_n
e_n$ and $\alpha_n \in [0,1]$ for every $n\in \zbb_+$.
Since for $t \in (0,\infty)$, $(I-A^2)^{t}$ is a
diagonal operator with diagonal
$\{(1-\alpha_n^2)^{t}\}_{n=0}^{\infty}$, we deduce
that for every $t \in (0,\infty)$,
   \allowdisplaybreaks
   \begin{gather}  \notag
\ob{(I-A^2)^{t}} = \bigg\{b \in \hh \colon
\sum_{n\colon \alpha_n <
1}\frac{|\is{b}{e_n}|^2}{(1-\alpha_n^2)^{2t}} < \infty
\text{ and } \is{b}{e_k} =0 \text{ if } \alpha_k=1
\bigg\},
   \\   \label{xyx}
\|(I-A^2)^{-t}b\|^2 = \sum_{n\colon \alpha_n <
1}\frac{|\is{b}{e_n}|^2}{(1-\alpha_n^2)^{2t}}, \quad b
\in \ob{(I-A^2)^{t}}.
   \end{gather}
Using the first of the above equalities (see also
\eqref{aa-1} and \eqref{aa-2}), one can find a
sequence $\{\alpha_n\}_{n=0}^{\infty} \subseteq [0,1]$
such that
   \begin{align*}
\dz{(I-A^2)^{-1}} = \ob{I-A^2} \varsubsetneq
\ob{(I-A^2)^{1/2}} = \dz{(I-A^2)^{-1/2}},
   \end{align*}
which emphasizes the difference between Theorems
\ref{s-b-norm} and \ref{glowne2}. Note also that the
set $\pcal := \{P_n\colon n \in \zbb_+\}$, where $P_n$
is the orthogonal projection of $\hh$ onto the linear
span $\hh_n$ of $\{e_k\}_{k=0}^n$, satisfies the
assumptions of Theorem \ref{glowne2}. Since for all
$n\in \zbb_+$, $\hh_n$ reduces $A$ and $I-AP_nA =
(I_{\hh_n}-(A|_{\hh_n})^2) \oplus I_{\hh\ominus
\hh_n}$, we see that $I-AP_nA$ is a diagonal operator
in $\hh$ with diagonal $(1-\alpha_0^2, \ldots,
1-\alpha_n^2, 1,1, \ldots)$.

Suppose now that $b\in \ob{(I-A^2)^{1/2}}$ (as above
$\{\alpha_n\}_{n=0}^{\infty}\subseteq [0,1]$). By
Theorem \ref{glowne2}, $C_{A+b} \in \ogr{\exp(\hh)}$.
Applying \eqref{xyx} to powers of $C_{A+b}$ and using
Lebesgue's monotone convergence theorem, we obtain
(with $b_k=(I + \ldots + A^{k-1})b$)
   \allowdisplaybreaks
   \begin{align*}
\|(I-A^{2k})^{-1/2}b_k\|^2 & = \sum_{n\colon \alpha_n
< 1}\frac{|\is{b_k}{e_n}|^2}{1-\alpha_n^{2k}}
   \\  \notag
& = \sum_{n\colon \alpha_n < 1}\frac{|\is{b}{e_n}|^2(1
+ \alpha_n + \ldots +
\alpha_n^{k-1})^2}{1-\alpha_n^{2k}}
   \\
& = \sum_{n\colon \alpha_n < 1}
\frac{|\is{b}{e_n}|^2(1 -
\alpha_n^{k})^2}{(1-\alpha_n)^2(1-\alpha_n^{2k})}
   \\
& = \sum_{n\colon \alpha_n < 1} \frac{|\is{b}{e_n}|^2
(1 - \alpha_n^{k})}{(1-\alpha_n)^2(1+\alpha_n^{k})} \;
\underset{(k \to \infty)}{\nearrow} \; \sum_{n\colon
\alpha_n < 1} \frac{|\is{b}{e_n}|^2}{(1-\alpha_n)^2}.
   \end{align*}
Set $\varPsi_b= \sum_{n\colon \alpha_n < 1}
\frac{|\is{b}{e_n}|^2}{(1-\alpha_n)^2}$. We show that
both cases $\varPsi_b < \infty$ and $\varPsi_b =
\infty$ can occur (still under the assumption that
$b\in \ob{(I-A^2)^{1/2}}$). Indeed, take $x \in
(1,\infty)$ and $y \in (0,\infty)$. Set $\alpha_n = 1
- \frac{1}{(n+1)^y}$ for all $n\in \zbb_+$. Then
$\{\alpha_n\}_{n=0}^{\infty} \subseteq [0,1)$ and
there exists $b\in \hh$ such that
$|\is{b}{e_n}|^2=\frac{1}{(n+1)^x}$ for all $n\in
\zbb_+$. It follows from \eqref{xyx} that $b\in
\ob{(I-A^2)^{1/2}}$ if and only if $x-y > 1$. In turn,
$\varPsi_b = \infty$ if and only if $x-2y \Le 1$.
Since the sets
   \begin{align*}
\{(x,y)\in (1,\infty) \times (0,\infty)\colon x-y
> 1, x-2y \Le 1\},
   \\
\{(x,y)\in (1,\infty) \times (0,\infty)\colon x-y
> 1, x-2y >  1\},
   \end{align*}
are nonempty, we are done.
   \end{exa}
   \appendix
\numberwithin{equation}{section}
\numberwithin{thm}{section}
   \section{\label{apap}Conjugations}
   Let $\hh$ be a complex Hilbert space. An
anti-linear map $Q\colon \hh \to \hh$ such that
$Q(Q\xi) = \xi$ and $\is{Q\xi}{Q\eta}=\is{\eta}{\xi}$
for all $\xi,\eta \in \hh$ is called a {\em
conjugation} on $\hh$. Note that such a map always
exists. Indeed, if $\{e_\omega\}_{\omega\in
\varOmega}$ is any orthonormal basis of $\hh$, then
there exists a unique conjugation $Q$ on $\hh$ such
that $Q e_\omega=e_\omega$ for all $\omega \in
\varOmega$. In fact, any conjugation is of this form
(cf.\ \cite[Lemma 1]{Gar-Put}). Given a conjugation
$Q$ on $\hh$, we denote by $\varXi_Q$ the selfmap of
$\ogr{\hh}$ defined by $\varXi_Q(A) = QAQ$ for $A \in
\ogr{\hh}$. The fixed points of $\varXi_Q$ are called
{\em $Q$-real} operators. As shown below, each
selfadjoint operator is $Q$-real with respect to some
conjugation $Q$.
   \begin{pro}
If $A\in \ogr{\hh}$ is selfadjoint, then there exists
a conjugation $Q$ on $\hh$ such that $A$ is $Q$-real.
   \end{pro}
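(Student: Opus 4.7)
The plan is to reduce the problem to the spectral multiplication model and then use pointwise complex conjugation on $L^2$. Since $\varXi_Q(Q)=I$ on conjugations ($Q^2=I$), the equality $QAQ = A$ is equivalent to $QA = AQ$, so I am really looking for a conjugation on $\hh$ that intertwines $A$ with itself.

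First I would invoke the spectral theorem for the bounded selfadjoint operator $A$ in its multiplication form: there exist an index set $I$, finite positive Borel measures $\mu_i$ on $\sigma(A)\subseteq \rbb$, a real-valued bounded Borel function $\phi_i\colon \sigma(A)\to\rbb$ (indeed $\phi_i(t)=t$ in the standard cyclic-subspace construction), and a unitary isomorphism
\[
U\colon \hh \longrightarrow \bigoplus_{i\in I} L^2(\sigma(A),\mu_i)
\]
such that $UAU^{-1} = \bigoplus_i M_{\phi_i}$, where $M_{\phi_i}$ is multiplication by $\phi_i$. For a possibly nonseparable $\hh$ this is obtained by decomposing $\hh$ into cyclic subspaces $\hh_i=\overline{\{p(A)\xi_i\colon p\in\cbb[x]\}}$ for a maximal orthogonal family $\{\xi_i\}_{i\in I}$, and taking $\mu_i(\varDelta)=\is{E(\varDelta)\xi_i}{\xi_i}$ with $E$ the spectral measure of $A$.

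Next I would define $C\colon \bigoplus_i L^2(\sigma(A),\mu_i)\to \bigoplus_i L^2(\sigma(A),\mu_i)$ by pointwise complex conjugation on each summand, i.e.\ $C\bigl((f_i)_{i\in I}\bigr) = (\overline{f_i})_{i\in I}$. This map is antilinear, involutive, and isometric, hence a conjugation. Because every $\phi_i$ is real-valued, $\overline{\phi_i f_i}=\phi_i\overline{f_i}$, so $C$ commutes with each $M_{\phi_i}$ and therefore with $\bigoplus_i M_{\phi_i}=UAU^{-1}$.

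Finally I would set $Q := U^{-1} C U$. Transport of a conjugation by a unitary is again a conjugation, and
\[
QAQ \;=\; U^{-1} C\bigl(UAU^{-1}\bigr) C U \;=\; U^{-1} \bigl(UAU^{-1}\bigr) C^2 U \;=\; U^{-1}(UAU^{-1})U \;=\; A,
\]
so $A$ is $Q$-real. The only nontrivial input is the multiplication model for $A$; everything else is bookkeeping. If one prefers to avoid citing the nonseparable version of the spectral theorem, the same proof works summand-by-summand on the cyclic decomposition and one defines $Q=\bigoplus_i Q_i$ with $Q_i=U_i^{-1}C_i U_i$, which is the only point that requires any care.
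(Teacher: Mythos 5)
Your argument is correct. It shares with the paper the essential first step --- a Zorn's lemma decomposition of $\hh$ into an orthogonal family of $A$-cyclic (hence reducing) subspaces --- but differs in how the conjugation is produced on each cyclic piece. You pass through the multiplication form of the spectral theorem, identify each cyclic summand with $L^2(\sigma(A),\mu_i)$ on which $A$ acts as multiplication by the real function $t$, and transport pointwise complex conjugation back; all the verifications (that $C$ is a conjugation, that it commutes with multiplication by a real symbol, that $U^{-1}CU$ is again a conjugation) are routine and correct. The paper instead stays entirely elementary: on each cyclic subspace $\hh_\omega(A)=\bigvee_{n\in\zbb_+}A^n e_\omega$ it defines the conjugation directly by fixing the vectors $A^n e_\omega$ and conjugating coefficients, which is well defined and isometric precisely because the Gram matrix $\is{A^m e_\omega}{A^n e_\omega}=\is{A^{m+n}e_\omega}{e_\omega}$ is real symmetric ($A=A^*$). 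Under your unitary identification $A^n e_\omega$ corresponds to the real function $t^n$, which pointwise conjugation fixes, so the two conjugations coincide; what your route buys is a slicker verification at the cost of invoking the (nonseparable) spectral theorem, while the paper's route uses nothing beyond selfadjointness and the cyclic decomposition.
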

   \begin{proof}
By the Kuratowski-Zorn lemma, there exists a family
$\{e_\omega\}_{\omega\in \varOmega}$ of vectors in $\hh$
such that $\hh = \bigoplus_{\omega \in \varOmega}
\hh_\omega(A)$, where $\hh_\omega(A) = \bigvee_{n\in
\zbb_+} A^n e_\omega$. Note that each $\hh_\omega(A)$
reduces $A$. It is now easily seen that for every $\omega
\in \varOmega$, there exists a (unique) conjugation
$Q_\omega$ on $\hh_\omega(A)$ such that $Q_\omega (A^n
e_\omega) = A^n e_\omega$ for all $n\in \zbb_+$. This
implies that $Q_\omega A_\omega = A_\omega Q_\omega$ for
all $\omega \in \varOmega$ with $A_\omega =
A|_{\hh_\omega(A)}$. Set $Q(\bigoplus_{\omega \in
\varOmega} h_\omega) = \bigoplus_{\omega \in \varOmega}
Q_\omega (h_\omega)$ for $\bigoplus_{\omega \in \varOmega}
h_\omega \in \bigoplus_{\omega \in \varOmega}
\hh_\omega(A)$. Then $Q$ is a conjugation on $\hh$ such
that $QA=AQ$, i.e., $\varXi_Q(A)=A$.
   \end{proof}
Now we show that any two conjugations on $\hh$ are
unitarily equivalent.
   \begin{pro}
Suppose $Q_1$ and $Q_2$ are conjugations on $\hh$.
Then there exists a unitary operator $U\in \ogr{\hh}$
such that $U^{-1}Q_2U = Q_1$. Moreover, we have
   \begin{align} \label{q1q2}
\varXi_{Q_1}(U^{-1} A U) = U^{-1} \varXi_{Q_2}(A) U,
\quad A \in \ogr{\hh}.
   \end{align}
   \end{pro}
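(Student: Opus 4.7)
The plan is to prove both assertions using the fact, recalled in the paper just before the proposition, that every conjugation on $\hh$ arises from fixing an orthonormal basis. I will pick such bases for $Q_1$ and $Q_2$, define $U$ to match them, and then verify the intertwining and the formula $\eqref{q1q2}$ by an algebraic manipulation.

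First I would invoke \cite[Lemma 1]{Gar-Put} (cited in the paper) to choose orthonormal bases $\{e_\omega\}_{\omega \in \varOmega_1}$ and $\{f_\tau\}_{\tau \in \varOmega_2}$ of $\hh$ such that $Q_1 e_\omega = e_\omega$ for every $\omega \in \varOmega_1$ and $Q_2 f_\tau = f_\tau$ for every $\tau \in \varOmega_2$. Since both index sets have cardinality equal to the Hilbert dimension of $\hh$, fix any bijection $\varOmega_1 \to \varOmega_2$, which after relabeling allows one to assume $\varOmega_1 = \varOmega_2 =: \varOmega$. Then define $U \in \ogr{\hh}$ to be the unique unitary operator determined by $U e_\omega = f_\omega$ for all $\omega \in \varOmega$.

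Next I would establish the intertwining identity $Q_2 U = U Q_1$ (equivalently $U^{-1} Q_2 U = Q_1$). Both $Q_2 U$ and $U Q_1$ are bounded anti-linear selfmaps of $\hh$ (conjugations are isometric, hence $\|Q_j\|=1$), and by construction $Q_2 U e_\omega = Q_2 f_\omega = f_\omega = U e_\omega = U Q_1 e_\omega$ for every $\omega \in \varOmega$. Since both maps are anti-linear, they agree on the dense subspace $\lin\{e_\omega\colon \omega\in \varOmega\}$, and by continuity on all of $\hh$. The only subtlety here, and the one place where a little care is needed, is the extension-by-continuity step for an anti-linear map; but this is standard once one notes that anti-linearity and boundedness together guarantee that the value on a convergent sequence of finite linear combinations is determined by the anti-linear action on the coefficients.

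Finally, for \eqref{q1q2}, I would simply compute, using $U^{-1} = U^{*}$ and $Q_1 = U^{-1} Q_2 U$:
   \begin{align*}
\varXi_{Q_1}(U^{-1} A U) & = Q_1 (U^{-1} A U) Q_1
   \\
& = (U^{-1} Q_2 U)(U^{-1} A U)(U^{-1} Q_2 U)
   \\
& = U^{-1} Q_2 A Q_2 U = U^{-1} \varXi_{Q_2}(A) U,
   \end{align*}
for every $A \in \ogr{\hh}$. The main (mild) obstacle is therefore the anti-linear continuity argument in the second step; everything else is a matter of picking bases and matching them by a unitary.
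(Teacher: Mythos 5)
Your proposal is correct and follows essentially the same route as the paper's proof: choose orthonormal bases fixed pointwise by $Q_1$ and $Q_2$ via \cite[Lemma 1]{Gar-Put}, let $U$ be the unitary matching them, check the intertwining on basis vectors, and derive \eqref{q1q2} by substituting $Q_1 = U^{-1}Q_2U$. The only difference is that you spell out the relabeling of index sets and the extension-by-anti-linearity-and-continuity step, which the paper leaves implicit; both are routine and your handling of them is fine.
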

   \begin{proof}
By \cite[Lemma 1]{Gar-Put}, there are orthonormal
bases $\{e^1_{\omega}\}_{\omega \in \varOmega}$ and
$\{e^2_{\omega}\}_{\omega \in \varOmega}$ of $\hh$
such that $Q_k e^k_{\omega}=e^k_{\omega}$ for all
$\omega \in \varOmega$ and $k=1,2$. Let $U\in
\ogr{\hh}$ be a unitary operator such that
$Ue^1_{\omega}=e^2_{\omega}$ for all $\omega \in
\varOmega$. Then $U^{-1}Q_2U e^1_{\omega}= Q_1
e^1_{\omega}$ for every $\omega \in \varOmega$ and
thus $U^{-1}Q_2U = Q_1$. As a consequence, the
equality \eqref{q1q2} holds.
   \end{proof}
Below, in Proposition \ref{abcon}, we collect some
basic properties of the selfmap $\varXi_Q$, all of
which are easy to prove. In particular, the selfmap
$\varXi_Q$ can be thought of as an abstract
conjugation on $\ogr{\hh}$.
   \begin{pro} \label{abcon}
If $Q$ is a conjugation on $\hh$, then
   \begin{enumerate}
   \item[(i)] $\varXi_Q$ is an antilinear bijection,
   \item[(ii)] $\|\varXi_Q(A)\|=\|A\|$ for all $A \in
\ogr{\hh}$,
   \item[(iii)] $\varXi_Q(\varXi_Q(A))=A$ for all $A \in
\ogr{\hh}$,
   \item[(iv)]
$\varXi_Q(AB)=\varXi_Q(A)\varXi_Q(B)$ for all $A,B \in
\ogr{\hh}$ and $\varXi_Q(I_{\hh}) = I_{\hh}$,
   \item[(v)]  $\varXi_Q(A^*) = \varXi_Q(A)^*$ for all $A \in
\ogr{\hh}$,
   \item[(vi)] if $A\in \ogr{\hh}$, then $A \Ge 0$ if and
only if $\varXi_Q(A) \Ge 0$.
   \end{enumerate}
   \end{pro}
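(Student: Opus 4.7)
The plan is to verify all six assertions by direct computation, relying on the three defining features of a conjugation: $Q$ is anti-linear, $Q^2=I_\hh$, and $\langle Q\xi,Q\eta\rangle=\langle\eta,\xi\rangle$ for all $\xi,\eta\in\hh$ (which in particular implies $\|Q\xi\|=\|\xi\|$ and $Q$ is a bijection of $\hh$ onto itself). The proofs of (iii) and (iv) are the backbone and should be carried out first: since $Q^2=I_\hh$, one has $\varXi_Q(\varXi_Q(A))=Q(QAQ)Q=Q^2AQ^2=A$, $\varXi_Q(AB)=QABQ=(QAQ)(QBQ)=\varXi_Q(A)\varXi_Q(B)$, and $\varXi_Q(I_\hh)=Q^2=I_\hh$.

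For (i), I would verify anti-linearity by observing that for $\lambda,\mu\in\cbb$ and $A,B\in\ogr{\hh}$, every $\xi\in\hh$ satisfies
\begin{align*}
Q((\lambda A+\mu B)Q\xi)=Q(\lambda AQ\xi+\mu BQ\xi)=\bar\lambda\,Q(AQ\xi)+\bar\mu\, Q(BQ\xi),
\end{align*}
by anti-linearity of $Q$, which gives $\varXi_Q(\lambda A+\mu B)=\bar\lambda\,\varXi_Q(A)+\bar\mu\,\varXi_Q(B)$. Bijectivity then reads off from (iii), which exhibits $\varXi_Q$ as its own inverse. For (ii), since $Q$ is a norm-preserving bijection of $\hh$, the chain $\|\varXi_Q(A)\xi\|=\|AQ\xi\|$ and the substitution $\eta=Q\xi$ yield $\|\varXi_Q(A)\|=\sup_{\|\eta\|=1}\|A\eta\|=\|A\|$.

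For (v), the key auxiliary identity is $\langle Q\alpha,\beta\rangle=\langle Q\beta,\alpha\rangle$ for all $\alpha,\beta\in\hh$ (take $\beta=Q\gamma$ in the defining relation and apply $Q^2=I_\hh$). Applied twice to $\langle\eta,QAQ\xi\rangle$—once with $\alpha=AQ\xi$, $\beta=\eta$, and once with $\alpha=\xi$, $\beta=A^*Q\eta$, together with the ordinary adjoint relation for $A$—this yields $\langle\eta,QAQ\xi\rangle=\langle QA^*Q\eta,\xi\rangle$, which is precisely $(QAQ)^*=QA^*Q$. For (vi), suppose $A\Ge 0$. By (v), $\varXi_Q(A)$ is selfadjoint, and using $\langle Q\alpha,Q\beta\rangle=\langle\beta,\alpha\rangle$ with $\alpha=AQ\xi$, $\beta=Q\xi$ gives $\langle QAQ\xi,\xi\rangle=\overline{\langle AQ\xi,Q\xi\rangle}\Ge 0$, so $\varXi_Q(A)\Ge 0$; the converse implication follows by applying this to $\varXi_Q(A)$ in place of $A$ and invoking (iii).

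There is no serious obstacle here: every assertion reduces to a one- or two-line manipulation once the three defining properties of $Q$ are in hand. The only place requiring mild care is the bookkeeping of complex conjugates in (i) and (v), where the anti-linearity of $Q$ must be tracked through each application.
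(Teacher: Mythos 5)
Your verification is correct, and it is exactly the routine computation the paper has in mind: the authors state that all six properties "are easy to prove" and omit the argument entirely, so your direct check from the three defining properties of a conjugation (anti-linearity, $Q^2=I_{\hh}$, $\is{Q\xi}{Q\eta}=\is{\eta}{\xi}$) supplies precisely the missing details. The only point worth making explicit is that $QAQ$ is a bounded \emph{linear} operator (the two anti-linear factors cancel, and boundedness follows from $\|Q\xi\|=\|\xi\|$), which your items (i) and (ii) cover implicitly.
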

The selfmap $\varXi_Q$ preserves a variety of
fundamental properties of Hilbert space operators.
Below, we collect some of them. Given $\varDelta
\subseteq \cbb$, we set
   \begin{align*}
\borel{\varDelta} & = \{\varDelta \cap
\varDelta^\prime\colon \varDelta^\prime \text{ is a
Borel subset of } \cbb\},
   \\
\varDelta^* & = \{z \in \cbb \colon \bar z \in
\varDelta\}.
   \end{align*}
If $A\in \ogr{\hh}$ is normal, then $E_A$ stands for
the spectral measure of $A$.
   \begin{thm}\label{qaq}
Suppose $Q$ is a conjugation on $\hh$ and $A \in
\ogr{\hh}$. Then
   \begin{enumerate}
   \item[(i)] if $A$ is paranormal
$($hyponormal, subnormal, quasinormal, normal,
isometric, a partial isometry, an orthogonal
projection$)$, then so is $\varXi_Q(A)$; the reverse
implication holds as well,\footnote{\;By Proposition
\ref{abcon}(v), (i) holds if $A$ and $\varXi_Q(A)$ are
replaced by $A^*$ and $\varXi_Q(A)^*$, respectively.}
   \item[(ii)] $\sigma(\varXi_Q(A))=\sigma(A)^*$ and
$\varXi_Q((\lambda I - A)^{-1}) = (\bar \lambda I -
\varXi_Q(A))^{-1}$ if $\lambda \in \cbb \setminus
\sigma(A)$,
   \item[(iii)] if $A = U|A|$ is a polar decomposition of
$A$, then $\varXi_Q(A) = \varXi_Q(U)\varXi_Q(|A|)$ is a
polar decomposition of $\varXi_Q(A)$; in particular,
$|\varXi_Q(A)|=\varXi_Q(|A|)$,
   \item[(iv)] if $A$ is normal, then $E_{\varXi_Q(A)}(\varDelta)
= \varXi_Q(E_A(\varDelta^*))$ for every $\varDelta \in
\borel{\sigma(A)^*}$,
   \item[(v)] if $A$ is normal and $f\colon \sigma(A) \to \cbb$
is an $E_A$-essentially bounded Borel function, then
$\varXi_Q(f(A)) = f^*(\varXi_Q(A))$, where $f^*(z) =
\overline{f(\bar z)}$ for $z \in \sigma(A)^*$,
   \item[(vi)] if $f$ is a holomorphic mapping defined on an
open set $\varOmega \subseteq \cbb$ that contains
$\sigma(A)$, then $\varXi_Q(f(A)) = f^*(\varXi_Q(A))$,
where $f^*(z) = \overline{f(\bar z)}$ for $z \in
\varOmega^*$.
   \end{enumerate}
   \end{thm}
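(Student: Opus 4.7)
The plan is to leverage the fact that, by Proposition \ref{abcon}, $\varXi_Q$ is an anti-linear, multiplicative, unital, $*$-preserving, positivity-preserving isometric bijection of $\ogr{\hh}$, and that $Q$ itself is anti-unitary ($\|Q\xi\|=\|\xi\|$ for all $\xi\in\hh$). From this, every property in (i) characterized by a polynomial or norm identity in $A$ and $A^*$ transfers mechanically. Hyponormality ($\|A^*\xi\|\Le\|A\xi\|$) and paranormality ($\|A\xi\|^2\Le\|\xi\|\|A^2\xi\|$) transfer via the substitution $\xi\mapsto Q\xi$, since $\|\varXi_Q(A)\xi\|=\|AQ\xi\|$ and $\|\varXi_Q(A)^*\xi\|=\|A^*Q\xi\|$; isometricity ($A^*A=I$), partial isometricity ($A=AA^*A$), quasinormality, normality, and orthogonal-projection status are preserved because they are algebraic identities involving $A$, $A^*$, and $I$. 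For subnormality, I would extend $Q$ to a conjugation $\tilde Q$ on any enclosing Hilbert space $\kk$ supporting a normal extension $N$ of $A$ (by picking an orthonormal basis of $\kk$ adapted to an orthonormal basis of $\hh$ fixed by $Q$); then $\varXi_{\tilde Q}(N)$ is a normal extension of $\varXi_Q(A)$. Reverse implications come from $\varXi_Q\circ\varXi_Q=\mathrm{id}$. Assertion (ii) follows at once: $\lambda I-A$ is invertible iff $\varXi_Q(\lambda I-A)=\bar\lambda I-\varXi_Q(A)$ is, with inverse $\varXi_Q((\lambda I-A)^{-1})$. For (iii), Proposition \ref{abcon}(vi) gives $\varXi_Q(|A|)\Ge0$, and $\varXi_Q(|A|)^2=\varXi_Q(A^*A)=\varXi_Q(A)^*\varXi_Q(A)$, so uniqueness of the positive square root yields $\varXi_Q(|A|)=|\varXi_Q(A)|$; the kernel equality $\jd{\varXi_Q(U)}=\jd{\varXi_Q(A)}$ follows from $\varXi_Q(U)\xi=0\iff UQ\xi=0\iff Q\xi\in\jd{A}\iff \varXi_Q(A)\xi=0$, so uniqueness of the polar decomposition completes the proof.

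For (iv), I would define $F(\varDelta):=\varXi_Q(E_A(\varDelta^*))$ for $\varDelta\in\borel{\sigma(A)^*}$. Proposition \ref{abcon} ensures that each $F(\varDelta)$ is an orthogonal projection, that $F(\sigma(A)^*)=I$, and that $F$ is multiplicative on intersections; countable additivity in the strong operator topology transfers because $\varXi_Q$ is anti-linear and isometric, hence strongly continuous. A direct check on $E_A$-integrable simple functions, combined with (ii), shows that $\int w\,\D F(w)=\varXi_Q(A)$, so by uniqueness of the spectral measure $F=E_{\varXi_Q(A)}$. Assertion (v) follows by inspection for indicator functions from (iv), extends by anti-linearity and additivity to simple Borel functions (noting $(\sum c_i\chi_{\varDelta_i})^*=\sum\bar c_i\chi_{\varDelta_i^*}$), and finally to general $E_A$-essentially bounded Borel $f$ by strong-operator limits, which $\varXi_Q$ preserves.

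For (vi), I would invoke the Riesz--Dunford formula $f(A)=\frac{1}{2\pi\I}\int_\Gamma f(z)(zI-A)^{-1}\D z$ along a counterclockwise contour $\Gamma\subseteq\varOmega$ surrounding $\sigma(A)$, approximate the integral by Riemann sums, and apply $\varXi_Q$ term-by-term using anti-linearity, continuity, and (ii). The substitution $w=\bar z$ then converts the resulting expression into the Riesz--Dunford integral for $f^*$ around $\sigma(A)^*=\sigma(\varXi_Q(A))$; the orientation reversal induced by complex conjugation of the contour absorbs the extra minus sign coming from $\overline{\I}=-\I$, yielding $\varXi_Q(f(A))=f^*(\varXi_Q(A))$. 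The main obstacles I anticipate are the bookkeeping of anti-linearity through the complex contour integral in (vi), and producing a conjugation on a normal-extension space compatible with $Q$ in the subnormal clause of (i); everything else reduces to routine transfer along the anti-linear $*$-isomorphism $\varXi_Q$.
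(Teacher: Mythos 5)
Your proposal is correct and, for parts (ii)--(vi), follows essentially the same route as the paper: (iii) via uniqueness of the positive square root and the kernel identity $\jd{\varXi_Q(U)}=Q(\jd{U})=\jd{\varXi_Q(A)}$; (iv)--(v) by pushing the spectral measure forward under $z\mapsto\bar z$ and checking the defining integrals; (vi) by conjugating and reversing the Riesz--Dunford contour, with the orientation flip cancelling the sign from $\overline{\I}=-\I$. The one genuine divergence is the subnormality clause of (i). The paper avoids the extension space entirely: it invokes Lambert's intrinsic characterization of subnormality ($\{\|A^n\xi\|^2\}_n$ a Stieltjes moment sequence for every $\xi$), rewritten via the Stieltjes theorem as a family of operator positivity conditions in $A$ and $A^*$, which then transfer mechanically through $\varXi_Q$ by Proposition \ref{abcon}. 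You instead extend $Q$ to a conjugation $\tilde Q$ on the enclosing space $\mm$ carrying a normal extension $N$ of $A$ (by completing a $Q$-fixed orthonormal basis of $\hh$ to one of $\mm$ and letting $\tilde Q$ fix it), and observe that $\varXi_{\tilde Q}(N)$ is a normal extension of $\varXi_Q(A)$; this works because $\tilde Q|_{\hh}=Q$ gives $\tilde QN\tilde Q\xi=QAQ\xi$ for $\xi\in\hh$. Your construction is more elementary and makes the mechanism visible, at the cost of an auxiliary existence argument for $\tilde Q$; the paper's argument is uniform with the other clauses of (i), reducing everything to positivity statements preserved by the abstract conjugation $\varXi_Q$. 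Both are complete proofs.
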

   \begin{proof}
(i) We only consider the case of subnormality because
the other cases can be obtained directly from
Proposition \ref{abcon}. By Lambert's theorem (cf.\
\cite{lam0}, see also \cite[Theorem 7]{St-Sz}), $A$ is
subnormal if and only if $\{\|A^n \xi
\|^2\}_{n=0}^{\infty}$ is a Stieltjes moment sequence
for every $\xi\in \hh$, or equivalently, by the
Stieltjes theorem (cf.\ \cite[Theorem 6.2.5]{ber}),
$A$ is subnormal if and only if
   \begin{align*}
\sum_{i,j=0}^n \lambda_i \bar \lambda_j A^{*(i+j+k)}
A^{i+j+k} \Ge 0, \quad \lambda_1, \ldots, \lambda_n
\in \cbb, \, n \in \zbb_+, \, k=0,1.
   \end{align*}
This combined with Proposition \ref{abcon} implies
that $A$ is subnormal if and only if $\varXi_Q(A)$ is
subnormal.

(ii) Apply Proposition \ref{abcon}.

(iii) By Proposition \ref{abcon}, $\varXi_Q(|A|) \Ge 0$ and
$\varXi_Q(|A|)^2 = \varXi_Q(A)^*\varXi_Q(A)$, and thus
$|\varXi_Q(A)|=\varXi_Q(|A|)$. Since $Q^{-1}=Q$ and, by
(i), $\varXi_Q(U)$ is a partial isometry such that
$\jd{\varXi_Q(U)} = Q(\jd{U}) =
Q(\jd{A})=\jd{\varXi_Q(A)}$, the assertion (iii) is proved.

(iv) By (i), the map $F_A\colon \borel{\sigma(A)^*}
\ni \varDelta \mapsto
\varXi_Q(E_A(\tau^{-1}(\varDelta))) \in \ogr{\hh}$ is
a spectral measure, where $\tau\colon \sigma(A) \to
\sigma(A)^*$ is given by $\tau(z)=\bar z$ for $z \in
\sigma(A)$. Applying the measure transport theorem
(cf.\ \cite[Theorem 1.6.12]{Ash}), we obtain
   \begin{align*}
\is{\varXi_Q(A)\xi}{\eta} = \int_{\sigma(A)} \tau(z)
\, \is{E_A(\D z)Q\eta}{Q\xi} = \int_{\sigma(A)^*} z \,
\is{F_A(\D z)\xi}{\eta}, \quad \xi,\eta \in \hh,
   \end{align*}
which together with (ii) implies (iv).

(v) By (iv), $f^*$ is $E_{\varXi_Q(A)}$-essentially bounded
Borel function and
   \begin{align*}
\is{\varXi_Q(f(A))\xi}{\eta} = \int_{\sigma(A)} \bar
f(z) \is{\varXi_Q(E_A(\D z)) \xi}{\eta} =
\int_{\sigma(A)^*} f^*(z) \is{F_A(\D z)\xi}{\eta}
   \end{align*}
for all $\xi, \eta \in \hh$, which gives (v).

(vi) Let $\gamma\colon [0,1] \to \cbb$ be a contour that
surrounds $\sigma(A)$ in $\varOmega$ (see \cite[p.\
260]{Rud}). Set $\tilde \gamma(t) = \overline{\gamma(1-t)}$
for $t\in [0,1]$. Since $\mathrm{Ind}_{\tilde \gamma}(z) =
\mathrm{Ind}_{\gamma}(\bar z)$ for $z\in \cbb \setminus
\gamma([0,1])^*$, where $\mathrm{Ind}_{\gamma}(z) =
\frac{1}{2\pi \I} \int_{\gamma} \frac{\D \zeta}{\zeta-z}$
for $z \in \cbb \setminus \gamma([0,1])$ (see \cite[Theorem
10.10]{Rud1}), we deduce that the contour $\tilde \gamma$
surrounds $\sigma(A)^*$ in $\varOmega^*$. Hence, by (ii),
we have
   \begin{align*}
\varXi_Q(f(A)) & = \varXi_Q\Big(\frac{1}{2\pi \I} \int_0^1
f(\gamma(t)) (\gamma(t) I- A)^{-1} \gamma^\prime(t) \D t
\Big)
   \\
& = \frac{1}{2\pi \I} \int_0^1 f^*(\tilde\gamma(t))
(\tilde\gamma(t) I- \varXi_Q(A))^{-1} \tilde \gamma^\prime
(t) \D t = f^*(\varXi_Q(A)),
   \end{align*}
which completes the proof.
   \end{proof}
It follows from Theorem \ref{qaq}(ii) that, in general, the
operators $A$ and $\varXi_Q(A)$ are not unitarily
equivalent.

Now we show that a conjugation on $\hh$ can be lifted
to a conjugation on $\varPhi(\hh)$.
   \begin{pro} \label{lem1+}
Let $\varPhi \in \fscr$ and $Q$ be a conjugation on
$\hh$. Then the map $\widehat Q\colon \varPhi(\hh) \to
\varPhi(\hh)$ given by $(\widehat Qf)(\xi) =
\overline{f(Q(\xi))}$ for $\xi \in \hh$ is a
well-defined conjugation on $\varPhi(\hh)$ such that
$\widehat QK^{\varPhi}_{\xi} = K^{\varPhi}_{Q\xi}$ and
$\widehat Q C_A \widehat Q= C_{\varXi_Q(A)}$ for all
$\xi \in \hh$ and $A\in\ogr{\hh}$.
   \end{pro}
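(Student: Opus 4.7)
The plan is to first establish that $\widehat Q$ is a well-defined isometry of $\varPhi(\hh)$, then check the algebraic properties (antilinearity, involutivity, inner product preservation), and finally verify the two explicit identities by direct pointwise computation.

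The main step is to show $\widehat Q f \in \varPhi(\hh)$ with $\|\widehat Q f\| = \|f\|$ for every $f \in \varPhi(\hh)$. I would use the RKHS test (Theorem \ref{rkhst}). For any finite $\{\lambda_i\} \subseteq \cbb$ and $\{\xi_i\} \subseteq \hh$, rewrite
\begin{align*}
\Big|\sum_{i=1}^n \lambda_i (\widehat Q f)(\xi_i)\Big|^2 = \Big|\sum_{i=1}^n \bar \lambda_i f(Q\xi_i)\Big|^2 \Le \|f\|^2 \sum_{i,j=1}^n K^{\varPhi}(Q\xi_i,Q\xi_j) \bar\lambda_i \lambda_j.
\end{align*}
Since $\varPhi$ has nonnegative Taylor coefficients we have $\overline{\varPhi(z)} = \varPhi(\bar z)$, and $\is{Q\xi_i}{Q\xi_j} = \is{\xi_j}{\xi_i}$, so $K^{\varPhi}(Q\xi_i,Q\xi_j) = \varPhi(\is{\xi_j}{\xi_i}) = \overline{K^{\varPhi}(\xi_i,\xi_j)}$. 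The sum therefore equals $\sum_{i,j} K^{\varPhi}(\xi_i,\xi_j)\lambda_i \bar \lambda_j$, and by Theorem \ref{rkhst} we conclude $\widehat Q f \in \varPhi(\hh)$ with $\|\widehat Q f\| \Le \|f\|$. Iterating the inequality (using $Q^2 = I_{\hh}$ to see that $\widehat Q \circ \widehat Q$ acts as the identity on $\varPhi(\hh)$) yields the reverse estimate, hence $\|\widehat Q f\| = \|f\|$.

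Antilinearity is immediate from the definition. The involutive property $(\widehat Q \widehat Q f)(\xi) = \overline{\overline{f(Q^2 \xi)}} = f(\xi)$ is a one-line check, and the identity $\is{\widehat Q f}{\widehat Q g} = \is{g}{f}$ then follows from the norm equality via the polarization formula. So $\widehat Q$ is indeed a conjugation on $\varPhi(\hh)$. For the identity $\widehat Q K^{\varPhi}_{\xi} = K^{\varPhi}_{Q\xi}$, I compute for $\eta \in \hh$:
\begin{align*}
(\widehat Q K^{\varPhi}_{\xi})(\eta) = \overline{K^{\varPhi}(Q\eta,\xi)} = \overline{\varPhi(\is{Q\eta}{\xi})} = \varPhi(\is{\xi}{Q\eta}) = \varPhi(\is{\eta}{Q\xi}) = K^{\varPhi}_{Q\xi}(\eta),
\end{align*}
again using $\overline{\varPhi(z)} = \varPhi(\bar z)$ and $\is{\xi}{Q\eta} = \is{Q^2\eta}{Q\xi} = \is{\eta}{Q\xi}$.

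Finally, for $\widehat Q C_A \widehat Q = C_{\varXi_Q(A)}$, observe that for any function $f\colon \hh \to \cbb$ and any $\xi \in \hh$,
\begin{align*}
(\widehat Q((\widehat Q f)\circ A))(\xi) = \overline{((\widehat Q f)\circ A)(Q\xi)} = \overline{\overline{f(QAQ\xi)}} = (f \circ \varXi_Q(A))(\xi),
\end{align*}
so $(\widehat Q f)\circ A = \widehat Q(f\circ \varXi_Q(A))$. Since $\widehat Q$ is a bijection of $\varPhi(\hh)$, we conclude that $(\widehat Q f)\circ A \in \varPhi(\hh)$ if and only if $f\circ \varXi_Q(A) \in \varPhi(\hh)$; that is, $\widehat Q f \in \dz{C_A}$ iff $f \in \dz{C_{\varXi_Q(A)}}$, and on this common domain the two sides agree. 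The main obstacle is keeping the complex conjugations and the order of composition with $Q$ straight; once the identity $\overline{\varPhi(z)} = \varPhi(\bar z)$ is invoked correctly, each remaining step is a short direct computation.
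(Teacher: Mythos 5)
Your proposal is correct and follows essentially the same route as the paper: the RKHS test combined with $\is{Q\xi}{Q\eta}=\is{\eta}{\xi}$ and the real Taylor coefficients of $\varPhi$ gives $\|\widehat Qf\|\Le\|f\|$, involutivity upgrades this to an isometry, and polarization yields the conjugation property. The paper leaves the two explicit identities as "easily seen," whereas you verify them by pointwise computation (including the domain equality $\dz{\widehat QC_A\widehat Q}=\dz{C_{\varXi_Q(A)}}$), which is a harmless and welcome elaboration.
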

   \begin{proof}
If $f \in \varPhi(\hh)$, then, by \eqref{rep} and
\eqref{rep2}, we have
   \begin{align*}
\Big|\sum_{i=1}^n \lambda_i \overline{f(Q\xi_i)}\Big|^2 =
\Big|\Big\langle f, \sum_{i=1}^n \lambda_i
K^{\varPhi}_{Q\xi_i}\Big\rangle\Big|^2 & \Le \|f\|^2
\sum_{i,j=1}^n K^{\varPhi}(Q\xi_j,Q\xi_i) \lambda_i \bar
\lambda_j
   \\
& = \|f\|^2 \sum_{i,j=1}^n K^{\varPhi}(\xi_i,\xi_j)
\lambda_i \bar \lambda_j
   \end{align*}
for all finite sequences $\{\lambda_i\}_{i=1}^n
\subseteq \cbb$ and $\{\xi_i\}_{i=1}^n \subseteq \hh$,
and thus, by the RKHS test, $\widehat Qf \in
\varPhi(\hh)$ and $\|\widehat Qf\| \Le \|f\|$. Hence
$\widehat Q$ is a well-defined contractive anti-linear
map from $\varPhi(\hh)$ to $\varPhi(\hh)$ such that
$\widehat Q (\widehat Q f)=f$ for all $f \in
\varPhi(\hh)$. This implies that $\|\widehat Qf\| =
\|f\|$ for all $f\in \varPhi(\hh)$. Applying the
anti-linearity of $\widehat Q$ and the polarization
formula, we conclude that $\widehat Q$ is a
conjugation on $\varPhi(\hh)$. The remaining part of
the conclusion is easily seen to be true.
   \end{proof}
It follows from Proposition \ref{lem1+} that if $A\in
\ogr{\hh}$ is $Q$-real, then $C_A$ is $\widehat
Q$-real (i.e., $\widehat Q C_A \widehat Q= C_{A}$).
   \section{\label{apap2}Paranormality of tensor products}
   In view of Fock's type model for a composition
operator $C_A$ (cf.\ Theorem \ref{fmod}), the question
discussed below seems to be of independent interest.
It is well-known that the tensor product of two
bounded paranormal operators may not be paranormal
even if they are equal (cf.\ \cite[Section 3]{An}).
Our aim is to prove the following.
   \begin{pro}
Let $\hh_j$ be a complex Hilbert space and $A_j \in
\ogr{\hh_j}$ for $j=1, \ldots, n$, where $n \in \nbb$.
Suppose $A_1 \otimes \dots \otimes A_n$ is a nonzero
paranormal operator. Then $A_j$ is paranormal for
every $j \in \{1, \ldots, n\}$.
   \end{pro}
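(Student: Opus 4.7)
The plan is to test the paranormality of $T:=A_1\otimes\cdots\otimes A_n$ on elementary tensors, exploiting the multiplicativity $T(f_1\otimes\cdots\otimes f_n) = A_1 f_1\otimes\cdots\otimes A_n f_n$ and $T^2 = A_1^2\otimes\cdots\otimes A_n^2$. For unit vectors $f_i \in \hh_i$, the element $f:=f_1\otimes\cdots\otimes f_n$ has norm one, so the paranormal inequality $\|Tf\|^2 \Le \|f\|\,\|T^2 f\|$ reads
\[
\prod_{i=1}^n \|A_i f_i\|^2 \;\Le\; \prod_{i=1}^n \|A_i^2 f_i\|. \qquad (\star)
\]
Because $T\ne 0$ one has $A_i\ne 0$, hence $\|A_i\|>0$, for every $i$.

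The decisive observation that lets $(\star)$ descend to each individual factor is the following: for any nonzero $A\in \ogr{\hh}$ and every $\varepsilon\in(0,1)$, the definition of $\|A\|$ provides a unit vector $f\in\hh$ with $\|Af\|\Ge(1-\varepsilon)\|A\|>0$; for such $f$ one has $Af\ne 0$ and, since $\|A^2 f\|\Le \|A\|\,\|Af\|$,
\[
\frac{\|A^2 f\|}{\|A f\|^2} \;\Le\; \frac{\|A\|}{\|Af\|} \;\Le\; \frac{1}{1-\varepsilon}.
\]
Thus the ratio $\|A^2 f\|/\|Af\|^2$ can be driven arbitrarily close to $1$ from above by suitable unit vectors $f$.

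Now fix $j\in\{1,\ldots,n\}$ and a unit vector $f_j\in\hh_j$; by homogeneity of the paranormal inequality it is enough to prove $\|A_j f_j\|^2\Le \|A_j^2 f_j\|$. The case $A_j f_j=0$ is trivial, so assume $A_j f_j\ne 0$. For each $i\ne j$ and each $\varepsilon\in(0,1)$ pick a unit $f_i^\varepsilon\in\hh_i$ as supplied by the key observation and feed the unit elementary tensor $f_1^\varepsilon\otimes\cdots\otimes f_{j-1}^\varepsilon\otimes f_j\otimes f_{j+1}^\varepsilon\otimes\cdots\otimes f_n^\varepsilon$ into $(\star)$. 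If $A_j^2 f_j=0$, the right-hand side of $(\star)$ vanishes while the left-hand side is strictly positive (since each $A_i f_i^\varepsilon\ne 0$), a contradiction; hence $A_j^2 f_j\ne 0$. Dividing in $(\star)$ and applying the bound from the key observation to every factor with $i\ne j$ yields
\[
\frac{\|A_j f_j\|^2}{\|A_j^2 f_j\|} \;\Le\; \prod_{i\ne j} \frac{\|A_i^2 f_i^\varepsilon\|}{\|A_i f_i^\varepsilon\|^2} \;\Le\; \frac{1}{(1-\varepsilon)^{n-1}},
\]
and letting $\varepsilon\to 0$ completes the proof.

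I do not expect any serious obstacle; the only point requiring care is the bookkeeping around the degenerate cases $A_j f_j=0$ and $A_j^2 f_j=0$, which is handled by the ruling-out argument above. The structural ingredient that makes the whole scheme run is precisely the elementary fact that approximate maximizers of $\|A\cdot\|$ are automatically \emph{approximate paranormality witnesses}, i.e.\ they push $\|A^2 f\|/\|Af\|^2$ down to $1$; this is exactly the slack needed to transfer $(\star)$ into a one-factor inequality.
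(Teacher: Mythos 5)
Your proof is correct, and it takes a noticeably different route from the paper's. The paper reduces to $n=2$, first establishes the kernel inclusion $\jd{A_1^2}\subseteq\jd{A_1}$, then introduces the optimal constant $c=\sup_{f\notin\jd{A_1^2}}\|A_1f\|^2/(\|f\|\,\|A_1^2f\|)$, sandwiches it against the corresponding infimum for $A_2$, and finally forces $c=1$ from the chain $\|A_1\|^2\Le c\|A_1^2\|\Le c\|A_1\|^2$ (and its mirror image). You instead treat general $n$ directly and, for each fixed factor $j$, feed the elementary-tensor inequality with near-norm-attaining unit vectors $f_i^\varepsilon$ in the remaining factors, so that the cost of cancelling those factors is at most $(1-\varepsilon)^{-(n-1)}$, and then let $\varepsilon\to 0$. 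Both arguments ultimately rest on the same elementary fact $\|A^2f\|\Le\|A\|\,\|Af\|$ combined with near-attainment of the operator norm, but your localization to approximate maximizers dispenses with the sup/inf constant and the kernel lemma (your explicit ruling-out of $A_jf_j=0$ and $A_j^2f_j=0$ replaces the latter), and avoids the reduction to $n=2$. What the paper's version buys in exchange is the slightly finer structural conclusion that the optimal paranormality constants of the two factors are reciprocals of one another and hence both equal to $1$, which your limiting argument does not record.
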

   \begin{proof}
There is no loss of generality in assuming that $n=2$.
Then
   \begin{align} \notag
\|A_1 f \|^2 \|A_2 g\|^2 & = \|(A_1\otimes
A_2)(f\otimes g)\|^2
   \\  \notag
& \Le \|f\otimes g\| \|(A_1\otimes A_2)^2(f\otimes
g)\|
   \\    \label{tens1}
& = \|f\| \|A_1^2 f\| \|g\| \|A_2^2 g\|, \quad f \in
\hh_1, \, g \in \hh_2.
   \end{align}
Since $A_1\otimes A_2 \neq 0$, we see that $A_1 \neq
0$ and $A_2 \neq 0$. Note that
   \begin{align}   \label{tens2}
\jd{A_1^2} \subseteq \jd{A_1}.
   \end{align}
Indeed, if $f \in \jd{A_1^2}$, then, by \eqref{tens1}
applied to $g \in \hh \setminus \jd{A_2}$, we deduce
that $f\in \jd{A_1}$. It follows from \eqref{tens2}
that $A_1^2 \neq 0$. This and \eqref{tens1} yield
   \begin{align} \label{tens3}
0 < c:=\sup_{f\in\hh \setminus\jd{A_1^2}} \frac{\|A_1
f \|^2}{\|f\| \|A_1^2 f\|} \Le \inf_{g\in\hh
\setminus\jd{A_2}} \frac{\|g\| \|A_2^2 g\|}{\|A_2
g\|^2} < \infty.
   \end{align}
Hence, by \eqref{tens2} and \eqref{tens3}, we have
   \begin{align} \label{tens4}
\|A_1 f \|^2 & \Le c \|f\| \|A_1^2 f\|, \quad f \in
\hh_1,
   \\ \label{tens5}
\|A_2 g\|^2 & \Le \frac{1}{c} \|g\| \|A_2^2 g\|, \quad
g \in \hh_2.
   \end{align}
If $c \in (0,1)$, then \eqref{tens4} implies that
$\|A_1\|^2 \Le c\|A_1^2\| \Le c \|A_1\|^2$ and thus $c
\Ge 1$, a contradiction. Similar argument based on
\eqref{tens5} excludes the case of $c \in (1,\infty)$.
Hence $c=1$, which completes the proof.
   \end{proof}
The special case of symmetric tensor powers is even
simpler to prove.
   \begin{pro} \label{parasym}
Let $\hh$ be a complex Hilbert space, $A \in
\ogr{\hh}$ and $n \in \nbb$. Suppose $A^{\odot n}$ is
paranormal operator. Then $A$ is paranormal.
   \end{pro}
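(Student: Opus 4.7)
The plan is to apply the paranormality inequality of $A^{\odot n}$ to the elementary symmetric tensor $\xi^{\otimes n}$ and then extract the paranormality of $A$ by taking $n$-th roots.

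First I would record the three identities that govern how $A^{\odot n}$ acts on such elementary symmetric tensors: for every $\xi \in \hh$,
\begin{gather*}
\|\xi^{\otimes n}\| = \|\xi\|^n, \qquad
A^{\odot n}(\xi^{\otimes n}) = (A\xi)^{\otimes n}, \\
(A^{\odot n})^2 (\xi^{\otimes n}) = (A^2\xi)^{\otimes n},
\end{gather*}
whence $\|A^{\odot n}(\xi^{\otimes n})\|^2 = \|A\xi\|^{2n}$ and $\|(A^{\odot n})^2(\xi^{\otimes n})\| = \|A^2\xi\|^n$. These are just the definition of the symmetric tensor power of an operator applied to the particular vector $\xi^{\otimes n} \in \hh^{\odot n}$, so no real calculation is involved.

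Next, I would insert $f := \xi^{\otimes n}$ into the paranormality inequality $\|(A^{\odot n}) f\|^2 \le \|f\| \|(A^{\odot n})^2 f\|$ satisfied by $A^{\odot n}$. Using the identities above, this becomes
\begin{align*}
\|A\xi\|^{2n} \le \|\xi\|^n \, \|A^2\xi\|^n, \quad \xi \in \hh.
\end{align*}
Taking $n$-th roots on both sides yields $\|A\xi\|^2 \le \|\xi\| \, \|A^2 \xi\|$ for every $\xi \in \hh$, which is exactly the paranormality of $A$.

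There is essentially no obstacle here; the only thing worth being a bit careful about is that the estimate is valid for \emph{every} $\xi \in \hh$ (not merely for those with $\xi^{\otimes n} \neq 0$), and this is automatic since the inequality is trivial when $\xi = 0$. In particular, neither the assumption $A^{\odot n} \neq 0$ nor any spectral input is needed, in marked contrast with the general (non-symmetric) tensor product case treated in the preceding proposition.
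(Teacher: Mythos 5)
Your proof is correct and is essentially identical to the paper's: both apply the paranormality inequality of $A^{\odot n}$ to the vector $\xi^{\otimes n}$, use the identities $\|\xi^{\otimes n}\|=\|\xi\|^n$ and $(A^{\odot n})^k\xi^{\otimes n}=(A^k\xi)^{\otimes n}$, and take $n$-th roots. No discrepancies.
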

   \begin{proof}
The definition of paranormality of $A^{\odot n}$ leads
to
   \begin{align*}
\|Af\|^{2n} = \|A^{\odot n} f^{\otimes n}\|^2 \Le
\|f^{\otimes n}\| \|(A^{\odot n})^2 f^{\otimes n}\| =
\|f\|^n \|A^2 f\|^n, \quad f \in \hh,
   \end{align*}
which completes the proof.
   \end{proof}
Applying Theorem \ref{fmod}, Proposition \ref{parasym}
and Theorem \ref{qaq}(i), we get the following.
   \begin{cor}
Suppose $\varPhi \in \fscr$ and $A \in \ogr{\hh}$. If
$C_A \in \ogr{\varPhi(\hh)}$ and $C_A^*$ is
paranormal, then $A$ is paranormal.
   \end{cor}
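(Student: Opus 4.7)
The plan is to chain together three tools established earlier: the Fock-type model (Theorem \ref{fmod}), the fact that paranormality passes from $A^{\odot n}$ to $A$ (Proposition \ref{parasym}), and the fact that the conjugation $\varXi_Q$ preserves paranormality (Theorem \ref{qaq}(i)).

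First I would fix any conjugation $Q$ on $\hh$ and invoke Theorem \ref{fmod} to conclude that $C_A^*$ is unitarily equivalent to $\bigoplus_{n \in \zscr_{\varPhi}} \varXi_{Q}(A)^{\odot n}$. Since $C_A^*$ is paranormal by assumption, so is this orthogonal sum. Next, I would observe the general fact that paranormality of an orthogonal sum forces paranormality of each summand: if $T = \bigoplus_{\omega} T_{\omega}$ is paranormal and $\omega_0$ is a fixed index, then for $f \in \dz{T_{\omega_0}}$, viewed as a vector in $\dz{T}$ supported on the $\omega_0$-summand, one has $\|T_{\omega_0} f\|^2 = \|T f\|^2 \Le \|f\| \|T^2 f\| = \|f\| \|T_{\omega_0}^2 f\|$.

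Applied to our situation, this yields that $\varXi_Q(A)^{\odot n}$ is paranormal for every $n \in \zscr_{\varPhi}$. Since $\varPhi \in \fscr$, by definition there exists some $n \in \nbb$ with $n \in \zscr_{\varPhi}$, so in particular we may fix such an $n \Ge 1$. Then Proposition \ref{parasym} gives that $\varXi_Q(A)$ is paranormal. Finally, Theorem \ref{qaq}(i) states that paranormality is preserved by $\varXi_Q$ in both directions, so $A = \varXi_Q(\varXi_Q(A))$ is paranormal, which is the desired conclusion.

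I do not foresee any serious obstacle: every ingredient is already in place, and the main point is just to assemble them. The only step that requires a brief verification (rather than a citation) is the passage from paranormality of the direct sum to paranormality of each summand, but this is a one-line graph-norm argument as sketched above. Everything else is a direct application of results stated earlier in the excerpt.
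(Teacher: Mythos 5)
Your proof is correct and follows exactly the route the paper intends: the paper's own (one-line) proof cites precisely Theorem \ref{fmod}, Proposition \ref{parasym} and Theorem \ref{qaq}(i), and your argument supplies the only implicit step, namely that paranormality of an orthogonal sum passes to each summand, which your graph-norm computation handles correctly.
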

   \bibliographystyle{amsalpha}

\begin{thebibliography}{99}
   \bibitem{Alu} A. Aluthge, On $p$-hyponormal operators for
$0<p<1$, {\em Integr. Equat. Oper. Th.} {\bf 13}
(1990), 307-315.
   \bibitem{An} T. Ando,  Operators with a norm condition,
{\em Acta Sci. Math. $($Szeged$)$} {\bf 33} (1972),
169-178.
   \bibitem{A-R-K-L-S} D. Applebaum, B. V. Rajarama Bhat, J. Kustermans,
J. Martin Lindsay, {\em Quantum independent increment
processes.} I, eds. M. Sch\"{u}rmann and U. Franz,
Lecture Notes in Mathematics 1865, Springer-Verlag,
Berlin, 2005.
   \bibitem{Aronsz0} N. Aronszajn, La th\'eorie des
noyaux reproduisants et ses applications. I. {\em
Proc. Cambridge Philos. Soc.} {\bf 39}, (1943),
133-153.
   \bibitem{Aronsz} N. Aronszajn,  Theory of reproducing
kernels, {\em Trans. Amer. Math. Soc.} {\bf 68}
(1950), 337-404.
   \bibitem{Ash} R. B. Ash, {\em Probability and measure theory},
Harcourt/Academic Press, Burlington, 2000.
   \bibitem{Barg}
V. Bargmann, On a Hilbert space of analytic functions
and an associated integral transform, I, {\em Comm.
Pure Appl. Math.} {\bf 14} (1961), 187-214.
   \bibitem{Barg2}
V. Bargmann, Remarks on a Hilbert space of analytic
functions, {\em Proc. Nat. Acad. Sci. U.S.A.} {\bf 48}
(1962), 199-204.
   \bibitem{ber} C. Berg, J. P. R.
Christensen, P. Ressel, {\em Harmonic Analysis on
Semigroups}, Springer, Berlin, 1984.
   \bibitem{b-s} M. S. Birman, M. Z. Solomjak,
{\it Spectral theory of self-adjoint operators in Hilbert
space}, D. Reidel Publishing Co., Dordrecht, 1987.
   \bibitem{c-m-s03}  B. J. Carswell, B. D. MacCluer, A.
Schuster, Composition Operators on the Fock Space,
{\em Acta Sci. Math. $($Szeged$)$} {\bf 69} (2003),
871-887.
   \bibitem{chae} S. B. Chae, {\em Holomorphy and
calculus in normed spaces}, Pure an Applied Mathematics,
vol. 92, M. Dekker, New York and Basel, 1985.
   \bibitem{Cha-Par14} I. Chalendar, J. R. Partington,
Norm estimates for weighted composition operators on
spaces of holomorphic functions, {\em Complex Anal.
Oper. Theory} {\bf 8} (2014), 1087-1095.
   \bibitem{Char13} S. Charpentier, Composition operators
on weighted Bergman-Orlicz spaces on the ball, {\em
Complex Anal. Oper. Theory} {\bf 7} (2013), 43-68.
   \bibitem{Chirka} E. M. Chirka, {\em Complex analytic sets},
Kluwer, Dordrecht, 1989.
   \bibitem{C-I-K10} B. R. Choe, K. H. Izuchi, H. Koo, Linear
sums of two composition operators on the Fock space,
{\em J. Math. Anal. Appl.} {\bf 369} (2010), 112-119.
   \bibitem{Con} J. B. Conway, {\em A course in
operator theory}, Graduate Studies in Mathematics
Vol.\ 21, American Mathematical Society, Providence,
Rhode Island, 2000.
   \bibitem{Cow-MC95} C. C. Cowen, B. D. MacCluer, {\em Composition
operators on spaces of analytic functions}, Studies in
Advanced Mathematics, CRC Press, Boca Raton, FL, 1995.
   \bibitem{Cow-Gun-Ko13} C. C. Cowen, G. Gunatillake, E.
Ko, Hermitian weighted composition operators and
Bergman extremal functions, {\em Complex Anal. Oper.
Theory} {\bf 7} (2013) 69-99.
   \bibitem{D-St} A. Daniluk, J. Stochel, Seminormal
composition operators induced by affine transformations,
{\em Hokkaido Math. J.} {\bf 26} (1997), 377-404.
   \bibitem{Doug}  R. G. Douglas, On majorization, factorization, and
range inclusion of operators on Hilbert space, {\em
Proc. Amer. Math. Soc.} {\bf 17} (1966), 413-415.
   \bibitem{Du11} D. Y. Du, Schatten class weighted composition operators
on the Fock space $F_{\alpha}^2(\cbb^{N})$, {\em Int.
Journal of Math. Analysis}, {\bf 5} (2011), 625-630.
   \bibitem{fr1}  R. Frankfurt, Subnormal   weighted
shifts and related function spaces, {\em J. Math.
Anal. Appl.} {\bf 52} (1975), 471-489.
   \bibitem{fr2} R. Frankfurt, Subnormal weighted shifts and
related function spaces. II, {\em J. Math. Anal.
Appl.} {\bf 55} (1976), 2-17.
   \bibitem{fr3} R. Frankfurt, Function spaces associated
with radially symmetric measures, {\em J. Math. Anal.
Appl.} {\bf 60} (1977), 502-541.
   \bibitem{Fur} T. Furuta, {\em Invitation to linear
operators,} Taylor \& Francis, London and New York, 2001.
   \bibitem{Gar-Put} S. R. Garcia, M. Putinar,
Complex symmetric operators and applications, {\em Trans.
Amer. Math. Soc.} {\bf 358} (2006), 1285-1315.
   \bibitem{Gui} A. Guichardet, {\em Symmetric Hilbert
spaces and related topics}, Lecture Notes in
Mathematics, Vol. 261, Springer-Verlag, Berlin-New
York, 1972.
   \bibitem{Hal} P. R. Halmos,  {\em A Hilbert space
problem book}, Springer-Verlag, New York Inc. 1982.
   \bibitem{ha-wh} D. Harrington, R. Whitley, Seminormal
composition operators, {\em J. Operator Theory}, {\bf 11}
(1984), 125-135.
   \bibitem{Hor-Joh} R. A. Horn, C. R. Johnson, {\em Matrix
analysis}, Cambridge University Press, Cambridge,
1985.
   \bibitem{Ja-Ru} J. Janas, K. Rudol, Toeplitz operators
on the Segal-Bargmann space of infinitely many
variables, {\em Operator Theory: Adv. Appl.} {\bf 43}
(1990), 217-228.
   \bibitem{Ja-Ru-2} J. Janas, K. Rudol,
Toeplitz operators in infinitely many variables, in
{\em Topics in Operator Theory, Operator Algebras and
Applications,} XV-th Internat. Conf. in Operator
Theory, Timi\c{s}oara 1994, A. Gheondea et al, Eds.,
IMAR, Bucharest 1995, 147-160.
   \bibitem{Jon01} M. M. Jones, Composition operators
on weighted Bergman spaces, {\em J. Math. Anal. Appl.}
{\bf 256} (2001), 650-667.
   \bibitem{J-K-P} I. B. Jung, E. Ko, C. Pearcy,  Aluthge
transforms of operators, {\em Integr. Equat. Oper.
Th.} {\bf 37} (2000), 437-448.
    \bibitem{J-St} I. Jung, J. Stochel, Subnormal operators whose
adjoints have rich point spectrum, {\em Journal of
Functional Analysis} {\bf 255} (2008), 1797-1816.
   \bibitem{Kat} T. Kato, {\em Perturbation theory for linear
operators}, Reprint of the 1980 edition, Classics in
Mathematics, Springer-Verlag, Berlin, 1995.
   \bibitem{lam0} A. Lambert, Subnormality and weighted
shifts, {\em J. London Math. Soc.} {\bf 14} (1976),
476-480.
   \bibitem{LeT14} T. Le, Normal and isometric weighted
composition operators on the Fock space, {\em Bull.
London Math. Soc.} {\bf 46} (2014), 847-856.
   \bibitem{LeT} T. Le, Composition operators between
Segal-Bargmann spaces, preprint.
   \bibitem{MC97} B. D. MacCluer, Fredholm composition operators,
{\em Proc. Amer. Math. Soc.} {\bf 125} (1997),
163-166.
   \bibitem{MacN} J. S. Mac Nerney, Investigation concerning
positive definite continued fractions, {\em Duke Math.
J.} {\bf 26} (1959), 663-677.
   \bibitem{Ma-St} W. Majdak, J. B. Stochel, Weighted shifts on
directed semi-trees: an application to creation
operators on Segal-Bargmann spaces, {\em Complex Anal.
Oper. Theory} {\bf 10} (2016), 1427-1452.
   \bibitem{Men14} T. Mengestie, Carleson type measures for
Fock-Sobolev spaces, {\em Complex Anal. Oper. Theory},
{\bf 8} (2014), 1225-1256.
   \bibitem{Mlak} W. Mlak, {\em Hilbert Spaces and
Operator Theory}, PWN-Polish Sci. Publ. Warszawa, and
Kluwer, Dordrecht, 1991.
   \bibitem{Moo} E. H. Moore, {\em General analysis},
Memoirs of the American Philosophical Society, Part II,
1939.
   \bibitem{Nor78} E. A. Nordgren, Composition operators on Hilbert
spaces, Lecture Notes in Math. {\bf 693},
Springer-Verlag, Berlin 1978, 37-63.
   \bibitem{Part} K. R. Parthasarathy,
{\em An introduction to quantum stochastic calculus},
Birkh\"{a}user/Springer Basel AG, Basel, 1992.
   \bibitem{Put} C. R. Putnam, An inequality for the
area of hyponormal spectra, {\em Math. Z.} {\bf 116}
(1970), 323-330.
   \bibitem{R-S} M. Reed, B. Simon, {\em Methods of modern
mathematical physics I,} Second edition, Academic
Press, Inc., New York, 1980.
   \bibitem{Rud} W. Rudin, {\em Functional analysis},
McGraw-Hill Series in Higher Math., McGraw-Hill Book
Co., New York, 1973.
   \bibitem{Rud1} W. Rudin, {\em Real and complex analysis},
McGraw-Hill, New York, 1987.
   \bibitem{sch} I. Schur, Bemerkungen zur Theorie der
beschr\"{a}nkten Bilinearformenz mit unendlich vielen
Ver\"{a}nderlichen, {\em J. Reine Angew. Math.} {\bf 140}
(1911), 1-29.
   \bibitem{seg}
I. E. Segal, {\em Mathematical problems of
relativistic physics.} With an appendix by George W.
Mackey, Lectures in Applied Mathematics (proceedings
of the Summer Seminar, Boulder, Colorado, 1960), Vol.
II, American Mathematical Society, Providence, R.I.
1963.
   \bibitem{St-Seb}
Z. Sebesty\'en, J. Stochel, On suboperators with
codimension one domains, {\em J. Math. Anal. Appl.}
{\bf 360} (2009), 391-397.
   \bibitem{Sh93} J. H. Shapiro, {\em Composition operators
and classical function theory}, Universitext: Tracts
in Mathematics, Springer-Verlag, New York, 1993.
   \bibitem{Shm} Yu. L. Shmul'yan, Two-sided division in the ring
of operators, {\em Math. Notes}, {\bf 1} (1967),
400-403.
   \bibitem{Ste09} S. Stevi\'{c}, Weighted composition operators between
Fock-type spaces in $\cbb^N$, {\em Appl. Math.
Comput.} {\bf 215} (2009), 2750-2760.
   \bibitem{js1} J. Stochel, Seminormal composition
operators on $L^2$ spaces induced by matrices, {\em
Hokkaido Math. J.} {\bf 19} (1990), 307-324.
   \bibitem{js2} J. Stochel, Seminormality of operators
from their tensor product, {\it Proc. Amer. Math.
Soc.} {\bf 124} (1996), 135-140.
   \bibitem{St-Sz} J. Stochel, F. H. Szafraniec, On normal
extensions of unbounded operators. II, {\it Acta. Sci.
Math. $($Szeged$)$} {\bf 53} (1989), 153-177.
   \bibitem{jerz0}
J. B. Stochel, A remark on Bargmann's Hilbert space of
an infinite order, {\em Opuscula Math.} {\bf 10}
(1991), 171-181.
   \bibitem{jerz} J. B. Stochel,  Subnormality of generalized
creation operators on Bargmann's space of an infinite
order, {\em Integr. Equat. Oper. Th.} {\bf 15} (1992),
1011-1032.
   \bibitem{jerz2}  J. B. Stochel,  Representation of generalised
creation and annihilation operators in Fock space,
{\em Univ. Iagel. Acta Math.} {\bf 34} (1997),
135-148.
   \bibitem{jerz4} J. B. Stochel, A note on inductive limit model of
Bargmann space of infinite order, {\em Opuscula Math.}
{\bf 25} (2005), 139-148.
   \bibitem{jerz3} J. B. Stochel, Erratum to: Subnormality of
generalized creation operators on Bargmann's space of
an infinite order, {\em Integr. Equat. Oper. Th.} {\bf
73} (2012), 603-604.
   \bibitem{FHSz} F. H. Szafraniec,
Interpolation and domination by positive definite
kernels, {\em Complex Analysis -- Fifth
Romanian-Finnish Seminar, Part $2$ $($Bucharest, {\em
1981}$)$}, 291-295, {\em Lecture Notes in Math.},
1014, Springer, Berlin, 1983.
   \bibitem{FHSz2} F. H. Szafraniec, Multipliers in the
reproducing kernel Hilbert space, subnormality and
noncommutative complex analysis, {\em Oper. Theory
Adv. Appl.}, Vol. 143, 313-331, Birkhäuser, Basel,
2003.
   \bibitem{FHSz3} F. H. Szafraniec,
{\em Przestrzenie Hilberta z j\c{a}drem
reprodukuj\c{a}cym} ({\em Reproducing kernel Hilbert
spaces}, in Polish), Wydawnictwo Uniwersytetu
Jagiello\'nskiego, Krak\'ow, {\bf 2004}.
   \bibitem{FHSz4} F. H. Szafraniec, The reproducing
kernel property and its space: the basics, in {\em
Operator Theory vol. $1$}, Daniel Alpay Ed., 3-30,
SpringerReference, Springer Basel {\bf 2015}.
   \bibitem{FHSz5} F. H. Szafraniec, The reproducing
kernel property and its space: more or less standard
examples of applications, in {\em Operator Theory vol.
$1$}, Daniel Alpay Ed., 31-58, SpringerReference,
Springer Basel {\bf 2015}.
   \bibitem{Ue07} S. Ueki, Weighted composition operator on the
Fock space, {\em Proc. Amer. Math. Soc.} {\bf 135}
(2007), 1405-1410.
   \bibitem{Ue07a} S. Ueki, Hilbert-Schmidt weighted composition operator
on the Fock space, {\em Int. Journal of Math.
Analysis}, {\bf 1} (2007), 769-774.
   \bibitem{Ue10}
S. Ueki, Weighted composition operators on some
function spaces of entire functions. {\em Bull. Belg.
Math. Soc. Simon Stevin} {\bf 17} (2010), 343-353.
   \bibitem{Weid} J. Weidmann, Linear operators in Hilbert
spaces, Springer-Verlag, Berlin, Heidelberg, New York,
1980.
   \bibitem{Zha14} L. Zhao, Unitary weighted composition
operators on the Fock space of $\cbb^n$, {\em Complex
Anal. Oper. Theory,} {\bf 8} (2014), 581-590.
   \bibitem{Zha-Pa15} L. Zhao, C. Pang, A class of weighted
composition operators on the Fock space, {\em Journal
of Mathematical Research with Applications}, {\bf 35}
(2015), 303-310.
   \end{thebibliography}
   
   \end{document}